\documentclass[a4paper, 11pt, onecolumn]{article}

\usepackage[utf8]{inputenc}
\usepackage[T1]{fontenc}
\usepackage{graphicx}
\usepackage[english]{babel}
\usepackage{textcomp}
\usepackage{layout}
\usepackage[top=2.2cm, bottom=2.2cm, left=2.8cm, right=2.8cm]{geometry}
\usepackage{setspace}
\usepackage{listings}
\usepackage{color}
\usepackage[dvipsnames]{xcolor}
\usepackage{lscape}
\usepackage{csquotes}
\usepackage{braket}
\usepackage{amsmath}
\usepackage{amssymb}
\usepackage{amsthm}
\usepackage{thmtools}
\usepackage{amsfonts}
\usepackage{mathrsfs}
\usepackage{xfrac}
\usepackage{faktor}
\usepackage{esint}
\usepackage{mathtools}
\usepackage{stmaryrd}
\usepackage{array}
\usepackage{bbm}
\usepackage{hyperref}
\hypersetup{
    colorlinks=true,
    linkcolor=red,
    filecolor=magenta,      
    urlcolor=blue,
    citecolor=BlueViolet,
    }
\usepackage{bbold}
\usepackage{algorithm}
\usepackage{algpseudocode}

\usepackage[backend = bibtex, style=alphabetic, giveninits=true]{biblatex}
\usepackage{csquotes}

\usepackage{tikz}
\usetikzlibrary{shadows}

\renewbibmacro{in:}{} %supprime le in dans les citations d'articles

\AtEveryBibitem{\clearfield{month}}
\AtEveryBibitem{\clearfield{day}}
\AtEveryBibitem{\clearfield{eprinttype}}
\AtEveryBibitem{\clearfield{eprint}}
\AtEveryBibitem{\clearfield{url}}
\AtEveryBibitem{\clearfield{urlyear}}
\AtEveryBibitem{\clearfield{review}}
\AtEveryBibitem{\clearfield{isbn}}
\AtEveryBibitem{\clearfield{issn}}
\AtEveryBibitem{\clearfield{pagetotal}}
\AtEveryBibitem{\clearfield{doi}}
\AtEveryBibitem{\clearfield{note}}
\AtEveryBibitem{\clearname{editor}}
\AtEveryBibitem{\clearlist{location}}
\AtEveryBibitem{\clearfield{number}}
\AtEveryBibitem{\clearfield{volume}}
\AtEveryBibitem{\clearfield{pages}}

\newtheorem{theorem}{Theorem}[section]
\newtheorem{corollary}{Corollary}[section]
\newtheorem{lemma}{Lemma}[section]
\newtheorem{proposition}{Proposition}[section]

\theoremstyle{remark}
\newtheorem{remark}{Remark}

\newenvironment{syst}
    {\left\{ \begin{aligned}}{ \end{aligned} \right.}

\newenvironment{nalign}{
    \begin{equation}
    \begin{aligned}
}{
    \end{aligned}
    \end{equation}
    \ignorespacesafterend
}

\newcommand{\subsecnonewline}[1]{\vspace{0.5cm}\noindent\textbf{\large{#1.}}}

\renewcommand{\le}{\leqslant}
\renewcommand{\ge}{\geqslant}
\renewcommand{\epsilon}{\varepsilon}

\newcommand{\qqtqq}[1]{\qquad \text{#1} \qquad}

\newcommand{\qqtq}[1]{\qquad \text{#1} \quad}
\newcommand{\qtq}[1]{\quad \text{#1} \quad}

\newcommand{\R}{\ensuremath{\mathbb{R}}}

\newcommand{\N}{\ensuremath{\mathbb{N}}}
\newcommand{\Q}{\ensuremath{\mathbb{Q}}}
\newcommand{\Z}{\ensuremath{\mathbb{Z}}}
\newcommand{\C}{\ensuremath{\mathbb{C}}}

\newcommand{\F}{\ensuremath{\mathbb{F}}}

\newcommand{\T}{\ensuremath{\mathbb{T}}}

\newcommand{\CA}{\mathcal{A}}
\newcommand{\CB}{\mathcal{B}}
\newcommand{\CC}{\mathcal{C}}

\newcommand{\CF}{\mathcal{F}}

\newcommand{\CH}{\mathcal{H}}
\newcommand{\CL}{\mathcal{L}}
\newcommand{\CO}{\mathcal{O}}
\newcommand{\CP}{\mathcal{P}}

\newcommand{\CS}{\mathcal{S}}
\newcommand{\CU}{\mathcal{U}}

\newcommand{\CM}{\mathcal{M}}

\newcommand{\eint}[2]{\left\llbracket #1 ,#2 \right\rrbracket}

\newcommand{\floor}[1]{\left\lfloor #1 \right\rfloor}

\newcommand{\sprod}[2]{\left\langle #1, #2 \right\rangle }
\renewcommand{\set}[1]{ \left \lbrace #1 \right \rbrace  }
\newcommand{\norm}[1]{\left \Vert #1 \right \Vert}

\newcommand{\jap}[1]{\left \langle #1 \right \rangle }

\newcommand*\diff{\mathop{}\!\mathrm{d}} %élément différentiel droit

\newcommand{\modulo}[1]{\left[#1\right]}

\DeclareMathOperator{\tr}{tr}

\DeclareMathOperator{\supp}{supp}

\DeclareMathOperator{\Vect}{Vect}
\DeclareMathOperator{\Op}{Op}
\DeclareMathOperator{\Sp}{Sp}
\DeclareMathOperator{\GL}{GL}

\DeclareMathOperator{\Leb}{Leb}
\DeclareMathOperator{\spectre}{spt}
\DeclareMathOperator{\Sym}{Sym}

\DeclareMathOperator{\SL}{SL}
\DeclareMathOperator{\adj}{adj}

\title{Semiclassical measures for quantized elliptic and parabolic symplectic linear maps of the multidimensional torus}
\author{Vincent Ferrari-Dominguez}

\date{\vspace{-5ex}}

\begin{document}
\maketitle

\begin{abstract}
	We study the semiclassical measures associated to the quantum map on the torus obtained by quantizing integer symplectic linear maps, focusing on periodic matrices and certain parabolic matrices. More precisely, we consider the time-one map of the completely integrable flows generated by Hamiltonians of the form $H(x,\xi) = Q(\xi)/2$ where $Q$ is an integral quadratic form. We prove that the set of semiclassical measures associated to a periodic matrix $A$ coincides with the set of invariant probability measures of $A$. The semiclassical measures of parabolic matrices on $\T^{2}$ are explicitly characterized by proving that they have at most three nonzero Fourier coefficients in position. A partial quantum ergodicity statement for parabolic matrices is obtained by proving a sharp bound on the quantum variance and some examples are built in higher dimensions. In particular, starting from $\T^{4}$ there always exist semiclassical measures whose projection on the position has only nonzero Fourier coefficients. Moreover, if $Q$ is isotropic over $\Q$, then there exist semiclassical measures whose projection on the position concentrate on some subtori. A link is made with the semiclassical measures of the Laplacian on the torus.
\end{abstract}

\section{Introduction}

Does the chaoticity of a classical dynamical system impact the high-frequency
behavior of its quantum analogue? To study this question, Hannay and Berry
introduced in \cite{hannayQuantizationLinearMaps1980a} a toy model for
quantum chaos by building a quantum analogue of symplectic integer matrices
acting on the torus $\T^2$. Since their seminal work, the quantization of
hyperbolic\footnote{A matrix $A\in \Sp_2(\Z) = \SL_2(\Z)$ is said elliptic if
	$|\tr(A)|<2$, which implies that $A$ is periodic. It is said hyperbolic if
	$|\tr(A)| >2$, which implies that the transformation induced by $A$ on $\T^2$
	is ergodic for the Lebesgue measure. It is said parabolic if $|\tr(A)|=2$.}
symplectic matrices, called quantum cat maps in reference to the Arnold cat map,
has been extended to higher dimension and extensively studied. We refer the
reader to \cite[sec. 5]{dyatlovQuantumErgodicity2022} for an overview and to
\cite{riviereLebesgueComponentSemiclassical2025,kimCharacterizingSupportSemiclassical2025a,kurlbergQuantumErgodicityHigher2025a}
for examples of recent works on this topic. While other types of parabolic maps
such as skew translations
\cite{marklofQuantumUniqueErgodicity2000,schubertRateQuantumErgodicity2008}
and perturbed Kronecker maps \cite{rosenzweigQuantumUniqueErgodicity2006a} have
been analyzed, the semiclassical measures of elliptic and parabolic symplectic
integer matrices have not been studied to the best of the author's knowledge.

The goal of this article is to fill this gap in the literature by studying the case
of elliptic and parabolic symplectic integer matrices. Another motivation is
the link that can be drawn between elliptic, resp. parabolic, symplectic
matrices and the geodesic flow on the spheres, resp. on the torus, as well as
between the quantum analogues of these dynamical systems. This will be a
recurring theme throughout this introduction.

\subsection{Symplectic integer matrices on the torus and their quantization}

We briefly recall the quantization procedure of symplectic integer matrices on
the torus. The first step consists in building a quantum phase space associated
to the torus $\T^{2d}$. The usual choice is to consider a finite vector space
$\CH_N(\theta)$ of distributions on $\R^d$ that are quasi-periodic both in
position and frequency and to fix a Hilbert space structure on it. The
parameter $\theta \in \T^{2d}$ corresponds to a choice of phase for the
quasi-periodicity, and $N \in \N^*$ is the semiclassical parameter that can be
thought of as the inverse of a discretization length of $\T^{2d}$.

The second step consists in quantizing the classical observables. To each
observable $a \in \CC^\infty(\T^{2d})$ is associated a family of operators
$(\Op_N(a))_{N \in \N^*}$ acting on the spaces $\CH_N(\theta)$, built from the
Weyl quantization on $\R^{2d}$.

In the third and final step, the quantum analogue $M_N(A)$ of a symplectic
integer matrix $A \in \Sp_{2d}(\Z)$ acting on the torus is built. The operator
$M_N(A)$ is a unitary operator on $\CH_N(\theta)$ that satisfies an Egorov
theorem
\begin{equation*}
	\norm{M_N(A)^{-1} \Op_N(a) M_N(A) - \Op_N(a \circ A)}_{\CB(\CH_N)} = \CO_a(\frac{1}{N}) \qqtq{for} a \in \CC^\infty(\T^{2d}).
\end{equation*}
These operators are built by restricting the metaplectic operators to the spaces $\CH_N(\theta)$ for adapted values of $\theta$. The metaplectic operators can be built either by studying the Schrödinger representation of the Heisenberg group or as an evolution operator for the Schrödinger equation associated to quadratic homogeneous Hamiltonians.

Once the quantum analogue of the symplectic matrix $A$ is constructed, the goal
is then to understand the spectral properties of $M_N(A)$ in the semiclassical limit $N
	\to + \infty$. More precisely, we seek to understand the possible concentration
behavior of a sequence $(\psi_N)_{N}$ of eigenstates of $M_N(A)$. The \textit{Wigner or Husimi distributions} are
tools that describe the behavior of a quantum state $\psi_N \in \CH_N(\theta)$ and
are defined on a classical observable $a \in \CC^\infty(\T^{2d})$ by
\begin{equation*}
	Wig(\psi_N)(a) = \sprod{\Op^w_N(a)\psi_N}{\psi_N} \qqtqq{and} Hus(\psi_N)(a) = \sprod{\Op^{AW}_N(a)\psi_N}{\psi_N},
\end{equation*}
where the superscript $w$, resp. $AW$, refers to the Weyl, resp. the Anti-Wick, quantization. They are distributions on the classical phase space $\T^{2d}$ and the Husimi distribution is, in fact, a positive measure.

A \textit{semiclassical measure} of a sequence of uniformly bounded quantum
states $(\psi_N)_N$ is then defined as an accumulation point of the sequence of
Husimi distributions $(Hus(\psi_N))_N$, equivalently of the Wigner
distributions $(Wig(\psi_N))_N$. The notion does not depend on the choice of
the quantization and a semiclassical measure is always a finite positive
measure on $\T^{2d}$. The set of semiclassical measures associated to a
symplectic integer matrix $A$, denoted by $\CS \CM(A)$, is thus defined as the
set of all semiclassical measures associated to a sequence $(\psi_N)_N$ where
each $\psi_N$ is a normed eigenvector of $M_N(A)$. The aim of this paper is to
describe the set $\CS \CM(A)$ when $A$ is either parabolic or elliptic. We call
\textit{quantum limit associated to a symplectic matrix $A$} the projection on
the position coordinates of $\T^{2d}$ of any semiclassical measure of $A$.

\subsection{The elliptic case}

We first fix the terminology and decide to call a matrix $A \in \Sp_{2d}(\Z)$
\textit{elliptic} if it is periodic, meaning that there exists some integer $T
	\in \N$ such that $A^T = I$. This is consistent with the usual notion of elliptic
symplectic matrices in dimension two as they are indeed periodic. Moreover, if
$A \in \Sp_{2d}(\Z)$, then $A$ is periodic if and only if the orbit of each
point in $\C^{2d}$ is bounded (see Proposition
\ref{pro_characterization_elliptic_matrices}), a fitting dynamical behavior for
an elliptic matrix.

It is standard that the semiclassical measures associated to a symplectic
integer matrix $A$ are invariant under $A$. The reverse inclusion holds when
$A$ is elliptic, as stated in the following theorem.
\begin{theorem}\label{thm_semiclassical_measures_elliptic_matrix}
	Let $A \in \Sp_{2d}( \Z)$ be elliptic, then
	\begin{equation*}
		\CS \CM(A) = \CP_A(\T^{2d}),
	\end{equation*}
	where $\CP_A(\T^{2d})$ is the set of probability measures on $\T^{2d}$ invariant under $A$.
\end{theorem}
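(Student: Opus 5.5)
The inclusion $\CS\CM(A) \subset \CP_A(\T^{2d})$ is the standard one. Semiclassical measures are positive, and since the $\psi_N$ are normed they have mass one. By Egorov's theorem and the eigenvector equation $M_N(A)\psi_N = e^{i\lambda_N}\psi_N$,
\begin{equation*}
\sprod{\Op_N(a\circ A)\psi_N}{\psi_N} = \sprod{\Op_N(a)\psi_N}{\psi_N} + \CO_a(1/N),
\end{equation*}
so every limit is $A$-invariant. The work is in the reverse inclusion. The set of semiclassical measures is closed under weak-$*$ limits by a diagonal extraction, and the extreme points of $\CP_A(\T^{2d})$ are the ergodic measures. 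Since $A$ is periodic, these are the uniform measures on finite orbits $\mu_x = \frac{1}{T}\sum_{k=0}^{T-1}\delta_{A^k x}$. Convex combinations of such orbit measures are weak-$*$ dense in $\CP_A(\T^{2d})$: average a dense family of atomic measures over the group generated by $A$. It therefore suffices to show that every finite convex combination $\sum_j c_j \mu_{x_j}$ lies in $\CS\CM(A)$.

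To build such states, I start from a coherent-state-like vector $\varphi_N \in \CH_N(\theta)$ that concentrates at a point $y\in\T^{2d}$; for instance a periodized Gaussian wave packet, or a Dirac vector at a nearby lattice point $y_N \in \frac{1}{N}\Z^{2d}$ smoothed at scale $N^{-1/2}$. Its Husimi/Wigner distribution converges to $\delta_y$. Given a target $\nu = \sum_j c_j\mu_{x_j}$, I set $\varphi_N = \sum_j \sqrt{c_j}\,\varphi_N^{x_j}$. Packets at distinct points are asymptotically orthogonal, and their cross terms vanish in the Wigner limit, so $\varphi_N$ has semiclassical measure $\sum_j c_j\delta_{x_j}$.

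Next I exploit periodicity at the quantum level. The operator $U_N = M_N(A)$ satisfies $U_N^T = e^{i\alpha_N}\,\mathrm{Id}$ on $\CH_N(\theta)$. This holds because the metaplectic representation is projective, and $M_N(A^T) = M_N(I)$ is a scalar, possibly after choosing $\theta$ adapted and tracking the sign ambiguity, which can be handled by passing to $U_N^{2T}$ if needed. Hence $U_N$ has at most $T$ (or $2T$) eigenvalues, and the spectral projectors are given exactly by
\begin{equation*}
\Pi_k = \frac{1}{T}\sum_{m=0}^{T-1} e^{-i m\lambda_k} U_N^m .
\end{equation*}
Then $\varphi_N = \sum_k \Pi_k\varphi_N$. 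For each $k$, expanding $\sprod{\Op_N(a)\Pi_k\varphi_N}{\Pi_k\varphi_N}$ gives terms $\frac{1}{T^2}\sum_{m,m'} e^{i(m'-m)\lambda_k}\sprod{U_N^{-m'}\Op_N(a)U_N^{m}\varphi_N}{\varphi_N}$. By Egorov, the diagonal terms $m = m'$ give $\frac{1}{T^2}\sum_m \int a\circ A^{-m}\diff \mu$, where $\mu$ is the limit measure of $\varphi_N$. The off-diagonal terms involve $\sprod{\Op_N(a)U_N^{m-m'}\varphi_N}{\varphi_N}$, which is a Wigner cross term between packets located at $A^{m-m'}x_j$ and $x_{j'}$.

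The main obstacle is controlling these cross terms. They vanish only when the points $A^{m}x_j$ ($0\le m<T$) are all distinct from one another and from the other orbits. I therefore first restrict to points $x_j$ with exact minimal period $T$ and disjoint orbits. Such points form a dense open set: their complement is a finite union of closed subtori $\ker(A^m - I)$ with $m<T$, together with diagonals. Measures supported on those orbits are still dense in $\CP_A(\T^{2d})$, since any orbit measure is approximated by orbit measures of nearby generic points by continuity of $A$.

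With cross terms negligible, $\sprod{\Op_N(a)\Pi_k\varphi_N}{\Pi_k\varphi_N} \to \frac{1}{T}\int a \diff\nu$ for every $k$, where $\nu = \frac{1}{T}\sum_m (A^m)_*\mu$ is exactly the target measure. The norms $\norm{\Pi_k\varphi_N}^2$ also tend to $1/T$ (take $a=1$), so the normalized eigenvectors $\Pi_k\varphi_N/\norm{\Pi_k\varphi_N}$ have semiclassical measure $\nu$. If the eigenvalue $\lambda_k$ is not actually in the spectrum, this contradicts the norm limit, so any fixed $k$ works. A diagonal argument then yields all of $\CP_A(\T^{2d})$.
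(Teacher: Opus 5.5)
Your proposal is correct and follows essentially the paper's proof. Like the paper, you average a superposition of coherent states centred at generic points (exact period $T$, pairwise disjoint orbits) over one period of $M_N(A)$, which is precisely a spectral projection. You then conclude using the weak-$*$ closedness of $\CS\CM(A)$ and the density of convex combinations of orbit measures, obtained by averaging atomic measures over the group generated by $A$.

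The remaining differences are cosmetic. You project onto an arbitrary eigenspace $\Pi_k$ and kill the off-diagonal terms by hand. The paper instead fixes $N$ even and $\theta=0$, projects onto one specific eigenvalue ($1$ or $e^{-\pi i/T}$), and invokes its lemma on superpositions of states with mutually singular semiclassical measures.
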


The proof is similar in spirit to that of \cite[thm.
	1.1]{jakobsonClassicalLimitsEigenfunctions1999} or \cite[thm.
	1]{azagraConcentrationSymmetricEigenfunctions2010} and consists in averaging a
coherent state over a period of $M_N(A)$. It can be adapted, mutatis mutandis, to
any periodic quantum map or evolution operator for a Schrödinger equation whose
period is independent of the semiclassical parameter. We call \textit{quantum
	propagator} the evolution operator solving a Schrödinger equation.

A standard example of a periodic quantum propagator is given by $t \mapsto
	e^{\frac{it}{h}D_h}$ where $D_h = \sqrt{-h^2\Delta_{S^d} + (h\frac{d-1}{2})^2}
	- h\frac{d-1}{2}$ on the sphere $S^d$. Jakobson and Zelditch
\cite{jakobsonClassicalLimitsEigenfunctions1999} proved using this property
that the semiclassical measures for the Laplacian on the sphere $S^d$ are
exactly the invariant measures for the geodesic flow on the cosphere bundle $S^*
	S^d$. This result was extended to all CROSSes\footnote{Compact Rank One
	Symmetric Spaces, namely the spheres, the real, complex and quaternionic
	projective spaces and the Cayley plane. See \cite[chap.
		3]{besseManifoldsAllWhose1978a} for details.} by Macia \cite[thm.
	4]{maciaRemarksQuantumLimits2008} and to compact manifolds of constant positive
sectional curvature by Azagra and Macia \cite[thm.
	3]{azagraConcentrationSymmetricEigenfunctions2010}. It is also possible in the
case of CROSSes to define a periodic quantum propagator by modifying the
constant $\frac{d-1}{2}$ in the definition of $D$. The values can be determined
using the table in \cite[sec 8.8]{besseManifoldsAllWhose1978a}.

Another example of a periodic quantum propagator is given by harmonic oscillators
on $\R^{2d}$ whose frequencies are rational multiples of one another. Arnaiz
and Macia \cite{arnaizLocalizationDelocalizationEigenmodes2022} proved that
in this context, the semiclassical measures of the corresponding quantum
harmonic oscillator are exactly the invariant measures of the classical one.
They also proved that this is no longer true if some of the frequencies are
independent over $\Q$.

A periodic quantum propagator also appears in the work of Charles and Lefeuvre
\cite[sec. 4.1.1, the operator $A_k$]{charlesSemiclassicalDefectMeasures2025}
on the semiclassical measure of the magnetic Laplacian on closed hyperbolic
surfaces when the magnetic field goes to $+\infty$ in the low energy regime.
They prove that in this regime all measures on an energy shell invariant under the magnetic flow are semiclassical measures for the magnetic Laplacian.

Finally, we point that it is not enough to suppose that the classical dynamic is periodic to ensure that all invariant measure are semiclassical measures. Indeed, Macia and Rivière \cite[thm. 1.3]{maciaConcentrationNonconcentrationSchrodinger2016a} obtained counterexamples to such a statement on Zoll manifolds.

\begin{remark}
	Kurlberg and Rudnick \cite{kurlbergHeckeTheoryEquidistribution2000} introduced, under some arithmetic condition on $A$, a family of operators on $\CH_N(0)$, called Hecke operators, that commute with $M_N(A)$. Then they considered joint eigenvectors of $M_N(A)$ and the Hecke operators, the so-called Hecke eigenfunctions, and proved that the only semiclassical measure of Hecke eigenfunctions is the Lebesgue measure, a phenomenon called arithmetic quantum unique ergodicity.
	Theorem \ref{thm_semiclassical_measures_elliptic_matrix} highlights the different behavior of Hecke eigenfunctions and general eigenvectors. Indeed, the matrix $A = \begin{pmatrix}
			1 & 1 \\ -2 & 1
		\end{pmatrix}$ satisfies arithmetic quantum unique ergodicity as already observed by Kelmer \cite[rem. 0.3]{kelmerArithmeticQuantumUnique2010a}, while Theorem \ref{thm_semiclassical_measures_elliptic_matrix} implies that all $A$-invariant measures are semiclassical measures, since $A$ is $4$-periodic.
\end{remark}

\subsection{The parabolic case}

As in the elliptic case, we first need to specify the set of parabolic matrices
we consider. We decide to call a matrix $A \in \Sp_{2d}(\Z)$ \textit{parabolic}
if it is of the form
\begin{equation*}
	P_B := \begin{pmatrix} I & B \\ 0 & I \end{pmatrix} \qqtq{where} B^T = B.
\end{equation*}

The symplectic matrices of the form $(P_B)_{B \in \Sym_d(\Z)}$ are easy to
quantize, form a subgroup of $\Sp_{2d}(\Z)$ and each of them has unbounded
orbits but no exponential growth. Moreover, there is a direct link between the
matrices of the form $P_B$ and quadratic Hamiltonians of the form
\begin{equation*}
	p_B(x, \xi) = \frac{1}{2} \sprod{B \xi}{\xi} \qqtq{for} (x,\xi) \in T^* \T^d = \T^d \times \R^d.
\end{equation*}
The Hamiltonian flow $(\phi_t)_{t \in \R}$ corresponding to $p_B$ is
\begin{equation*}
	\phi_t(x,\xi) = (x + tB\xi, \xi) = P_B^t(x, \xi) \qqtq{for} (x,\xi) \in \T^d \times \R^d, \; t \in \R.
\end{equation*}
This link also exists at the quantum level as the metaplectic operator $M_h(P_{B})$ is the quantum propagator associated to the Hamiltonian $\Op^w_h(p_B)$, see Theorem \ref{thm_metaplectic_operators_quadratic_hamiltonians} for a generalization of this phenomenon.
We note that the case $B=I$ corresponds to the geodesic flow on the torus and it is thus no surprise that the semiclassical measures of the Laplacian on the torus, denoted $\CS \CM(-\Delta_{\T^{d}})$, are also semiclassical measures for $P_{2I}$. The converse is, however, false, as shown by the following corollary, which is obtained by explicitly associating to each eigenvector of the Laplacian an eigenvector of $M_N(P_{2I})$, see Proposition \ref{pro_link_semiclassical_laplacian_P_2I} for details.
\begin{corollary}\label{cor_semicla_laplacian_torus_included_semicla_P_2I}
	We have
	\begin{equation*}
		\CS \CM (-\Delta_{\T^d}) \subsetneq \CS \CM (P_{2I}).
	\end{equation*}
\end{corollary}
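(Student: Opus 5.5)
The proof has two parts: the inclusion $\CS\CM(-\Delta_{\T^d}) \subset \CS\CM(P_{2I})$, and the fact that it is strict.

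\textbf{Inclusion.} I would make the quantization of $P_{2I}$ concrete on $\CH_N(\theta)$, with $\theta$ adapted to $P_{2I}$ (for instance $\theta=0$). A basis of $\CH_N(0)$ is given by the Dirac combs $e_j$, $j \in (\Z/N\Z)^d$, supported on $\frac{j}{N}+\Z^d$. In the dual (Fourier) basis $\hat e_k$, the metaplectic operator $M_N(P_{2I})$ is the propagator of $\Op_h^w(|\xi|^2)$ at time one with $h = 1/(2\pi N)$. Up to the normalization conventions of the paper, it acts diagonally:
\begin{equation*}
	M_N(P_{2I})\hat e_k = e^{-2i\pi |k|^2/N}\hat e_k, \qquad k \in (\Z/N\Z)^d.
\end{equation*}
Now take an eigenfunction of the Laplacian, $u = \sum_{|k|^2 = E} c_k e^{2i\pi k\cdot x}$, with frequencies $k \in \Z^d$ on a sphere, and $h_E$ the associated scale. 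I would pick $N$ larger than the frequency support, say $N \gg \sqrt{E}$, but still comparable to $\sqrt{E}/h$, so that the semiclassical scaling matches. I then set
\begin{equation*}
	\psi_N = \sum_{|k|^2 = E} c_k\, \hat e_{k \bmod N}.
\end{equation*}
Since the reduction mod $N$ is injective on the frequency support, $\psi_N$ has norm one. It is an eigenvector of $M_N(P_{2I})$ because $|k|^2 = E$ for every $k$ in the support.

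It remains to check that the Wigner distributions of $\psi_N$ and of $u$ coincide on the trigonometric observables $e^{2i\pi(n\cdot x + m\cdot \xi)}$, up to $o(1)$. This is an explicit computation with the Weyl quantization on both sides. To get the correct scaling in the momentum variable, I would embed $\R^d$ into the torus in $\xi$, rescaled by $N$. Any Laplacian semiclassical measure, which lives on $T^*\T^d$ and is compactly supported on the sphere of radius $1$, is then realised after this identification.

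The main obstacle is the bookkeeping of scales. The Laplacian measure lives on the cotangent bundle with $\xi = hk$, while on $\CH_N$ the momentum is $k/N \in \T^d$. So I must choose $N = N(h)$ with $\frac{1}{N}\sim h$ up to a fixed factor, and the eigenvalue condition only needs $|k|^2$ constant modulo $N$. This is automatic since $|k|^2 = E$ exactly, so any $N$ works. The one genuine requirement is that the momenta $k/N$ stay inside a fundamental domain, so that no wrapping occurs.

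\textbf{Strictness.} I would exhibit a measure in $\CS\CM(P_{2I})$ that is not a Laplacian semiclassical measure. The simplest candidates come from the extra degeneracy mod $N$: any $k$ with $|k|^2 \equiv c \pmod N$ gives the same eigenvalue. A single Fourier vector $\hat e_k$, with $k/N \to \xi_0$, has as semiclassical measure $\Leb_x \otimes \delta_{\xi_0}$, which is already allowed for the Laplacian. So I would instead take the coherent-state style superposition
\begin{equation*}
	\psi_N = \frac{1}{\sqrt{\#S_N}} \sum_{k \in S_N} \hat e_k, \qquad S_N = \set{k \in (\Z/N\Z)^d : |k|^2 \equiv 0 \pmod N}.
\end{equation*}
Its semiclassical measure projects in $\xi$ onto a measure spread over a set of positive dimension, not contained in a single sphere $|\xi| = \text{const}$. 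A Laplacian measure, by contrast, is supported on $\set{|\xi| = 1}$. Even simpler: the measure $\Leb \otimes \delta_{\xi=0}$, or a Dirac mass in $\xi$ at two different radii simultaneously (taking $\hat e_{k_1} + \hat e_{k_2}$ with $|k_1|^2 \equiv |k_2|^2 \pmod N$ but $|k_1|/N$, $|k_2|/N$ converging to different limits), cannot be supported on a single energy shell. Hence it is not a Laplacian semiclassical measure. This strictness step is easy; the main work is the careful scale matching in the inclusion step.
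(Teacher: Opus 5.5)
Your proposal is correct. The inclusion follows the paper's route, and the strictness step uses a different and more elementary one.

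\textbf{Inclusion.} Your $\psi_N=\sum_k c_k\hat e_{k \bmod N}$ is exactly $S_N(\psi_\lambda)=\psi_\lambda\sum_{k\in\Z^d}e^{2\pi i N\sprod{k}{\cdot}}=\sum_j\alpha_j f^0_j$ of Proposition \ref{pro_link_semiclassical_laplacian_P_2I}. The computation you defer reduces to the no-wrapping observation you identify. For $\norm{n}_\infty<N/2$ and $j,l$ on the sphere, $j\equiv l+n \pmod N$ forces $j=l+n$, so the Fourier coefficients agree exactly, not just up to $o(1)$.

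One slip to fix: ``$N\gg\sqrt{E}$'' is wrong. If $N/\sqrt{E}\to\infty$, all momenta $k/N$ collapse to $\xi=0$. You need $N\sim c\sqrt{E}$ with a fixed $c>2$; the paper takes $N(\lambda)=\floor{4\sqrt{\lambda}}+1$. This is what your later remark $1/N\sim h$ (up to a fixed factor) actually says.

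\textbf{Strictness.} You use that Laplacian semiclassical measures live on the energy shell, here $\set{\norm{\xi}=1/4}$ after the paper's identification. On the other side, $f^0_0$ is an eigenvector of $M_N(P_{2I})$ with eigenvalue $1$, and $\CS\CM(f^0_0)=\set{\Leb\otimes\delta_0}$ by Lemma \ref{lem_properties_basis_f_theta_j}. Proposition \ref{pro_parabolic_nu_times_lebesgue} even gives every $\Leb\otimes\nu$. This proves the literal statement in all dimensions at once.

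The paper argues in position instead:
\begin{itemize}
\item For $d=1$, the only Laplacian quantum limit is $\Leb$, while Theorem \ref{thm_full_characterization_parabolic_dimension_1} yields densities $1+\beta\cos(2\pi mx+\phi)$.
\item For $d\ge2$, the measure of Theorem \ref{thm_parabolic_case_infinite_fourier_coef} has $\widehat{\pi_\#\mu}(n)>0$ for every $n$. A Laplacian quantum limit vanishes at every $n$ with $\norm{n}^2$ odd (Remark \ref{rem_infinite_fourier_coeff_parabolic}).
\end{itemize}
This uses much heavier results but gives a stronger conclusion. The inclusion is already strict at the level of quantum limits, independently of how the momentum variable is scaled or identified. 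Your argument only registers that $P_{2I}$-eigenvectors need $|k|^2$ constant merely modulo $N$.

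Two cleanups. Drop your first candidate, the uniform superposition over $|k|^2\equiv 0\pmod N$: its equidistribution claim is unproved and unnecessary. Also, $\Leb\otimes\delta_{\xi_0}$ is a Laplacian measure only when $\xi_0$ lies on the shell, so it is not ``already allowed'' in general. That is exactly why $\xi_0=0$ works.
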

The inclusion is strict by Theorem \ref{thm_full_characterization_parabolic_dimension_1} in dimension $1$, and by the example built in Theorem \ref{thm_parabolic_case_infinite_fourier_coef} in higher dimensions. This corollary follows from Proposition \ref{pro_link_semiclassical_laplacian_P_2I}, which associates to any eigenvector of the Laplacian an eigenvector of $M_N(P_{2I})$ on $\CH_N(0)$.

As claimed above, parabolic matrices are easy to quantize, particularly if $N$ is even. If $N$ is even, then there exists an eigenbasis $(f^0_j)_{j \in \eint{0}{N-1}^d}$ of $\CH_N(0)$ such that
\begin{equation*}
	M_N(P_B) f^0_j = e^{-\frac{\pi i}{N}\sprod{Bj}{j}} f^0_j.
\end{equation*}
We observe from this that the eigenvalues and the structure of the eigenspaces of $M_N(P_B)$ are dictated by the level sets of $j \mapsto \sprod{B j}{j}$ on $(\Z/N\Z)^d$. As the Fourier coefficients of the Wigner distribution are given by the formula
\begin{equation*}
	Wig(\sum_{j \in (\Z/N\Z)^d} \psi_j f^0_j)(e^{2 \pi i \sprod{(m,n)}{\cdot}}) = e^{\frac{\pi i}{N} \sprod{n}{m}} \sum_{j \in (\Z / N \Z)^d} e^{\frac{2 \pi i }{N}\sprod{n}{j}} \psi_j \overline{\psi_{j+m}},
\end{equation*}
we see that the Fourier coefficients of semiclassical measures of $P_B$ are intimately linked to solutions of equations of the form
\begin{equation*}
	\sprod{Bj}{j} \equiv \sprod{B(j+m)}{j+m} \equiv a \quad \modulo{N},
\end{equation*}
where $m \in \Z^d$ corresponds to the Fourier coefficient $(m,n)$ under scrutiny and $a \in \Z/N\Z$ can be interpreted as the radius of a sphere defined by the quadratic form $j \mapsto \sprod{B j}{j}$. 

\begin{remark}
	The main difference between the semiclassical measures of the Laplacian on the torus and those of $P_{2I}$ stems from the fact that the frequencies of an eigenvector of the Laplacian are all on a sphere, while those of eigenvectors of the metaplectic operator $M_N(P_{2I})$ only need to be on the same sphere modulo $N$. In particular, while the radius of the sphere must go to $+\infty$ for the Laplacian to obtain semiclassical measures, spheres of any radius matter for the semiclassical measures of $P_{2I}$.
\end{remark}

\subsubsection{Semiclassical measures of parabolic matrices on \texorpdfstring{$\T^2$}{T²}}

We first completely describe the semiclassical measures for parabolic matrices
on $\T^2$. As there is only one parameter for parabolic matrices in dimension
$1$, we denote
\begin{equation*}
	P_b = \begin{pmatrix}
		1 & b \\ 0 & 1
	\end{pmatrix} \qqtqq{for} b \in \Z^*.
\end{equation*}
As $\CS \CM(P_b) = \CS \CM(P_b^{-1}) = \CS \CM(P_{-b})$ per Lemma \ref{lem_basic_pro_semicla_meas_symp_matrix}, we restrict ourselves to the case $b \in \N^*$.

\begin{theorem}\label{thm_full_characterization_parabolic_dimension_1}
	Let $b \in \N^*$. We have
	\begin{equation*}
		\CS \CM (P_b) = \set{\gamma \Leb_x \otimes \nu + \sum_{l=0}^{bm-1} \alpha_l (1 + \beta_l \cos(2 \pi m x + \phi_l)) \Leb_x  \otimes \delta_{\frac{l}{bm}} }_{\gamma, \nu, m, \alpha_l, \beta_l, \phi_l},
	\end{equation*}
	where the parameters satisfy
	\begin{equation*}
		\nu \in \CP(\T), \, \gamma \in [0,1],\,  m \in \N^*, \, (\alpha_l, \beta_l, \phi_l)_{l \in \eint{0}{bm-1}} \in ([0,1]^2 \times \R)^{bm} \qtq{with} \gamma + \sum_{l=0}^{bm-1} \alpha_l = 1.
	\end{equation*}
\end{theorem}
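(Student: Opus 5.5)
Both inclusions go through the eigenbasis $(f^0_j)$ of $M_N(P_b)$ and the Fourier-coefficient formula for the Wigner distribution. I treat $N$ even first; other $N$ and phases $\theta$ are handled the same way with the appropriate quasi-periodic basis. In this basis $M_N(P_b)$ is diagonal with eigenvalue $e^{-\pi i b j^2/N}$, so an eigenvector $\psi=\sum_j\psi_j f^0_j$ is supported on a set $E_a=\set{j\in\Z/N\Z : bj^2\equiv a \ [2N]}$. The coefficient at frequency $(m,n)$ involves only the products $\psi_j\overline{\psi_{j+m}}$ with both $j$ and $j+m$ in $E_a$. Subtracting the two congruences gives $b m(2j+m)\equiv 0$ modulo $2N$, so $j$ is confined to solutions of a \emph{linear} congruence $2bmj\equiv -bm^2 \ [2N]$. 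For fixed $m\neq 0$ this has at most $\gcd(2bm,2N)\le 2b|m|$ solutions. The products therefore come from a bounded number of pairs $(j,j+m)$. These sit at frequency points $\xi=j/N$ lying in at most $2b|m|$ cosets of $\frac{1}{bm}\Z$ modulo $1$, i.e. near the rationals $l/(bm)$ up to $\CO(1/N)$.

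\textbf{Inclusion $\subset$.} Let $\mu$ be a semiclassical measure.
\begin{itemize}
\item[(i)] $\mu$ is $P_b$-invariant. Decompose the frequency marginal into its part on rational points $\set{l/(bm)}$ and the rest.
\item[(ii)] A bounded number of coefficients $\psi_j$ carry weight. Extracting subsequences, their moduli squared converge to weights, and the $x$-Fourier coefficient of $\mu$ at $m\neq 0$ is a finite sum of terms $\lim \psi_j\overline{\psi_{j+m}}$ located at finitely many frequencies $\xi_0$.
\item[(iii)] By Cauchy--Schwarz, a frequency point $\xi_0$ carries at most one pair $(j,j+m)$ for fixed $m$, up to $o(1)$ mass. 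Two distinct $m,m'$ cannot both pair with the same heavy $j$ unless the congruences are compatible.
\end{itemize}
The main obstacle is step (ii): showing that for each atom $\xi_0$ only one $|m|$ (and hence only the modes $0,\pm m$) survives. Then the $x$-density above $\delta_{\xi_0}$ is $\alpha(1+\beta\cos(2\pi mx+\phi))$ with $\beta\le1$ by positivity. I would argue that if $j,j+m,j+m'$ all lie in $E_a$ with mass bounded below, then $j+m$ and $j+m'$ also pair, and the linear congruences force the heavy indices to lie at mutual distances with a common structure. I expect a single common $m$ (the gcd/lcm) can be chosen, which is why one global $m$ with atoms at $l/(bm)$ appears in the statement. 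Mass on frequencies not carrying a nonzero $x$-mode gives $\gamma\Leb_x\otimes\nu$, using invariance, which forces Lebesgue in $x$ off rational $\xi$, together with absence of correlations.

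\textbf{Inclusion $\supset$.} Here I construct eigenvectors explicitly.
\begin{itemize}
\item For $\Leb\otimes\nu$, use Corollary \ref{cor_semicla_laplacian_torus_included_semicla_P_2I}-type arguments. Alternatively, take superpositions over distinct $a$ of single basis vectors $f^0_j$ with $j/N\to\xi$, normalised so that the sum stays in one eigenspace. The cleanest route is a single $f^0_j$ giving $\Leb\otimes\delta_\xi$, followed by convexity through unions of orthogonal vectors in a common $E_a$. Getting many $j$ in one $E_a$ requires choosing $N$ with many square roots, e.g. $N$ with many prime factors.
\item For the atomic terms, choose $N$ divisible by $bm$ and indices $j_l\approx lN/(bm)$ solving $2bmj\equiv -bm^2$. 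Then $j_l$ and $j_l+m$ lie in a common $E_a$, so $\psi=\sum_l(c_lf^0_{j_l}+d_lf^0_{j_l+m})$ produces $\alpha_l(1+\beta_l\cos(2\pi mx+\phi_l))$ with $\alpha_l=|c_l|^2+|d_l|^2$ and $\beta_l e^{i\phi_l}\propto c_l\bar d_l$.
\item Arranging all these $j$, together with the $\gamma$-part, in the same $E_a$ requires an arithmetic choice of $N$ by CRT so that the quadratic values coincide. This is the delicate number-theoretic step on this side.
\end{itemize}
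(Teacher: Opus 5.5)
Your proposal has a genuine gap in the inclusion $\CS\CM(P_b)\subset\ldots$, at exactly the step you call the main obstacle, and the mechanism you suggest for it would not work. The theorem asserts a single \emph{global} $m$: the position-Fourier support of $\mu$ lies in $\set{-m,0,m}$ for all atoms simultaneously. The pairs $(j_1,j_1+m_1)$ and $(j_2,j_2+m_2)$ that produce $\widehat{\mu}(m_1,\cdot)\neq 0$ and $\widehat{\mu}(m_2,\cdot)\neq 0$ may sit near different atoms. An analysis at a single $\xi_0$ (a heavy $j$ paired with both $j+m$ and $j+m'$) therefore cannot exclude them. No gcd/lcm choice helps either, since the limit may only have the modes $0,\pm m$.

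The missing idea is that two pairs at distances $|m_1|\neq|m_2|$, both nonzero, cannot lie in one eigenspace once $N$ is large. Set $u_i=2bj_i-2b\theta_\xi\in\Z$. The pair conditions give $m_iu_i\equiv -bm_i^2 \pmod{2N}$. Having $j_1,j_2$ in the same eigenspace gives $u_1^2\equiv u_2^2\pmod{2N}$ (multiply the congruence by $4b$). Squaring and cross-multiplying yields $b^2m_1^2m_2^2(m_1^2-m_2^2)\equiv 0\pmod{2N}$. This is a fixed nonzero integer, so $N$ is bounded, uniformly in the admissible $\theta_\xi$. Proposition~\ref{pro_projection_parabolic_case_dim1} reaches the same contradiction another way: the system has no complex solution, so the B\'ezout identity of Lemma~\ref{lem_empty_over_C_empty_over_ZN} applies (Lemma~\ref{lem_no_solutions_system_2_Fourier_coeff_parabolic_1d}). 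Only after this does invariance plus disintegration give the stated form, roughly as you sketch. Also, your step (ii) is false as written: an eigenvector can spread over many indices (that is the $\gamma\Leb_x\otimes\nu$ part). Only the indices entering pairs at a fixed distance $m$ are bounded in number.

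The converse is likewise left open at its real content. Reducing to $N$ even fails when $b$ and $m$ are both odd. Then $2b\theta_\xi$ is even, $mu\equiv -bm^2\pmod{2N}$ has no solution, and no eigenspace contains a pair at odd distance. One must take $N$ odd and $\theta_\xi=1/2$; Proposition~\ref{pro_projection_parabolic_case_dim1_building_the_measure} distinguishes four parity cases for this reason. For the atoms, the paper takes $N=bm^2k$ with $k$ odd and $j_l=\frac{m+2\theta_\xi}{2}+l\frac{N}{bm}$. It then checks directly that $bj_l^2-2b\theta_\xi j_l$ is independent of $l$ modulo $2N$, using that $l(lk+1)$ is even; no CRT is needed here.

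The genuinely delicate point is placing the $\gamma$-part in the \emph{same} eigenspace as the atoms. You need indices $s$ with, say for $m$ even, $b(s-\frac m2)(s+\frac m2)\equiv 0\pmod{2N}$ and $s/N\to\xi_i$ for arbitrary $\xi_i$. ``$N$ with many prime factors'' is the right intuition, but it requires an equidistribution theorem for roots of quadratic congruences along a density-one set of moduli. The paper uses \cite{kowalskiEquidistributionChineseRemainder2021a} for this and intersects that set with the progression of admissible $N$.
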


This theorem implies that quantum limits of a parabolic matrix $P_b$ have a
density against Lebesgue and this density is a trigonometric polynomial whose
support in frequency is of the form $\set{-m,0,m}$ for $m \in \N^*$. It can
thus be seen as an analogue of a celebrated theorem of Jakobson \cite[thm.
	1.2]{jakobsonQuantumLimitsFlat1997}: any quantum limit of the Laplacian on the
torus $\T^2$ is absolutely continuous with respect to the Lebesgue measure and
its density is a trigonometric polynomial whose support in frequency is
included in two circles centered at the origin.

We highlight that the set of semiclassical measures obtained in Theorem
\ref{thm_full_characterization_parabolic_dimension_1} is path-connected but not
convex, as is also the case for the semiclassical measures for the Laplacian on
the torus $\T^2$. The work of Arnaiz and Macia
\cite{arnaizLocalizationDelocalizationEigenmodes2022} on harmonic
oscillators provides another example of a non-convex set of semiclassical
measures.

\begin{remark}\label{rem_semicla_parabolic_dim_1_all_matrices_trace_2}
	The set of semiclassical measures associated to any symplectic integer matrix of trace $2$ can be computed using Theorem \ref{thm_full_characterization_parabolic_dimension_1}. Indeed, for any $A \in \Sp_2(\Z)$ with $\tr(A) = 2$, there exists $C \in \Sp_2(\Z)$ and $b \in \Z$ such that $A = C P_b C^{-1}$. This is a consequence of the characterization of the conjugacy classes in $\SL_2(\Z)$ \cite[thm. 11.6]{karpenkovGeometryContinuedFractions2013}\footnote{We warn the reader that Karpenkov classifies the conjugacy classes in $\GL_2(\Z)$, therefore if needed one should multiply by $\begin{pmatrix}1 & 0 \\ 0 & -1 \end{pmatrix}$ to obtain a conjugacy $C \in \SL_2(\Z)$. Looking at the characteristic polynomial of $A \in \SL_2(\Z)$, it is straightforward that if $|\tr(A)| = 2$ then the eigenvalues of $A$ are either $1$ or $-1$.} and the coincidence $\Sp_2(\Z) = \SL_2(\Z)$. As Lemma \ref{lem_basic_pro_semicla_meas_symp_matrix} implies that $\CS \CM(A) = C_{\#} \CS \CM(P_b)$, there remain three cases to consider:
	\begin{itemize}
		\item if $b = 0$, then $A = I$ and $\CS \CM(A) = \CP(\T^{2d})$ by Theorem
		      \ref{thm_semiclassical_measures_elliptic_matrix},
		\item if $b > 0$, then Theorem \ref{thm_full_characterization_parabolic_dimension_1}
		      and the relation $\CS \CM(A) = C_{\#} \CS \CM(P_b)$ concludes,
		\item if $b < 0$, then $\CS \CM(A) = \CS \CM(A^{-1}) = \CS \CM( C P_{-b} C^{-1})$
		      brings us back to the previous case.
	\end{itemize}
	If the trace of $A$ is $-2$, Theorem \ref{thm_full_characterization_parabolic_dimension_1} does not completely characterize $\CS \CM(A)$, but it constrains it. Indeed, the only eigenvalue of $A$ is $-1$ and thus $\tr(A^2) = 2$. Since $\CS \CM(A) \subset \CS \CM(A^2)$ according to Lemma \ref{lem_basic_pro_semicla_meas_symp_matrix}, the discussion above restricts heavily the possible semiclassical measures for $A$. In particular, if $A = \begin{pmatrix}
			-1 & b \\ 0 & -1
		\end{pmatrix}$, then $A^2 = P_{2b}$ and Theorem \ref{thm_full_characterization_parabolic_dimension_1} implies that any quantum limit of $A$ has at most three nonzero Fourier coefficients and is thus absolutely continuous with respect to the Lebesgue measure.
\end{remark}

\subsubsection{Partial quantum ergodicity}

The case of $\T^2$ settled, we sail to higher dimensions. A first result tells
us that measures of the form $\nu \otimes \Leb$ are still semiclassical
measures for parabolic matrices.

\begin{proposition}\label{pro_parabolic_nu_times_lebesgue}
	Let $B \in \Sym_d(\Z)$. Then
	\begin{equation*}
		\set{ \Leb_{\T^d} \otimes \nu  \, | \, \nu \in \CP(\T^d)} \subset \CS \CM (P_B).
	\end{equation*}
\end{proposition}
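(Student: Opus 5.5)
Our strategy is to construct explicit eigenvectors of $M_N(P_B)$ that are superpositions of the basis vectors $f^0_j$ within a single eigenspace. We will choose the coefficients so that the Wigner distribution localizes in frequency according to $\nu$ and becomes uniform in position. We work along even $N$, so that the eigenbasis $(f^0_j)$ with eigenvalues $e^{-\frac{\pi i}{N}\sprod{Bj}{j}}$ is available on $\CH_N(0)$. Since $\CS\CM(P_B)$ is closed (a diagonal extraction argument), it suffices to treat $\nu = \frac{1}{K}\sum_{k=1}^K \delta_{\xi_k}$ with $\xi_k\in\T^d$. Such measures are weak-$*$ dense in $\CP(\T^d)$.

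\textbf{Construction.} For each $k$, pick $j_k\in(\Z/N\Z)^d$ with $j_k/N\to \xi_k$. We want all the $j_k$ to lie in the same eigenspace, that is,
\begin{equation*}
	\sprod{Bj_k}{j_k}\equiv \sprod{Bj_1}{j_1} \quad \modulo{2N}.
\end{equation*}
To arrange this, we take $N$ in a sparse subsequence and perturb: write $j_k = N\xi_k^{(N)} + r_k$ with $|r_k| = o(N)$. The quadratic form restricted to a small box around $N\xi_k$ takes values $\sprod{Bj}{j}$ covering all residues modulo $2N$ in the relevant parity class. This holds because $j\mapsto\sprod{B(j+te_1)}{j+te_1}$ is a polynomial in $t$ with linear coefficient $2\sprod{Bj}{e_1}$, and we can adjust $j$ so that this coefficient is coprime to $N$ up to a bounded factor. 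If this adjustment is awkward, we will instead pick $N$ prime, or a power of $2$ times a prime, to make the congruences solvable with $|r_k|\le C\sqrt N$ or $O(1)$. We then set $\psi_N = K^{-1/2}\sum_k f^0_{j_k}$, which is a normed eigenvector of $M_N(P_B)$.

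\textbf{Computing the limit.} We use the Fourier coefficient formula
\begin{equation*}
	Wig(\psi_N)(e^{2\pi i\sprod{(m,n)}{\cdot}}) = e^{\frac{\pi i}{N}\sprod{n}{m}}\sum_j e^{\frac{2\pi i}{N}\sprod{n}{j}}\psi_j\overline{\psi_{j+m}}.
\end{equation*}
For $m=0$ this gives $\frac1K\sum_k e^{2\pi i\sprod{n}{j_k/N}}\to \hat\nu(n)$, where $\nu$ lives in the frequency variable. For fixed $m\neq 0$, a nonzero term requires $j_{k'}-j_k\equiv m$ modulo $N$ for some $k\ne k'$. If the $\xi_k$ are distinct this is impossible for large $N$, since $|j_k-j_{k'}|\sim N|\xi_k-\xi_{k'}|$. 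If some $\xi_k$ coincide, we instead choose distinct $j_k$ near the same point separated by distances tending to infinity, which again kills every fixed $m$. Hence all position-nonzero Fourier modes vanish in the limit, and the limit is $\Leb_{\T^d}\otimes\nu$.

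\textbf{Main obstacle.} The hard step is the arithmetic one: guaranteeing that, within $o(N)$ of each prescribed point $N\xi_k$, and with mutual separations tending to infinity, there are lattice points sharing a common value of $\sprod{Bj}{j}$ modulo $2N$. We will first try choosing $N=2p$ with $p$ prime and using that a nondegenerate linear term in one coordinate direction makes $t\mapsto \sprod{B(j+te)}{j+te}$ surjective modulo $p$ on a window of length $p$. This is too large a window, so we refine it by varying two coordinates to get quadratic residues within a window of size $O(\sqrt p)$. Failing that, we fall back on counting solutions with Gauss sums, which are well controlled for the quadratic form modulo primes. The degenerate case ($B$ singular) should be easier: we pick a direction $e$ with $Be=0$ along which the form is constant, which directly gives many points on the same level set.
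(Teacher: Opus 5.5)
\paragraph{Verdict.} Your proposal has a genuine gap: the arithmetic step it depends on is never established, and the routes you sketch for it do not work as stated.

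\paragraph{What is fine.} The reduction to $\nu=\frac1K\sum_k\delta_{\xi_k}$ via closedness of $\CS\CM(P_B)$ is correct. So is the Fourier-coefficient computation, and so is the observation that all position modes $m\neq 0$ vanish once the $j_k$ are pairwise separated by distances tending to infinity.

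\paragraph{The gap.} You never produce, for infinitely many admissible $N$, points $j_1,\dots,j_K$ with $j_k/N\to\xi_k$, pairwise far apart, and with a common value of $\sprod{Bj_k}{j_k}$ modulo $2N$. The routes you sketch fail or are incomplete:
\begin{itemize}
\item Moving $j$ along one coordinate direction yields at most as many residues as the length of the window. Sweeping all classes modulo $2N$ therefore needs a window of length comparable to $N$, not $o(N)$.
\item The fallback ``$N$ prime or $N=2p$'' fails outright whenever $B$ has rank one, in particular for $d=1$, which the statement includes. For $p\nmid 2b$, the congruence $bk^2\equiv bj^2 \modulo{4p}$ forces $k\equiv \pm j$ modulo $p$. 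Hence every eigenspace of $M_{2p}(P_b)$ on $\CH_{2p}(0)$ has dimension at most four, and along such $N$ you can never realize five distinct frequencies. The same obstruction confines the first frequency coordinate when $B=\mathrm{diag}(1,0,\dots,0)$.
\item In the degenerate case, a kernel direction $e$ with $Be=0$ only lets you move along a line, not towards arbitrary $\xi_k\in\T^d$.
\item The claim about windows of size $O(\sqrt p)$ is unsupported.
\item The Gauss-sum/Weil route might be workable for forms of rank at least two with much larger boxes, but it is heavy, not carried out, and still says nothing about rank one.
\end{itemize}

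\paragraph{The missing idea.} You do not need to solve any congruence near prescribed points. Instead, choose $N$ so that a whole sublattice of mesh $o(N)$ lies in a single eigenspace.
\begin{itemize}
\item Take $N=2m^2$, which is even, so $\CH_N(0)$ is invariant.
\item For $j=2mk$ with $k\in\Z^d$ one has $\sprod{Bj}{j}=4m^2\sprod{Bk}{k}\equiv 0 \modulo{2N}$.
\item Hence every $f^0_j$ with $j\in 2m\Z^d$ is an eigenvector of $M_N(P_B)$ with eigenvalue $1$. This set is well defined modulo $N$ because $2m\mid N$.
\item Since $2m=\sqrt{2N}=o(N)$, each $N\xi_k$ lies within $\ell^\infty$-distance $m$ of this lattice.
\item Distinct lattice points, still within $o(N)$ of $N\xi_k$, can be assigned to distinct indices even when some $\xi_k$ coincide. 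They are automatically at mutual distance at least $2m\to\infty$.
\end{itemize}

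\paragraph{Relation to the paper.} This is exactly the paper's argument, in the case $m=2^{n-1}$: its Hilbert-space parameter is $2N=2^{2n-1}$ and its lattice is $2^n\Z^d$. It works for every $B\in\Sym_d(\Z)$, singular or not, in every dimension. With it, the rest of your computation goes through unchanged. The paper phrases the final step through its lemma on superpositions with mutually singular limits, which is equivalent to your direct Fourier computation.
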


In particular, the full Lebesgue measure on $\T^{2d}$ is a semiclassical
measure. This is also the case for elliptic matrices as a direct consequence of
Theorem \ref{thm_semiclassical_measures_elliptic_matrix} and for ergodic
symplectic matrices thanks to the quantum ergodicity theorem, proven for
quantum maps on the torus $\T^2$ by Bouzouina and De Bièvre
\cite{bouzouinaEquipartitionEigenfunctionsQuantized1996a} with a proof that
generalizes to higher dimensions. 

\begin{remark}
	An analog of this fact on compact Riemannian manifold $(M,g)$ does not hold. More precisely, the Liouville measure on the cosphere bundle $S^*M$ is not always a semiclassical measure for the Laplacian $-\Delta_g$ on $M$. An example is given by the Laplacian on the torus $\T^{2}$ with the metric $d g_{\sqrt{2}, \sqrt{3}} = \sqrt{2} d x^2 + \sqrt{3} d y^2$. The associated Laplacian is
	\begin{equation*}
		-\Delta_{\sqrt{2}, \sqrt{3}} = \frac{1}{\sqrt{2}} \partial_x^2 + \frac{1}{\sqrt{3}} \partial_y^2.
	\end{equation*}
	An eigenbasis for $-\Delta_{\sqrt{2}, \sqrt{3}}$ is given by complex exponentials $(e^{2\pi i \sprod{n}{\cdot}})_{n \in \Z^2}$ and the multiplicity of a positive eigenvalue is at most $4$, as $\sqrt{2}$ and $\sqrt{3}$ are irrational and independent over $\Q$. This implies that the semiclassical measures of $-\Delta_{\sqrt{2}, \sqrt{3}}$ are of the form
	\begin{equation*}
		\sum_{i=1}^4 P_i(x,y) \Leb_{x,y} \otimes \delta_{z_i},
	\end{equation*}
	where $P_1, \dots, P_4$ are trigonometric polynomials and $z_1, \dots, z_4$ are points on the sphere. In particular, the Liouville measure on $\T^2 \times S^1$ is not a semiclassical measure for $-\Delta_{\sqrt{2}, \sqrt{3}}$.
\end{remark}

Proposition \ref{pro_parabolic_nu_times_lebesgue} prevents any hope for some
regularity of semiclassical measures in momentum, but raises the question of
the possible behavior for the projection on position. The next corollary tells
us that, up to some exotic and rare sequences of eigenvectors, all
semiclassical measures project to the Lebesgue measure on the position
coordinate. In other words, the generic quantum limit is the Lebesgue measure
in position.
\begin{corollary}[Partial quantum ergodicity in position]\label{cor_partial_quantum_ergodicity_position}
	Let $B \in \Sym_d(\Z) \cap \GL_d(\Q)$. Consider a sequence $(\theta_N)_N$ such that $M_N(P_B)$ is well defined on $\CH_N(\theta_N)$ and fix for each $N \ge 0$ an orthonormal basis $(\psi_{N,j})_{j \in \eint{1}{N}^d}$ of $\CH_N(\theta_N)$ composed of eigenvectors of $M_N(P_B)$. Then there exists a sequence $(E_N)_N$ of subsets of $\eint{1}{N}^d$ that satisfies
	\begin{equation*}
		\frac{|E_N|}{N^d} \xrightarrow[N \to + \infty]{} 1 \qqtqq{and} \pi_\# Wig(\psi_{N,j}) \xrightharpoonup[N \to + \infty, j \in E_N]{}^* \Leb,
	\end{equation*}
	where $\pi : (x,\xi) \in \T^{2d} \mapsto x \in \T^d$ is the projection in position.
\end{corollary}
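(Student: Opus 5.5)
The plan is the standard quantum-variance argument: bound a variance, then run a Chebyshev-type and diagonal extraction. The abstract announces a sharp bound on the quantum variance, and the corollary should follow from it. For $m \in \Z^d\setminus\{0\}$ and $e_m(x) = e^{2\pi i \sprod{m}{x}}$, I would aim for the bound
\begin{equation*}
	V_N(m) := \frac{1}{N^d} \sum_{j \in \eint{1}{N}^d} \left| Wig(\psi_{N,j})(e_m \otimes 1) \right|^2 \xrightarrow[N \to +\infty]{} 0 .
\end{equation*}
Since $\pi_\# Wig(\psi)(e_m) = Wig(\psi)(e_m\otimes 1)$ and $\Leb(e_m)=0$ for $m \neq 0$, it suffices to control these position Fourier coefficients.

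To estimate $V_N(m)$, I would write $\psi_{N,j}$ in the eigenbasis $(f_k)$ of $M_N(P_B)$ on $\CH_N(\theta_N)$. In the case $N$ even, $\theta_N=0$, this is the explicit basis $f^0_k$; for other admissible $\theta_N$ I would use its analogue, obtained by shifting $k$. Each $\psi_{N,j}$ lies in a single eigenspace $\Lambda_a = \Vect\{f_k : \sprod{Bk}{k} \equiv a\}$. By the Fourier formula for Wigner distributions, with $n=0$, the operator $\Op_N(e_m\otimes 1)$ acts, up to a phase, as the shift $f_k \mapsto f_{k+m}$.

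Let $\Pi_a$ denote the orthogonal projector onto $\Lambda_a$. Summing $|\sprod{\Op_N(e_m)\psi_{N,j}}{\psi_{N,j}}|^2$ over an orthonormal basis of $\Lambda_a$ is bounded by the squared Hilbert--Schmidt norm $\norm{\Pi_a \Op_N(e_m) \Pi_a}_{HS}^2$. Because the operator is a shift, this norm equals the number of $k$ with both $\sprod{Bk}{k} \equiv a$ and $\sprod{B(k+m)}{k+m} \equiv a$ modulo $N$. Summing over $a$, I would get
\begin{equation*}
	N^d V_N(m) \le \#\set{k \in (\Z/N\Z)^d : 2\sprod{Bk}{m} + \sprod{Bm}{m} \equiv 0 \ [N]} .
\end{equation*}
Here $N$ may need to be replaced by $2N$, depending on the normalization of the phases.

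The main step, and the expected obstacle, is counting solutions of this linear congruence uniformly enough. The vector $v = 2Bm$ is a fixed nonzero integer vector, since $B \in \GL_d(\Q)$ and $m\neq 0$. The number of solutions of $\sprod{v}{k} \equiv c$ modulo $N$ is at most $\gcd(v_1,\dots,v_d,N)\, N^{d-1} \le C_m N^{d-1}$. This gives $V_N(m) \le C_m/N \to 0$, and it is exactly where invertibility of $B$ enters. If $Bm=0$, the coefficient would not be controlled, consistent with the hypothesis. I would need to verify carefully that the twisted bases for general $\theta_N$ and odd $N$ produce the same congruence up to a bounded shift. I expect this bookkeeping, rather than the counting itself, to be the delicate part.

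Finally comes the extraction. For fixed $m$, Chebyshev's inequality shows that the set of $j$ with $|Wig(\psi_{N,j})(e_m\otimes 1)| > \epsilon$ has proportion at most $V_N(m)/\epsilon^2 \to 0$. I would then enumerate $m\neq 0$ and pick $\epsilon_N \to 0$ and $R_N \to \infty$ slowly, so that
\begin{equation*}
	E_N := \set{j : |Wig(\psi_{N,j})(e_m\otimes 1)| \le \epsilon_N \text{ for all } 0<|m|\le R_N}
\end{equation*}
satisfies $|E_N|/N^d \to 1$; this uses $\sum_{|m|\le R_N} V_N(m)\epsilon_N^{-2} \to 0$. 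Along $j\in E_N$, every nonzero position Fourier coefficient tends to $0$, while the zeroth equals $\norm{\psi}^2=1$. The Wigner distributions are uniformly bounded on trigonometric polynomials, and the semiclassical measures they accumulate to are probability measures. By density of trigonometric polynomials, this yields $\pi_\# Wig(\psi_{N,j}) \rightharpoonup^* \Leb$.
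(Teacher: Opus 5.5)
Your proof is correct. Its outer layer is what the paper intends: the paper deduces the corollary from the $O(1/N)$ variance bound of Theorem~\ref{thm_upper_bound_quantum_variance_position_parabolic_matrices} via Colin de Verdi\`ere's Chebyshev and diagonal-extraction argument, which it leaves to the reader and which you write out.

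You take a genuinely different route to the variance bound itself.
\begin{itemize}
\item \textbf{The paper.} It uses the exact Egorov theorem to replace $a$ by the time average $\frac{1}{T}\sum_{t<T} a\circ P_B^t$. It then bounds the variance by $N^{-d}$ times the squared Hilbert--Schmidt norm of the quantized average, using Cauchy--Schwarz and Lemma~\ref{lem_trace_quantum_translation_HS_norm_Weyl_quantization}. Only afterwards does it send $T\to\infty$ at fixed $N$, counting pairs $(s,t)$ with $(s-t)Bm\in N\Z^d$ via Lemma~\ref{lem_limit_average_indicator_bound_quantum_variance}.
\item \textbf{Your route.} You compress $\Op^w_N(e_m\otimes 1)$ directly onto the eigenspaces of $M_N(P_B)$, in the explicit eigenbasis $(f^{\theta}_k)_k$ of Lemma~\ref{lem_quantizability_condition_expression_Mh_PB}. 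You then count solutions of a single linear congruence in $k$.
\end{itemize}

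The two computations describe the same object from two sides. For unitary $M$ one has $\frac{1}{T}\sum_{t<T} M^{-t}AM^{t}\to\sum_\lambda \Pi_\lambda A\Pi_\lambda$. Hence the paper's normalized Hilbert--Schmidt norm converges, as $T\to\infty$, exactly to your $N^{-d}\sum_a\norm{\Pi_a \Op^w_N(e_m\otimes 1)\Pi_a}_{HS}^2$. Both arguments then bound this by the density $\gcd(N,Bm)/N$, meaning the gcd of $N$ with the coordinates of $Bm$. The paper weakens this further to $\gcd(N,\det(B)m)/N$ by passing through $\operatorname{adj}(B)$, so your bound is marginally sharper.

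Your version is shorter and needs neither Egorov nor a limit in $T$. The bookkeeping you flag for general $\theta_N$ is harmless, for two reasons the paper records: $2B\theta_{N,\xi}\in\Z^d$, and the eigenphase of $f^{\theta_N}_k$ modulo $2N$ depends only on $k$ modulo $N$. So $\theta_N$ only changes the constant $c_N$ in $2\sprod{Bm}{k}\equiv c_N \pmod{2N}$. The number of solutions $k\in(\Z/N\Z)^d$ is then either $0$ or $\gcd(N,Bm)\,N^{d-1}$, whatever $c_N$ is.

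What the paper's route buys is a uniform quantitative bound for all position observables in terms of Sobolev norms, in the standard Zelditch--Schubert template. Your mode-by-mode bound, with $m$-dependent constants, is weaker as a theorem. It is nonetheless all the corollary needs, since the extraction only uses $V_N(m)\to 0$ for each fixed $m$.
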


This statement resembles the quantum ergodicity theorem, except that here only
a projection of the Wigner distribution converges to the Lebesgue measure.
Similar results on partial quantum ergodicity were proven by Marklof and
Rudnick \cite{marklofAlmostAllEigenfunctions2012a} for the Dirichlet
Laplacian on rational polygons and investigated by Rivière
\cite{riviereRemarksQuantumErgodicity2013}.

The proof of Corollary \ref{cor_partial_quantum_ergodicity_position}
follows the same reasoning as that of the quantum ergodicity theorem
\cite{snirelcprimemanErgodicPropertiesEigenfunctions1974,colindeverdiereErgodiciteFonctionsPropres1985,zelditchUniformDistributionEigenfunctions1987b}
and can be deduced in a standard way from information about the quantum
variance. The quantum variance associated to an orthonormal basis $(\psi_j)_{j
			\in \eint{1}{N}^d}$ of $\CH_N(\theta)$ and an observable $a \in
	\CC^\infty(\T^{2d})$ is given by
\begin{equation*}
	V(a,N,(\psi_j)_{j}) := \frac{1}{N^d} \sum_{j\in \eint{1}{N}^d} \left| \sprod{\Op^w_N(a) \psi_j }{\psi_j} - \bar{a} \right|^2 \qqtq{where} \bar{a} = \int_{\T^{2d}} a(x,\xi) \diff x \diff \xi.
\end{equation*}

We obtain a precise bound for the quantum variance of an eigenbasis of $M_N(P_B)$
if we restrict to functions $a \in \CC^\infty(\T^{2d})$ that only depend on
the position.
\begin{theorem}\label{thm_upper_bound_quantum_variance_position_parabolic_matrices}
	Let $B \in \Sym_d(\Z) \cap \GL_d(\Q)$ and $N \ge 1$. Consider an orthonormal basis $(\psi_{j})_{j \in \eint{1}{N}^d}$ of $\CH_N(\theta_N)$ composed of eigenvectors of $M_N(P_B)$ and an observable $a \in \CC^\infty(\T^{2d})$ that depends only on the position, i.e. $a(x, \xi) = a(x)$, then for any $s > \frac{d}{2}$, we have
	\begin{equation*}
		V(a,N, (\psi_j)_{j}) \le C_{s,d} \frac{|\det(B)|}{N} \norm{a}_{H^{s+1}(\T^d)} \norm{a}_{H^s(\T^d)},
	\end{equation*}
	where $C_{s,d}$ is a constant that depends only on $s$ and $d$.
\end{theorem}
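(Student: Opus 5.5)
Expand $a(x)=\sum_{m\in\Z^d}\hat a(m)e^{2\pi i\sprod{m}{x}}$. Since $\bar a=\hat a(0)$ and $\Op^w_N(1)=I$, the $m=0$ term drops out. Cauchy--Schwarz with weights $\jap{m}^{-2s}$ (summable since $s>d/2$) then gives
\begin{equation*}
V(a,N,(\psi_j)_j)\le C_{s,d}\sum_{m\neq 0}\jap{m}^{2s}|\hat a(m)|^2\,\frac{1}{N^d}\sum_{j}\left|\sprod{\Op^w_N(e_m)\psi_j}{\psi_j}\right|^2,
\end{equation*}
where $e_m(x)=e^{2\pi i\sprod{m}{x}}$. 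It therefore suffices to prove the per-mode bound
\begin{equation*}
\frac{1}{N^d}\sum_{j}\left|\sprod{\Op^w_N(e_m)\psi_j}{\psi_j}\right|^2\le C\frac{|\det B|\,\jap{m}}{N}.
\end{equation*}
The extra factor $\jap{m}$ is what turns $\norm{a}_{H^s}^2$ into $\norm{a}_{H^{s+1}}\norm{a}_{H^s}$, via $\jap{m}^{2s+1}\le\jap{m}^{s+1}\jap{m}^{s}$.

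For the per-mode bound, I would use that $T_m:=\Op^w_N(e_m)$ is unitary and acts as a shift $f_j\mapsto f_{j+m}$ (up to phases) on a basis adapted to $\theta_N$, analogous to the $f^0_j$ of the introduction. In that basis $M_N(P_B)$ is diagonal with eigenvalue $e^{-\frac{\pi i}{N}\sprod{B(j+\kappa)}{j+\kappa}}$ for some shift $\kappa$ depending on $\theta_N$. Let $\Pi_\lambda$ be the spectral projectors of $M_N(P_B)$. Since the $\psi_j$ form an eigenbasis,
\begin{equation*}
\sum_j|\sprod{T_m\psi_j}{\psi_j}|^2\le\sum_\lambda\norm{\Pi_\lambda T_m\Pi_\lambda}_{HS}^2=\sum_\lambda\sum_{k,\,k+m\in E_\lambda}1.
\end{equation*}
The first inequality holds because, within each eigenspace, the diagonal entries in an orthonormal basis have squared sum at most the Hilbert--Schmidt norm squared of the compression. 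The right-hand side counts the $k\in(\Z/N\Z)^d$ such that $k$ and $k+m$ lie in the same eigenspace, i.e.
\begin{equation*}
\sprod{B(k+m+\kappa)}{k+m+\kappa}\equiv\sprod{B(k+\kappa)}{k+\kappa}\quad \modulo{2N}.
\end{equation*}
This is the linear congruence $2\sprod{Bm}{k+\kappa}+\sprod{Bm}{m}\equiv0\ \modulo{2N}$ in $k$.

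The main work is counting the solutions of this congruence. Write $v=Bm\in\Z^d$, which is nonzero because $B$ is invertible, and let $g=\gcd(v_1,\dots,v_d)$. The map $k\mapsto 2\sprod{v}{k}$ modulo $2N$ has image a subgroup of size $2N/\gcd(2N,2g)$, so each fibre has size at most $N^d\gcd(N,g)/N$. Hence the count is at most $N^{d-1}\gcd(N,g)\le N^{d-1}g$. Finally, $g\le|Bm|$ is too weak, so instead bound $g$ through the adjugate: $\adj(B)v=\det(B)m$, and $g$ divides every entry of $\adj(B)v$, so $g$ divides $\det(B)\gcd(m)$. Thus $g\le|\det B|\,\jap{m}$, which yields the per-mode bound with $C$ independent of $N$ and $m$.

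I expect the main obstacle to be the bookkeeping for a general $\theta_N$. One has to check that $T_m$ still acts as a shift with unimodular phases, and that the eigenvalue is a quadratic in $k$ whose difference under $k\mapsto k+m$ is affine in $k$ with linear part exactly $2\sprod{Bm}{k}/(2N)$. For odd $N$, the admissible $\theta$ and the resulting half-integer shifts $\kappa$ must also be handled; the plan is to absorb them into the constant term of the congruence, which does not affect the fibre-size count.
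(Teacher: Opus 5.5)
Your proof is correct, but it is organized differently from the paper's. The paper follows the Zelditch--Schubert variance template. It replaces $a$ by the classical average $\jap{a}_T=\frac1T\sum_{s<T}a\circ P_B^s$ using exact Egorov, and bounds the variance by $\frac{1}{N^d}\norm{\Op^w_N(\jap{a}_T)}_{HS}^2$. It then expands this with Lemma~\ref{lem_trace_quantum_translation_HS_norm_Weyl_quantization} over the Fourier modes $(m,tBm)$ of $\jap{a}_T$ and puts absolute values on every term. Only then does it let $T\to+\infty$, evaluating $\frac{1}{T^2}\sum_{s,t}\mathbb{1}_{N\Z^d}((s-t)Bm)$ with Lemma~\ref{lem_limit_average_indicator_bound_quantum_variance}.

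You go straight to the $T=\infty$ object. The pinching $\sum_\lambda \Pi_\lambda T_m \Pi_\lambda$ is exactly the limit of the quantum time averages $\frac1T\sum_s M_N(P_B)^{-s}T_mM_N(P_B)^s$. You compute its Hilbert--Schmidt norm by counting pairs $(k,k+m)$ in a common eigenspace, after reducing to one Fourier mode at a time by a weighted Cauchy--Schwarz. The arithmetic is the same in substance. The paper applies $\adj(B)$ first and bounds by $\frac{N\wedge\gcd(\det(B)m)}{N}$. You count exactly with $\gcd(Bm)$ and use $\adj(B)$ afterwards to get $\gcd(Bm)\mid\det(B)\gcd(m)$. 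Both end at $|\det B|\,\norm{m}_\infty/N$.

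Each route has an advantage. The paper's needs neither the explicit diagonalization of $M_N(P_B)$ in the basis $(f^{\theta}_j)_j$ nor any information on $\theta_N$, since the phases are simply discarded; it is therefore insensitive to the bookkeeping you worried about. Yours avoids the off-diagonal terms $m\neq k$, $m\equiv k\ \modulo{N}$, which the paper controls only by dropping $\mathbb{1}_{N\Z^{d}}(m-k)$. It actually yields the bound with $\sum_{m}\jap{m}^{2s+1}|\hat a(m)|^2=\norm{a}_{H^{s+1/2}}^2\le\norm{a}_{H^{s+1}}\norm{a}_{H^s}$. This is a slight improvement in the direction of Remark~\ref{rem_proof_thm_upper_bound_quantum_variance_position_parabolic_matrices}.

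One correction to your bookkeeping: for odd $N$ the shift $\kappa=\pm\theta_{N,\xi}$ is in general not half-integral, since its denominators can divide $2\det B$. What you need is $2B\theta_{N,\xi}\in\Z^d$, which follows from the quantizability condition of Lemma~\ref{lem_quantizability_condition_expression_Mh_PB}. Then $2\sprod{Bm}{\kappa}=\sprod{m}{2B\kappa}$ is an integer and can be absorbed into the constant term of your congruence, as you planned.
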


The proof follows almost the standard line of reasoning that consists in
controlling the remainders in the proof of quantum ergodicity as initiated by
Zelditch \cite{zelditchRateQuantumErgodicity1994} and perfected by Schubert
\cite{schubertUpperBoundsRate2006a} for the geodesic flow on negatively
curved manifolds. There is a small twist in that we let the averaging time go
to $+\infty$ before taking the semiclassical limit $N \to + \infty$, while the
standard proof does the inverse. This idea was already used by Hezari and
Rivière \cite{hezariEquidistributionToralEigenfunctions2020} to obtain
a similar bound for the partial quantum variance for the Laplacian on the torus. They
prove a bound of the form $V(a) \le \norm{a}_{L^2} h$ improving on their
previous bound of order $h^{2/3}$ obtained by the standard semiclassical
method \cite{hezariQuantitativeEquidistributionProperties2017a}. A bound on
the quantum mean for the Laplacian on the torus $\T^d$ was derived by
arithmetic means by Lester and Rudnick
\cite{lesterSmallScaleEquidistribution2017a} to prove small-scale
equidistribution results. McKenzie and Sabri
\cite{mckenzieQuantumErgodicityPeriodic2023} also used the same twist to
prove a general criterion ensuring the quantum ergodicity of Schrödinger
operators on large periodic graphs. They also obtained a rate of convergence of
order at least $N^{-1}$ under some hypotheses on the graph (see Remark 1.7 in
their paper).

The quantum variance was also studied for other parabolic maps on the torus.
Marklof and Rudnick \cite{marklofQuantumUniqueErgodicity2000} quantized the
skew translations and proved that the Wigner distribution of any eigenstate
converges to the Lebesgue measure at an optimal rate of $N^{-1/2}$. Schubert
adapted his method to quantum maps and obtained a control of order
$N^{-1+\epsilon}$ on the partial quantum variance for skew translations
\cite[thm. 6]{schubertRateQuantumErgodicity2008}. His method would also work
in our case and give the same rate of $N^{-1+\epsilon}$. Perturbed Kronecker
maps were studied by Rosenzweig \cite{rosenzweigQuantumUniqueErgodicity2006a}
and he proved that the Wigner measures of eigenstates converge to the Lebesgue
measure at a rate of at least $N^{-2}$. Both the skew translations and the
perturbed Kronecker maps are uniquely ergodic but non-mixing.

We prove that the bound obtained in Theorem
\ref{thm_upper_bound_quantum_variance_position_parabolic_matrices} is optimal
in its dependence on $N$. It is, however, likely that the Sobolev norms used in
Theorem \ref{thm_upper_bound_quantum_variance_position_parabolic_matrices} are
not optimal, see Remark \ref
{rem_proof_thm_upper_bound_quantum_variance_position_parabolic_matrices}.
\begin{theorem}\label{thm_lower_bound_quantum_variance_position_parabolic_matrices}
	Let $B \in \Sym_d(\Z) \cap \GL_d(\Q)$. There exists an increasing sequence $(N_k)_k$ of integers, an orthonormal eigenbasis $(\psi_{k,j})_{j \in \eint{1}{N_k}^d}$ of $M_{N_k}(P_B)$ on $\CH_{N_k}(0)$ and a vector $n \in \Z^d  \times \set{0} $ such that
	\begin{equation*}
		V(e^{2 \pi i \sprod{n}{\cdot}}, N_k,(\psi_{k,j})_{j}) \ge \frac{1}{2^{d+1}N_k}.
	\end{equation*}
\end{theorem}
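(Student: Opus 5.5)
We build the eigenbasis explicitly from the eigenbasis $(f^0_j)_{j \in (\Z/N\Z)^d}$ of $M_N(P_B)$ on $\CH_N(0)$, for $N$ even, recalled in the introduction. We choose $N_k$ so that the quadratic form $j \mapsto \sprod{Bj}{j}$ has many coincidences modulo $2N_k$. Pairs of basis vectors $f^0_j, f^0_{j+m}$ in the same eigenspace are then rotated into $(f^0_j \pm f^0_{j+m})/\sqrt 2$, which produces a nonzero Fourier coefficient of order one for the observable $e^{2\pi i \sprod{m}{x}}$.

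\textbf{Step 1 (choice of $n$ and $N_k$).} By the Fourier formula for the Wigner distribution, the coefficient at $(m,0)$ of $Wig(\sum_j \psi_j f^0_j)$ is $\sum_j \psi_j\overline{\psi_{j+m}}$. For the basis vectors $f^0_j$ themselves this vanishes when $m\ne 0$, so they give no variance. We take $m = e_1$, say, and $n=(m,0)$. We need $f^0_j$ and $f^0_{j+m}$ to share an eigenvalue, i.e.
\begin{equation*}
\sprod{B(j+m)}{j+m} - \sprod{Bj}{j} = 2\sprod{Bm}{j} + \sprod{Bm}{m} \equiv 0 \quad \modulo{2N}.
\end{equation*}
This is a linear congruence in $j$. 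We choose $N_k$ even and large, for instance $N_k = 2k\, c$ with $c$ adapted to $\sprod{Bm}{m}$ and the entries of $Bm$, so that the congruence is solvable. Since $B$ is invertible, $Bm \neq 0$. Let $g = \gcd(Bm, N)$ be the gcd of its entries and $N$. The solution set is a coset of a subgroup of index about $N/g$, hence it contains roughly $N^{d-1}g$ points $j$. Keeping $g$ bounded, which depends only on $B$ and $m$, gives about $N^{d-1}$ solutions.

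\textbf{Step 2 (pairing).} Among the solutions $j$ we select a family $S$ of disjoint pairs $\{j, j+m\}$, where both $j$ and $j+m$ solve the eigenvalue coincidence. The set of $j$ solving the congruence is invariant under $j\mapsto j+m$ only if $2\sprod{Bm}{m}\equiv0$; if it is not, we only need $j$ itself to solve it, and the pairs can be taken disjoint by picking every other element along $m$-chains. We need $|S| \gtrsim N^{d-1}/2^{d}$. On each pair we replace $f^0_j, f^0_{j+m}$ by $(f^0_j\pm f^0_{j+m})/\sqrt2$, which are still eigenvectors and orthonormal, and we keep the other $f^0_j$. The mean $\bar a$ vanishes for $a=e^{2\pi i\sprod{m}{x}}$. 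For each new vector the Wigner coefficient is $\pm\frac12 e^{i\cdot}$, up to the phase $e^{\frac{2\pi i}{N}\sprod{n}{j}}$ with $n=0$ here, so its modulus is $1/2$. Hence
\begin{equation*}
V \ge \frac{1}{N^d}\cdot 2|S|\cdot\frac14 \gtrsim \frac{1}{N}.
\end{equation*}

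\textbf{Step 3 (constants).} The main obstacle is the counting that yields exactly $2^{-(d+1)}N^{-1}$. This requires a careful choice of $m$ and $N_k$ so that $g$ is controlled and the pairs are disjoint; the factor $2^{-d}$ plausibly comes from the parity and pairing losses in each coordinate. The first thing to try is $N_k$ a multiple of $2\sprod{Bm}{m}$ times a large prime, with $m$ a standard basis vector. It may also be necessary to take $m$ with $Bm$ primitive-ish, adjusting $m$ via $\det B$ to make the congruence solvable.
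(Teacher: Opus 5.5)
Your strategy is the one the paper uses. You place $f^0_j$ and $f^0_{j+m}$ in a common eigenspace and rotate them into $(f^0_j\pm f^0_{j+m})/\sqrt2$, so that the $(m,0)$ Wigner coefficient has modulus $1/2$. You then count about $N^{d-1}$ such pairs through a linear congruence. \textbf{The gap is that you never make that congruence solvable, and your concrete choice $m=e_1$ can make it unsolvable for every $N$.} On $\CH_N(0)$ with $N$ even, the coincidence condition is $2\sprod{Bm}{j}+\sprod{Bm}{m}\equiv 0 \pmod{2N}$. Reducing modulo $2$ shows this forces $\sprod{Bm}{m}$ to be even. If $b_{11}$ is odd, $f^0_j$ and $f^0_{j+e_1}$ never share an eigenvalue, whatever $N_k=2kc$ you take; for instance $B=I$ gives $(j_1+1)^2-j_1^2=2j_1+1$, which is odd. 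Nor can $N$ be odd in that case, since $\CH_N(0)$ is invariant only for even $N$ when some $b_{ii}$ is odd. So Step 1 fails in general. Step 3, where the actual counting has to happen, is only a list of things to try.

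The missing idea is to build the parity into $m$: take $m=2v$ (the paper takes $m=\mathbf{2}$) and $N_k=2p_k$ with $p_k$ prime. Writing $Q(x)=\sprod{Bx}{x}$, you get $Q(j+2v)-Q(j)=4(\sprod{Bv}{j}+Q(v))$. So the condition modulo $4p_k$ is exactly the affine hyperplane equation $\sprod{Bv}{j}\equiv -Q(v)\pmod{p_k}$. It is solvable because $Bv\not\equiv 0 \pmod{p_k}$ for large $p_k$. It has $p_k^{d-1}$ solutions modulo $p_k$, hence $2^d p_k^{d-1}=2N_k^{d-1}$ solutions in $(\Z/N_k\Z)^d$. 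Choose $v$ with $Q(v)\neq 0$, which is possible since $B\neq 0$. Then $2Q(v)\not\equiv 0\pmod{p_k}$ for large $p_k$, so the solution set $X$ satisfies $X\cap(X+m)=\emptyset$. The pairs $\{j,j+m\}$, $j\in X$, are therefore automatically disjoint.

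The paper keeps only $p_k^{d-1}$ indices and only the $+$ combinations, which yields $\frac{1}{2^{d+1}N_k}$. With your $\pm$ completion and all $2N_k^{d-1}$ indices, the same computation gives $V\ge N_k^{-d}\cdot 4N_k^{d-1}\cdot\frac{1}{4}=N_k^{-1}$. Your chain-pairing remark also handles the case $Q(v)=0$. The paper's choice $v=\mathbf{1}$ does not cover that case when $Q(\mathbf{1})=0$, e.g.\ $B=\mathrm{diag}(1,-1)$.
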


\subsubsection{Examples of semiclassical measures in higher dimensions}

We also investigate the possible behavior of the exotic sequences of
eigenvectors missed by Corollary
\ref{cor_partial_quantum_ergodicity_position}. The next result tells us that
there always exists a quantum limit that has all its Fourier coefficients
strictly positive in dimension greater than or equal to $2$.
\begin{theorem}\label{thm_parabolic_case_infinite_fourier_coef}
	Let $d \ge 2$ and $B \in \Sym_d(\Z) \cap \GL_d(\Q)$. Then there exists $\mu \in \CS \CM(P_{2B})$ such that
	\begin{equation*}
		\forall n \in \Z^d, \quad \widehat{\mu}(n,0) = \widehat{\pi_\# \mu}(n) > 0.
	\end{equation*}
\end{theorem}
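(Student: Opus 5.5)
The plan is to build, for a suitable sequence of odd primes $N=p\to+\infty$, a single eigenvector $\psi_p$ of $M_p(P_{2B})$ on $\CH_p(0)$ whose coefficients in the basis $(f^0_j)$ are all real and nonnegative. For $P_{2B}$ the eigenvalue attached to $f^0_j$ is $e^{-\frac{2\pi i}{p}\sprod{Bj}{j}}$, which is $p$-periodic in $j$. I will first check that this eigenbasis is available on $\CH_p(0)$ for every $p$ (not only even $N$) once $B$ is replaced by $2B$. Then the eigenspaces are the level sets $S_a=\set{j\in(\Z/p\Z)^d : Q(j)\equiv a \,[p]}$ with $Q(j)=\sprod{Bj}{j}$. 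By the Wigner Fourier formula recalled in the introduction, with $n=0$,
\begin{equation*}
	Wig(\psi)(e^{2\pi i\sprod{m}{x}}) = \sum_{j} \psi_j \overline{\psi_{j+m}} .
\end{equation*}
So if every $\psi_j\ge 0$, all position Fourier coefficients are automatically nonnegative for every $p$, and hence so are those of any limit. It then only remains to force strict positivity at each fixed $m\neq 0$, uniformly in $p$. The coefficient at $m=0$ equals $1$.

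To force positivity, enumerate the nonzero vectors of $\Z^d$ as $(m_k)_{k\ge1}$ and fix weights $c_k>0$ with $\sum_k c_k^2=1$. For each $p$ and each $k\le K_p$, where $K_p\to\infty$ slowly, I look for a common level $a_p$ and points $j_k$ such that both $j_k$ and $j_k+m_k$ lie in $S_{a_p}$. Since $Q(j+m)-Q(j)=2\sprod{Bj}{m}+Q(m)$, the conditions are
\begin{equation*}
	Q(j_k)\equiv a_p, \qquad 2\sprod{Bj_k}{m_k}+Q(m_k)\equiv 0 \quad [p].
\end{equation*}
I also require the $2K_p$ points $j_k, j_k+m_k$ to be pairwise distinct. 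I then set
\begin{equation*}
	\psi_p=\sum_{k\le K_p} c_k\,\frac{f^0_{j_k}+f^0_{j_k+m_k}}{\sqrt2},
\end{equation*}
normalized, which changes it by a factor tending to $1$. Each pair contributes at least $c_k^2/2$ to the coefficient at $\pm m_k$, and every other contribution is nonnegative. Hence any weak-$*$ limit $\mu$ satisfies $\widehat{\pi_\#\mu}(m_k)\ge c_k^2/2>0$ for all $k$. The identity $\widehat\mu(n,0)=\widehat{\pi_\#\mu}(n)$ is immediate.

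The main obstacle is the arithmetic step: finding one common level $a_p$ that works simultaneously for $K_p\to\infty$ different vectors $m_k$. I restrict to primes $p\nmid 2\det B$ with $p$ larger than all $|Q(m_k)|$ for $k\le K_p$, so that $m_k\not\equiv 0$ and $Bm_k\not\equiv0$. Then the linear condition defines an affine hyperplane $L_k\subset\F_p^d$, and I parametrize it affinely by $\F_p^{d-1}$. The restriction $q_k:=Q|_{L_k}$ is a polynomial of degree at most $2$ on $\F_p^{d-1}$, with $d-1\ge1$; this is exactly where $d\ge2$ is used. The task is to show that the value sets $q_k(L_k)$ have a common element.

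For $d\ge3$ I expect each $q_k$ to be surjective onto $\F_p$, possibly after discarding degenerate cases. For $d=2$ each $q_k$ is a nonconstant polynomial in one variable of degree $1$ or $2$. If $q_k$ is affine, or a quadric whose leading coefficient vanishes, it is surjective. Otherwise its value set is $\set{\alpha_k+\beta_k t^2}$, i.e. a translate of a scaled set of squares of size $(p+1)/2$.

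I would then count the $a$ lying in all value sets using Weil/Gauss-sum estimates for the Legendre symbol. The count should be
\begin{equation*}
	p\,2^{-K}+\CO\bigl(K 2^K\sqrt p\bigr),
\end{equation*}
which is positive as soon as $K_p\le c\log p$. The same counting gives many preimages on each line, at least about $\sqrt p/2^{K}$ choices, so distinctness of the points can be arranged greedily since it excludes only $\CO(K_p)$ points.

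The places where I expect to have to be careful are degenerate cases where $q_k$ is constant on $L_k$. This happens if $B m_k$ is proportional to an isotropic direction of $Q$ restricted to $L_k$. In that case I will use that $L_k$ is then entirely contained in a level set, so any level choice must equal that constant. I will handle this by choosing $a_p$ equal to it, or by excluding such $p$ via a congruence condition on $p$, which is possible by Dirichlet's theorem on primes in progressions.
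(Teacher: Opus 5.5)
Your construction is the paper's: an eigenvector on $\CH_p(0)$ with nonnegative coefficients in the basis $(f^0_j)$, supported in one level set of $Q$ and built from pairs $f^0_j+f^0_{j+m}$. Each position Fourier coefficient is then bounded below by the contribution of its own pair. You differ in the arithmetic step. The paper fixes the level to be $0$ (eigenvalue $1$), so the conditions for different $n$ decouple: each $n$ only needs some $j_n$ with $Q(j_n)=Q(j_n-n)=0$. The only arithmetic input is then that finitely many fixed integers are squares modulo $p$ (Lemma \ref{lem_finite_integer_squares_modulo_infinity_primes}), followed by a diagonal choice of primes $p_M$. You let the level float, which couples all the $m_k$ and forces a Weil-type count. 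In exchange, when no degenerate direction occurs, every large prime works. A useful normalization for your computation is $L_m=-m/2+m^{\perp_B}$ and $Q(-m/2+v)=Q(m)/4+Q(v)$ for $v\in m^{\perp_B}$. For $d=2$ this gives $Q(L_m)=Q(m)\cdot\{\frac14+\det(B)s^2 : s\in\F_p\}$.

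As written, three steps need repair.

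\emph{Distinctness.} The pairwise distinctness you impose can be impossible, and your justification fails for $d=2$: a line $L_k$ with $Q(m_k)\neq 0$ meets a level set in at most two points, not $\sqrt p/2^K$. Take $Q=x^2-y^2$. The vector $m=(1,1)$ has $L_m=\{j_1=j_2\}\subset\{Q=0\}$, so the level is forced to be $0$. The admissible pairs for $m=(1,0)$ are then $\{(-\frac12,s),(\frac12,s)\}$ and those for $m=(0,1)$ are $\{(r,-\frac12),(r,\frac12)\}$, with $r,s\in\{\pm\frac12\}$. Any two such pairs share $(r,s)$. The fix is to drop distinctness and use weights with $\sum_k c_k<\infty$, as the paper does with $\norm{n}^{-d-1}$. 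By nonnegativity, overlaps only increase $\sum_j\psi_j\psi_{j+m}$, and $\norm{\psi_p}\le\sqrt2\sum_k c_k$ stays bounded.

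\emph{Degenerate case.} For $d=2$ the degenerate case is exactly $Q(m_k)=0$ in $\Z$. When $Q$ is isotropic over $\Q$ this happens for every $p$, so no congruence condition on $p$ excludes it. The level is forced to be $0$, and you must show $0\in Q(L_k)$ for all the other $k$. By the formula above this amounts to $-\det B$ being a square mod $p$. That holds because a binary form isotropic over $\Q$ has $-\det B\in(\Q^*)^2$. For $d=3$ the degenerate value sets are $c\cdot\{s^2\}$, not constants, but they contain $0$; for $d\ge 4$ they are all of $\F_p$.

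\emph{Weil count.} The count needs distinct value sets to have distinct centres $Q(m_k)/4$. This follows from the same formula, since equal $Q(m_k)$ give identical sets, but it must be stated. Otherwise $\alpha+\beta\{s^2\}$ and $\alpha+\beta'\{s^2\}$ with $\beta/\beta'$ a non-square meet only in $\alpha$.

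Once the degenerate case is settled you will see that level $0$ is always admissible: for $d\ge3$ at every large $p$, and for $d=2$ at every $p$ with $-\det B$ a square. This is the paper's simplification, and it removes the need for character sums.
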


In the case of the Laplacian on the torus $\T^d$, it was shown by Jakobson
\cite[sec. 5]{jakobsonQuantumLimitsFlat1997} that there exist quantum limits
whose density is not a trigonometric polynomial if $d \ge 4$. This result was
recently extended by McGrath \cite[thm.
	1.5]{mcgrathVariationPrimeKtuples2022} to $d=3$. It is not true for $d=2$ as
Jakobson \cite[thm. 1.2]{jakobsonQuantumLimitsFlat1997} showed that any
quantum limit on $\T^{2}$ has a density against the Lebesgue measure which is a
trigonometric polynomial.

\begin{remark}\label{rem_infinite_fourier_coeff_parabolic}
	We point out that the quantum limit corresponding to $\mu$ given by Theorem \ref{thm_parabolic_case_infinite_fourier_coef} cannot be a quantum limit for the Laplacian on the torus $\T^{d}$. Indeed, for $n \in \Z^d$, the $n$-th Fourier coefficient of a quantum limit can be non-null if and only if there exists a sequence of spheres of increasing radius such that each sphere contains at least two integer points whose difference is $n$. However, if $n$ is the difference between two integer points on the same sphere, then $\norm{n}^2$ is even. In particular, a quantum limit cannot have only nonzero Fourier coefficients, as is the case for the projection of $\mu$.
\end{remark}

The last results of the paper study the link between the integral quadratic
form $Q$ associated to $B$ and the semiclassical measures of $P_B$. Before
stating them, we need to introduce some terminology about quadratic forms and
symplectic linear algebra. Let $k$ be a field and $(E, \sigma)$ a symplectic
vector space on $k$. A quadratic form $Q$ on $k^d$ is said \textit{isotropic}
if there exists a vector $x \neq 0$ such that $Q(x) =0$, otherwise it is said
\textit{anisotropic}. A vector subspace $F$ of $k^d$ is called \textit{totally
	isotropic} if the restriction of $Q$ to $F$ is zero and the \textit{isotropy
	index} of $Q$ is the maximal dimension of a totally isotropic subspace for $Q$.

The symplectic orthogonal of a vector subspace $F$ of $E$, denoted
$F^\sigma$, is given by
\begin{equation*}
	F^\sigma = \set{x \in E \; | \; \forall y \in F, \; \sigma(x,y) = 0}.
\end{equation*}
The subspace $F$ is said \textit{symplectic} if $F \cap F^\sigma = \set{0}$, \textit{isotropic} if $F \subset F^\sigma$, \textit{coisotropic} if $F^\sigma \subset F$ and \textit{Lagrangian} if $F = F^\sigma$. Two vector subspaces $F,G$ of $E$ are \textit{symplectic, resp. Lagrangian, supplementaries}, if they are symplectic, resp. Lagrangian, and supplementary in $E$.

If the quadratic form $Q$ is isotropic over $\Q$, then there exist
semiclassical measures that are concentrated on a submanifold of $\T^{2d}$.
Such submanifolds are called \textit{superscars}.
\begin{theorem}\label{thm_weird_measure_parabolic_isotropic}
	Let $B \in \Sym(\Z^d)$ and $Q$ be the associated quadratic form. If $Q$ is isotropic over $\Q$ with isotropy index $r$, then there exists a semiclassical measure $\mu$ of $P_{2B}$ of the form
	\begin{equation*}
		\mu = \delta_{\T^r_{E}} \otimes \Leb_{\T^{d-r}_{E^\bot}} \otimes \delta_{\T^r_{F}} \otimes \Leb_{\T^{d-r}_{F^\bot}},
	\end{equation*}
	where $\T^{l}_{V}$ denote the image in the torus of $V \otimes \R$ for $V$ a subspace of $\Q^{2d}$ of dimension $l$. The subspace $E$ is a maximal totally isotropic subspace for $Q$. The vector subspaces $E \oplus F$ and $E^\bot \oplus F^\bot$ are symplectic supplementaries in $\R^{2d}$ and $E$ and $F$ are supplementary Lagrangian in $E \oplus F$.
\end{theorem}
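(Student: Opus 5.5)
We construct explicit eigenvectors of $M_N(P_{2B})$ on $\CH_N(0)$ in the eigenbasis $(f^0_j)_j$. By the formula recalled in the introduction, $M_N(P_{2B}) f^0_j = e^{-\frac{2\pi i}{N}\sprod{Bj}{j}} f^0_j$. Since $B$ now carries the factor $2$, the eigenvalue depends only on $Q(j) = \sprod{Bj}{j}$ modulo $N$, with no parity restriction on $N$. Any superposition $\psi_N = \sum_{j \in S_N} \psi_j f^0_j$ over a set $S_N$ on which $Q$ is constant modulo $N$ is therefore an eigenvector. We then read off the Fourier coefficients of the limit measure from the Wigner formula
\[
Wig(\psi_N)(e^{2\pi i\sprod{(m,n)}{\cdot}}) = e^{\frac{\pi i}{N}\sprod{n}{m}}\sum_j e^{\frac{2\pi i}{N}\sprod{n}{j}}\psi_j\overline{\psi_{j+m}}.
\]

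\emph{Step 1 (linear algebra).} Take a maximal totally isotropic subspace $E_0 \subset \Q^d$ for $Q$, of dimension $r$. Choose a primitive integral basis $e_1,\dots,e_r$ of $E_0 \cap \Z^d$ and complete it to a basis of $\Q^d$. Use the decomposition adapted to the form $\sprod{Bx}{y}$: the restriction of $Q$ to $E_0$ vanishes, so $Q(x+y) = Q(y) + 2\sprod{Bx}{y}$ for $x \in E_0$. We choose $E$, $F$, $E^\bot$, $F^\bot$ as rational subspaces of position space and momentum space respectively. The momentum directions along which we localize are dual to $E_0$, and the position directions along which we delocalize are the complementary ones. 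We rename these subspaces to match the statement and check that $E \oplus F$ is symplectic with Lagrangian summands $E$ and $F$, and that its symplectic orthogonal is $E^\bot \oplus F^\bot$.

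\emph{Step 2 (the states).} Choose $N$ along a subsequence, say $N = N_0 k$ with $N_0$ clearing denominators of $B$, the basis change and $\det$ factors. Take
\[
S_N = \set{ j = j_0 + \textstyle\sum_{i} t_i e_i \;:\; t \in \eint{0}{N-1}^r }.
\]
Here $j_0$ is chosen so that $\sprod{B e_i}{j_0} \equiv 0 \ [N]$ for each $i$; for instance, $j_0$ is a multiple of $N$ times a rational vector, scaled so that $j_0/N$ is a prescribed point in momentum. On $S_N$ we have $Q(j) = Q(j_0) + 2\sum_i t_i\sprod{Be_i}{j_0} + Q(\sum_i t_i e_i) \equiv Q(j_0)$ modulo $N$, so $Q$ is constant modulo $N$ on $S_N$. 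Take uniform coefficients $\psi_j = N^{-r/2}$ on $S_N$. Translations $m \in \Z$-span of the $e_i$ preserve $S_N$ up to $O(N^{r-1})$ boundary terms. The sum over $t$ of $e^{\frac{2\pi i}{N}\sprod{n}{\sum_i t_i e_i}}$ then gives $1$ if $\sprod{n}{e_i}=0$ for all $i$ and $o(1)$ otherwise. For $m \notin \mathrm{span}(e_i)$, the sets $S_N$ and $S_N+m$ are disjoint and the coefficient vanishes.

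\emph{Step 3 (identifying the limit).} Reading off these Fourier coefficients, the limit is Lebesgue in position along $\mathrm{span}(e_i)$ and a Dirac in position in the orthogonal directions, the latter coming from the fact that no translations outside $\mathrm{span}(e_i)$ survive. In momentum the limit is a Dirac at the limit of $j_0/N$ in the directions dual to the $e_i$, which is a point (or, after averaging over $j_0$, a subtorus), and free in the remaining directions. To obtain exactly the product of Dirac and Lebesgue pieces on subtori as in the statement, with a Dirac on a whole torus $\T^r_F$ rather than at a point, we average over $j_0$ in a lattice coset. This can be done either by letting $j_0$ vary in a sublattice compatible with the congruence $\sprod{Be_i}{j_0}\equiv 0 \ [N]$, which is a rank $d-r$ sublattice yielding Lebesgue in those momentum directions, or by superposing such states, which is still allowed since all the states share the eigenvalue $Q(j_0)$ modulo $N$. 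We then match the resulting product structure with the statement.

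\textbf{Main obstacle.} The hardest part is making the roles of $E,F,E^\bot,F^\bot$ precise, in particular which factors are Dirac and which are Lebesgue. A second difficulty is ensuring that the congruence condition together with the constancy of $Q$ modulo $N$ on the enlarged set is compatible with producing Lebesgue in the complementary directions. This requires choosing $N$ divisible by suitable integers so that the lattice $\set{j_0: B j_0 \cdot e_i \equiv 0\ [N]}$ has the right index.
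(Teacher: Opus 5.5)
Your Step 2 is the paper's construction written in different coordinates. The paper diagonalises $Q$ over $\Q$ by some $S$, takes primes $p$, and forms the uniform superposition of the $f^0_{Sx}$ over the totally isotropic coordinate subspace $X\subset\F_p^d$. You instead reduce a primitive basis of $E_0\cap\Z^d$ modulo an arbitrary large $N$; with $j_0=0$ the two states coincide. Your coefficient computation is also correct: the coefficient at $(m,n)$ tends to $e^{2\pi i\sprod{n}{\xi_0}}$ if $m\in E_0$ and $\sprod{n}{e_i}=0$ for all $i$, and is exactly $0$ otherwise.

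The gap is Step 3, which reads these data backwards and then tries to fix a problem that does not exist.
\begin{itemize}
\item \emph{Position.} Coefficients equal to $1$ on $E_0\cap\Z^d$ and $0$ off it mean the position marginal is the uniform measure on $\set{x : \sprod{m}{x}\in\Z \text{ for } m\in E_0\cap\Z^d}$. That is Dirac in the $E_0$ directions and Lebesgue along $E_0^\bot$, the reverse of what you wrote. For $Q=2x_1x_2$, the vector $N^{-1/2}\sum_t f^0_{(t,0)}$ is a Dirac comb in $x_1$.
\item \emph{Momentum.} The sum over $t$ already spreads $j/N$ uniformly over $\xi_0+\T_{E_0}$, so nothing is concentrated at a point and no averaging is needed.
\item \emph{The averaging is also not allowed.} On $\set{j_0 : \sprod{Be_i}{j_0}\equiv 0}$ the form $Q$ is not constant modulo $N$. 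For $Q=2x_1x_2+x_3^2$ and $E_0=\Q e_1$ it equals $j_{0,3}^2$. So these states do not share an eigenvalue, and their superposition is not an eigenvector.
\end{itemize}

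Read correctly, Step 2 proves that the uniform measure on the affine Lagrangian subtorus $\T_{E_0^\bot}\times(\xi_0+\T_{E_0})$ belongs to $\CS\CM(P_{2B})$. Here the position lies in $E_0^\bot$ and the momentum in $E_0$.

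The identification with the statement that you postpone to Steps 1 and 3 cannot be carried out. Take $\bot$ to be the Euclidean orthogonal, as in the paper's proof, and assume $\det B\neq 0$. Then no measure $\delta_{\T^r_E}\otimes\Leb_{\T^{d-r}_{E^\bot}}\otimes\delta_{\T^r_F}\otimes\Leb_{\T^{d-r}_{F^\bot}}$ with $E$ totally isotropic and $E\oplus F$ symplectic is $P_{2B}$-invariant. Invariance forces $2BF^\bot\subset E^\bot$, hence $F=BE$ by dimension. But then $\sigma((e,0),(0,Be'))=-\sprod{e}{Be'}=0$ for all $e,e'\in E$, so $E\oplus F$ is isotropic, not symplectic.

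The paper uses the same state and reaches the symplectic form through a slip in the character sum. The sum $\sum_{x\in X}e^{-\frac{2\pi i}{p}\sprod{S^Tn}{x}}$ is nonzero iff $S^Tn\perp X$, not iff $\modulo{S^Tn}_p\in X$. Once corrected, the paper's coefficients are $1$ iff $m\in SE$ and $n\in(SE)^\bot$, where $E$ is its coordinate subspace. This is exactly your answer with $E_0=SE$. What your argument supports, and what you should state, is this Lagrangian version, not the product form with $E\oplus F$ symplectic.
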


In this example, the semiclassical measure exhibits superscars on a symplectic
submanifold consisting of fixed points for $P_B$. This phenomenon was already
noticed in an almost identical setting by Anantharaman, Fermanian-Kammerer and
Macia \cite[example 1.6,
	1.7]{anantharamanSemiclassicalCompletelyIntegrable2015a}. They observed that
some semiclassical measures associated to the Hamiltonian $H(\xi) = \xi_1^2 -
	\xi_2^2$ or $H(\xi) = \xi_1^2 + \xi_2^2 - \xi_3^2$ exhibited superscars. Note
that these two Hamiltonians are isotropic quadratic forms over $\Q$.

Other examples of superscars for symplectic integer matrices were known \cite{gurevichWeilRepresentationDeligne2006,kelmerArithmeticQuantumUnique2010a}, but
with a different flavor. If the matrix $A$ had an isotropic rational invariant
vector subspace $E$, then Kelmer \cite{kelmerArithmeticQuantumUnique2010a}
proved that some semiclassical measures of $A$ exhibit superscars on the
subtorus $\T_{E^\sigma}$. He also showed that isotropic rational invariant
subspaces are the only obstruction to arithmetic quantum unique ergodicity for
symplectic integer matrices. Note that the subtorus $\T_{E^\sigma}$ is a
coisotropic submanifold of $\T^{2d}$. Another example of scars was given by De
Bièvre, Faure and Nonnenmacher \cite{faureScarredEigenstatesQuantum2003}
who proved that $\frac{1}{2} \Leb + \frac{1}{2} \delta_0$ is a semiclassical
measure for any hyperbolic symplectic integer matrix on $\T^{2}$.

\begin{remark}
	The problem of determining whether a given integral quadratic form $Q$ in $d$ variables is isotropic over $\Q$ is made tractable by the Hasse-Minkowski theorem \cite[sec. 4.3.2, thm. 8]{serreCourseArithmetic1973a}. This theorem states that $Q$ is isotropic over $\Q$ if and only if it is isotropic over $\R$ and all the $p$-adic fields $\Q_p$. It implies in particular that, if $d \ge 5$ or if the discriminant\footnote{The discriminant of a non-degenerate quadratic form associated to a symmetric matrix $B$ is the image of $\det(B)$ in $\Q^*/(\Q^*)^2$.} of $Q$ is $\pm 1$, then $Q$ is isotropic over $\Q$ if and only if it is isotropic over $\R$, hence if it is neither positive nor negative definite \cite[chap. 4.3.2, cor. 3 and chap. 5.2.2, thm. 3]{serreCourseArithmetic1973a}. In the other cases, determining whether $Q$ is isotropic over $\Q_p$ can be done via finite computations of Hilbert symbols involving the discriminant and the Hasse invariant of $Q$. A standard reference is \cite[chap. 3, 4, 5]{serreCourseArithmetic1973a}.
\end{remark}

The next theorem provides a bound on the partial sums of the Fourier
coefficients of quantum limits in terms of the maximal number $H(M)$ of
equations of the form $Q(j) = Q(j-n)$, where $n \in \Z^d$ with $0< \norm{n} \le
	M$, that admit a joint complex solution $j\in \C^d$. A simple bound for $H(M)$
is given when $Q$ is anisotropic over $\Q$.
\begin{theorem}\label{thm_bound_sum_fourier_coeff_parabolic}
	Let $d \ge 2$ and $B \in \Sym_d(\Z) \cap \GL_d(\Q)$. Denote by $Q$ the quadratic form associated to $B$ and $B_0(M)^* = \set{n \in \Z^d \setminus \set{0} \, | \, \norm{n} \le M}$ for $M \in \N$. Then for any $M \ge 0$,
	\begin{equation*}
		\sup_{\mu \in \CS \CM(P_{B})} \sum_{n \in B_0(M)^*} |\widehat{\pi_\# \mu}(n)| \le H(M),
	\end{equation*}
	where
	\begin{equation*}
		H(M) = \max_{X \subset B_0(M)^*} |X| \text{ s.t. } \exists j \in \C^d, \, \forall n \in X, \, Q(j) = Q(j-n).
	\end{equation*}
	Furthermore, if $Q$ is anisotropic over $\Q$ then
	\begin{equation*}
		H(M) \le \frac{|B_0(M)^*|}{2}.
	\end{equation*}
\end{theorem}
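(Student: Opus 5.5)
The argument has two parts. First, for a single eigenvector $\psi$ of $M_N(P_B)$, I will bound the sum of the position Fourier coefficients of its Wigner distribution by $H(M)$ up to an error vanishing as $N\to\infty$. Second, I will prove the combinatorial bound on $H(M)$ in the anisotropic case.

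\textbf{Step 1: setting up the Fourier coefficients.} Restrict to even $N$ and $\theta=0$; for general $(\theta_N)$ the same eigenbasis description holds up to a shift of the indices. Write $\psi=\sum_j\psi_j f^0_j$. The eigenvalue condition forces the support $S\subset(\Z/N\Z)^d$ of $(\psi_j)$ to lie in a single level set $\{\sprod{Bj}{j}\equiv a\ [2N]\}$. By the Fourier formula from the introduction with $n$-frequency zero,
\[
\widehat{\pi_\# Wig(\psi)}(m)=\sum_{j}\psi_j\overline{\psi_{j+m}}.
\]
Pick unimodular phases $c_m$ with $c_m\widehat{\pi_\#Wig(\psi)}(m)=|\widehat{\pi_\#Wig(\psi)}(m)|$. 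Then
\[
\sum_{m\in B_0(M)^*}\bigl|\widehat{\pi_\# Wig(\psi)}(m)\bigr| = \sum_j \psi_j\,\overline{\sum_{m}\bar c_m\psi_{j+m}}
\le \sum_j |\psi_j|\sum_{m\in X_j}|\psi_{j+m}|,
\]
where $X_j=\{m\in B_0(M)^*: j,\,j+m\in S\}$. Bounding $|\psi_j||\psi_{j+m}|\le\frac12(|\psi_j|^2+|\psi_{j+m}|^2)$ and regrouping, the right-hand side is at most $\sup_j|X_j\cup(-X_j)|/2\cdot\ldots$, i.e. essentially $\sup_j|X_j|$. So it suffices to show that $|X_j|\le H(M)$ for every $j$ once $N$ is large.

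\textbf{Step 2: lifting congruences to exact equations.} This is the main obstacle. For $m\in X_j$ we have $Q(j+m)\equiv Q(j)$ modulo $2N$ (up to the sign convention relating $j+m$ and $j-n$), that is, the affine-linear congruence $2\sprod{Bj}{m}+Q(m)\equiv 0\ [2N]$. These are congruences, not equalities, so I cannot directly produce a joint complex solution. The fix is a rank argument. Let $V$ be the $\Q$-span of $X_j$, and choose a basis $m_1,\dots,m_r\in X_j$ of $V$. Every $m\in X_j$ is $\sum q_k m_k$ with rational coefficients whose denominators are bounded in terms of $M$ and $d$ only (Cramer's rule). Since $\langle Bj,\cdot\rangle$ is linear, this yields the consistency congruence
\[
Q(m)\equiv\sum_k q_kQ(m_k)\quad [2N/D_M].
\]
Both sides are bounded by $C_M$, so once $N$ is large the congruence is an exact equality. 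Consequently the linear system $2\sprod{Bz}{m}=-Q(m)$ for $m\in X_j$ is consistent over $\Q$ (solve on the basis, then the remaining equations follow). Since $B$ is invertible, a solution $z$ exists, and $z$ jointly solves $Q(z+m)=Q(z)$ for all $m\in X_j$. Hence $|X_j|\le H(M)$ for $N\ge N_0(M)$. Passing to the limit along any sequence defining $\mu$, and noting that the Fourier coefficients of $Wig(\psi_N)$ converge to those of $\mu$, gives the inequality for every $\mu\in\CS\CM(P_B)$. If $N$ is odd, or if $\theta\neq0$, I expect to need the same argument with $2N$ replaced by the appropriate modulus; this is routine.

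\textbf{Step 3: the anisotropic bound.} Let $X$ be realized by some $j\in\C^d$. If $n,-n\in X$, then $Q(j)=Q(j-n)=Q(j+n)$. Adding the last two and using $Q(j\pm n)=Q(j)\mp2\sprod{Bj}{n}+Q(n)$ gives $2Q(n)=0$. This is impossible for rational $n\ne0$ when $Q$ is anisotropic over $\Q$. So $X$ contains at most one element of each pair $\{n,-n\}$, and therefore $|X|\le|B_0(M)^*|/2$.
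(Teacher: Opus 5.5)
Your proof is correct. Its skeleton matches the paper's: the same AM--GM reduction of $\sum_{n}|Wig(\psi_N)(e^{-2\pi i\sprod{(n,0)}{\cdot}})|$ to $\max_j|X_j|$ (the paper's $\max_j|S_{N,j}|$), and the same $\pm n$ argument for the anisotropic bound. You genuinely depart from the paper only in the key step, where congruences modulo $2N$ are turned into an exact joint solution.

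\textbf{The paper's route.} It argues by contradiction. It extracts a subsequence along which the index set $X_N$ and the angle $\theta_{N,\xi}$ are constant; the latter is possible because $\theta_{N,\xi}$ ranges over a finite set. It then invokes the Nullstellensatz-based Lemma \ref{lem_empty_over_C_empty_over_ZN} to produce a complex solution. That general tool also handles genuinely nonlinear systems, such as the one in Lemma \ref{lem_no_solutions_system_2_Fourier_coeff_parabolic_1d}.

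\textbf{Your route.} You exploit that $Q(j)=Q(j-n)$ is affine-linear in $j$. Clearing denominators with a minor $\Delta$ of a basis $m_1,\dots,m_r$ of the span of $X_j$ gives $\Delta Q(m)\equiv\sum_k\Delta_kQ(m_k)\pmod{2N}$. Both sides are bounded in terms of $M,d,B$, so this is an exact identity once $N\ge N_0(M)$. That identity is precisely the consistency condition of the rational linear system. This buys three things over the paper:
\begin{itemize}
\item an effective threshold valid for every eigenvector and every $j$, with no contradiction or extraction;
\item a rational rather than merely complex solution;
\item insensitivity to $\theta$. For general $\theta_N$ the eigenvalue condition reads $Q(j)-\sprod{j}{2B\theta_{N,\xi}}\equiv a_N\pmod{2N}$ with $2B\theta_{N,\xi}\in\Z^d$. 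The extra term is linear in $m$, so it is absorbed into your linear form and drops out of the consistency identity. You do not even need finiteness of the possible $\theta_{N,\xi}$.
\end{itemize}

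\textbf{Points to tidy when writing it up.}
\begin{itemize}
\item The general $(N,\theta)$ case should be written out rather than called routine. The modulus stays $2N$, not ``an appropriate modulus''. You need the parity check from Lemma \ref{lem_quantizability_condition_expression_Mh_PB} that $Q(j)-\sprod{j}{2B\theta_{\xi}}\bmod 2N$ depends only on $j\bmod N$. For $\theta_x\neq0$ the Wigner coefficients acquire only unimodular phases, which are harmless for the absolute-value bound.
\item In Step 1, the regrouping gives exactly $\sum_j|\psi_j|^2|X_j|$ by the symmetry of $B_0(M)^*$, so the expression $|X_j\cup(-X_j)|/2$ is unnecessary.
\item The congruence ``$Q(m)\equiv\sum_kq_kQ(m_k)\ [2N/D_M]$'' should be stated after clearing denominators, keeping the modulus $2N$.
\end{itemize}
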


The bound on $H(M)$ is clearly suboptimal but it immediately implies that the
Dirac mass at $0$ cannot be a quantum limit if $Q$ is anisotropic over $\Q$.
\begin{corollary}\label{cor_Q_anisotropic_dirac_not_quantum_limit}
	Let $d \ge 2$ and $B \in \Sym_d(\Z)$. Denote by $Q$ the quadratic form associated to $B$ and suppose that it is anisotropic over $\Q$. Then
	\begin{equation*}
		\forall \mu \in \CS \CM(P_B) \qquad \pi_\# \mu \neq \delta_0.
	\end{equation*}
\end{corollary}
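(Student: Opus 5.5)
We argue by contradiction, using Theorem \ref{thm_bound_sum_fourier_coeff_parabolic}. The point is that $\delta_0$ has every Fourier coefficient equal to $1$, so its partial Fourier sums are as large as they can be. The anisotropy bound on $H(M)$ cuts these sums in half.

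First we reduce to the setting of that theorem. If $d \ge 2$ and $Q$ is anisotropic over $\Q$, then $B$ is nondegenerate. Indeed, a nonzero rational vector $x$ in $\ker B$ would give $Q(x) = \sprod{Bx}{x} = 0$, contradicting anisotropy. Since $B$ is an integer matrix, its kernel over $\Q$ is trivial exactly when it is trivial over $\R$. Hence $B \in \Sym_d(\Z) \cap \GL_d(\Q)$, and Theorem \ref{thm_bound_sum_fourier_coeff_parabolic} applies. It gives, for every $\mu \in \CS\CM(P_B)$ and every $M \ge 0$,
\begin{equation*}
	\sum_{n \in B_0(M)^*} |\widehat{\pi_\# \mu}(n)| \le H(M) \le \frac{|B_0(M)^*|}{2}.
\end{equation*}

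Now suppose that $\pi_\# \mu = \delta_0$ for some $\mu \in \CS\CM(P_B)$. Then $\widehat{\pi_\#\mu}(n) = 1$ for every $n \in \Z^d$, so the left-hand side above equals $|B_0(M)^*|$. Take $M = 1$. The set $B_0(1)^*$ contains the $2d \ge 4$ vectors $\pm e_i$, so it is nonempty. We would then have $|B_0(M)^*| \le |B_0(M)^*|/2$ with $|B_0(M)^*| > 0$, which is impossible. Therefore $\pi_\#\mu \neq \delta_0$.

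There is no real obstacle here; all the work sits in Theorem \ref{thm_bound_sum_fourier_coeff_parabolic}. The only points to check are that anisotropy forces $B$ to be invertible, which is needed to invoke that theorem, and that $B_0(M)^*$ is nonempty for some $M$.
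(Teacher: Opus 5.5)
Your proof is correct and follows the paper's own argument: the corollary is read off from Theorem \ref{thm_bound_sum_fourier_coeff_parabolic} together with the anisotropic bound $H(M) \le |B_0(M)^*|/2$, since every Fourier coefficient of $\delta_0$ equals $1$. Your extra check that anisotropy forces $\det B \neq 0$, so that the theorem's hypothesis $B \in \Sym_d(\Z) \cap \GL_d(\Q)$ holds, is a step the paper leaves implicit and is worth keeping.
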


Theorem \ref{thm_bound_sum_fourier_coeff_parabolic} is probably more
interesting for its proof strategy than for the bound obtained. The idea is to
bound the sum of Fourier coefficients of the Wigner distribution by the number
of equations $Q(j) = Q(j-n)$ that have a joint solution $j$ modulo $N$ and then
to bound this number when $N$ goes to infinity by the number $H(M)$ that counts
the same quantity but with solutions in $\C^d$.

\begin{remark}
	Consider the matrix $B = \begin{pmatrix} 1 & 0 \\ 0 & -2 \end{pmatrix}$, whose associated quadratic form is $Q(x,y) = x^2 -2 y^2$. While $Q$ is isotropic over $\R$, it is anisotropic over $\Q$ as $\sqrt{2}$ is irrational. Corollary \ref{cor_Q_anisotropic_dirac_not_quantum_limit} thus implies that $\delta_0$ is not a quantum limit for $P_B$. This may come as a surprise as $Q$ is non-definite and one could expect $P_B$ to have wildly singular quantum limits.
\end{remark}

Stronger results on the regularity of the quantum limits hold for the Laplacian on the torus. Indeed, Bourgain
\cite{jakobsonQuantumLimitsFlat1997} proved that all quantum limits of the
Laplacian on the torus have a density against the Lebesgue measure. This was
also proved by Anantharaman and Macia
\cite{anantharamanSemiclassicalMeasuresSchrodinger2014} via semiclassical
two-microlocal methods and extended to more general completely integrable
systems with Fermanian-Kammerer
\cite{anantharamanSemiclassicalCompletelyIntegrable2015a}. They also
obtained observability estimates, improving on previous work by Jaffard
\cite{jaffardControleInterneExact1990}, Macia
\cite{maciaSchrodingerFlowCompact2011} and Burq and Zworski
\cite{burqControlSchrodingerOperators2012}. 

Finally, we mention that the summability properties of the sequence of Fourier
coefficients of quantum limits of the Laplacian on the torus were studied,
first by Jakobson \cite{jakobsonQuantumLimitsFlat1997}, then by Aïssiou
\cite{aissiouSemiclassicalLimitsEigenfunctions2013a} and by McGrath
\cite{mcgrathVariationPrimeKtuples2022}.

\subsecnonewline{Structure of the paper} The Weyl and Anti-Wick quantization on $\R^{2d}$ and on the torus $\T^{2d}$ are presented in Section \ref{section_quantization}. The symplectic
automorphisms of the torus are quantized via the metaplectic representation and the link between the quantum propagators of quadratic
homogeneous Hamiltonians and the metaplectic operators is presented in Theorem \ref{thm_metaplectic_operators_quadratic_hamiltonians}. The Wigner
and Husimi distributions are introduced in Section
\ref{section_semiclassical_measures} and used to define semiclassical measures.
Technical lemmas about them needed in the rest of the paper are also
proved.

Section \ref{section_elliptic_matrices} is dedicated to elliptic
matrices and contains the proof of Theorem
\ref{thm_semiclassical_measures_elliptic_matrix} and of a characterization of
elliptic matrices, Proposition \ref{pro_characterization_elliptic_matrices}. A
few arithmetical lemmas needed for the proofs on semiclassical measures of
parabolic matrices are presented in Section \ref{section_arithmetic_interlude}. The final section \ref{section_parabolic_matrices} deals with parabolic
matrices. The quantizability conditions for them are first presented, then
Proposition \ref{pro_parabolic_nu_times_lebesgue} and Theorems 
\ref{thm_full_characterization_parabolic_dimension_1}, 
\ref{thm_upper_bound_quantum_variance_position_parabolic_matrices}, 
\ref{thm_lower_bound_quantum_variance_position_parabolic_matrices}, \ref{thm_parabolic_case_infinite_fourier_coef},  
\ref{thm_weird_measure_parabolic_isotropic} and \ref{thm_bound_sum_fourier_coeff_parabolic} are proved. Finally, the link between the
semiclassical measures of parabolic matrices and those of the Laplacian on
the torus is detailed in Proposition
\ref{pro_link_semiclassical_laplacian_P_2I}.

\subsecnonewline{Notations, definitions and conventions} We fix some notations, definitions and conventions that will be used in the
rest of the paper. The spaces $\R^{2d}$ and $\T^{2d}$ are endowed with a
symplectic structure given by the natural symplectic form $\sigma$ defined by

\begin{equation*}
	\sigma(z,z') = \sprod{Jz}{z'} \qqtq{for} z,z' \in \R^{2d} \text{ or } \T^{2d} \qqtq{where} J = \begin{pmatrix*}
		O & I \\ - I & 0
	\end{pmatrix*}.
\end{equation*}
The corresponding Heisenberg group of dimension $2d+1$, denoted $H_{2d+1}$, is $\R^{2d+1}$ endowed with the group law
\begin{equation*}
	(z, t) \cdot (z, t') = (z + z', t + t' + \frac{1}{2} \sigma(z,z')) \qqtq{for} z,z' \in \R^{2d}, \, t,t' \in \R.
\end{equation*}
Let $R$ be a ring. We denote by $M_d(R)$ the square matrices of size $d$ with coefficients in $R$, by $\GL_d(R)$ the invertible matrices in $M_d(R)$, by $\Sym_d(R)$ the symmetric matrices and by $\Sp_{2d}(R)$ the symplectic matrices. A matrix $A \in M_{2d}(R)$ is said \textit{symplectic} if it satisfies $A^T J A = J$.
Let $h >0$. The $h$-rescaled Fourier transform $\CF_h$ is defined by
\begin{equation*}
	\CF_h(a)(\xi) = \frac{1}{(2\pi h)^{d/2}} \int_{\R^{d}} e^{-\frac{i}{h} \sprod{x}{\xi}} a(x) \diff x \qqtq{for} a \in L^1(\R^{d}).
\end{equation*}
The set of Borel probability measures on a topological space $X$ is denoted $\CP(X)$. If $\phi : X \to X$ is a continuous transformation of $X$, the set of Borel probability measures invariant under $\phi$ is denoted $\CP_\phi(X)$. If $\mu$ is a measure on the torus $\T^d$, its Fourier coefficients are given by
\begin{equation*}
	\forall k \in \Z^d, \quad \widehat{\mu}(k) = \int_{\T^d} e^{-2 i \pi \sprod{k}{x}} \diff \mu(x).
\end{equation*}
Let $a \in \CC^\infty(\T^d)$ and $s \ge0$. The $H^s$ Sobolev norm of $a$ is defined as
\begin{equation*}
	\norm{a}_{H^s(\T^d)}^2 = \sum_{n \in \Z^d} (1 + \norm{n}^2)^{s} |a_n|^2 \qqtq{where} a = \sum_{n \in \Z^d} a_n e^{2 \pi i \sprod{n}{\cdot}}.
\end{equation*}
The Sobolev space $H^s(\T^d)$ is then defined as the completion of $\CC^\infty(\T^d)$ for this norm.

\subsecnonewline{Acknowledgement} I warmly thank my advisor Nalini Anantharaman for introducing me to quantum cat maps, and for her support and advice during the writing of this work. I am also grateful to Thomas Agugliaro for numerous discussions about arithmetic, quadratic forms and varieties. I used a Large Language Model, Coder Gwen 3.8 27B NVFP4, only to correct typos and English mistakes, find references in the literature and review the results once they were written.

\section{Quantization on the torus}\label{section_quantization}

We recall in this section the quantization procedure of integer symplectic
matrices on $\T^{2d}$ as introduced originally in
\cite{hannayQuantizationLinearMaps1980a} and extended by
\cite{bouzouinaEquipartitionEigenfunctionsQuantized1996a}. Our presentation
is based on the higher-dimensional version done in
\cite{riviereEntropySemiclassicalMeasuresSymplecticMapTorus2011} and
\cite{dyatlovSemiclassicalMeasuresHigherdimensional2024}. The goal is to build
a quantum version of the classical dynamics on $\T^{2d}$ and thus to construct
\begin{enumerate}
	\item a family of Hilbert spaces $(\CH_N)_{N \in \N^*}$ playing the role of the quantum phase space,
	\item a quantization procedure that associates to a classical observable $a \in \CC^\infty(\T^{2d})$ a family of operators $(\Op_N(a))_{N \in \N^*}$ acting on $(\CH_N)_{N \in \N^*}$,
	\item a family of operators $(M_N(\phi))_{N \in \N^*}$ acting on $(\CH_N)_{N \in \N^*}$ that quantize the classical dynamical system $\phi : \T^{2d} \to\T^{2d}$, meaning that they satisfy an Egorov theorem:
	\begin{equation*}
		\forall a \in \CC^\infty(\T^{2d}), \qquad \norm{M_N(\phi)^{-1} \Op_N(a) M_N(\phi) - \Op_N(a \circ \phi)}_{\CH_N} \xrightarrow[N \to + \infty]{} 0.
	\end{equation*}
\end{enumerate}

\subsection{Weyl quantization and Schrödinger representations of the Heisenberg group}

The Weyl quantization of a classical observable $a \in \CS(\R^{2d})$ is defined
as

\begin{equation*}
	\forall u \in L^2(\R^d), \; \forall x \in \R^d  \quad \left(\Op^w_h(a)u \right) (x) = \frac{1}{(2\pi h)^d} \int_{\R^{2d}} e^{\frac{i}{h}\sprod{x-y}{\xi}} a\left(\frac{x+y}{2}, \xi \right) u(y) \diff y \diff \xi.
\end{equation*}

\noindent This quantization procedure can be extended to a large class of symbols $S(1)$, defined by
\begin{equation*}
	S(1) = \set{(a_h)_{h >0} \in \CC^\infty(\R^{2d}) \; | \; \forall \alpha \in \N^{2d}, \, \exists C_\alpha \ge 0 \; \norm{\partial^\alpha a}_{\infty} \le C_\alpha },
\end{equation*}
on which it satisfies the Calderón-Vaillancourt theorem: there exists $C,M >0$ such that for all symbols $a \in S(1)$
\begin{equation}\label{Calderon-Vaillancourt_R2d}
	\norm{\Op^w_h(a)}_{L^2(\R^d \to \R^d)} \le C \sum_{|\alpha| \le M} h^{|\alpha|/2} \norm{\partial^\alpha a}_\infty.
\end{equation}
We refer the reader to \cite[chap. 4]{zworskiSemiclassicalAnalysis2012} for precise statements and proofs on the Weyl quantization.

Let $h >0$. The quantum translations $(U_h(z))_{z \in \R^{2d}}$ are unitary
operators on $L^2(\R^d)$ that act on a function $\psi \in L^2(\R^d)$ by
\begin{equation}\label{eq_action_quantum_translation_on_functions}
	\forall (x, \xi) \in \R^{2d}, \, \forall y \in \R^d, \quad (U_h(x,\xi) \psi)(y) = e^{\frac{i}{h}\sprod{y-x/2}{\xi}} \psi(y-x).
\end{equation}
They do not commute but they satisfy the relations
\begin{equation}\label{eq_commutation_quantum_translation}
	U_h(z) U_h(z') = e^{\frac{i}{2h}\sigma(z, z')} U_h(z + z') = e^{\frac{i}{h}\sigma(z, z')} U_h(z') U_h(z) \qtq{for} z,z' \in \R^{2d}.
\end{equation}
Hence, they generate a representation $\rho_h$ of the Heisenberg group $H_{2d+1}$ given by
\begin{equation*}
	\rho_h : (x, \xi, t) \in H_{2d+1} \mapsto e^{\frac{i}{h}t} U_h(x,\xi).
\end{equation*}

The representations $(\rho_h)_{h >0}$ are the irreducible
representations of the Heisenberg group which are non-trivial on the center.
This is the content of the Stone-von Neumann theorem and we refer the reader to
\cite[chap. 1]{follandHarmonicAnalysisPhase1989a} for more
details\footnote{We warn the reader that Folland chooses a different convention
	for the Schrödinger representation, see \cite[chap. 1, sec. 2, eq
		(1.27)]{follandHarmonicAnalysisPhase1989a} for a remark on this choice.}.
Finally, we remark that
\begin{equation}\label{eq_U_h(z)=quantif_e(isigma(z,.))}
	U_h(z) = \Op^w_h(e^{\frac{i}{h}\sigma(z, \cdot)}) \qqtq{for} z \in \R^{2d}.
\end{equation}

\subsection{Weyl quantization on the torus}

The Weyl quantization on the torus $\T^{2d}$ is built directly from the
Weyl quantization on $\R^{2d}$. The first step is to define the Hilbert space
of states. Intuitively, our quantum states should be invariant under quantum
phase space translations of length $1$ since the classical phase space is
invariant under such classical translations. Since two quantum states that
differ by a global phase are physically identical, we seek tempered
distributions $\psi \in \CS'(\R^d)$ such that
\begin{equation*}
	\forall w \in \Z^{2d} ,\; \exists \theta \in \R \quad U_h(w) \psi = e^{i \theta} \psi.
\end{equation*}

A necessary condition for such a tempered distribution $\psi$ to exist is that
all the operators $(U_h(w))_{w \in \Z^{2d}}$ must commute. According to the
commutation relations \eqref{eq_commutation_quantum_translation} of the $U_h$,
this implies that there exists an integer $N \in \N^*$ such that
\begin{equation*}
	\frac{1}{h} = 2 \pi N.
\end{equation*}
We suppose in the following that such a choice of $h >0$ has been made and we define for $\theta \in \T^{2d}$ the space $\CH_N(\theta)$ by
\begin{equation*}
	\CH_N(\theta) = \set{\psi \in \CS'(\R^d) \; | \; \forall w \in \Z^{2d}, \; \; U_h(w)\psi = e^{2 \pi i \sigma(\theta,w) + \pi i N P(w) } \psi}.
\end{equation*}
where $P : \R^{2d} \to \R$ is the polarization defined by
\begin{equation*}
	P(z) = \sprod{y}{\eta} \qqtqq{if} z = (y, \eta) \in \R^{2d}.
\end{equation*}

The important properties of $\CH_N(\theta)$ are explained in the following
theorem \cite[section
	2.2]{dyatlovSemiclassicalMeasuresHigherdimensional2024}.

\begin{theorem}\label{thm_proprietes_CH_N(theta)}
	Let $\theta = (\theta_x, \theta_\xi) \in \T^{2d}$. The dimension of $\CH_N(\theta)$ is $N^d$ and a basis is given by $(e^\theta_j)_{j \in \eint{0}{N-1}^d}$ where
	\begin{equation*}
		e^\theta_j(x) = \frac{1}{N^{d/2}} \sum_{k \in \Z^d} e^{-2 \pi i \sprod{k}{\theta_\xi}} \delta\left(x - \frac{j+kN - \theta_x}{N}\right).
	\end{equation*}
	If the scalar product on $\CH_N(\theta)$ is fixed such that $(e^\theta_j)_{j \in \eint{0}{N-1}^d}$ is an orthonormal basis, there is a unitary isomorphism $\Phi_N : L^2(\R^d) \simeq L^2(\T^{2d}, \CH_N(\theta))$ whose expression on $\CS(\R^d)$ is
	\begin{equation*}
		\Phi_N(f)(\theta) = \sum_{j \in \eint{0}{N-1}^d} \sprod{f}{e^\theta_j}_{L^2} e^\theta_j \qquad f \in \CS(\R^d), \; \theta \in \T^{2d}.
	\end{equation*}
\end{theorem}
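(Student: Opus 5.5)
The statement to prove is Theorem \ref{thm_proprietes_CH_N(theta)}. The plan is to work directly from the definition of $\CH_N(\theta)$ by splitting the lattice $\Z^{2d}$ into position translations $w=(y,0)$ and momentum translations $w=(0,\eta)$, with $y,\eta\in\Z^d$. On both kinds of translation the polarization $P(w)$ vanishes. A general $w=(y,\eta)$ factors as a product of the two kinds via \eqref{eq_commutation_quantum_translation}, and the resulting phase $e^{\frac{i}{2h}\sigma(\cdot,\cdot)}=e^{\pi i N\sprod{y}{\eta}}$ is exactly the factor $e^{\pi i N P(w)}$. It therefore suffices to impose the two families of conditions separately.

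Using \eqref{eq_action_quantum_translation_on_functions} with $1/h=2\pi N$, the condition for $(0,\eta)$ reads $e^{2\pi i N\sprod{x}{\eta}}\psi = e^{2\pi i\sigma(\theta,(0,\eta))}\psi$. Tested against Schwartz functions, this forces $\supp\psi\subset\{x : Nx+\theta_x\in\Z^d\}$ with the sign convention of $\sigma$, so $\psi$ is supported on the shifted lattice $(\Z^d-\theta_x)/N$. The condition also forces $\psi$ to have order zero there. To see this, compare derivatives of deltas: multiplying $\delta^{(\alpha)}$ by $e^{2\pi iN\sprod{x}{\eta}}$ produces lower-order terms proportional to $\eta$. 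These terms cannot cancel for all $\eta$, so only $\alpha=0$ survives. Hence
\[
\psi=\sum_{m\in\Z^d}c_m\,\delta\Big(x-\frac{m-\theta_x}{N}\Big),
\]
and temperedness gives polynomial growth of $(c_m)$. The condition for $(y,0)$ is a shift $x\mapsto x-y$ with phase $e^{2\pi i\sigma(\theta,(y,0))}$. It translates into $c_{m+Ny}=e^{-2\pi i\sprod{y}{\theta_\xi}}c_m$, up to the sign conventions of $\sigma$, which I will fix carefully. So $c$ is determined by its values on $m=j\in\eint{0}{N-1}^d$, and one recovers exactly the $e^\theta_j$. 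Their linear independence is clear from disjoint supports, so $\dim\CH_N(\theta)=N^d$.

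For the isomorphism $\Phi_N$, I would compute $\sprod{f}{e^\theta_j}$ for $f\in\CS(\R^d)$ explicitly as a weighted lattice sum $N^{-d/2}\sum_k e^{2\pi i\sprod{k}{\theta_\xi}}\overline{\phantom{|}}\!f\big((j+kN-\theta_x)/N\big)$. Then $\int_{\T^{2d}}\|\Phi_N f(\theta)\|^2\diff\theta$ equals a sum over $j$ of integrals of squared Fourier series in $\theta_\xi$. Parseval in $\theta_\xi\in\T^d$ gives
\[
N^{-d}\sum_{j,k}\int_{\T^d}\big|f\big((j+kN-\theta_x)/N\big)\big|^2\diff\theta_x .
\]
Since $j+kN$ ranges over $\Z^d$ and $\theta_x\in[0,1)^d$ tiles $\R^d$, the change of variables $x=(m-\theta_x)/N$ turns this into $\|f\|_{L^2}^2$ (the Jacobian $N^{-d}$ cancels the prefactor). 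This isometry is a Zak/Bloch–Floquet transform. For surjectivity, I would exhibit the inverse: given $F\in L^2(\T^{2d},\CH_N(\theta))$ with coefficients $F_j(\theta)$, expand each in Fourier series in $\theta_\xi$ and read off the values of $f$ on each translated lattice. Equivalently, the orthogonal complement of the image is trivial, because the map above is onto $L^2$ coefficient families.

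The main obstacle I expect is the rigorous step "support on the lattice implies a combination of deltas without derivatives". I would handle it by the standard structure theorem for distributions supported on a discrete set, followed by the multiplication argument above. A secondary technical point is tracking the signs in $\sigma$, $P$ and $\theta$ so that the phases $e^{-2\pi i\sprod{k}{\theta_\xi}}$ and the shift $-\theta_x$ come out exactly as stated.
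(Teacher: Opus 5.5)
Your proof is correct. The reduction to pure position and pure momentum translations checks out: the cross phase $e^{\mp \pi i N\sprod{y}{\eta}}$ equals $e^{\pi i N P(w)}$ because $N\sprod{y}{\eta}\in\Z$. The support and order-zero argument, the quasi-periodicity $c_{m+Ny}=e^{-2\pi i\sprod{y}{\theta_\xi}}c_m$, and the Parseval computation in $\theta_\xi$ giving $\int_{\T^{2d}}\norm{\Phi_N(f)(\theta)}^2\diff\theta=\norm{f}_{L^2}^2$ are also right with the paper's sign conventions.

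The paper itself gives no argument; it only cites \cite[section 2.2]{dyatlovSemiclassicalMeasuresHigherdimensional2024}, and your route is the standard direct proof of this fact. One detail is worth adding: changing the lift of $\theta$ permutes the $e^\theta_j$ up to unimodular phases by \eqref{eq_transformation_rule_e_theta_j}. Hence the inner product on $\CH_N(\theta)$ and $\Phi_N(f)(\theta)$ are well defined for $\theta\in\T^{2d}$.
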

The space $\CH_N(\theta)$ will therefore always be endowed with the scalar product making $(e^\theta_j)_{j\in \eint{0}{N-1}^d}$ an orthonormal basis. The existence of the unitary isomorphism $\Phi_N$ can be restated as an integral decomposition of $L^2(\R^d)$
\begin{equation}\label{integral_decomposition_L2_Rn}
	L^2(\R^d) = \int^{\oplus}_{\theta \in \T^{2d}} \CH_N(\theta) \diff \theta.
\end{equation}
The orthonormal basis $(e_j^\theta)_{j \in \eint{0}{N-1}^d}$ of $\CH_N(\theta)$ satisfies the relations
\begin{equation}\label{eq_transformation_rule_e_theta_j}
	e^{\theta + (y, \eta) }_j = e^\theta_{j-y} \qqtqq{and}
	e^\theta_{j+N y} = e^{2\pi i \sprod{y}{\theta_\xi}} e^\theta_j \qqtq{for} (y,\eta) \in \Z^{2d},
\end{equation}
and the quantum phase space translation acts on them by
\begin{equation}\label{eq_action_quantum_translation_e_theta_j}
	U_h(z) e^\theta_j = e^{2 \pi i \sprod{\xi}{j - \theta_x} + \pi i N P(z)}e^{\theta -N z}_j \qqtq{for} z = (x, \xi) \in \R^{2d}.
\end{equation}
This implies that
\begin{equation*}
	U_h(z) \CH_N(\theta) = \CH_N(\theta - Nz).
\end{equation*}
In particular, we find that $U_h(n/N) \CH_N(\theta) = \CH_N(\theta)$ for any $n \in \Z^{2d}$. The restriction of $U_h(n/N)$ to $\CH_N(\theta)$ is denoted $U_{N, \theta}(n/N)$, or $U_N(n/N)$ if the value of $\theta$ is clear from the context. Those operators act on the basis $(e^\theta_j)_{j \in \eint{0}{N-1}^d}$ by

\begin{equation}\label{eq_action_U_N((m,n)/N)_on_e_theta_j}
	U_{N}(\frac{m}{N}, \frac{n}{N}) \, e^\theta_j = e^{\frac{\pi i}{N} \sprod{n}{m-2 \theta_x}} e^{\frac{2 \pi i }{N} \sprod{n}{j}} e^\theta_{j+m} \qqtq{for} (m,n) \in \Z^{2d}.
\end{equation}
We record another useful relation.
\begin{equation}\label{eq_projection_on_torus_of_translation}
	\forall \theta \in \T^{2d}, \; n \in \Z^{2d}, \; g \in \CS(\R^d) \qquad \Phi_N (U_h(n) g) (\theta) = e^{2 \pi i \sigma(\theta,n) - \pi i N P(n)} \Phi_N(g) (\theta).
\end{equation}

We now turn to the quantization of observables on the torus. Any function $a
	\in \CC^\infty(\T^{2d})$ can be seen as a symbol in $S(1)$ on which we can use
the Weyl quantization. Using a Fourier decomposition of $a$ and the formula
\eqref{eq_U_h(z)=quantif_e(isigma(z,.))}, we get
\begin{equation}\label{eq_quantization_obs_torus}
	\Op^w_h(a) = \sum_{n \in \Z^{2d}} a_n U_h(\frac{n}{N}) \qqtqq{where} a(x) = \sum_{n \in \Z^{2d}} a_n e^{2 i \pi \sigma(n,x)}.
\end{equation}
The series of operators converges since the Fourier coefficients of $a$ satisfy $a_n = \CO(n^{-\infty})$ as $a \in \CC^{\infty}(\T^{2d})$. Since the operators $(U_h(n/N))_{n \in \Z^{2d}}$ stabilize $\CH_N(\theta)$, the Weyl quantization of $a$ can be restricted to those spaces. This restriction is denoted $\Op^w_{N, \theta}(a)$ or $\Op^w_N(a)$ if the value of $\theta$ is clear. The direct integral decomposition \eqref{integral_decomposition_L2_Rn} of $L^2(\R^d)$ implies \cite[thm. XIII.83]{reedMethodsModernMathematical4Operators1978}
\begin{equation*}
	\sup_{\theta \in \T^{2d}} \norm{\Op^w_{N, \theta}(a)}_{\CL(\CH_N(\theta))} = \norm{\Op^w_h(a)}_{\CL(L^2(\R^d))}.
\end{equation*}
This equality and the Calderón-Vaillancourt theorem \eqref{Calderon-Vaillancourt_R2d} lead to a bound on the operator norm of the Weyl quantization on the torus:
\begin{equation}\label{Calderon-Vaillancourt_torus}
	\forall a \in \CC^\infty(\T^{2d}), \quad \norm{\Op^w_{N, \theta}(a)}_{\CL(\CH_N(\theta))} \le C \sum_{|\alpha| \le M} N^{-|\alpha|/2} \norm{\partial^\alpha a}_{\infty}.
\end{equation}

The Weyl quantization on the torus of an observable $a \in \CC^\infty(\T^{2d})$
could also have been defined with the relation
\begin{equation*}
	\forall g \in L^2(\T^{2d}, \CH_N(\theta)) \; \forall \theta \in \T^{2d} \quad \Op^W_{N, \theta}(a) g (\theta) = \Phi_N \Op^W_h(a) \Phi_N^* g \, (\theta),
\end{equation*}
as is done in \cite{dyatlovSemiclassicalMeasuresHigherdimensional2024}.

We compute the trace of the quantum translation on $\CH_N(\theta)$ and the
Hilbert-Schmidt norm of $\Op^w_N(a)$.
\begin{lemma}\label{lem_trace_quantum_translation_HS_norm_Weyl_quantization}
	Let $n \in \Z^{2d}$ and $a \in \CC^\infty(\T^{2d})$. We have
	\begin{equation*}
		\tr(U_N(\frac{n}{N})) = \begin{syst}
			&e^{\frac{2 \pi i}{N} \sigma(\theta, n) + \frac{\pi i}{N} P(n)} N^d \qtq{if} n \equiv 0 \quad \modulo{N} ,\\
			&0 \qtq{otherwise}
		\end{syst},
	\end{equation*}
	and the Hilbert-Schmidt norm of $\Op^w_N(a)$ is given by
	\begin{equation*}
		\frac{\norm{\Op^w_N(a)}_{HS}^2}{N^d} = \frac{\tr(\Op^w_N(a)^* \Op^w_N(a))}{N^d} = \sum_{\substack{n,k \in \Z^{2d} ,\\ n \equiv k \modulo{N}}} \overline{a_{k}} a_{n} e^{\frac{\pi i}{N} \sigma(2\theta-n, n-k) + \frac{\pi i}{N} P(n-k)}.
	\end{equation*}
\end{lemma}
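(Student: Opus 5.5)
The plan is to compute the trace directly in the orthonormal basis $(e^\theta_j)_{j \in \eint{0}{N-1}^d}$ using the explicit action \eqref{eq_action_U_N((m,n)/N)_on_e_theta_j}. We then obtain the Hilbert--Schmidt formula by expanding $\Op^w_N(a)$ through \eqref{eq_quantization_obs_torus} and reducing products of translations with \eqref{eq_commutation_quantum_translation}.

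\textbf{Trace.} Write $n = (m, n')$ with $m, n' \in \Z^d$. By \eqref{eq_action_U_N((m,n)/N)_on_e_theta_j},
\begin{equation*}
	\sprod{U_N(\tfrac{n}{N}) e^\theta_j}{e^\theta_j} = e^{\frac{\pi i}{N}\sprod{n'}{m - 2\theta_x}} e^{\frac{2\pi i}{N}\sprod{n'}{j}} \sprod{e^\theta_{j+m}}{e^\theta_j}.
\end{equation*}
By \eqref{eq_transformation_rule_e_theta_j}, $e^\theta_{j+m}$ is a unimodular multiple of a basis vector whose index is $j+m$ reduced modulo $N$. Hence the diagonal term vanishes unless $m \equiv 0 \ \modulo{N}$. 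If $m = N y$, then $\sprod{e^\theta_{j+Ny}}{e^\theta_j} = e^{2\pi i \sprod{y}{\theta_\xi}}$, and summing over $j$ gives
\begin{equation*}
	e^{(\dots)} \sum_{j \in \eint{0}{N-1}^d} e^{\frac{2\pi i}{N}\sprod{n'}{j}}.
\end{equation*}
This geometric sum equals $N^d$ if $n' \equiv 0 \ \modulo{N}$ and $0$ otherwise. When $n = N(y, \eta)$, collecting the phases gives
\begin{equation*}
	\frac{\pi i}{N}\sprod{N\eta}{Ny} - \frac{2\pi i}{N}\sprod{N\eta}{\theta_x} + 2\pi i\sprod{y}{\theta_\xi}.
\end{equation*}
The first term is $\frac{\pi i}{N}P(n)$, and the last two are $\frac{2\pi i}{N}\sigma(\theta, n)$ for $\sigma(z, z') = \sprod{Jz}{z'}$. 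The step most prone to error is this sign bookkeeping; I would check it against the convention $J = \begin{pmatrix} 0 & I \\ -I & 0 \end{pmatrix}$.

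\textbf{Hilbert--Schmidt norm.} The series \eqref{eq_quantization_obs_torus} converges absolutely in operator norm since $a_n = \CO(n^{-\infty})$, and $\CH_N(\theta)$ is finite dimensional. We may therefore write
\begin{equation*}
	\Op^w_N(a)^* \Op^w_N(a) = \sum_{k,n} \overline{a_k}\, a_n\, U_N(\tfrac{k}{N})^* U_N(\tfrac{n}{N}),
\end{equation*}
using that the $U_N$ are unitary, and take the trace term by term. Since $U_h(z)^* = U_h(-z)$, \eqref{eq_commutation_quantum_translation} gives
\begin{equation*}
	U_h(-\tfrac{k}{N}) U_h(\tfrac{n}{N}) = e^{\frac{i}{2h}\sigma(-k/N,\, n/N)} U_h(\tfrac{n-k}{N}) = e^{-\frac{\pi i}{N}\sigma(k,n)} U_h(\tfrac{n-k}{N}).
\end{equation*}
Applying the trace formula, only $n \equiv k \ \modulo{N}$ survives, with phase
\begin{equation*}
	-\frac{\pi i}{N}\sigma(k, n) + \frac{2\pi i}{N}\sigma(\theta, n-k) + \frac{\pi i}{N}P(n-k).
\end{equation*}
It remains to check that this equals $\frac{\pi i}{N}\sigma(2\theta - n, n-k) + \frac{\pi i}{N}P(n-k)$. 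This holds because $-\sigma(k, n) = -\sigma(n, n-k)$, which follows from antisymmetry: $\sigma(n, n-k) = -\sigma(n, k) = \sigma(k, n)$. Dividing by $N^d$ gives the stated formula. The only real obstacle is keeping the phase conventions consistent; the analytic part is routine.
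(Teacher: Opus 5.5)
Your proposal is correct and follows the same route as the paper. You compute the trace in the basis $(e^\theta_j)_j$ from the explicit action of $U_N$ and the transformation rule, where your phase and sign bookkeeping against $\sigma(z,z')=\sprod{Jz}{z'}$ checks out. You then expand $\Op^w_N(a)$, reduce $U_N(\frac{k}{N})^*U_N(\frac{n}{N})$ with the commutation relation, and simplify the phase using antisymmetry of $\sigma$; your only addition is the explicit absolute-convergence justification for taking the trace term by term, which the paper leaves implicit.
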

\begin{proof}
	Write $n=(k,l)$. We deduce from \eqref{eq_transformation_rule_e_theta_j} and \eqref{eq_action_U_N((m,n)/N)_on_e_theta_j} that
	\begin{align*}
		\tr(U_N(\frac{k}{N}, \frac{l}{N})) & = \sum_{j \in \eint{0}{N-1}^d} \sprod{U_N(\frac{k}{N}, \frac{l}{N})e^\theta_j}{e^\theta_j} = \sum_{j \in \eint{0}{N-1}^d} e^{\frac{\pi i}{N} \sprod{l}{k-2 \theta_x}} e^{\frac{2 \pi i }{N} \sprod{l}{j}} \sprod{e^\theta_{j+k}}{e^{\theta}_j} ,\\
		& = \mathbb{1}_{N \Z^d}(k) e^{2 \pi i \sprod{\frac{k}{N}}{\theta_\xi}} e^{\frac{\pi i}{N} \sprod{l}{k-2 \theta_x}} \sum_{j \in \eint{0}{N-1}^d} e^{\frac{2 \pi i}{N} \sprod{l}{j}} ,\\
		& = \mathbb{1}_{N \Z^{2d}}(k,l) e^{\frac{2 \pi i}{N} \sigma(\theta, (k,l)) + \frac{\pi i}{N} \sprod{k}{l}} N^d.
	\end{align*}
	Write $a = \sum_{n \in \Z^{2d}} a_{n} e^{2 \pi i \sigma(n, \cdot)}{}$. The formulas \eqref{eq_commutation_quantum_translation} and \eqref{eq_quantization_obs_torus} imply
	\begin{align*}
		\tr(\Op^w_N(a)^* \Op^w_N(a)) & = \sum_{n, k \in \Z^{2d}} \overline{a_{k}} a_{n} \tr(U_N(\frac{k}{N})^* U_N(\frac{n}{N})) ,\\
		& = \sum_{n, k \in \Z^{2d}} \overline{a_{k}} a_{n} e^{\frac{\pi i}{N} \sigma(-k,n)} \tr(U_N(\frac{n-k}{N})).
	\end{align*}
	The computation of the trace of the quantum translation done above then gives
	\begin{equation*}
		\tr(\Op^w_N(a)^* \Op^w_N(a)) = \sum_{n,k \in \Z^{2d}} \overline{a_{k}} a_{n} e^{\frac{\pi i}{N} \sigma(n,k)} \mathbb{1}_{N \Z^{2d}}(n-k) e^{\frac{2 \pi i}{N} \sigma(\theta, n-k) + \frac{\pi i}{N} P(n-k)}.
	\end{equation*}
	We simplify the phase to obtain
	\begin{equation*}
		\frac{\tr(\Op^w_N(a)^* \Op^w_N(a))}{N^d} = \sum_{\substack{n,k \in \Z^{2d} ,\\ n \equiv k \modulo{N}}} \overline{a_{k}} a_{n} e^{\frac{\pi i}{N} \sigma(2\theta-n, n-k) + \frac{\pi i}{N} P(n-k)}.
	\end{equation*}
\end{proof}

\subsection{Anti-Wick quantization}

We will also use another quantization based on coherent states, the
Anti-Wick quantization. We refer the reader to
\cite{robertCoherentStatesApplications2021} for a presentation of coherent
states in mathematical physics. The standard coherent states are Gaussian
functions centered around a point in phase space. More precisely, the coherent
state centered at $0$ in $\R^{2d}$, denoted $\varphi_{0,h}$, is given by
\begin{equation*}
	\varphi_{0,h}(x) = \frac{1}{(\pi h)^{d/4}}e^{-\norm{x}^2/2h}.
\end{equation*}
and its translates centered at $\rho \in \R^{2d}$ are obtained via the action of the quantum translation $(U_h(\rho))_{\rho \in \R^{2d}}$
\begin{equation*}
	\varphi_{\rho,h} = U_h(\rho) \varphi_{0,h} \qqtq{for} \rho \in \R^{2d}.
\end{equation*}
On the one hand, they are not orthogonal and their overlap is
\begin{equation}\label{eq_overlap_coherent_states}
	\sprod{\varphi_{\rho,h}}{\varphi_{\rho',h}} = e^{\frac{i}{2h} \sigma(\rho,\rho')} e^{-\frac{\norm{\rho - \rho'}^2}{4h}}.
\end{equation}
This can be obtained by direct computation of Gaussian integrals or in a more conceptual fashion using the creation and annihilation operators for the harmonic oscillator \cite[see][sec. 1.3.1 ,prop. 2]{robertCoherentStatesApplications2021}.

On the other hand, they satisfy a resolution of the identity
\begin{equation}\label{eq_resolution_identity_R2d}
	Id_{L^2(\R^d)} = \int_{\R^{2d}} \ket{\varphi_{\rho,h}}\bra{\varphi_{\rho,h}} \frac{\diff \rho}{(2\pi h)^d},
\end{equation}
where $\ket{\varphi_{\rho,h}}\bra{\varphi_{\rho,h}}$ is the rank-1 projector on $\varphi_{\rho,h}$ defined by
\begin{equation*}
	\forall u \in L^2(\R^d), \qquad \ket{\varphi_{\rho,h}}\bra{\varphi_{\rho,h}} u = \sprod{u}{\varphi_{\rho,h}} \varphi_{\rho,h}.
\end{equation*}

Let $a \in S(1)$, its Anti-Wick quantization is defined as
\begin{equation}\label{eq_def_Anti-Wick_R2d}
	\Op^{AW}_h(a) = \int_{\R^{2d}} a(\rho) \ket{\varphi_{\rho,h}}\bra{\varphi_{\rho,h}} \frac{\diff \rho}{(2\pi h)^d}.
\end{equation}
The main feature of this quantization is positivity, meaning that if $a \ge 0$ is a positive bounded symbol then its Anti-Wick quantization $\Op^{AW}_h(a)$ is a positive self-adjoint operator on $L^2(\R^d)$.

The Anti-Wick quantization is linked to the Weyl one by the formula
\begin{equation}\label{eq_link_anti_wick_weyl}
	\forall a \in S(1), \quad \Op^{AW}_h(a) = \Op^w_h(a * G_h) \qqtq{where} G_h(x, \xi) = \frac{1}{(\pi h)^d} e^{-\frac{1}{h}(\norm{x}^2 + \norm{\xi}^2)}.
\end{equation}
This expression and the Calderón-Vaillancourt \eqref{Calderon-Vaillancourt_R2d} allow one to prove that
\begin{equation}\label{eq_bound_Wey_vs_Anti_Wick_R2d}
	\forall a \in S(1), \qquad \norm{\Op^{AW}_h(a) - \Op^w_h(a)}_{\CL(L^2(\R^d))} = O_a(h).
\end{equation}

To descend the Anti-Wick quantization to the torus, one simply applies the
unitary isomorphism $\Phi_N$ between $L^2(\R^d)$ and $L^2(\T^{2d},
	\CH_N(\theta))$ given by Theorem \ref{thm_proprietes_CH_N(theta)}. We project
the coherent states on $\R^{2d}$ onto the torus and define
\begin{equation*}
	\varphi_{\rho,N}^\theta = \Phi_N \left(\varphi_{\rho, (2\pi N)^{-1}}\right)(\theta) \in \CH_N(\theta) \qqtq{for} \theta \in \T^{2d}, \; \rho \in \R^{2d}.
\end{equation*}
The relation \eqref{eq_projection_on_torus_of_translation} implies that, up to a phase, the coherent states $\varphi_{\rho,N}^\theta$ only depend on the equivalence class of $\rho$ modulo $\Z^{2d}$.

This leads to the definition of the Anti-Wick quantization of an observable $a
	\in \CC^\infty(\T^{2d})$ as
\begin{equation*}
	\Op^{AW}_{N, \theta}(a) = \int_{\T^{2d}} a(\rho) \ket{\varphi_{\rho,N}^\theta} \bra{\varphi_{\rho,N}^\theta} N^d \diff \rho \quad \in \CL(\CH_N(\theta)).
\end{equation*}
As usual, we will denote $\varphi_{\rho,N}^\theta$ as $\varphi_{\rho,N}$ and $\Op^{AW}_{N, \theta}(a)$ as $\Op^{AW}_N(a)$ when $\theta$ is clear from the context. This quantization is again positive.

Let $\theta \in \T^{2d}$. The resolution of identity
\eqref{eq_resolution_identity_R2d} descends to the quantum space $\CH_N(\theta)$
leading to
\begin{equation}\label{eq_resolution_identity_torus}
	Id_{\CH_N(\theta)} = \int_{\T^{2d}} \ket{\varphi_{\rho,N}^\theta} \bra{\varphi_{\rho,N}^\theta} N^d \diff \rho.
\end{equation}
This equality implies the following bound on the $L^2$ operator norm of the Anti-Wick quantization of an observable
\begin{equation}\label{ineq_bound_norm_Anti-Wick_torus}
	\forall a \in \CC(\T^{2d}), \qquad \norm{\Op^{AW}_{N, \theta}(a)}_{\CL(\CH_N(\theta))} \le \norm{a}_\infty.
\end{equation}
The difference between the Anti-Wick and the Weyl quantization on the torus is of order $1/N$ as a consequence of \eqref{eq_bound_Wey_vs_Anti_Wick_R2d}
\begin{equation}\label{eq_bound_Wey_vs_Anti_Wick_torus}
	\forall a \in \CC^\infty(\T^{2d}) \quad \norm{\Op^{AW}_N(a) - \Op^w_N(a)}_{\CL(\CH_N(\theta))} = O_a(\frac{1}{N}).
\end{equation}

We present estimates on the norm and on the overlap of the coherent states
$\varphi_{\rho,N}$ on the torus that are a direct transposition to higher
dimensions of the estimates for $d=1$ of \cite[sec. 6.4, prop.
	61]{robertCoherentStatesApplications2021}.
\begin{lemma}[Overlap and norm of coherent states on the torus]\label{lem_prop_coherent_states_torus}
	Let $\rho, \rho' \in \R^{2d}$, $\theta \in \T^{2d}$ and $N \in \N^*$. Then
	\begin{equation*}
		|\sprod{\varphi^\theta_{\rho, N}}{\varphi^\theta_{\rho',N}}| \le C_d e^{-\frac{\pi N}{2} d_{\T^{2d}}(\rho, \rho')^2} \qqtqq{and} \norm{\varphi^\theta_{\rho', N}}^2 = 1 + \CO_d(e^{-\frac{\pi}{4}N}).
	\end{equation*}
	The constants depend only on the dimension $d$ and $d_{\T^{2d}}(\rho,\rho')$ is the distance on $\T^{2d}$ defined by
	\begin{equation*}
		d_{\T^{2d}}(\rho,\rho') = \inf_{m \in \Z^{2d}} \norm{\rho - \rho' -m}.
	\end{equation*}
\end{lemma}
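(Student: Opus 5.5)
The approach is to compute everything on $\R^d$ and push it down to $\CH_N(\theta)$ with the isometry $\Phi_N$ of Theorem \ref{thm_proprietes_CH_N(theta)}. By definition $\varphi^\theta_{\rho,N} = \Phi_N(\varphi_{\rho,h})(\theta)$ with $h=(2\pi N)^{-1}$. The first step is to write $\Phi_N$ as a periodisation over the lattice translations. Concretely, I would check that for $f\in\CS(\R^d)$
\begin{equation*}
	\Phi_N(f)(\theta) = \sum_{m \in \Z^{2d}} c_m(\theta)\, U_h(m) f,
\end{equation*}
up to normalisation and as an identity in $\CS'(\R^d)$, where the coefficients $c_m(\theta)$ are unimodular phases read off from \eqref{eq_projection_on_torus_of_translation} (one takes $c_m = e^{-2\pi i\sigma(\theta,m)+\pi i N P(m)}$). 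I would then identify the scalar product on $\CH_N(\theta)$ with the pairing $\sprod{\sum_m c_m U_h(m) f}{g}_{L^2(\R^d)}$. This identification can be verified on the basis $e^\theta_j$: one computes that $\sum_m c_m U_h(m)$ applied to the function $x\mapsto \delta$-comb pieces reproduces $e^\theta_j$, and the normalisation is fixed by comparing with the resolution of the identity \eqref{eq_resolution_identity_torus}.

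With this formula, the overlap becomes
\begin{equation*}
	\sprod{\varphi^\theta_{\rho,N}}{\varphi^\theta_{\rho',N}} = \sum_{m\in\Z^{2d}} c_m(\theta)\,\sprod{U_h(m)\varphi_{\rho,h}}{\varphi_{\rho',h}}_{L^2}.
\end{equation*}
By \eqref{eq_commutation_quantum_translation}, $U_h(m)\varphi_{\rho,h}$ equals $\varphi_{\rho+m,h}$ up to a phase. By \eqref{eq_overlap_coherent_states}, the $m$-th term then has modulus $e^{-\norm{\rho+m-\rho'}^2/4h} = e^{-\frac{\pi N}{2}\norm{\rho-\rho'+m}^2}$. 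This yields
\begin{equation*}
	|\sprod{\varphi^\theta_{\rho,N}}{\varphi^\theta_{\rho',N}}| \le \sum_{m\in\Z^{2d}} e^{-\frac{\pi N}{2}\norm{\rho-\rho'+m}^2}.
\end{equation*}
Let $m_0$ realise the torus distance and set $v=\rho-\rho'+m_0$, so that $\norm{v}=d_{\T^{2d}}(\rho,\rho')\le \sqrt{2d}/2$. For every other $m$, writing $m=m_0+k$ with $k\neq 0$, the minimality of $m_0$ gives $\norm{v+k}\ge\norm{v}$. I would bound $\norm{v+k}^2 \ge \norm{v}^2 + c\norm{k}^2$ for $\norm{k}$ large, and use a crude bound for the finitely many small $k$. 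This gives a geometric-type series $\sum_k e^{-\frac{\pi N}{2}(\norm{v}^2 + \dots)} \le C_d e^{-\frac{\pi N}{2}\norm{v}^2}$ uniformly in $N\ge1$. In fact $\norm{v+k}\ge \norm{k}-\norm{v}$ and $\norm{v+k}\ge\norm{v}$ already suffice: split into $\norm{k}\le 2\sqrt{2d}$, where there are $C_d$ terms each bounded by $e^{-\frac{\pi N}{2}\norm v^2}$, and $\norm{k}> 2\sqrt{2d}$, where $\norm{v+k}^2\ge \norm{v}^2 + \norm{k}^2/8$ and the tail sum is bounded.

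For the norm, take $\rho=\rho'$. The $m=0$ term is exactly $\norm{\varphi_{\rho,h}}^2=1$. The remaining terms have modulus $e^{-\frac{\pi N}{2}\norm{m}^2}$ for $m\neq0$, and their sum is $\le C_d e^{-\frac{\pi N}{2}}$, which is $\CO_d(e^{-\frac{\pi}{4}N})$. (In this case the phases are not needed at all, since we only bound the remainder in modulus.)

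The main obstacle is the first step: getting the exact periodisation formula for $\Phi_N$, together with the normalisation constant and the identification of the $\CH_N(\theta)$ inner product with the $L^2$ pairing against the unperiodised function. The Gaussian summations afterwards are routine. If the direct verification on $e^\theta_j$ becomes messy, I would instead use the unitarity of $\Phi_N$ together with \eqref{eq_projection_on_torus_of_translation}. The map $\theta\mapsto \sprod{\Phi_N f(\theta)}{\Phi_N g(\theta)}$ has Fourier coefficients $\sprod{U_h(m)f}{g}$ times phases, and integrating over $\theta$ recovers $\sprod fg$, which pins down the formula.
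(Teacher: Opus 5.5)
Your proposal is correct and takes essentially the same route as the argument the paper defers to (Robert's periodization proof for $d=1$). You write $\Phi_N(f)(\theta)=\sum_{m\in\Z^{2d}}c_m(\theta)U_h(m)f$ (this holds exactly, with normalisation $1$ and your $c_m$, by Poisson summation in the momentum component of $m$, and your fallback via unitarity of $\Phi_N$ and \eqref{eq_projection_on_torus_of_translation} also pins it down), which reduces the torus overlap to a lattice sum of Euclidean overlaps $e^{-\frac{\pi N}{2}\norm{\rho-\rho'+m}^2}$; your splitting of that Gaussian sum is sound, and the norm estimate you get, $1+\CO_d(e^{-\frac{\pi}{2}N})$, is even sharper than the stated one.
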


\begin{proof}
	The proof for $d=1$ of \cite[sec. 6.4, prop. 61]{robertCoherentStatesApplications2021} adapts mutatis mutandis to higher dimensions. There is however a small error in the expression for the overlap of coherent states used in the proof and accounting for this error leads to the lemma above, with no $\sqrt{N}$ and a different exponent. Note that we only need the case $\gamma = i$ of the coherent states defined in \cite{robertCoherentStatesApplications2021}.
\end{proof}

\subsection{Symplectic automorphisms of the torus and metaplectic representation}

Let $A \in \Sp_{2d}( \R)$ be a real symplectic matrix. The map defined
by
\begin{equation*}
	(z, t) \in H_{2d+1} \mapsto (Az,t) \in H_{2d+1},
\end{equation*}
is an automorphism of the Heisenberg group that preserves the center $\set{(0,t) \in H_{2d+1} \; | \: t \in \R}$. We continue to denote it $A$. We can twist the irreducible representation $\rho_h$ by $A$ to get the representation $\rho_h \circ A$. It is still irreducible and acts on the center in the same manner as $\rho_h$. As a consequence of the Stone-von Neumann theorem, $\rho_h$ and $\rho_h \circ A$ are conjugate, meaning that there exists a unitary operator $M_h(A)$ on $L^2(\R^d)$ such that
\begin{equation*}
	\forall z \in \R^{2d}, \; \forall t \in \R, \qquad \rho_h (Az, t) = M_h(A) \rho_h(z,t) M_h(A)^{-1}.
\end{equation*}
This relation is equivalent to
\begin{equation}\label{eq_exact_Egorov_R2d}
	\forall z \in \R^{2d}, \qquad U_h (Az) = M_h(A) U_h(z) M_h(A)^{-1},
\end{equation}
which is an exact Egorov theorem for the metaplectic operators.
In other words, the operator $M_h(A)$ acting on $L^2(\R^d)$ is the quantum analogue of the diffeomorphism $A$ acting on $\R^{2d}$.
According to Schur's lemma, the operator $M_h(A)$ is unique up to a phase and the morphism
\begin{equation*}
	A \in \Sp_{2d}( \R) \mapsto M_h(A) \in \CU(L^2(\R^d))/\set{cId \, | \,  |c|=1 },
\end{equation*}
is thus a projective representation of the symplectic group. This representation can be lifted into a representation of the metaplectic group, which is the double cover of the symplectic group, and is generally called the metaplectic representation. We refer the reader to \cite[chap. 4]{follandHarmonicAnalysisPhase1989a} for more information on the metaplectic representation.

The metaplectic operators associated to a set of generators of the symplectic
group admit simple formulas.
\begin{lemma}[Metaplectic operators for generators of the symplectic group]\label{lem_metaplectic_op_generator_symp_group}
	Let $E \in \GL_d(\R)$, $B \in \Sym_d(\R)$ and $h >0$. We have, for
	\begin{align*}
		M_h\left(\begin{pmatrix}
				         E & 0 \\ 0 & E^{-T}
			         \end{pmatrix}\right)u = u \circ E^{-1} \quad & \quad M_h\left(\begin{pmatrix}
				                                                                       I & 0 \\ B & I
			                                                                       \end{pmatrix}\right)u(x) = e^{\frac{i}{2h} \sprod{Bx}{x}} u(x) \\ M_h\left(J\right) = \CF_h&
	\end{align*}
	The equalities are to be understood modulo a multiplication by $\pm Id$.
\end{lemma}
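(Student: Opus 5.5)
The plan is to check each formula directly against the exact Egorov relation \eqref{eq_exact_Egorov_R2d}, namely $M_h(A) U_h(z) M_h(A)^{-1} = U_h(Az)$ for all $z \in \R^{2d}$. The quantum translations $(U_h(z))_{z}$ act irreducibly on $L^2(\R^d)$ by the Stone-von Neumann theorem. Hence, by Schur's lemma, any unitary operator satisfying this relation coincides with $M_h(A)$ up to a unimodular constant. It therefore suffices to verify the relation for each candidate operator, and it is enough to do so on Schwartz functions.

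Throughout I use the explicit action \eqref{eq_action_quantum_translation_on_functions},
\[
(U_h(x,\xi)\psi)(y) = e^{\frac{i}{h}\sprod{y - x/2}{\xi}} \psi(y - x),
\]
and verify the equivalent intertwining $M U_h(z) = U_h(Az) M$ for each candidate $M$ and each $z=(x,\xi)$.

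\textbf{Diagonal generator.} Let $A = \mathrm{diag}(E, E^{-T})$ and $T u = u\circ E^{-1}$. Unitarity of $T$ fails by the factor $|\det E|^{1/2}$, so I use the normalized operator $|\det E|^{-1/2} u\circ E^{-1}$; this is presumably what the statement means up to normalization. One computes
\[
T U_h(x,\xi) u(y) = e^{\frac{i}{h}\sprod{E^{-1}y - x/2}{\xi}}\, u(E^{-1}y - x).
\]
Rewriting $\sprod{E^{-1}y - x/2}{\xi} = \sprod{y - Ex/2}{E^{-T}\xi}$ and $u(E^{-1}y - x) = (Tu)(y - Ex)$ gives exactly $U_h(Ex, E^{-T}\xi)\, Tu$, as required.

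\textbf{Shear generator.} Let $A = \begin{pmatrix} I & 0 \\ B & I \end{pmatrix}$ and let $M$ be multiplication by $e^{\frac{i}{2h}\sprod{By}{y}}$. The ratio of phases between $M U_h(x,\xi) u(y)$ and $U_h(x, \xi + Bx) M u(y)$ is
\[
\tfrac{1}{2}\sprod{By}{y} - \tfrac{1}{2}\sprod{B(y-x)}{y-x} - \sprod{y - x/2}{Bx}.
\]
Expanding, the first two terms give $\sprod{Bx}{y} - \tfrac12\sprod{Bx}{x}$, using that $B$ is symmetric. The third term gives $-\sprod{y}{Bx} + \tfrac12\sprod{x}{Bx}$, so the total vanishes.

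\textbf{Fourier generator.} For $J$ the image is $Jz = (\xi, -x)$, and I check that $\CF_h U_h(x,\xi) = U_h(\xi,-x)\CF_h$. Compute $\CF_h\big(e^{\frac{i}{h}\sprod{\cdot - x/2}{\xi}} u(\cdot - x)\big)(\eta)$ by the substitution $y = w + x$. This yields
\[
e^{\frac{i}{h}\sprod{x/2}{\xi}}\, e^{-\frac{i}{h}\sprod{x}{\eta}}\, \hat u(\eta - \xi).
\]
On the other side,
\[
U_h(\xi, -x)\hat u(\eta) = e^{-\frac{i}{h}\sprod{\eta - \xi/2}{x}}\, \hat u(\eta - \xi),
\]
which agrees. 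Unitarity of $\CF_h$ is Plancherel with the $(2\pi h)^{-d/2}$ normalization.

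\textbf{Main obstacle.} The only delicate point is the scalar ambiguity. Schur's lemma gives equality only up to a unimodular constant $c$, while the statement asserts $\pm Id$. This is a convention: one fixes the lift to the metaplectic group (the double cover) so that these generators map to the given operators, and the representation is then determined up to sign. Equivalently, the statement is read as equality modulo this choice. I would remark this explicitly rather than derive the sign, together with the Jacobian normalization in the diagonal case.
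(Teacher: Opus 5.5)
Your proposal is correct and follows the same route as the paper. The paper also reduces the lemma to checking the exact Egorov relation \eqref{eq_exact_Egorov_R2d}, which determines $M_h(A)$ up to a phase, and defers the three computations to Folland; you carry these out explicitly and correctly. Two of your remarks are also well taken: the diagonal generator needs the factor $|\det E|^{-1/2}$ to be unitary, as in Folland's formula, which is a genuine correction to the statement as written; and reading ``modulo $\pm Id$'' as a choice of lift is consistent with the paper's own Schur-lemma definition of $M_h(A)$ up to a phase.
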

\begin{proof}
	For $A \in \Sp_{2d}(\R)$, a representative $M_h(A)$ of the metaplectic operator associated to $A$ is characterized by the relation \eqref{eq_exact_Egorov_R2d}. Thus, it suffices to check that the expression in the lemma verifies it. The explicit computations are done at the beginning of \cite[chap. 4, sec. 2]{follandHarmonicAnalysisPhase1989a}.
\end{proof}

There is a direct link between metaplectic operators and the quantum
propagator solving the Schrödinger equation associated to quadratic
Hamiltonians. The next theorem is a reformulation of statements in \cite[sec.
	11.3]{zworskiSemiclassicalAnalysis2012} and we refer the reader to it for a
different construction of the metaplectic operators via quantum propagators.

\begin{theorem}[Metaplectic operators and quadratic Hamiltonians]\label{thm_metaplectic_operators_quadratic_hamiltonians}
	Let $A_1 \in \GL_d(\R)$ and $A_2, A_3 \in \Sym_d(\R)$. Define $K=\exp(A) \in \Sp_{2d}(\R)$ and a quadratic Hamiltonian $p \in \CC^\infty(\R^{2d})$ by
	\begin{equation*}
		p(x, \xi) = \frac{1}{2} \sprod{A_2\xi}{\xi} + \sprod{A_1x}{\xi} + \frac{1}{2} \sprod{A_3x}{x} \qtq{and} A = \begin{pmatrix}
			A_1 & A_2 \\ A_3 & A_1^{-T}
		\end{pmatrix}.
	\end{equation*}
	Then the equation below has a unique continuous solution $(F_t)_{t \in \R}$ of unitary operators on $L^2(\R^d)$ and this solution verifies $F(t) = \pm M_h(K^t)$ for all $t \in \R$.
	\begin{equation*}
		\frac{h}{i} \frac{d}{dt} F(t) + F(t) \Op^w_h(p) = 0 \qqtq{with} F(0) = Id_{L^2(\R^d)}.
	\end{equation*}
\end{theorem}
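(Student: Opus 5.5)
We will take $M_h(K^t)$ to be defined through the one-parameter group $t\mapsto K^t=\exp(tA)$. The quantity to identify is $F(t)=e^{-\frac{i}{h}t\Op^w_h(p)}$. Since $p$ is a real quadratic form, $\Op^w_h(p)$ is essentially self-adjoint on $\CS(\R^d)$. Stone's theorem therefore gives a unique strongly continuous unitary group solving the equation, which settles existence and uniqueness.

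The key step is an exact Egorov relation for $F(t)$. For $z\in\R^{2d}$ we set $G_z(t)=F(t)U_h(z)F(t)^{-1}$ and differentiate. The derivative equals $\frac{i}{h}F(t)[\Op^w_h(p),U_h(z)]F(t)^{-1}$ up to sign. For quadratic $p$ the Weyl calculus is exact: the commutator $[\Op^w_h(p),\Op^w_h(e^{\frac ih\sigma(z,\cdot)})]$ equals $\frac hi\Op^w_h(\{p,e^{\frac ih\sigma(z,\cdot)}\})$, because the Moyal expansion stops after the first order. One can also check this directly on generators using \eqref{eq_U_h(z)=quantif_e(isigma(z,.))}. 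The Poisson bracket $\{p,e^{\frac ih\sigma(z,\cdot)}\}$ is $\frac ih\sigma(\dot z,\cdot)e^{\frac ih\sigma(z,\cdot)}$, where $\dot z=\mathcal{H}z$ is the linear Hamiltonian vector field of $p$ applied to $z$. Hence both $t\mapsto G_z(t)$ and $t\mapsto U_h(K^{t}z)$ solve the same linear ODE in $t$ with the same initial value $U_h(z)$. Here we take $\exp(tA)$ to be the flow of $p$, adjusting signs of $t$ to match the conventions.

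Uniqueness of that ODE can be checked weakly on Schwartz functions. Testing against Schwartz functions is legitimate because $F(t)$ preserves $\CS(\R^d)$: the propagator of a quadratic Hamiltonian does, for instance via Hermite functions or an explicit Mehler-type formula. This yields
\begin{equation*}
	F(t)U_h(z)F(t)^{-1}=U_h(K^tz)\qquad\text{for all } z\in\R^{2d},\ t\in\R,
\end{equation*}
which is exactly \eqref{eq_exact_Egorov_R2d} for $K^t$.

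By Schur's lemma (irreducibility in Stone--von Neumann), this forces $F(t)=c(t)M_h(K^t)$ with $|c(t)|=1$. To pin down the phase, we use that $F$ is a continuous group, so $c$ combined with the metaplectic lift is a continuous character of $\R$. The metaplectic representation restricted to the one-parameter subgroup lifts to a continuous homomorphism into unitaries, defined up to sign in the projective representation. Normalizing $M_h(K^t)$ along this path via the metaplectic group, $c$ becomes a continuous homomorphism $\R\to S^1$.

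Showing $c\equiv 1$ up to sign is the main obstacle. The plan is to compare both sides on the generators of Lemma \ref{lem_metaplectic_op_generator_symp_group}. For the purely kinetic part $p=\frac12\langle A_2\xi,\xi\rangle$, conjugating by $\CF_h$ reduces to multiplication by $e^{\frac{i}{2h}\langle Bx,x\rangle}$ with no extra phase. For $\langle A_1x,\xi\rangle$, the Weyl symmetrization produces exactly the factor $|\det|^{-1/2}$ that makes $u\circ E^{-1}$ unitary. Alternatively, one can invoke the identification in \cite[sec. 11.3]{zworskiSemiclassicalAnalysis2012}, where $M_h$ is defined precisely as such propagators, so that the statement reduces to that construction being compatible with the Stone--von Neumann characterization, up to the sign ambiguity.
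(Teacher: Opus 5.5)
Your core argument is the paper's. The paper takes existence and uniqueness from Zworski's Lemma~11.8 and the exact Egorov identity $F(t)^{-1}\Op^w_h(a)F(t)=\Op^w_h(a\circ K^t)$ from the proof of his Theorem~11.9, which runs on exactly the commutator formula for quadratic symbols that you invoke. It then specializes to $a=e^{\frac{i}{h}\sigma(z,\cdot)}$ and concludes from the characterization \eqref{eq_exact_Egorov_R2d}. You simply reprove the cited facts in the special case needed. Your reading of $K^t$ as the Hamiltonian flow of $p$ is the right one: as printed, the lower-right block $A_1^{-T}$ does not in general give an element of $\mathfrak{sp}_{2d}(\R)$, and the generator of the flow of $p$ is $\left(\begin{smallmatrix}A_1&A_2\\-A_3&-A_1^T\end{smallmatrix}\right)$.

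Two small repairs are needed in this part. First, for fixed $z$ the derivative of $G_z(t)$ is, up to sign, $F(t)\Op^w_h\bigl(\tfrac{i}{h}\sigma(\dot z,\cdot)e^{\frac{i}{h}\sigma(z,\cdot)}\bigr)F(t)^{-1}=\partial_s G_{z+s\dot z}(t)|_{s=0}$. This is a $z$-derivative of $G$, not a function of $G_z(t)$, so $G$ and $U_h(K^t\,\cdot)$ share a transport equation in $(t,z)$ rather than an ODE in $t$. The clean version is to check that $t\mapsto F(t)^{-1}U_h(K^tz)F(t)$ has zero derivative on Schwartz vectors, which is exactly what the exact commutator formula gives. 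Second, Stone's theorem gives uniqueness only among unitary groups. For an arbitrary continuous solution $G$, differentiate $G(t)F(t)^{-1}u$ for $u\in\CS(\R^d)$.

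The substantive point is the phase. The paper never fixes it: it regards any unitary satisfying \eqref{eq_exact_Egorov_R2d} as a representative of $M_h(A)$, unique up to a unimodular constant. So your Schur step already reaches everything the paper proves.

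For the sharper claim, where $M_h$ is the metaplectic representation $\mu$ and is ambiguous only up to sign, your last step does not close. You correctly arrive at $F(t)=e^{i\lambda t}\widetilde M(t)$, with $\widetilde M$ the lifted one-parameter group. Your computations against Lemma~\ref{lem_metaplectic_op_generator_symp_group} do give $\lambda=0$ when only one block type is present (only $A_2$, only $A_3$, or only $A_1$). For general $A$, however, $\exp(tA)$ is not a product of such one-parameter groups. Knowing the character on each block subgroup does not by itself determine the character on $\exp(tA)$, and the fallback to Zworski's construction merely restates what must be shown.

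The missing ingredient is that $\lambda=\lambda(A)$ is linear in $A$. Write $p_A$ for the quadratic form attached to $A$, and differentiate at $t=0$ on $\CS(\R^d)$, which is the space of smooth vectors of $\mu$. This gives $d\mu(A)=-\frac{i}{h}\Op^w_h(p_A)-i\lambda(A)$, and both $d\mu$ and $A\mapsto\Op^w_h(p_A)$ are linear. Since your three block types span $\mathfrak{sp}_{2d}(\R)$, this forces $\lambda\equiv0$. Equivalently, $d\mu$ and $A\mapsto-\frac{i}{h}\Op^w_h(p_A)$ are both Lie algebra homomorphisms (the latter by the exact commutator formula). Hence $\lambda$ vanishes on $[\mathfrak{sp}_{2d}(\R),\mathfrak{sp}_{2d}(\R)]=\mathfrak{sp}_{2d}(\R)$.
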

\begin{proof}
	Note that $JA + A^TJ = 0$. As this equation defines the tangent space of the symplectic group at the identity, we deduce that $K \in \Sp_{2d}(\R)$.
	The existence and uniqueness of $F$ solving the equation of the theorem is exactly \cite[lem. 11.8]{zworskiSemiclassicalAnalysis2012}. Then the proof of \cite[thm. 11.9]{zworskiSemiclassicalAnalysis2012} shows that for any $t \in \R$, the quantum propagator $F$ satisfies
	\begin{equation*}
		\forall a \in S(1), \qquad F(t)^{-1} \Op^w_h(a) F(t) = \Op^w_h(a \circ K^t).
	\end{equation*}
	Taking $a = e^{\frac{i}{h}\sigma(z, \cdot)}$ and using \eqref{eq_U_h(z)=quantif_e(isigma(z,.))} gives
	\begin{equation*}
		\forall z \in \R^{2d}, \qquad F(t)^{-1} U_h(z) F(t) = \Op^w_h(e^{\frac{i}{h}\sigma(z, K^t\cdot)}) = \Op^w_h(e^{\frac{i}{h}\sigma(k^{-t}z, \cdot)}) = U_h(K^{-t}z).
	\end{equation*}
	Replacing $t$ by $-t$, we see that this is exactly \eqref{eq_exact_Egorov_R2d}, which means that $F(t) = M_h(K^t)$.
\end{proof}

Since any matrix with integer coefficients stabilizes the torus $\T^{2d}$, any
symplectic integer matrix $A \in \Sp_{2d}( \Z)$ induces an automorphism of
$\T^{2d}$, still denoted $A$. It is quantized by restricting the quantum
propagator $M_h(A)$ to $\CH_N(\theta)$, where $\theta$ is chosen to ensure that
$\CH_N(\theta)$ is invariant under $M_h(A)$. According to \cite[sec.
	2.2.4]{dyatlovSemiclassicalMeasuresHigherdimensional2024}, such a choice can be
made as long as $1$ is not an eigenvalue of $A$ or $N$ is even.
\begin{lemma}[Quantizability conditions for metaplectic operators]\label{lem_conditions_quantizability_metaplectic_operators}
	Let $A \in \Sp_{2d}( \Z)$. There exists a unique $\phi_A \in \set{0,1}^{2d}$ such that
	\begin{equation*}
		M_{h}(A) \CH_N(\theta) = \CH_N(A\theta + \frac{N \phi_A}{2}) \qqtqq{where} h = \frac{1}{2 \pi N}.
	\end{equation*}
	The angle $\phi_A$ is characterized by the relation
	\begin{equation*}
		\forall w \in \Z^{2d}, \qquad P(A^{-1}w) - P(w) \equiv \sigma(\phi_A,w) \quad \modulo{2\Z}.
	\end{equation*}
	In particular,
	\begin{itemize}
		\item If $N$ is even, then $\CH_N(0)$ is invariant under $M_h(A)$.
		\item If $1$ does not belong to the spectrum of $A$, then, for any $N \in \N^*$,
		      there exists an angle $\theta_N(A) \in \T^{2d}$ such that $\CH_N(\theta_N(A))$
		      is invariant under $M_h(A)$
	\end{itemize}
\end{lemma}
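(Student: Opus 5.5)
The plan is to use the characterization of $\CH_N(\theta)$ as the joint eigenspace of the commuting translations $(U_h(w))_{w\in\Z^{2d}}$, together with the exact Egorov relation \eqref{eq_exact_Egorov_R2d}. Let $\psi \in \CH_N(\theta)$ and $w \in \Z^{2d}$. Since $A^{-1}w\in\Z^{2d}$, we have
\begin{equation*}
	U_h(w) M_h(A)\psi = M_h(A) U_h(A^{-1}w)\psi = e^{2\pi i \sigma(\theta, A^{-1}w) + \pi i N P(A^{-1}w)} M_h(A)\psi .
\end{equation*}
Because $A$ is symplectic, $\sigma(\theta,A^{-1}w) = \sigma(A\theta, w)$. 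So $M_h(A)\psi$ is a joint eigenvector with phase $e^{2\pi i\sigma(A\theta,w) + \pi i N P(A^{-1}w)}$. To place it in some $\CH_N(\theta')$, I need to write $N(P(A^{-1}w)-P(w))$ modulo $2$ in the form $2\sigma(\eta,w)$ for a fixed $\eta$.

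The key arithmetic step is to show that $w\mapsto P(A^{-1}w)-P(w) \bmod 2$ is a $\Z/2$-linear form on $\Z^{2d}$. The map $P$ is a quadratic form with associated bilinear form $P(w+w')-P(w)-P(w') = \sprod{y}{\eta'}+\sprod{y'}{\eta}$, which is $\sigma(w,w')$ modulo $2$. Since $A^{-1}$ preserves $\sigma$, the difference $q(w) = P(A^{-1}w)-P(w)$ has polar form $\sigma(A^{-1}w,A^{-1}w')-\sigma(w,w') = 0$ modulo $2$. Hence $q$ is additive modulo $2$, so it is a homomorphism $\Z^{2d}\to \Z/2$. Because $\sigma$ is unimodular on $\Z^{2d}$, there is a unique $\phi_A\in\{0,1\}^{2d}$ with $q(w)\equiv\sigma(\phi_A,w)$ modulo $2$. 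This gives the characterization in the statement.

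Substituting, $\pi i N q(w) \equiv 2\pi i\,\sigma(N\phi_A/2, w)$ modulo $2\pi i$. Therefore the phase equals $e^{2\pi i\sigma(A\theta + N\phi_A/2, w) + \pi i N P(w)}$, which shows $M_h(A)\CH_N(\theta)\subset \CH_N(A\theta+N\phi_A/2)$. Equality follows by applying the same argument to $M_h(A)^{-1}=M_h(A^{-1})$, or from unitarity and equal dimensions $N^d$ (Theorem \ref{thm_proprietes_CH_N(theta)}). For uniqueness of $\phi_A$ as an element of $\{0,1\}^{2d}$, the phase characterization pins down $\sigma(\phi_A,\cdot)$ modulo $2$ on all of $\Z^{2d}$, and unimodularity of $\sigma$ then determines $\phi_A$ modulo $2$.

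For the consequences:
\begin{itemize}
	\item If $N$ is even, then $N\phi_A/2\in\Z^{2d}$ vanishes in $\T^{2d}$, so $\theta=0$ is mapped to itself.
	\item If $1\notin\Sp(A)$, I need $\theta$ with $(I-A)\theta \equiv N\phi_A/2$ in $\T^{2d}$. Since $I-A$ is an invertible real matrix, take $\theta = (I-A)^{-1}N\phi_A/2$ modulo $\Z^{2d}$.
\end{itemize}

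The main subtlety I expect is the sign and convention bookkeeping: the direction of Egorov ($A$ versus $A^{-1}$), $\sigma(\theta,A^{-1}w)=\sigma(A\theta,w)$, and checking that the polar form of $P$ really is $\sigma$ modulo $2$ under the paper's conventions for $J$ and $P$.
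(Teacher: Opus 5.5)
Your proof is correct. The paper does not prove this lemma itself; it takes it from \cite[sec.~2.2.4]{dyatlovSemiclassicalMeasuresHigherdimensional2024}, and your argument is the standard one behind that statement. The exact Egorov relation $U_h(w)M_h(A)=M_h(A)U_h(A^{-1}w)$ makes $M_h(A)\psi$ a joint eigenvector of the $U_h(w)$, $w\in\Z^{2d}$. The congruence $P(w+w')-P(w)-P(w')\equiv\sigma(w,w')$ modulo $2$, together with $A^{-1}\in\Sp_{2d}(\Z)$, makes $w\mapsto P(A^{-1}w)-P(w)$ a homomorphism $\Z^{2d}\to\Z/2\Z$. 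By unimodularity of $\sigma$ it is uniquely of the form $\sigma(\phi_A,\cdot)$. Writing this out has the merit of making explicit the congruence that the paper later uses to compute $\phi_{\tilde P_B}$.

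Two small precision points. First, elements of $\CH_N(\theta)$ are quasi-periodic distributions rather than $L^2$ functions. The equality step should therefore use injectivity of $M_h(A)$ on $\CS'(\R^d)$ together with $\dim \CH_N(\theta)=N^d$, not unitarity. If you take the alternative route through $A^{-1}$, you also need $\phi_{A^{-1}}\equiv A^{-1}\phi_A$ modulo $2$. This follows from $P(Aw)-P(w)=-\bigl(P(A^{-1}(Aw))-P(Aw)\bigr)$ and the symplecticity of $A$.

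Second, the mapping identity alone determines $\phi_A$ only through odd values of $N$. For even $N$, every $\phi\in\set{0,1}^{2d}$ gives the same space $\CH_N(A\theta)$. So reading the uniqueness claim through the congruence, as you do, is the right interpretation.
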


Applying this lemma to the matrix $J$ gives $\phi_J = 0$ and thus
\begin{equation}\label{eq_action_J_CH_N(theta)}
	M_h(J) \CH_N(\theta) = \CH_N(J\theta) \qqtq{for} \theta \in \T^{2d}, \, h = \frac{1}{2 \pi N}.
\end{equation}

The restriction of $M_h(A)$ to $\CH_N(\theta_N(A))$ is denoted $M_N(A)$. It
satisfies an exact Egorov theorem for the Weyl quantization as a direct
consequence of \eqref{eq_exact_Egorov_R2d}
\begin{equation}\label{eq_exact_Egorov_torus}
	\forall n \in \Z^{2d}, \qquad U_N (\frac{An}{N}) = M_N(A) U_N(\frac{n }{N}) M_N(A)^{-1},
\end{equation}
which, together with \eqref{eq_quantization_obs_torus}, implies
\begin{equation}\label{eq_exact_Egorov_observable_torus}
	\forall a \in \CC^\infty(\T^{2d}), \qquad M_N(A)^{-1} \Op^w_N(a) M_N(A) = \Op^w_N(a \circ A).
\end{equation}

We highlight that the exactness of the Egorov theorem for $A$ is specific to
the Weyl quantization. The Anti-Wick quantization satisfies an Egorov theorem
but with an error of order $1/N$:
\begin{equation*}
	\forall a \in \CC^\infty(\T^{2d}), \qquad \norm{M_N(A) \Op^{AW}_N(a) M_N(A)^{-1} - \Op^{AW}_N(a \circ A)}_{\CL(\CH_N(\theta))} = O_{a}(\frac{1}{N}).
\end{equation*}

\section{Semiclassical measures}\label{section_semiclassical_measures}

The main objects of interest of this article are semiclassical measures. They
are measures on $\T^{2d}$ that encode the asymptotic behavior
of a sequence of quantum states $(\psi_N)_N \in \CH_N(\theta_N)$ in the
semiclassical limit $N \to \infty$. Let $(\phi_{N})_N$ be another sequence of
quantum states in $\CH_N(\theta_N)$. We associate to them two sequences of
distributions corresponding to either the Weyl or the Anti-Wick quantization
and given by
\begin{equation*}
	\forall a \in \CC^\infty(\T^{2d}), \qquad \begin{aligned}
		Hus(\psi_N, \phi_N)(a) & = \sprod{\Op^{AW}_N(a)\psi_N}{\phi_N} ,\\
		Wig(\psi_N, \phi_N)(a) & = \sprod{\Op^w_N(a)\psi_N}{\phi_N}.
	\end{aligned}
\end{equation*}
These distributions are called the Husimi and Wigner distributions of the pair of quantum states $(\psi_N, \phi_N)$. The Husimi distribution is equal to
\begin{equation*}
	Hus(\psi_N, \phi_N)(a) = \int_{\T^{2d}} a(\rho) \sprod{\psi_N}{\varphi_{\rho,N}^{\theta_N}} \overline{\sprod{\phi_N}{\varphi_{\rho,N}^{\theta_N}}} N^d \diff \rho \qqtq{for} a \in \CC^\infty(\T^{2d}),
\end{equation*}
and can thus be extended to a signed finite measure on $\T^{2d}$ thanks to \eqref{eq_resolution_identity_torus}. The Calderón-Vaillancourt theorem \eqref{Calderon-Vaillancourt_torus} implies that the Wigner distribution is of finite order
\begin{equation}\label{ineq_wigner_distribution_finite_order}
	\forall a \in \CC^\infty(\T^{2d}), \qquad |Wig(\psi_N, \phi_N)(a)| \le \norm{\psi_N} \norm{\phi_N} C \sum_{|\alpha| \le M} N^{-|\alpha|/2} \norm{\partial^\alpha a}_{\infty}.
\end{equation}
We denote by $Hus(\psi_N)$ and $Wig(\psi_N)$ the distributions $Hus(\psi_N, \psi_N)$ and $Wig(\psi_N, \psi_N)$. The Husimi distribution $Hus(\psi_N)$ is a positive measure on $\T^{2d}$.

\begin{remark}
	The fact that the Husimi distribution is a positive measure while the Wigner distribution is signed is a manifestation of the uncertainty principle. The formula \eqref{eq_link_anti_wick_weyl} tells us that the Anti-Wick quantization is the Weyl quantization where the classical observable has been smoothed with a Gaussian of variance $\sqrt{h}/2$. This is just enough to evade the uncertainty principle that prevents concentration on sets of characteristic size $\sqrt{h}$ in phase space. We point the reader to the discussion in \cite[chap. 1 sec. 8]{follandHarmonicAnalysisPhase1989a} about the positivity of the Wigner distribution and a remarkable theorem of Hudson \cite{hudsonWhenWignerQuasiprobability1974} that characterizes the functions with a non-negative Wigner distribution.
\end{remark}

We suppose in the following that the norm of the quantum states $(\psi_N)_N$ is
bounded independently of $N$. The set of semiclassical measures associated to
$(\psi_N)_N$, noted $\CS \CM ((\psi_N)_N)$, is the set of accumulation points
of the sequence $(Hus(\psi_N))_N$ in the space of finite measures on $\T^{2d}$.
It is a non-empty compact subset of the set of finite measures on $\T^{2d}$. We
obtain the same set if we consider the accumulation points of the sequence
$(Wig(\psi_N))_N$ in the space of distributions, thanks to
\eqref{eq_bound_Wey_vs_Anti_Wick_torus} which implies
\begin{equation*}
	\forall a \in \CC^\infty(\T^{2d}), \qquad |Hus(\psi_N)(a) - Wig(\psi_N)(a)| = O_a \left( \frac{1}{N} \right).
\end{equation*}
We record that for any finite measure $\mu$ on $\T^{2d}$ and any increasing sequence of integers $(N_j)_{j}$, we have the equivalence
\begin{equation}\label{equivalence_semiclassical_measure_Wigner_Husimi}
	Hus(\psi_{N_j}) \rightharpoonup^* \mu \iff Wig(\psi_{N_j}) \rightharpoonup^* \mu.
\end{equation}
Note that the convergence on the left is for the weak-star topology on the space of finite measures on $\T^{2d}$, while the convergence on the right is the weak-star convergence of distributions.

To illustrate the notion of semiclassical measure, we prove that the only
semiclassical measure associated to the coherent state on the torus
$\varphi^{\theta_N}_{\rho_0,N}$ is the Dirac measure at $\rho_0$.
\begin{lemma}[Semiclassical measure of coherent states]\label{lem_semiclassical_measure_coherent_states}
	Let $\rho_0 \in \T^{2d}$ and $(\theta_N)_N$ a sequence of elements of $\T^{2d}$. Then
	\begin{equation*}
		\CS \CM (\varphi^{\theta_N}_{\rho_0,N}) = \set{\delta_{\rho_0}}.
	\end{equation*}
\end{lemma}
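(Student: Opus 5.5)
We will work with the Husimi distribution, which is a positive measure, and test it against a continuous function $a \in \CC(\T^{2d})$. By definition,
\begin{equation*}
	Hus(\varphi^{\theta_N}_{\rho_0,N})(a) = \int_{\T^{2d}} a(\rho) \left|\sprod{\varphi^{\theta_N}_{\rho_0,N}}{\varphi^{\theta_N}_{\rho,N}}\right|^2 N^d \diff \rho .
\end{equation*}
The plan is to show that the weight $w_N(\rho) = N^d |\sprod{\varphi_{\rho_0,N}}{\varphi_{\rho,N}}|^2$ is an approximate identity concentrating at $\rho_0$. First, its total mass is computed with the resolution of the identity \eqref{eq_resolution_identity_torus}: $\int w_N = \norm{\varphi^{\theta_N}_{\rho_0,N}}^2 = 1 + \CO_d(e^{-\frac{\pi}{4}N})$ by Lemma \ref{lem_prop_coherent_states_torus}. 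In particular the sequence is uniformly bounded, so semiclassical measures exist, and any limit point is a probability measure.

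Second, we show that the mass escapes from no neighbourhood of $\rho_0$. Fix $\epsilon>0$. On the set $\set{d_{\T^{2d}}(\rho,\rho_0) \ge \epsilon}$, Lemma \ref{lem_prop_coherent_states_torus} gives $w_N(\rho) \le C_d^2 N^d e^{-\pi N \epsilon^2}$. Since $\T^{2d}$ has volume one, the mass outside the $\epsilon$-ball is $\CO(N^d e^{-\pi N\epsilon^2}) \to 0$. Then, for continuous $a$, we write
\begin{equation*}
	|Hus(\varphi_{\rho_0,N})(a) - a(\rho_0)| \le \sup_{d(\rho,\rho_0)<\epsilon}|a(\rho)-a(\rho_0)| \int w_N + 2\norm{a}_\infty \int_{d(\rho,\rho_0)\ge\epsilon} w_N + |a(\rho_0)|\,\Big|\int w_N - 1\Big|.
\end{equation*}
Letting $N\to\infty$ and then $\epsilon\to0$ by uniform continuity, we obtain $Hus(\varphi_{\rho_0,N}) \rightharpoonup^* \delta_{\rho_0}$ along the whole sequence. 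Hence the set of accumulation points is exactly $\set{\delta_{\rho_0}}$.

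By \eqref{equivalence_semiclassical_measure_Wigner_Husimi}, the same conclusion holds for the Wigner distributions, though this is not needed for the definition. There is no real obstacle here. The only delicate point is that the bounds of Lemma \ref{lem_prop_coherent_states_torus} must be uniform in $\theta_N$, so that the result holds for an arbitrary sequence $(\theta_N)_N$. This uniformity holds because the constants there depend only on $d$. We also use that $\rho_0$ is understood modulo $\Z^{2d}$, which is harmless since $d_{\T^{2d}}$ is used throughout.
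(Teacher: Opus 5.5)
Your proof is correct and uses the same ingredients as the paper: the norm estimate and the Gaussian off-diagonal decay of the overlap from Lemma \ref{lem_prop_coherent_states_torus}, with the resolution of the identity giving the total mass. The only difference is presentation: you prove weak-star convergence of the whole sequence directly as an approximate-identity argument, whereas the paper takes an arbitrary limit point, shows its support is $\{\rho_0\}$, and concludes by compactness.
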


\begin{proof}
	The angles $\theta_N$ will not play any role in the proof and we will not mention them. We deduce from Lemma \ref{lem_prop_coherent_states_torus} that
	\begin{equation*}
		\norm{\varphi_{\rho_0,N}} \xrightarrow[N \to + \infty]{} 1.
	\end{equation*}
	Hence, the norms of the states $(\varphi_{\rho_0,N})_N$ are bounded independently of $N$ by some $M >0$, the positive measure $(Hus(\varphi_{\rho_0,N}))_N$ has mass at most $M$ and any semiclassical measure of $(\varphi_{\rho_0,N})_N$ has mass $1$.
	Let $\mu$ be a semiclassical measure for $\varphi_{\rho_0,N}$. This means that, up to a subsequence that we do not mention, we have
	\begin{equation*}
		Hus(\varphi_{\rho_0,N}) \xrightharpoonup[N \to \infty]{} \mu.
	\end{equation*}
	We prove that the support of $\mu$ is restricted to $\set{\rho_0}$, which implies that $\mu = \delta_{\rho_0}$ as $\mu$ is a probability measure.

	Let $\chi \in \CC_c(\T^{2d}\setminus \set{\rho_0})$. In particular, there
	exists $\epsilon >0$ such that the support of $\chi$ is included in the
	complement of the ball $B(\rho_0,\epsilon)$. Lemma
	\ref{lem_prop_coherent_states_torus} gives
	\begin{align*}
		|Hus(\varphi_{\rho_0,N})(\chi)| & = \int_{\T^{2d}} |\chi(\rho)| |\sprod{\varphi_{\rho_0,N}}{\varphi_{\rho,N}}|^2 N^d \diff \rho ,\\
		& \le \norm{\chi}_{\infty} \int_{\T^{2d} \setminus B(\rho_0,\epsilon)}  e^{-\frac{\pi N}{2} d_{\T^{2d}}(\rho_0, \rho)^2} N^d \diff \rho ,\\
		& \le C_d \norm{\chi}_{\infty} e^{-\frac{\pi N}{2} \epsilon^2 } N^d \xrightarrow[N \to + \infty]{} 0.
	\end{align*}
	Therefore $\mu(\chi) = 0$. As this is the case for any continuous function compactly supported in $\T^{2d} \setminus \set{\rho_0}$, we conclude that the support of $\mu$ is included in $\set{\rho_0}$.
	This proves that the only accumulation point of the sequence $(Hus(\varphi_{\rho_0,N}))_{N}$ is $\delta_{\rho_0}$. This concludes the proof as the space of positive measures on $\T^{2d}$ of mass at most $M$ is compact.
\end{proof}

The metaplectic operators act as expected on semiclassical measures thanks to
the Egorov theorem.
\begin{lemma}\label{lem_action_metaplectic_semiclassical_measures}
	Let $A \in \Sp_{2d}( \Z)$ and $(\psi_{N})_N$ a sequence of normed vectors in $\CH_{N}(\theta_N)$. Then, for any probability measure $\mu \in \CP(\T^{2d})$ and any $s \in \Z$, we have
	\begin{equation*}
		Wig(\psi_{N}) \xrightharpoonup[N \to  + \infty]{}^* \mu \implies Wig(M_{(2 \pi N)^{-1}}(A)^s \psi_{N}) \xrightharpoonup[N \to  + \infty]{}^* A^s_{\#} \mu.
	\end{equation*}
\end{lemma}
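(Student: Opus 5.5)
The plan is to use the exact Egorov theorem \eqref{eq_exact_Egorov_observable_torus} for the Weyl quantization, and to reduce the statement to a computation of Wigner distributions tested against smooth observables. Write $M_N = M_{(2\pi N)^{-1}}(A)$. By Lemma \ref{lem_conditions_quantizability_metaplectic_operators}, $M_N^s \psi_N$ lies in some space $\CH_N(\theta'_N)$. The Wigner distribution there is still computed by restricting $\Op^w_h(a)$. The restrictions of $\Op^w_h(a)$ to the different spaces $\CH_N(\theta)$ are all compatible with $\Op^w_h(a)$ on $L^2(\R^d)$ (or on $\CS'$). Hence the relation \eqref{eq_exact_Egorov_R2d}, combined with \eqref{eq_quantization_obs_torus}, gives the intertwining identity
\begin{equation*}
	M_N^{-s} \Op^w_{N,\theta'_N}(a) M_N^{s} = \Op^w_{N,\theta_N}(a \circ A^s) \qquad \text{on } \CH_N(\theta_N),
\end{equation*}
for every $a \in \CC^\infty(\T^{2d})$ and every $s \in \Z$.

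The steps run as follows. First, I establish the identity above for $s = 1$. I expand $a = \sum_n a_n e^{2 i \pi \sigma(n,\cdot)}$, so that $\Op^w_h(a) = \sum_n a_n U_h(n/N)$ with absolute convergence in operator norm on $L^2$. Applying \eqref{eq_exact_Egorov_R2d} gives $M_h(A)^{-1} U_h(n/N) M_h(A) = U_h(A^{-1}n/N)$. Since $\sigma(n, A\rho) = \sigma(A^{-1}n, \rho)$ for symplectic $A$, this is exactly the Fourier expansion of $\Op^w_h(a\circ A)$. Every $U_h(m/N)$ with $m \in \Z^{2d}$ preserves each $\CH_N(\theta)$, so the identity of distributional operators restricts to the spaces $\CH_N(\cdot)$. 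The only care needed is that the series also converge on these finite-dimensional spaces; this follows from the rapid decay of $a_n$ and the unitarity of $U_N$. Negative $s$ follows by taking inverses, and general $s$ by iteration.

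Second, I compute
\begin{equation*}
	Wig(M_N^s\psi_N)(a) = \sprod{\Op^w_N(a) M_N^s \psi_N}{M_N^s\psi_N} = \sprod{\Op^w_N(a\circ A^s)\psi_N}{\psi_N} = Wig(\psi_N)(a\circ A^s),
\end{equation*}
using the unitarity of $M_N$. Third, $a\circ A^s \in \CC^\infty(\T^{2d})$ because $A^s$ is an integer matrix and so descends to a smooth diffeomorphism of the torus. The weak-star convergence of distributions therefore gives $Wig(\psi_N)(a\circ A^s) \to \mu(a \circ A^s) = (A^s_\#\mu)(a)$.

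The main obstacle is bookkeeping rather than analysis. Without assuming $\theta_N$ is adapted to $A$, the operator $M_N$ moves $\CH_N(\theta_N)$ to another space $\CH_N(A\theta_N + N\phi_A/2)$. One has to check that the Wigner distribution on the target space is computed with the same formula, and that the exact Egorov identity holds between two different fibers. I would handle this by working on $\CS'(\R^d)$ with $\Op^w_h$ and $M_h(A)$ directly, restricting only at the end, so that no phase ambiguity in $M_h(A)$ matters: the ambiguity is a global $\pm$ sign, which cancels in $\sprod{\cdot}{\cdot}$.
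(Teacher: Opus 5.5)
Your proposal is correct and follows essentially the same route as the paper. The exact Egorov identity $M^{-s}\Op^w_N(a)M^{s}=\Op^w_N(a\circ A^s)$, together with unitarity, turns $Wig(M^s\psi_N)(a)$ into $Wig(\psi_N)(a\circ A^s)$, and testing the weak-star convergence on the smooth observable $a\circ A^s$ gives $(A^s_\#\mu)(a)$.

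Your extra care for non-adapted $\theta_N$ fills in a point the paper passes over silently: you work with $\Op^w_h$ and $M_h(A)$ on $\CS'(\R^d)$, restrict to the fibers only at the end, and note that the sign ambiguity cancels in the inner product. Both your argument and the paper's still take the unitarity of $M_h(A)$ between the two fibers for granted.
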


\begin{proof}
	Let $a \in \CC^\infty(\T^{2d})$. The exact Egorov theorem for the Weyl quantization on the torus \eqref{eq_exact_Egorov_observable_torus} leads to
	\begin{align*}
		Wig(M_{(2 \pi N)^{-1}}(A)^s \psi_N)(a) & = \sprod{\Op^w_N(a) M_{(2 \pi N)^{-1}}(A)^s \psi_N}{M_{(2 \pi N)^{-1}}(A)^s \psi_N},\\
		& = \sprod{M_{(2 \pi N)^{-1}}(A)^s \Op^w_N(a) M_{(2 \pi N)^{-1}}(A)^s \psi_N}{ \psi_N} ,\\
		& = \sprod{\Op^w_N(a \circ A^s) \psi_N}{ \psi_N} ,\\
		& \xrightharpoonup[N \to  + \infty]{}^* \mu(a \circ A^s) = (A^s_{\#} \mu)(a),
	\end{align*}
	which implies the lemma.
\end{proof}

We now present the definition of the semiclassical measure associated to a
symplectic integer matrix. Let $A \in \Sp_{2d}(\Z)$ be a symplectic matrix with
integer coefficients. A measure $\mu \in \CP(\T^{2d})$ is a semiclassical
measure associated to $A$ if there exist an increasing sequence of integers
$(N_k)_k$, a sequence of angles $(\theta_k)_k$ in $\T^{2d}$ and a sequence
$(\psi_k)_k$ of normed eigenvectors of $M_{N_k}(A)$ on $\CH_{N_k}(\theta_k)$
such that
\begin{equation*}
	Wig(\psi_k) \xrightharpoonup[k \to \infty]{} \mu.
\end{equation*}
The set of all semiclassical measures associated to $A$ is denoted $\CS \CM(A)$.

The next lemma records a few simple facts about the semiclassical measures
associated to a symplectic matrix that will be used extensively in the rest of
the paper.
\begin{lemma}[Basic properties of $\CS \CM(A)$]\label{lem_basic_pro_semicla_meas_symp_matrix}
	Let $A \in \Sp_{2d}(\Z)$, then
	\begin{equation*}
		\emptyset \neq \CS \CM(A) \subset \CP_A(\T^{2d}).
	\end{equation*}
	Moreover, $\CS \CM(A)$ is weak-star closed in the set of probability measures on $\T^{2d}$ and thus compact for the weak-star topology. For any $0 \le s \le t \in \N$, we have
	\begin{equation*}
		\CS \CM(A) = \CS \CM(A^{-1}) \qqtqq{and} \CS \CM(A^s) \subset \CS \CM(A^t).
	\end{equation*}
	Finally, for any $C \in \Sp_{2d}(\Z)$, we have
	\begin{equation*}
		\CS \CM (C A C^{-1}) = C_{\#} \CS \CM(A).
	\end{equation*}
\end{lemma}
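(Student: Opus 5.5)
We prove the statements in the order they are listed, relying mostly on the exact Egorov theorem \eqref{eq_exact_Egorov_observable_torus} and Lemma \ref{lem_action_metaplectic_semiclassical_measures}.

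\emph{Non-emptiness and invariance.} For each $N$ (even, say, so that $\CH_N(0)$ is invariant by Lemma \ref{lem_conditions_quantizability_metaplectic_operators}), the unitary operator $M_N(A)$ admits a normed eigenvector $\psi_N$. The Husimi measures $Hus(\psi_N)$ are positive with mass $\sprod{\Op^{AW}_N(1)\psi_N}{\psi_N}=1$ by \eqref{eq_resolution_identity_torus}, so weak-star compactness gives an accumulation point, which is a probability measure. For invariance, take $\psi_k$ with $M_{N_k}(A)\psi_k=\lambda_k\psi_k$, $|\lambda_k|=1$. Then $Wig(M_{N_k}(A)\psi_k)=|\lambda_k|^2Wig(\psi_k)=Wig(\psi_k)$. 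Lemma \ref{lem_action_metaplectic_semiclassical_measures} gives the limit $A_\#\mu$ on the left and $\mu$ on the right, so $\mu=A_\#\mu$.

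\emph{Closedness.} Weak-star topology on $\CP(\T^{2d})$ is metrizable. Given $\mu_l\in\CS\CM(A)$ with $\mu_l\to\mu$, we run a diagonal argument: fix a countable dense family $(a_p)$ in $\CC^\infty(\T^{2d})$, and for each $l$ pick $k_l$ with $N_{k_l}>N_{k_{l-1}}$ and $|Wig(\psi^{(l)}_{k_l})(a_p)-\mu_l(a_p)|\le 1/l$ for $p\le l$. The resulting sequence of eigenvectors has Wigner distributions converging to $\mu$ on the dense family. Because these distributions have uniformly bounded finite order \eqref{ineq_wigner_distribution_finite_order}, or equivalently by passing through the Husimi measures of mass $1$ via \eqref{equivalence_semiclassical_measure_Wigner_Husimi}, this convergence extends to all test functions. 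Compactness then follows since $\CP(\T^{2d})$ is compact.

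\emph{Inverse, powers and conjugation.} First, $M_N(A)^{-1}=M_N(A^{-1})$ up to a phase on the same space $\CH_N(\theta_N)$, and it has the same eigenvectors; this gives $\CS\CM(A)=\CS\CM(A^{-1})$. For $0\le s\le t$ with $s\neq0$, an eigenvector of $M_N(A^s)$ should be an eigenvector of $M_N(A^t)$. This is immediate when $s\mid t$, since $M_N(A^t)=c\,M_N(A^s)^{t/s}$ with the same quantizing angle. That is the only case used later, with $A^2$ in Remark \ref{rem_semicla_parabolic_dim_1_all_matrices_trace_2}, and it is how I would interpret the statement. The main obstacle is the quantizability bookkeeping: one must check that the angle making $\CH_N(\theta)$ invariant for $A^s$ also works for $A^t$. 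I would use the characterization of $\phi_A$ in Lemma \ref{lem_conditions_quantizability_metaplectic_operators}, namely that invariance of $\CH_N(\theta)$ under $A^s$ implies invariance under its powers, together with the fact that the metaplectic representation is a projective representation. For the case $s=0$, all angles are allowed for $I$. I would either treat it separately via Theorem \ref{thm_semiclassical_measures_elliptic_matrix}, or accept that $\CS\CM(I)=\CP(\T^{2d})$ by the elliptic averaging argument.

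For conjugation, $M_N(CAC^{-1})=c\,M_N(C)M_N(A)M_N(C)^{-1}$. Since $M_N(C)$ maps $\CH_N(\theta)$ to $\CH_N(C\theta+N\phi_C/2)$, the image $M_N(C)\psi$ of an eigenvector $\psi$ of $M_N(A)$ is an eigenvector of $M_N(CAC^{-1})$ on a valid space. Lemma \ref{lem_action_metaplectic_semiclassical_measures} with $s=1$ then gives limit $C_\#\mu$. The reverse inclusion follows by applying the same argument to $C^{-1}$.
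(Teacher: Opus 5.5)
Your arguments for invariance, $\CS\CM(A)=\CS\CM(A^{-1})$ and conjugation are the paper's. For non-emptiness and closedness you argue directly, taking an accumulation point and then running a diagonal extraction. The paper instead writes $\CS\CM(A)=\bigcap_{k}\overline{\bigcup_{N\ge k}\{Hus(\psi)\,:\,\psi\in E_N(A)\}}$, where $E_N(A)$ is the set of normed eigenvectors of $M_N(A)$ on all admissible $\CH_N(\theta)$. It then reads off non-emptiness, closedness and compactness together from a decreasing intersection of non-empty compact subsets of $\CP(\T^{2d})$. This is the same diagonal argument packaged through metrizability, so either route is fine.

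The real issue is the statement about powers. Restricting to $s\mid t$ is correct, and it is all the paper's own justification yields: an eigenvector of $M_N(A^s)$ is an eigenvector of its powers. That gives $E_N(A^s)\subset E_N(A^{ks})$, but not $E_N(A^s)\subset E_N(A^t)$ when $s\nmid t$. You should say outright that the inclusion is \emph{false} in that case rather than ``interpret'' it.

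Your paragraph on $s=0$ is wrong. There is nothing to treat separately, because $\CS\CM(I)=\CP(\T^{2d})$ is exactly what breaks the claim. Take $A=-I\in\Sp_{2d}(\Z)$. Every normalized coherent state is an eigenvector of $M_N(I)$, which is $\pm Id$ on every $\CH_N(\theta)$. Lemma~\ref{lem_semiclassical_measure_coherent_states} then gives $\delta_x\in\CS\CM(I)$ for every $x$. On the other hand, the first part of the lemma gives $\CS\CM(-I)\subset\CP_{-I}(\T^{2d})$. Any $x$ with $2x\not\equiv 0$ therefore shows both $\CS\CM(A^0)\not\subset\CS\CM(A^1)$ and $\CS\CM(A^2)\not\subset\CS\CM(A^3)$.

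Finally, the ``quantizability bookkeeping'' you single out as the main obstacle is immediate and needs no appeal to $\phi_A$. If $\CH_N(\theta)$ is stable under $M_h(A^s)$, it is stable under $M_h(A^s)^k$, which agrees with $M_h(A^{ks})$ up to a phase.
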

\begin{proof}
	Denote $E_N(A)$ the set of normed eigenvectors of $M_N(A)$ in $\CH_N(\theta)$ for any $\theta$ for which $\CH_N(\theta)$ is stable under $M_N(A)$. This set is non-empty if $N$ is even as $\CH_N(0)$ is then stable under $M_N(A)$ according to Lemma \ref{lem_conditions_quantizability_metaplectic_operators} and the diagonalizability of unitary operators on finite dimensional spaces. The definition of the set of semiclassical measures associated to $A$ and the usual set description of accumulation points of sequences gives
	\begin{equation*}
		\CS \CM(A) = \cap_{k \ge 0} \overline{\cup_{N \ge k} \set{Hus(\psi) \, | \, \psi \in E_N(A)}}.
	\end{equation*}
	This is a decreasing intersection of non-empty compact sets in the space $\CP(\T^{2d})$ of probability measures on $\T^{2d}$, hence it is compact and non-empty.
	The invariance is a consequence of Lemma \ref{lem_action_metaplectic_semiclassical_measures} and the fact that if $\psi_N \in E_N(A)$ then $Wig(M_N(A) \psi_N) = Wig(\psi_N)$ since $M_N(A)$ is unitary.
	The property for different powers of the matrix comes from
	\begin{equation*}
		E_N(A) = E_N(A^{-1}) \qqtqq{and} E_N(A^s) \subset E_N(A^t).
	\end{equation*}
	which are immediate as $M_N(A^s) = \pm M_N(A)^s$ and an eigenvector for a given matrix is always an eigenvector for any power of this matrix.

	As the metaplectic operators form a representation of the metaplectic group, we
	have that $M_h(C A C^{-1}) = \pm M_h(C) M_h(A) M_h(C)^{-1}$. This implies that
	\begin{equation*}
		E_N(C A C^{-1}) = M_h(C) E_N(A) = \set{M_h(C) \psi \; | \; \psi \in E_N(A)}.
	\end{equation*}
	The last property of the lemma then follows from Lemma \ref{lem_action_metaplectic_semiclassical_measures}.
\end{proof}

We compute an explicit expression of the Fourier coefficients of the Wigner
distribution.
\begin{lemma}[Fourier coefficients of the Wigner distribution]\label{lem_fourier_coefficients_wigner_distribution}
	Let $\psi = \sum_{j \in \eint{0}{N-1}^d} \psi_j e^\theta_j \in \CH_N(\theta)$. Then for any $(m,n) \in \Z^{2d}$, we have
	\begin{equation*}
		Wig(\psi)(e^{2 \pi i \sprod{(m,n)}{\cdot}}) = e^{\frac{2 \pi i}{N} \sigma(\theta,(-n,m)) -\frac{\pi i}{N} P(m,n)} \sum_{j \in \eint{0}{N-1}^d} \psi_j \overline{\psi_{\modulo{j-n}_N}} e^{\frac{2 \pi i }{N}\sprod{m}{j}} e^{\frac{2 \pi i}{N} \sprod{\theta_\xi}{j-\modulo{j-n}_N}},
	\end{equation*}
	where $\modulo{x}_N$ is the representant in $\eint{0}{N-1}^d$ of $x \in \Z^{d}$ modulo $N \Z^d$.
\end{lemma}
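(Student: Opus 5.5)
The plan is to compute the Wigner distribution of the single Fourier mode directly. I will express the mode as a quantum translation using \eqref{eq_quantization_obs_torus}, let that translation act on the basis $(e^\theta_j)_j$ through \eqref{eq_action_U_N((m,n)/N)_on_e_theta_j}, reduce indices modulo $N$ using \eqref{eq_transformation_rule_e_theta_j}, and finally collect the phases.

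First, I rewrite the observable in the symplectic Fourier convention. Since $\sigma(k,x)=\sprod{Jk}{x}$, I need $Jk=(m,n)$, which gives $k=(-n,m)$; one checks that $J(-n,m)=(m,n)$. Hence
\begin{equation*}
	e^{2\pi i\sprod{(m,n)}{\cdot}}=e^{2\pi i\sigma((-n,m),\cdot)},
\end{equation*}
and \eqref{eq_quantization_obs_torus} gives $\Op^w_N(e^{2\pi i\sprod{(m,n)}{\cdot}})=U_N(\frac{-n}{N},\frac{m}{N})$.

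Next, I apply \eqref{eq_action_U_N((m,n)/N)_on_e_theta_j} with the pair $(-n,m)$ in place of $(m,n)$:
\begin{equation*}
	U_N(\tfrac{-n}{N},\tfrac{m}{N})e^\theta_j=e^{\frac{\pi i}{N}\sprod{m}{-n-2\theta_x}}e^{\frac{2\pi i}{N}\sprod{m}{j}}e^\theta_{j-n}.
\end{equation*}
The index $j-n$ need not lie in $\eint{0}{N-1}^d$. I write $j-n=\modulo{j-n}_N+Ny$ with $y\in\Z^d$, so that the second rule in \eqref{eq_transformation_rule_e_theta_j} yields
\begin{equation*}
	e^\theta_{j-n}=e^{\frac{2\pi i}{N}\sprod{\theta_\xi}{j-n-\modulo{j-n}_N}}e^\theta_{\modulo{j-n}_N}.
\end{equation*}
Expanding $\psi=\sum_j\psi_j e^\theta_j$ and using orthonormality of the basis, the pairing $\sprod{U_N\psi}{\psi}$ keeps only the coefficient $\overline{\psi_{\modulo{j-n}_N}}$ in front of $\psi_j$. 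This produces the sum over $j$ in the statement.

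The only real obstacle is the bookkeeping of the phases. The $j$-independent factor is
\begin{equation*}
	e^{-\frac{\pi i}{N}\sprod{m}{n}}\,e^{-\frac{2\pi i}{N}\sprod{m}{\theta_x}}\,e^{-\frac{2\pi i}{N}\sprod{\theta_\xi}{n}},
\end{equation*}
where the last factor is split off from the $\theta_\xi$-phase above, leaving $e^{\frac{2\pi i}{N}\sprod{\theta_\xi}{j-\modulo{j-n}_N}}$ inside the sum. The first factor equals $e^{-\frac{\pi i}{N}P(m,n)}$. For the other two, $J\theta=(\theta_\xi,-\theta_x)$, so
\begin{equation*}
	\sigma(\theta,(-n,m))=\sprod{J\theta}{(-n,m)}=-\sprod{\theta_\xi}{n}-\sprod{\theta_x}{m},
\end{equation*}
and they combine to $e^{\frac{2\pi i}{N}\sigma(\theta,(-n,m))}$. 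This is exactly the prefactor in the formula of Lemma~\ref{lem_fourier_coefficients_wigner_distribution}, which completes the plan.
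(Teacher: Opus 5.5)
Your proof is correct and follows the same route as the paper. You rewrite the mode as $e^{2\pi i\sigma((-n,m),\cdot)}$ so that its quantization is $U_N(\frac{-n}{N},\frac{m}{N})$, let it act on the basis $(e^\theta_j)_j$, reduce $j-n$ modulo $N$ with the quasi-periodicity relation, and then collect the phases, including the check that $e^{-\frac{2\pi i}{N}\sprod{m}{\theta_x}}e^{-\frac{2\pi i}{N}\sprod{\theta_\xi}{n}}=e^{\frac{2\pi i}{N}\sigma(\theta,(-n,m))}$, which is the simplification the paper makes implicitly in its final line.
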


\begin{proof}
	We have
	\begin{align*}
		Wig(\psi)(e^{2 \pi i \sprod{(m,n)}{\cdot}}) & = Wig(\psi)(e^{2 \pi i \sigma((-n,m),\cdot)}) = \sprod{\Op^w_N(e^{2 \pi i \sigma((-n,m),\cdot)}) \psi}{\psi} ,\\
		& = \sprod{U_N(\frac{-n}{N}, \frac{m}{N}) \psi}{\psi} = \sum_{j,k \in \eint{0}{N-1}^d} \psi_j \overline{\psi_k} \sprod{U_N(\frac{-n}{N}, \frac{m}{N}) e^\theta_j}{e^\theta_k},\\
		& = e^{-\frac{\pi i}{N} \sprod{m}{n+2 \theta_x}} \sum_{j,k \in \eint{0}{N-1}^d} \psi_j \overline{\psi_k} e^{\frac{2 \pi i }{N}\sprod{m}{j}}\sprod{ e^\theta_{j-n}}{e^\theta_k}.
	\end{align*}
	Writing the Euclidean division of $j-n$ by $N$ as $j-n = a+bN$, the relation \eqref{eq_transformation_rule_e_theta_j} gives that
	\begin{equation*}
		\sprod{ e^\theta_{j-n}}{e^\theta_k} = e^{2 \pi i \sprod{\theta_\xi}{b}} \sprod{e^\theta_a}{e^\theta_k} = e^{2 \pi i \sprod{\theta_\xi}{b}} \delta_{a,k}.
	\end{equation*}
	Hence, as $a = \modulo{j-n}_N$ and $b = \frac{1}{N}(j-n-\modulo{j-n}_N)$, we get
	\begin{align*}
		Wig(\psi)(e^{2 \pi i \sprod{(m,n)}{\cdot}}) & = e^{-\frac{\pi i}{N} \sprod{m}{n+2 \theta_x}} \sum_{j \in \eint{0}{N-1}^d} \psi_j \overline{\psi_{\modulo{j-n}_N}} e^{\frac{2 \pi i }{N}\sprod{m}{j}} e^{\frac{2 \pi i}{N} \sprod{\theta_\xi}{j-n-\modulo{j-n}_N}},\\
		& = e^{\frac{2 \pi i}{N} \sigma(\theta,(-n,m)) -\frac{\pi i}{N} P(m,n)} \sum_{j \in \eint{0}{N-1}^d} \psi_j \overline{\psi_{\modulo{j-n}_N}} e^{\frac{2 \pi i }{N}\sprod{m}{j}} e^{\frac{2 \pi i}{N} \sprod{\theta_\xi}{j-\modulo{j-n}_N}}.
	\end{align*}
\end{proof}

Note that if $\theta_\xi =0$, then the relation
\eqref{eq_transformation_rule_e_theta_j} implies that $e^\theta_j$ only depends
on the residue modulo $N$ of $j$. In this case, the formula in Lemma
\ref{lem_fourier_coefficients_wigner_distribution} simplifies significantly into
\begin{equation}\label{eq_Fourier_coefficients_Wigner_distribution}
	Wig(\psi)(e^{2 \pi i \sprod{(m,n)}{\cdot}}) = e^{-\frac{\pi i}{N} \sprod{m} {n +2 \theta_x}} \sum_{j \in (\Z/N\Z)^d} e^{\frac{2\pi i}{N} \sprod{m}{j} } \psi_j \overline{\psi_{j-n}}.
\end{equation}

We use the previous lemma to determine the semiclassical measure associated to
the vectors $(e^\theta_j)_{j, \theta}$.

\begin{lemma}\label{lem_semiclassical_measure_e_theta_j}
	Let $(j_N)_N$ be a sequence of elements in $\eint{0}{N-1}^d$ and $(\theta_N)_N$ be a sequence of elements in $\T^{2d}$. Then
	\begin{equation*}
		\frac{j_N}{N} \xrightarrow[N \to \infty]{} x \implies \CS \CM(e^{\theta_N}_{j_N}) = \set{\delta_x \otimes \Leb}.
	\end{equation*}
\end{lemma}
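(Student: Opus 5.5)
The plan is to compute all Fourier coefficients of $Wig(e^{\theta_N}_{j_N})$ with Lemma \ref{lem_fourier_coefficients_wigner_distribution}, and to check that they converge to those of $\delta_x \otimes \Leb$. The state $\psi = e^{\theta_N}_{j_N}$ has coordinates $\psi_j = \delta_{j, j_N}$ in the orthonormal basis $(e^{\theta_N}_j)_j$, and it is normed. By the equivalence \eqref{equivalence_semiclassical_measure_Wigner_Husimi}, it is enough to prove weak-star convergence of the Wigner distributions. By density of trigonometric polynomials, together with the uniform finite-order bound \eqref{ineq_wigner_distribution_finite_order}, it suffices to treat the exponentials $e^{2\pi i \sprod{(m,n)}{\cdot}}$ for $(m,n) \in \Z^{2d}$. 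A cleaner route is to use the Husimi measures, which are positive of mass about $1$. Tightness then holds automatically, and any accumulation point is determined by its Fourier coefficients. These agree with the limits of the Wigner coefficients because of \eqref{eq_bound_Wey_vs_Anti_Wick_torus}.

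In the sum of Lemma \ref{lem_fourier_coefficients_wigner_distribution}, a term is nonzero only if $j = j_N$ and $\modulo{j_N - n}_N = j_N$, that is, only if $n \equiv 0 \ \modulo{N}$. Fix $(m,n)$. For $N > \norm{n}_\infty$ this forces $n = 0$, so every coefficient with $n \neq 0$ vanishes for $N$ large. When $n = 0$, the term $j - \modulo{j}_N$ vanishes because $j = j_N \in \eint{0}{N-1}^d$. We also have $\sigma(\theta_N, (0,m)) = \sprod{(\theta_N)_x}{m}$ and $P(m,0) = 0$. The coefficient is therefore
\begin{equation*}
	e^{\frac{2\pi i}{N}\sprod{\theta_{N,x}}{m}} e^{2\pi i \sprod{m}{j_N/N}}.
\end{equation*}
The only point needing care is that $\theta_N$ is an element of $\T^{2d}$. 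I would take a representative in $[0,1)^{2d}$, so that the first factor tends to $1$; the overall phase is unambiguous anyway, by the construction of the spaces $\CH_N(\theta)$. Since $j_N/N \to x$, the second factor tends to $e^{2\pi i\sprod{m}{x}}$.

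It remains to compare with $\delta_x \otimes \Leb$. Integrating $e^{2\pi i\sprod{(m,n)}{\cdot}}$ against this measure gives $e^{2\pi i\sprod{m}{x}}$ when $n = 0$, and $0$ otherwise, so the limits match. This shows that every Fourier coefficient converges, hence the whole sequence converges to $\delta_x\otimes\Leb$, and the set of accumulation points is exactly $\set{\delta_x \otimes \Leb}$. I expect no real obstacle. The only delicate point is the bookkeeping of the phase $\theta_N$ and of the representative $\modulo{\cdot}_N$, which I would handle by the choice of representatives just described.
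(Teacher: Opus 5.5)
Your proposal is correct and follows the paper's proof essentially step by step: Fourier coefficients via Lemma~\ref{lem_fourier_coefficients_wigner_distribution}, vanishing of the coefficients with $n \neq 0$ once $N > \norm{n}_\infty$, convergence of the $n=0$ coefficients to $e^{2\pi i \sprod{m}{x}}$, then density. Your choice of a representative of $\theta_N$ in $[0,1)^{2d}$ makes explicit a convention the paper leaves implicit; note that it fixes the vector itself and not just a phase, since $e^{\theta+(y,\eta)}_j = e^{\theta}_{j-y}$ moves the state's position by $-y/N$, and there is a harmless sign slip, $\sigma(\theta_N,(0,m)) = -\sprod{\theta_{N,x}}{m}$, which changes nothing because that factor tends to $1$ anyway.
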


\begin{proof}
	Let $(m,n) \in \Z^{2d}$. Lemma \ref{lem_fourier_coefficients_wigner_distribution} gives that $Wig(e^{\theta_N}_{j_N})(e^{2 \pi i \sprod{(m,n)}{\cdot}})$ is equal to 
	\begin{equation*}
		e^{\frac{2 \pi i}{N} \sigma(\theta_N,(-n,m)) -\frac{\pi i}{N} P(m,n)} \sum_{j \in \eint{0}{N-1}^d} \delta_{j, j_N} \delta_{\modulo{j-n}_N, j_N} e^{\frac{2 \pi i }{N}\sprod{m}{j}} e^{\frac{2 \pi i}{N} \sprod{\theta_{N,\xi}}{j-\modulo{j-n}_N}} ,
	\end{equation*}
	which simplifies into
	\begin{equation*}
		e^{\frac{2 \pi i}{N} \sigma(\theta_N,(-n,m)) -\frac{\pi i}{N} P(m,n)} \delta_{\modulo{j_N-n}_N, j_N} e^{\frac{2 \pi i }{N}\sprod{m}{j_N}} e^{\frac{2 \pi i}{N} \sprod{\theta_{N, \xi}}{j_N-\modulo{j_N-n}_N}},
	\end{equation*}
	where $\delta_{j,k}$ is the Kronecker delta. Note that $\modulo{j_N-n}_N = j_N$ is only possible if $n$ is a multiple of $N$. This is not the case for $N > |n|$. Hence, for $N > |n|$, we obtain
	\begin{align*}
		Wig(e^{\theta_N}_{j_N})(e^{2 \pi i \sprod{(m,n)}{\cdot}}) & = \delta_{n,0} e^{\frac{2 \pi i}{N} \sigma(\theta,(-n,m)) -\frac{\pi i}{N} P(m,n)} e^{\frac{2 \pi i }{N}\sprod{m}{j_N}} e^{\frac{2 \pi i}{N} \sprod{\theta_\xi}{j_N-\modulo{j_N-n}_N}} ,\\
		 & = \delta_{n,0} e^{\frac{2 \pi i}{N} \sigma(\theta,(0,m))} e^{\frac{2 \pi i }{N}\sprod{m}{j_N}} ,\\
		 & \xrightarrow[N \to + \infty]{} \delta_{n,0} e^{2 \pi i \sprod{m}{x}} = \delta_x \otimes \Leb_\xi(e^{2 \pi i \sprod{(m,n)}{\cdot}}).
	\end{align*}
	We conclude thanks to the density of the trigonometric polynomials in $\CC^\infty(\T^{2d})$.
\end{proof}

We prove a technical lemma to understand the semiclassical measure associated
to a sum of vectors.
\begin{lemma}\label{lem_convex_combination_orthogonal_semiclassical_measures}
	Let $(\psi^1_N)_N, \dots, (\psi^m_N)_N$ be sequences of vectors in $\CH_N(\theta)$ and $\mu_1, \dots, \mu_m$ be probability measures on $\T^{2d}$ such that
	\begin{equation*}
		Hus(\psi^i_N) \rightharpoonup^* \mu_i \qqtqq{and} \mu_i \bot \mu_j \qtq{for} i \neq j.
	\end{equation*}
	Then for any $(\alpha_i)_{1 \le i \le m} \in \R_+^m$ such that $\sum_{i=1}^m \alpha_i = 1$, we have
	\begin{equation*}
		Hus(\frac{\sum_{i=1}^{m} \sqrt{\alpha_i} \psi^i_N}{\norm{\sum_{i=1}^{m}\sqrt{\alpha_i} \psi^i_N}}) \rightharpoonup^* \sum_{i=1}^m \alpha_i \mu_i.
	\end{equation*}
\end{lemma}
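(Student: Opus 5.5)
The argument rests on one idea. Mutual singularity of the limits forces the Husimi cross terms to vanish in the limit, and the only thing that needs proof is that vanishing.

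Set $\Psi_N = \sum_i \sqrt{\alpha_i}\psi^i_N$. Since the Husimi distribution is sesquilinear in the pair of states, we have
\begin{equation*}
	Hus(\Psi_N)(a) = \sum_{i} \alpha_i Hus(\psi^i_N)(a) + \sum_{i \neq j} \sqrt{\alpha_i \alpha_j}\, Hus(\psi^i_N, \psi^j_N)(a) \qquad \text{for } a \in \CC(\T^{2d}).
\end{equation*}
The diagonal terms converge to $\sum_i \alpha_i \mu_i$ by hypothesis. Testing with $a=1$ gives $\norm{\Psi_N}^2 \to 1 + \sum_{i\neq j}\sqrt{\alpha_i\alpha_j}\lim Hus(\psi^i_N,\psi^j_N)(1)$. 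So once the cross terms tend to $0$ for every continuous $a$, we get $\norm{\Psi_N}\to 1$, and dividing by $\norm{\Psi_N}^2$ yields the claim. Everything therefore reduces to showing $Hus(\psi^i_N,\psi^j_N)(a) \to 0$ for $i \neq j$.

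To control a cross term, use the coherent-state formula. For $\chi \ge 0$ continuous, Cauchy–Schwarz in $L^2(\chi\, N^d \diff\rho)$ gives
\begin{equation*}
	|Hus(\psi^i_N,\psi^j_N)(\chi a)| \le \norm{a}_\infty \, Hus(\psi^i_N)(\chi)^{1/2}\, Hus(\psi^j_N)(\chi)^{1/2}.
\end{equation*}
Now fix $\epsilon > 0$. Since $\mu_i \perp \mu_j$, there is a Borel set $S$ with $\mu_i(S^c)=0$ and $\mu_j(S)=0$. By regularity of the finite Borel measures $\mu_i$ and $\mu_j$, pick a compact $K \subset S$ with $\mu_i(K^c) < \epsilon$, and an open $U \supset K$ with $\mu_j(U) < \epsilon$. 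Take a continuous $\chi$ with $0 \le \chi \le 1$, $\chi = 1$ on $K$ and $\operatorname{supp}\chi \subset U$, and split $a = \chi a + (1-\chi) a$.

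For the first piece, $\limsup Hus(\psi^j_N)(\chi) \le \mu_j(\overline{U}) \le \epsilon$ after shrinking $U$ slightly, while $Hus(\psi^i_N)(\chi)$ stays bounded. Hence the first piece is $O(\sqrt{\epsilon})$ in the limit.

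For the second piece, apply the same inequality with weight $1-\chi$: here $\lim Hus(\psi^i_N)(1-\chi) = \mu_i(1-\chi) \le \mu_i(K^c) < \epsilon$, and again the other factor is bounded. So the second piece is also $O(\sqrt{\epsilon})$.

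The limsup of each cross term is thus $O(\sqrt{\epsilon})$ for every $\epsilon > 0$, hence it is zero.

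The main obstacle is exactly this choice of a continuous cutoff that separates two mutually singular measures up to $\epsilon$. It is handled by inner regularity for $\mu_i$ and outer regularity for $\mu_j$, together with weak-star convergence tested against continuous functions only (using $\chi \le \mathbb{1}_U$). The remaining steps are bookkeeping. Boundedness of $\norm{\psi^i_N}$ follows from the convergence of the total masses $Hus(\psi^i_N)(1)\to 1$, and the weak-star limit is identified by testing against all $a \in \CC(\T^{2d})$.
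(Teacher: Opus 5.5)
Your proof is correct, and its architecture matches the paper's. In both, you expand $Hus(\Psi_N)$ by sesquilinearity, show the cross terms vanish using the Cauchy--Schwarz bound on the coherent-state integral, and then renormalize using $\norm{\Psi_N}^2 = Hus(\Psi_N)(1) \to 1$.

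Where you differ is in how mutual singularity enters. The paper takes limits first:
\begin{itemize}
\item it extracts a subsequence along which $Hus(\psi^i_N,\psi^j_N)$ converges to a complex measure $\nu$;
\item it passes the Cauchy--Schwarz inequality to the limit, getting $|\nu(a)| \le \sqrt{\mu_i(|a|)}\sqrt{\mu_j(|a|)}$ for continuous $a$;
\item it concludes that $\nu$ is absolutely continuous with respect to both $\mu_i$ and $\mu_j$, hence vanishes;
\item it uses compactness and metrizability of the ball of complex measures to upgrade subsequential convergence to convergence of the whole sequence.
\end{itemize}
You instead use the singularity before passing to the limit, through an explicit Urysohn cutoff $\chi$ adapted to a separating set $S$. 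This gives $\limsup_N |Hus(\psi^i_N,\psi^j_N)(a)| \lesssim \norm{a}_\infty\sqrt{\epsilon}$ directly.

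Your route avoids subsequence extraction and the compactness argument. It also makes explicit the regularity and density step that is implicit in the paper's passage from an inequality tested on continuous functions to absolute continuity. The paper's version is shorter to write once that step is accepted.

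One small simplification: since $\chi$ is continuous, $\lim_N Hus(\psi^j_N)(\chi) = \mu_j(\chi) \le \mu_j(U) < \epsilon$ holds exactly. There is no need to shrink $U$ or to pass through $\mu_j(\overline{U})$.
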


\begin{proof}
	Remark that
	\begin{equation*}
		\norm{\psi^i_N}^2 = Hus(\psi^i_N, \psi^i_N)(1) \xrightarrow[N \to + \infty]{} \mu_i(1) = 1,
	\end{equation*}
	as $\mu_i$ is a probability measure on $\T^{2d}$. Hence, if we suppose that $N$ is large enough, we can suppose that $\norm{\psi^i_N}^2 \le 2$.
	We first prove that
	\begin{equation}\label{limit_proof_lem_convex_combination_orthogonal_semiclassical_measures}
		\forall i \neq j \in \eint{1}{m}, \quad Hus(\psi^i_N,\psi^j_N)  \xrightarrow[N \to + \infty]{} 0.
	\end{equation}
	Fix $i \neq j \in \eint{1}{m}$ and remark that the distribution $Hus(\psi^i_N,\psi^j_N)$ is a complex measure on $\T^{2d}$ of norm bounded by $2$ for all $N \in \N$ large enough. By compactness of the set of complex measures on $\T^{2d}$ of norm at most $2$, we consider a subsequence $(N_k)_k$ such that
	\begin{equation*}
		Hus(\psi^i_{N_k},\psi^j_{N_k})\xrightarrow[k \to + \infty]{} \nu,
	\end{equation*}
	where $\nu$ is a complex finite measure on $\T^{2d}$. Let $a \in \CC^0(\T^{2d})$ be a periodic continuous function. We have
	\begin{align*}
		|Hus(\psi^i_{N_k},\psi^j_{N_k})(a)| & = | \int_{\T^{2d}} a(\rho) \sprod{\psi^i_{N_k}}{\varphi_{\rho, N_k}} \sprod{\varphi_{\rho, N_k}}{\psi^j_{N_k}}  N_k^d \diff \rho | ,\\
		& \le \sqrt{\int_{\T^{2d}} |a(\rho)| |\sprod{\psi^i_{N_k}}{\varphi_{\rho, N_k}}|^2 N_k^d \diff \rho} \sqrt{\int_{\T^{2d}} |a(\rho)|  |\sprod{\varphi_{\rho, N_k}}{\psi^j_{N_k}}|^2 N_k^d \diff \rho} ,\\
		& \le \sqrt{Hus(\psi^i_{N_k})(|a|)} \sqrt{Hus(\psi^j_{N_k})(|a|)}.
	\end{align*}
	Letting $N_k$ go to $+\infty$, we obtain
	\begin{equation*}
		|\nu(a)| \le \sqrt{\mu_i(|a|)} \sqrt{\mu_j(|a|)}.
	\end{equation*}
	Since this is valid for any $a \in \CC^0(\T^{2d})$, we deduce that $\nu$ is absolutely continuous with respect to both $\mu_i$ and $\mu_j$. As these two measures are mutually singular, the complex measure $\nu$ is null. Since this argument is valid for any accumulation point of the sequence $(Hus(\psi^i_N,\psi^j_N))_N$ and the space of complex measures on $\T^{2d}$ of norm at most $2$ is metrizable, we deduce that
	\begin{equation*}
		Hus(\psi^i_N,\psi^j_N) \xrightarrow[N \to + \infty]{} 0.
	\end{equation*}
	As the Husimi distribution is bilinear in its inputs, we get
	\begin{align*}
		Hus(\sum_{i=1}^{m} \sqrt{\alpha_i} \psi^i_N) & = \sum_{i = 1}^m \alpha_i Hus(\psi^i_N, \psi^i_N) + 2 \sum_{1 \le i < j \le m} \sqrt{\alpha_i \alpha_j} Hus(\psi^i_N, \psi^j_N) \xrightharpoonup[N \to + \infty]{}^* \sum_{i = 1}^m \alpha_i \mu_i,
	\end{align*}
	thanks to \eqref{limit_proof_lem_convex_combination_orthogonal_semiclassical_measures}. This limit also implies that
	\begin{align*}
		\norm{\sum_{i=1}^{m}\sqrt{\alpha_i} \psi^i_N}^2 & = Hus(\sum_{i=1}^{m} \sqrt{\alpha_i} \psi^i_N)(1) \xrightarrow[N \to + \infty]{} \sum_{i=1}^{m} \alpha_i \mu_i(1) = 1.
	\end{align*}
	Hence,
	\begin{equation*}
		Hus(\frac{\sum_{i=1}^{m} \sqrt{\alpha_i} \psi^i_N}{\norm{\sum_{i=1}^{m}\sqrt{\alpha_i} \psi^i_N}}) = \frac{1}{\norm{\sum_{i=1}^{m}\sqrt{\alpha_i} \psi^i_N}^2} Hus(\sum_{i=1}^{m} \sqrt{\alpha_i} \psi^i_N)  \xrightharpoonup[N \to + \infty]{}^* \sum_{i = 1}^m \alpha_i \mu_i.
	\end{equation*}
\end{proof}

\section{Elliptic symplectic matrices}\label{section_elliptic_matrices}

This section is dedicated to the proof of Theorem
\ref{thm_semiclassical_measures_elliptic_matrix} and the characterization of
elliptic matrices claimed in the introduction.

\begin{proposition}\label{pro_characterization_elliptic_matrices}
	Let $A \in \Sp_{2d}(\Z)$. Then the following statements are equivalent
	\begin{enumerate}
		\item $A$ is periodic, meaning that there exists $T \in \N$ such that $A^T = I$,
		\item $A$ is diagonalizable over $\C$ and all its eigenvalues are in the unit circle,
		\item for any point $x \in \C^{2d}$, its orbit $\CO_A(x) = \set{A^t x \, | \, t \in
				      \Z}$ is bounded.
	\end{enumerate}
\end{proposition}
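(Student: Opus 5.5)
I will prove the cycle of implications $1 \Rightarrow 3 \Rightarrow 2 \Rightarrow 1$. Note that symplecticity plays essentially no role: the argument works for any matrix in $\GL_{2d}(\Z)$. The integrality of $A$ is what makes the last implication work.

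For $1 \Rightarrow 3$: if $A^T = I$, then $A^{-1} = A^{T-1}$. Hence for every $x \in \C^{2d}$ the orbit $\CO_A(x)$ is the finite set $\set{x, Ax, \dots, A^{T-1}x}$, which is bounded.

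For $3 \Rightarrow 2$, I work with the Jordan decomposition of $A$ over $\C$.
\begin{itemize}
	\item Take an eigenvector $v$ with eigenvalue $\lambda$. Then $A^t v = \lambda^t v$ for all $t \in \Z$. Boundedness as $t \to +\infty$ forces $|\lambda| \le 1$, and boundedness as $t \to -\infty$ forces $|\lambda| \ge 1$. So every eigenvalue lies on the unit circle.
	\item Suppose a Jordan block of size at least $2$ occurs. Then there are vectors $v, w$ with $Av = \lambda v$ and $Aw = \lambda w + v$, and $v \neq 0$.
	\item By induction, $A^t w = \lambda^t w + t \lambda^{t-1} v$ for $t \in \N$.
	\item Since $|\lambda| = 1$, we get $\norm{A^t w} \ge t\norm{v} - \norm{w} \to +\infty$. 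This contradicts boundedness, so $A$ is diagonalizable.
\end{itemize}

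For $2 \Rightarrow 1$, which is the only step that is not pure linear algebra, I use Kronecker's theorem on algebraic integers.
\begin{itemize}
	\item For every $t \in \N$, the characteristic polynomial $\chi_{A^t}$ is monic with integer coefficients of degree $2d$.
	\item Its roots are $\lambda_i^t$, all of modulus $1$. So its coefficients are elementary symmetric functions of these roots, and the $k$-th one is bounded by $\binom{2d}{k}$.
	\item Hence the polynomials $\chi_{A^t}$, $t \in \N$, range over a finite set.
	\item Therefore the set of eigenvalues $\set{\lambda_i^t}_{i,t}$ is finite, and by pigeonhole each $\lambda_i$ satisfies $\lambda_i^{s} = \lambda_i^{s'}$ for some $s \neq s'$. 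So each $\lambda_i$ is a root of unity.
	\item Let $T$ be a common order for all the $\lambda_i$. Since $A = P D P^{-1}$ with $D$ diagonal, we have $A^T = P D^T P^{-1} = I$.
\end{itemize}

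The main point needing care is this finiteness/pigeonhole argument. The diagonalizability hypothesis is exactly what turns "eigenvalues are roots of unity" into "$A^T = I$". Without it, the parabolic matrices $P_B$ would be counterexamples.
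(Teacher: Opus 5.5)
Your proof is correct and follows the same cycle $1 \Rightarrow 3 \Rightarrow 2 \Rightarrow 1$ as the paper, with the same Jordan-block growth argument for $3 \Rightarrow 2$; your use of $t \to -\infty$ plays the role of the paper's applying the argument to $A^{-1}$. The only difference is in $2 \Rightarrow 1$: the paper cites Kronecker's theorem, whereas you reprove it inline from the finiteness of the set of monic integer polynomials $\chi_{A^t}$ with bounded coefficients, which is the classical proof of that theorem, so your argument is self-contained.
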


\begin{proof}
	We prove $1 \implies 3 \implies 2 \implies 1$.
	\begin{itemize}
		\item $\underline{1 \implies 3} : $ It is trivial as any finite set is bounded.
		\item $\underline{3 \implies 2} : $ We write $A = P M P^{-1}$ where $M$ is in Jordan form:
		      \begin{equation*}
			      M = \begin{pmatrix}
				      J_{k_1}(\lambda_1) &&\\
				 & \ddots &\\
				 && J_{k_r}(\lambda_r)
			      \end{pmatrix} \qqtq{where} J_k(\lambda) = \begin{pmatrix}
				      \lambda & 1 & \;& \;\\
				      \;& \lambda & \ddots & \;\\
				      \;& \;& \ddots & 1 \\
				      \;& \;& \;& \lambda
			      \end{pmatrix} \in M_k(\C),
		      \end{equation*}
		      and $\lambda_1, \dots, \lambda_r$ are the eigenvalues of $A$. The powers of a Jordan block are
		      \begin{equation*}
			      J_k(\lambda)^t =
			      \begin{pmatrix}
				      \lambda^t & \binom{t}{1}\lambda^{t-1} & \binom{t}{2}\lambda^{t-2} & \cdots & \cdots & \binom{t}{k-1}\lambda^{t-k+1} \\
				 & \lambda^t& \binom{t}{1}\lambda^{t-1} & \cdots & \cdots & \binom{t}{k-2}\lambda^{t-k+2} \\
				 && \ddots& \ddots & \vdots & \vdots\\
				 &&& \ddots & \ddots & \vdots\\
				 &&&& \lambda^t & \binom{t}{1}\lambda^{t-1} \\
				 &&&&& \lambda^t
			      \end{pmatrix} \qqtq{for} t \in \N.
		      \end{equation*}
		    One readily sees that if $|\lambda| > 1$ or $|\lambda|=1$ and $k > 1$, then the sequence 
			  $$(\norm{J_{k}(\lambda)^t (0, \dots, 0, 1 )})_{t \in \N},$$
			is unbounded. Therefore, we must have $|\lambda_i| \le 1$ and in case of equality $k_i = 1$. Applying the same reasoning to $A^{-1}$, we see that all the eigenvalues of $A$ must have modulus one and that the corresponding Jordan blocks must have size $1$. Therefore, $A$ is diagonalizable.
		\item $\underline{2 \implies 1} : $ The roots of the minimal polynomial of $A$ are algebraic integers and they are all on the unit circle. By a theorem of Kronecker \cite[original article]{kroneckerZweiSaetzeUeber1857} \cite[a quick modern proof]{greiterSimpleProofTheorem1978}, they are all roots of unity. It is then clear that $A^T = I$ where $T$ is the least common multiple of the orders of the eigenvalues of $A$.
	\end{itemize}
\end{proof}

We now prove the theorem \eqref{thm_semiclassical_measures_elliptic_matrix}.
\begin{proof}
	Fix $A \in \Sp_{2d}( \Z)$ periodic of minimal period $T>0$ and consider the sequence of even integers $N_k = 2k$. According to Lemma \ref{lem_conditions_quantizability_metaplectic_operators}, the Hilbert space $\CH_{N_k}(0)$ is invariant under $M_{N_k}(A)$. To simplify the notations, we drop the indices $k$ and do not mention $\theta = 0$ in the following. Let $x_1, \dots, x_n$ be periodic points for $A$ of period $T$ such that
	\begin{equation}\label{non_eq_proof_theorem_elliptic_symplectic_matrices}
		\forall (i,t) \neq (j,s) \in \eint{1}{n} \times \eint{0}{T-1}  \qquad A^t x_i \neq A^s x_j,
	\end{equation}
	and fix $\alpha_i \ge 0$ such that $\sum_{i=1}^n \alpha_i = 1$. As $A^T = I$, we have that $M_{N}(A)^T = \pm Id_{\CH_N}$. Define an eigenvector $\psi_N$ of $M_N(A)$ by
	\begin{equation*}
		\psi_N = \begin{syst}
		&\sum_{i=1}^n \frac{\alpha_i}{T}\sum_{s=0}^{T-1}  M_N(A)^s \varphi_{x_i,N} \qtq{if} M_N(A)^T = Id_{\CH_N} \\
		&\sum_{i=1}^n \frac{\alpha_i}{T}\sum_{s=0}^{T-1}  e^{\frac{\pi i}{T}s} M_N(A)^s \varphi_{x_i,N} \qtq{if} M_N(A)^T = - Id_{\CH_N}
		\end{syst}
	\end{equation*}
	In the first case, $\psi_N$ is an eigenvector with eigenvalue $1$ of $M_N(A)$. In the second one, $e^{\frac{\pi i}{T}} M_N(A)$ is $T$-periodic and $\psi_N$ is an eigenvector with eigenvalue $e^{-\frac{\pi i}{T}}$ of $M_N(A)$.
	Lemma \ref{lem_semiclassical_measure_coherent_states} and Lemma \ref{lem_action_metaplectic_semiclassical_measures} imply that
	\begin{equation*}
		Hus(M_N(A)^s \varphi_{x_i,N}) \xrightharpoonup[N \to + \infty]{}^* \delta_{A^s x_i}.
	\end{equation*}
	According to the condition \eqref{non_eq_proof_theorem_elliptic_symplectic_matrices}, we find that
	\begin{equation*}
		\forall (i,t) \neq (j,s) \in \eint{1}{n} \times \eint{0}{T-1}  \qquad \delta_{A^t x_i} \bot \delta_{A^s x_j}.
	\end{equation*}
	We denote for $x \in \T^{2d}$ its orbit under $A$ by $\CO(x)$ and the counting measure supported on $\CO(x)$ by $\delta_{\CO(x)}$. It is given by
	\begin{equation*}
		\delta_{\CO(x)} = \frac{1}{T_x} \sum_{s=0}^{T_x-1} \delta_{A^s x} \qqtq{where} T_x \text{ is the period of } x.
	\end{equation*}
	Hence, we deduce from Lemma \ref{lem_convex_combination_orthogonal_semiclassical_measures} that
	\begin{equation*}
		Hus(\frac{\psi_N}{\norm{\psi_N}}) \xrightharpoonup[N \to + \infty]{}^* \sum_{i=1}^n \frac{\alpha_i}{T}\sum_{s=0}^{T-1} \delta_{A^s x_i} = \sum_{i=1}^n \alpha_i \delta_{\CO(x_i)}.
	\end{equation*}
	Thus, the measure on the right-hand side of the last equation is a semiclassical measure for $A$. In other words, any convex combination of counting measures on the orbits of points of period $T$ for $A$ is a semiclassical measure of $A$.

	We now leverage the fact that the set $\CS \CM(A)$ of semiclassical measures of
	$A$ is weak-star closed in the set of probability measures on $\T^{2d}$. It
	allows us to remove the conditions on the period of the points $x_i$. Indeed,
	as $T$ is the minimal period of $A$, the set of points of period strictly smaller
	than $T$ is contained in a finite union of tori of dimension strictly smaller
	than $2d$. Hence, any point $y$ of period $T_1 < T$ can be approximated by a
	sequence of points $(y_m)_{m}$ of period $T$. Moreover, we get
	\begin{equation*}
		\delta_{\CO(y_m)}\ = \frac{1}{T} \sum_{s=0}^{T-1} \delta_{A^s y_m} \xrightharpoonup[N \to + \infty]{}^* \frac{1}{T_1} \sum_{s=0}^{T_1-1} \delta_{A^s y} = \delta_{\CO(y)}.
	\end{equation*}
	Hence,
	\begin{equation*}
		X := \set{\sum_{i=1}^n \alpha_i \delta_{\CO(x_i)} \; | \; \sum_{i=1}^n \alpha_i =1 \qtq{and} x_i \in \T^{2d}} \subset \CS \CM(A) \subset \CP_A(\T^{2d}).
	\end{equation*}
	We can conclude as long as we prove that $X$ is dense in $\CP_A(\T^{2d})$ for the weak-star topology. Define a continuous projector $\pi$ from $\CP(\T^{2d})$ onto $\CP_A(\T^{2d})$ by
	\begin{equation*}
		\pi(\mu) = \frac{1}{T} \sum_{s=0}^{T-1} A^s_\# \mu.
	\end{equation*}
	This projector sends the space of convex combinations of Dirac measures onto $X$. Since the Dirac measures are the extreme points of the set of probability measures, the Krein-Milman theorem implies that the convex combinations of Dirac measures are dense in $\CP(\T^{2d})$. As the image of a dense subset under a continuous surjective map is dense, we deduce that $X$ is dense in $\CP_A(\T^{2d})$.
\end{proof}

\section{An arithmetic interlude}\label{section_arithmetic_interlude}

This section is dedicated to the proof of some arithmetic lemmas that will be
needed in the proofs for parabolic symplectic matrices.

We first present a lemma that controls the existence of solutions of
integer polynomial equations over $\Z/N\Z$ via the existence of a solution over
$\C$. It is a standard consequence of the Weak Nullstellensatz.
\begin{lemma}\label{lem_empty_over_C_empty_over_ZN}
	Let $(P_1, \dots, P_m) \in \Z[X_1, \dots X_n]$ be integer polynomials and denote by $(E)$ the system
	\begin{equation*}
		(E) \quad \begin{syst}
			P_1(X_1, \dots, X_n) &= 0 ,\\
			&\vdots ,\\
			P_m(X_1, \dots, X_n) &= 0
		\end{syst}
	\end{equation*}
	If the system $(E)$ has no solution over $\C$, then there exists an integer $N_0 \in \Z$ such that the system $(E)$ has no solution in $\Z / N\Z$ for any $N > N_0$.
\end{lemma}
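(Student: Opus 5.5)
The plan is to apply the Weak Nullstellensatz over $\C$ to produce a polynomial identity with rational, hence integer, coefficients. Suppose the system $(E)$ has no solution in $\C^n$. The Weak Nullstellensatz says the ideal generated by $P_1, \dots, P_m$ in $\C[X_1, \dots, X_n]$ is the whole ring. So there exist $Q_1, \dots, Q_m \in \C[X_1, \dots, X_n]$ with
\begin{equation*}
	\sum_{i=1}^m Q_i P_i = 1 .
\end{equation*}

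The first step is to show that the $Q_i$ can be taken in $\Q[X_1, \dots, X_n]$. Bound the degrees of the $Q_i$ by some $D$ (the $Q_i$ found over $\C$ give such a $D$). The identity then becomes a linear system in the finitely many unknown coefficients of the $Q_i$. This system has coefficients in $\Q$, coming from the integer polynomials $P_i$, and a right-hand side in $\Q$. It has a solution over $\C$, so its augmented and coefficient matrices have equal rank. Rank does not change under field extension, so the system also has a solution over $\Q$. Alternatively, one can note that $I\cap \Q[X] = I_\Q$ by flatness of $\C$ over $\Q$. This descent is the only point needing care, and the linear-algebra argument disposes of it cleanly.

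Next, clear denominators. Multiply the identity by a common denominator $c \in \N^*$ of all coefficients of the rational $Q_i$. This gives integer polynomials $R_i = cQ_i \in \Z[X_1, \dots, X_n]$ with
\begin{equation*}
	\sum_{i=1}^m R_i P_i = c .
\end{equation*}

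Finally, reduce modulo $N$. Set $N_0 = c$ and take $N > N_0$. If $x \in (\Z/N\Z)^n$ solved $(E)$, then evaluating the integer identity at $x$ in the ring $\Z/N\Z$ would give $0 = c \bmod N$. This is impossible, since $0 < c < N$. Hence $(E)$ has no solution in $\Z/N\Z$ for every $N > N_0$. In fact the argument shows more: there is no solution whenever $N$ does not divide $c$.
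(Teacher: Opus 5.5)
Your proof is correct and follows the same route as the paper: a Nullstellensatz identity $\sum_i R_i P_i = c$ with $R_i \in \Z[X_1,\dots,X_n]$ and $c \in \N^*$, followed by reduction modulo $N$. The only difference is how rational coefficients are obtained. The paper cites a version of the Nullstellensatz stating that having no zeros in $\overline{\Q}^n$ already implies $1 \in (P_1,\dots,P_m)\,\Q[X_1,\dots,X_n]$. You instead apply it over $\C$ and descend to $\Q$ with your rank argument, which is equally valid.
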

\begin{proof}
	As the system $(E)$ has no solution in $\C$, it has no solution in the algebraic closure $\overline{\Q}$ of $\Q$. A variant of the Nullstellensatz \cite[chap. 15.3, cor. 33]{dummitAbstractAlgebra2004} implies that $1$ belongs to the ideal generated by $P_1, \dots, P_m$ in $\Q[X_1, \dots, X_n]$. In other words, there exist polynomials $R_1, \dots, R_m \in \Q[X_1, \dots, X_n]$ such that
	\begin{equation}\label{eq_lem_empty_over_C_empty_over_ZN}
		R_1 P_1 + \dots + R_m P_m = 1.
	\end{equation}
	We multiply this equation by the least common multiple of the denominators of the coefficients of all the polynomials involved to obtain
	\begin{equation}\label{eq_lem_empty_over_C_empty_over_ZN_final}
		R_1 P_1 + \dots + R_m P_m = N_0,
	\end{equation}
	with $R_1, \dots, R_m \in \Z[X_1, \dots, X_n]$ and $N_0 \in \N$. Suppose now that the system $(E)$ admits a solution $(x_1, \dots, x_n)$ in $\Z/N \Z$. Evaluating the equality \eqref{eq_lem_empty_over_C_empty_over_ZN_final} at $(x_1, \dots, x_n)$ thus gives
	\begin{equation*}
		N_0 \equiv \sum_{i=1}^m R_i(x_1, \dots, x_n) P_i(x_1, \dots, x_n) \equiv 0 \modulo{N}.
	\end{equation*}
	Hence, $N$ must divide $N_0$ and thus $N \le N_0$.
\end{proof}

We deduce from this lemma the emptiness of the solutions of a particular family
of systems of quadratic integer equations over $\Z/N\Z$ for $N$ large enough.
This will be used in the proof of Proposition
\ref{pro_projection_parabolic_case_dim1}.
\begin{lemma}\label{lem_no_solutions_system_2_Fourier_coeff_parabolic_1d}
	Let $a \in \Z, b \in \Z^*$ and $n_1 \neq \pm n_2 \in \Z^*$. Then there exists a finite number of values of $N \in \N$ such that the following system admits solutions $(j,l) \in \Z/ N \Z$
	\begin{equation}\label{syst_lem_arithmetic_parabolic_1d}
		\begin{syst}
			bj^2 - a j &\equiv  b(j-n_1)^2 - a (j-n_1) \quad \modulo{2N} ,\\
			bl^2 -  a l &\equiv  b(l-n_2)^2 - a (l-n_2) \quad \modulo{2N} ,\\
			bj^2 - a j &\equiv bl^2 - a jl \quad \modulo{2N}.
		\end{syst}
	\end{equation}
\end{lemma}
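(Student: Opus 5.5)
The plan is to reduce the statement to Lemma \ref{lem_empty_over_C_empty_over_ZN}. I read the third congruence as $bj^2 - aj \equiv bl^2 - al \; \modulo{2N}$, i.e. equality of the values of $f(x) = bx^2 - ax$ at $j$ and $l$; the factor $jl$ looks like a typo. Write the system as the vanishing of three integer polynomials in $(j,l)$:
\begin{equation*}
	P_1 = f(j) - f(j-n_1), \qquad P_2 = f(l) - f(l-n_2), \qquad P_3 = f(j) - f(l).
\end{equation*}
A solution modulo $2N$ reduces to a solution of $P_1 = P_2 = P_3 = 0$ in $\Z/N\Z$. So it suffices to show that this system has no solution in $\Z/N\Z$ for all $N$ large, and Lemma \ref{lem_empty_over_C_empty_over_ZN} reduces this to showing that the system has no complex solution.

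To show there is no complex solution, expand the first two equations. They are linear in $j$ and $l$:
\begin{equation*}
	P_1 = 2bn_1 j - bn_1^2 - an_1, \qquad P_2 = 2bn_2 l - bn_2^2 - an_2.
\end{equation*}
Since $b, n_1, n_2 \neq 0$, any complex solution satisfies
\begin{equation*}
	j = \frac{a}{2b} + \frac{n_1}{2}, \qquad l = \frac{a}{2b} + \frac{n_2}{2}.
\end{equation*}
Completing the square gives $f(x) = b\left(x - \frac{a}{2b}\right)^2 - \frac{a^2}{4b}$. The third equation therefore becomes $\frac{b}{4} n_1^2 = \frac{b}{4} n_2^2$, that is $n_1 = \pm n_2$, which is excluded by hypothesis. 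Hence the system has no solution over $\C$.

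Lemma \ref{lem_empty_over_C_empty_over_ZN} then gives $N_0$ such that there is no solution in $\Z/N\Z$ for $N > N_0$. A fortiori there is no solution of the congruences modulo $2N$ for $N > N_0$, so only finitely many $N$ admit solutions.

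The only delicate point is the bookkeeping between the moduli $2N$ and $N$. This is harmless, because reducing a solution modulo $2N$ to one modulo $N$ only weakens the equations. If the third equation is meant literally with $a\,jl$, the same scheme applies verbatim: substitute the forced values of $j$ and $l$ and check that the resulting polynomial identity in $n_1, n_2$ fails when $n_1 \neq \pm n_2$. That verification would be the one step needing care.
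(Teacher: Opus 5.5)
Your proof is correct and is essentially the paper's argument. The first two equations are linear and force $j = \frac{a}{2b} + \frac{n_1}{2}$ and $l = \frac{a}{2b} + \frac{n_2}{2}$; the third equation then reduces to $n_1 = \pm n_2$, and Lemma~\ref{lem_empty_over_C_empty_over_ZN} finishes. The paper also reads the third equation with $a\,l$, and your explicit reduction from modulus $2N$ to modulus $N$ is a detail the paper leaves implicit.

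One correction to your closing aside: the literal version with $a\,jl$ does not follow by the same scheme, because it is false. For $a = 2$, $b = 1$, $n_1 = 2$, $n_2 = 6$, the pair $(j,l) = (2,4)$ solves all three equations exactly in $\Z$, hence modulo every $2N$.
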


\begin{proof}
	We prove that the system has no solution over $\C$ and then Lemma \ref{lem_empty_over_C_empty_over_ZN} concludes, as it implies that a system with integer coefficients and no solutions in $\C$ has no solutions in $\Z/N\Z$ for $N$ large enough. Suppose that there exists $(j,l) \in \C^2$ that is a solution to the system \eqref{syst_lem_arithmetic_parabolic_1d}. The first two equations lead to
	\begin{equation*}
		j = \frac{a}{2b} + \frac{n_1}{2} \qqtqq{and} l = \frac{a}{2b} + \frac{n_2}{2}.
	\end{equation*}
	Hence, the last equation is equivalent to
	\begin{align*}
		b(\frac{a}{2b} + \frac{n_1}{2})^2 - a (\frac{a}{2b} + \frac{n_1}{2}) = b(\frac{a}{2b} + \frac{n_2}{2})^2 - a (\frac{a}{2b} + \frac{n_2}{2}).
	\end{align*}
	which simplifies into
	\begin{align*}
		b\frac{n_1^2}{4} = b \frac{n_2^2}{4} \iff n_1 = \pm n_2.
	\end{align*}
	As we supposed that $n_1 \neq \pm n_2$, the system has no solution over $\C$.
\end{proof}

The following lemma will be used in the proof of Theorem
\ref{thm_upper_bound_quantum_variance_position_parabolic_matrices}.
\begin{lemma}\label{lem_limit_average_indicator_bound_quantum_variance}
	Let $B \in M_d(\Z) \cap \GL_d(\Q)$, $N \in \N$ and $n \in \Z^d \setminus \set{0}$. Then
	\begin{equation}
		\limsup_{T \to + \infty} \frac{1}{T^2} \sum_{s,t = 0}^{T-1} \mathbb{1}_{N \Z^d}((s-t)B n) \le \frac{N \wedge gcd(\det(B) n)}{N},
	\end{equation}
	where $gcd(\det(B) n)$ is the greatest common divisor of the nonzero coordinates of $\det(B) n$ and $a \wedge b$ is the greatest common divisor of $a,b \in \Z$. If $|\det(B)| = 1$, the $\limsup$ can be replaced by a real limit and the inequality by an equality.
\end{lemma}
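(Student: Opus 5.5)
The plan is to identify exactly which differences $s-t$ contribute, reduce the double sum to the density of a subgroup of $\Z$, and then bound that density using the adjugate of $B$.

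\textbf{The contributing differences form a subgroup.} Set
\begin{equation*}
	\Lambda = \set{k \in \Z \, | \, kBn \in N\Z^d}.
\end{equation*}
This is a subgroup of $\Z$ and contains $N$, so $\Lambda = q\Z$ for a unique integer $q \ge 1$. The summand $\mathbb{1}_{N \Z^d}((s-t)Bn)$ is then exactly $\mathbb{1}_{q\Z}(s-t)$.

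\textbf{Counting pairs.} Split $\eint{0}{T-1}$ into its $q$ residue classes modulo $q$, and let $c_r$ be the size of class $r$. Then $c_r = T/q + \CO(1)$, and
\begin{equation*}
	\sum_{s,t=0}^{T-1} \mathbb{1}_{q\Z}(s-t) = \sum_{r=0}^{q-1} c_r^2 = \frac{T^2}{q} + \CO(qT).
\end{equation*}
Dividing by $T^2$, the average converges to $1/q$. So the $\limsup$ is a genuine limit for every $B$, and it remains to show $1/q \le \frac{N \wedge g}{N}$, with $g = gcd(\det(B) n)$.

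\textbf{Lower bound on $q$ via the adjugate.} Let $k \in \Lambda$. Multiplying $kBn \in N\Z^d$ on the left by the integer matrix $\adj(B)$ gives
\begin{equation*}
	k\det(B)\, n = \adj(B)\, kBn \in N\Z^d.
\end{equation*}
The vector $\det(B)n$ is nonzero, since $B \in \GL_d(\Q)$ and $n \neq 0$. Therefore $N$ divides every coordinate of $k\det(B)n$. Equivalently, by Bézout, $N \mid kg$, since $g$ is the gcd of those coordinates. This holds if and only if $\frac{N}{N\wedge g}$ divides $k$. Hence $q$ is a positive multiple of $\frac{N}{N \wedge g}$, so $q \ge \frac{N}{N\wedge g}$, which is the claimed inequality.

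\textbf{The case $|\det(B)| = 1$.} Here $B \in \GL_d(\Z)$, so $kBn \in N\Z^d$ if and only if $kn \in N\Z^d$. The adjugate step is then reversible, and $g = gcd(n)$. This gives $q = \frac{N}{N \wedge g}$ exactly, and the limit equals $\frac{N\wedge g}{N}$.

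No step is a real obstacle. The only points needing care are reversing the adjugate step in the unimodular case and the reformulation "$N$ divides all coordinates of $kv$ if and only if $N \mid k\,gcd(v)$". The counting error term is harmless because $q \le N$ is fixed while $T \to +\infty$.
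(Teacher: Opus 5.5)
Your proof is correct and follows essentially the paper's route: multiply by $\adj(B)$ to pass from $Bn$ to $\det(B)n$, reduce to the single congruence $s \equiv t$ modulo $N/(N\wedge g)$, then count pairs, with the reduction reversible when $B \in \GL_d(\Z)$. Your subgroup $\Lambda = q\Z$ and direct B\'ezout step (in place of the paper's coordinatewise lcm identity) are a tidy streamlining, and they also show the average actually converges, to $1/q$, for every $B$.
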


\begin{proof}
	We need to count the number of solutions $(s,t) \in \eint{0}{T-1}^2$ of the equation
	\begin{equation}\label{eq_proof_lem_limit_average_indicator_bound_quantum_variance_problem_P}
		(s-t) B n \in N \Z^d \iff  (s-t) \tilde{n} \in N \adj(B) \Z^d,
	\end{equation}
	where $\adj(B)$ is the adjugate matrix of $B$ and $\tilde{n} = \det(B) n$. The adjugate matrix has integer coefficients and satisfies $\adj(B) B = \det(B) I_d$. If $|\det(B)| = 1$, we see that $|\det(\adj(B))| = 1$ and thus $\adj(B) \Z^d = \Z^d$. Otherwise, we only have $\adj(B) \Z^d \subset \Z^d$. Therefore, the number of solutions of the equation \eqref{eq_proof_lem_limit_average_indicator_bound_quantum_variance_problem_P} is bounded by the number of solutions of the equation
	\begin{equation}\label{eq_proof_lem_limit_average_indicator_bound_quantum_variance_problem_P_prime}
		(s-t) \tilde{n} \equiv 0 \quad \modulo{N},
	\end{equation}
	with equality if $|\det(B)| = 1$. Denote by $S$ the subset of $\eint{1}{d}$ of the indices of the nonzero coordinates of $\tilde{n}$. The equation \eqref{eq_proof_lem_limit_average_indicator_bound_quantum_variance_problem_P_prime} is equivalent to a system of one-dimensional equations
	\begin{equation*}
		\forall i \in S, \quad (s-t) \tilde{n}_i \equiv 0 \quad \modulo{N}  \quad \iff \quad  \forall i \in S, \quad (s-t) \equiv 0 \quad \modulo{\frac{N}{N \wedge \tilde{n}_i}}.
	\end{equation*}
	Using the Chinese remainder theorem, we see that this system is equivalent to
	\begin{equation}\label{eq_proof_lem_limit_average_indicator_bound_quantum_variance}
		(s-t) \equiv 0 \quad \modulo{\bigvee_{i \in S} \frac{N}{N \wedge \tilde{n}_i}},
	\end{equation}
	where $a \vee b$ denotes the least common multiple of $a,b \in \Z$. Let $a,b,c \in \Z^*$. The following identity can be checked by decomposing $a,b,c$ as a product of prime numbers
	\begin{equation*}
		\frac{a}{a\wedge b} \vee \frac{a}{a\wedge c} = \frac{a}{a\wedge b .\wedge c}
	\end{equation*}
	After iterative applications of this rule, we obtain that \eqref{eq_proof_lem_limit_average_indicator_bound_quantum_variance} is equivalent to
	\begin{equation*}
		s \equiv t \quad \modulo{\frac{N}{N \wedge gcd(\det(B) n)}}.
	\end{equation*}
	The number of solutions of this equation in $\eint{0}{T-1}$ is
	\begin{equation*}
		T \cdot \left(\frac{T}{\frac{N}{N \wedge gcd(\det(B) n)}} + \CO(1)\right)\ = \frac{T^2 N \wedge gcd(\det(B) n)}{N} + \CO(T).
	\end{equation*}
	All in all, we get, with equality if $|\det(B)| = 1$,
	\begin{align*}
		\frac{1}{T^2} \sum_{s,t = 0}^{T-1} \mathbb{1}_{N \Z^d}((s-t)B n) \le \frac{N \wedge gcd(\det(B) n)}{N} + \CO(\frac{1}{T}) \xrightarrow[T \to + \infty]{} \frac{N \wedge gcd(\det(B) n)}{N}.
	\end{align*}
\end{proof}

Finally, we provide\footnote{The author claims no novelty as he remembers
	having read this idea of proof somewhere on the internet, not on an AI, but can
	sadly not remember where.} a proof of the folklore fact that given a finite set
of integers $S$, there exist infinitely many primes such that each of the
integers in $S$ is a square modulo those primes.
\begin{lemma}\label{lem_finite_integer_squares_modulo_infinity_primes}
	Let $a_1, \dots, a_n \in \Z$. Then there exist infinitely many primes $p$ such that each $a_i$ is a square modulo $p$.
\end{lemma}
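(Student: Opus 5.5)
The plan is to reduce to the polynomial $P(X) = \prod_{i=1}^n (X^2 - a_i)$ and show that it has a root modulo infinitely many primes. First remove trivial cases. If some $a_i = 0$, it is a square modulo every prime and can be dropped. If all $a_i$ are zero the statement is trivial. So we may assume every $a_i \neq 0$. This does not quite finish the reduction, however: a root of $P$ modulo $p$ only shows that \emph{one} of the $a_i$ is a square, not all of them simultaneously. The argument therefore has to be organised differently.

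The cleaner route uses only the elementary Schur-type fact that a nonconstant integer polynomial $f$ has roots modulo infinitely many primes. Recall the argument. If $f(0) = c \neq 0$, suppose only $p_1, \dots, p_k$ divide values of $f$. Then $f(c\,p_1\cdots p_k\, x) = c\,(1 + p_1 \cdots p_k\, x\, g(x))$. For suitable $x$ the bracket is not $\pm 1$, since a nonconstant polynomial takes the values $\pm 1$ only finitely often. Hence the bracket has a prime factor, which must be outside $\{p_1, \dots, p_k\}$, a contradiction. If $f(0) = 0$, every prime divides $f(0)$ and the claim is immediate.

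We apply this to $f(X) = (X^2 - a_1 a_2 \cdots a_n)$ combined with multiplicativity of the Legendre symbol. The trick is to choose $f$ so that a root of $f$ modulo $p$ forces \emph{each} $a_i$ individually to be a square. One natural choice is a polynomial whose roots generate the field $K = \Q(\sqrt{a_1}, \dots, \sqrt{a_n})$. Take a primitive element $\alpha = \sum_i c_i \sqrt{a_i}$ with $c_i \in \Z$, and let $f \in \Z[X]$ be its minimal polynomial, made monic after scaling. The key fact is that each $\sqrt{a_i}$ can be written as $g_i(\alpha)$ with $g_i \in \Q[X]$.

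Now take a prime $p$ that divides none of the denominators of the $g_i$, none of the $a_i$, and not the discriminant of $f$, and such that $f$ has a root $r$ modulo $p$; Schur gives infinitely many such $p$. The identities $g_i(X)^2 - a_i \equiv 0 \pmod{f(X)}$ hold in $\Q[X]$, and after clearing denominators they hold in $\Z_{(p)}[X]$. Evaluating at $r$ gives $g_i(r)^2 \equiv a_i \pmod p$ for every $i$ simultaneously, so all the $a_i$ are squares modulo $p$. Only finitely many primes are excluded, so infinitely many remain.

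The main obstacle is justifying that $f \mid g_i^2 - a_i$ in $\Q[X]$ and that the reduction modulo $p$ is legitimate. The divisibility holds because $\alpha$ is a root of $g_i^2 - a_i$ and $f$ is the minimal polynomial of $\alpha$. Legitimacy follows by writing $g_i^2 - a_i = f h_i$, where $h_i \in \Q[X]$ and $f$ is monic with integer coefficients, so $h_i$ has denominators dividing those of $g_i$; excluding those primes makes the congruence valid at $r$.

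A more elementary alternative avoids primitive elements. Apply Schur directly to $F(X) = \prod_{S \subseteq \{1,\dots,n\}} (X^2 - \prod_{i\in S} a_i)$. This is weaker, since it only gives one product being a square modulo each such prime. So I would use the primitive-element route above.
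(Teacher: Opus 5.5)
Your proof is correct, but it follows a genuinely different route from the paper's.

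The paper argues prime by prime. It suffices that $-1$, $2$ and every odd prime $q$ dividing some $a_i$ be squares modulo $p$. The condition $p \equiv 1 \pmod 8$ handles $-1$ and $2$. Quadratic reciprocity turns ``$q$ is a square mod $p$'' into ``$p$ is a square mod $q$'', which $p \equiv 1 \pmod q$ guarantees. Dirichlet's theorem for the progression $1 \bmod 8\prod_q q$ then supplies infinitely many such $p$.

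You instead pass to the multiquadratic field $K=\Q(\sqrt{a_1},\dots,\sqrt{a_n})=\Q(\alpha)$. Schur's elementary argument gives infinitely many primes at which the minimal polynomial $f$ of $\alpha$ has a root $r$. Reducing the identities $g_i^2-a_i=f h_i$ at $r$ makes all the $a_i$ squares simultaneously; these identities hold in $\Z_{(p)}[X]$ because $f$ is monic in $\Z[X]$.

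What each route buys:
\begin{itemize}
\item Your route needs neither quadratic reciprocity nor Dirichlet's theorem, and it proves much more. The same argument shows that any finite system of integer polynomial equations with a solution in $\C$ (hence in $\overline{\Q}$) is solvable modulo infinitely many primes. This is a partial converse to Lemma~\ref{lem_empty_over_C_empty_over_ZN}. It would let one get the solvability of $(E_M)$ in the proof of Theorem~\ref{thm_parabolic_case_infinite_fourier_coef} directly from complex solutions, without the detour through discriminants.
\item The paper's route produces an explicit arithmetic progression of admissible primes, hence a positive proportion of all primes. Schur's argument alone does not give this.
\end{itemize}

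When you write it up:
\begin{itemize}
\item Remove the false starts, in particular the sentence applying Schur to $X^2-a_1\cdots a_n$, which would only make the product a square.
\item Justify that $f$ is monic in $\Z[X]$ by noting that $\alpha$ is a sum of algebraic integers, not by ``scaling''. Monicity is inessential anyway, since you may instead discard the primes dividing a leading coefficient.
\item Excluding the primes dividing $\mathrm{disc}(f)$ or the $a_i$ is harmless but unnecessary.
\end{itemize}
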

\begin{proof}
	Denote by $S$ the set of primes that divide any of the $a_i$.
	\begin{equation*}
		S := \set{p \text{ prime} \, | \,  \exists i \in \eint{1}{n} \, p \text{ divides } a}.
	\end{equation*}
	The proof is a combination of the quadratic reciprocity law and of Dirichlet's theorem on arithmetic progressions in prime numbers.
	For an integer to be a square modulo $p$, it suffices that each of its prime divisors and $-1$ (to handle negative integers) are squares modulo $p$. Therefore, we need to find primes $p$ such that each element of $S$ and $-1$ are squares modulo $p$.
	For this purpose, we suppose first that $p \equiv 1 \modulo{8}$ to ensure that $2$ is a square modulo $p$ \cite[sec. 1.3.2, thm. 5]{serreCourseArithmetic1973a}, which implies in particular that $p \equiv 1 \modulo{4}$ and thus that $-1$ is also a square modulo $p$. Hence, the quadratic reciprocity law \cite[sec. 1.3.3, thm. 6]{serreCourseArithmetic1973a} gives that a prime $q \neq 2$ is a square modulo $p$ if and only if $p$ is a square modulo $q$, which is in particular the case if $p \equiv 1 \modulo{q}$.
	Hence, we further impose that
	\begin{equation*}
		\forall q \in S \setminus \set{2}, \quad p \equiv 1 \quad \modulo{q}.
	\end{equation*}
	Putting all the conditions together via the Chinese remainder theorem, we are done if we find $p$ such that
	\begin{equation*}
		p \equiv 1 \quad \modulo{8 \prod_{q \in S \setminus \set{2}} q}.
	\end{equation*}
	Thankfully, Dirichlet's theorem on arithmetic progressions \cite[chap. 6]{serreCourseArithmetic1973a} tells us that there are infinitely many primes that satisfy the conditions above, which concludes.
\end{proof}

\section{Parabolic symplectic matrices}\label{section_parabolic_matrices}

The goal of this section is to study the semiclassical measures associated to
symplectic matrices of the form
\begin{equation*}
	P_B = \begin{pmatrix}
		I & B \\ 0 & I
	\end{pmatrix} \qqtqq{where} B \in \Sym_d(\Z) \cap \GL_d(\Q).
\end{equation*}
We define an auxiliary subgroup of symplectic matrices
\begin{equation*}
	\tilde{P}_B = \begin{pmatrix}
		I & 0 \\ B & I
	\end{pmatrix} = J P_{-B} J^{-1} \qqtq{where} B \in \Sym_d(\Z) \cap \GL_d(\Q),
\end{equation*}
and a new orthonormal basis $(f^\theta_j)_{j \in \eint{0}{N-1}^d}$ of $\CH_N(\theta)$ given by
\begin{equation}\label{eq_new_basis_f_theta_j}
	f^\theta_j = M_N(J)^{-1} e^{J\theta}_j = \CF_{(2 \pi N)^{-1}}^{-1} e^{J\theta}_j \qqtq{for} j \in \eint{0}{N-1}^d, \, \theta \in \T^{2d}.
\end{equation}
The distributions $f^\theta_j$ belong to $\CH_N(\theta)$ thanks to \eqref{eq_action_J_CH_N(theta)}.

Since we seek to describe the semiclassical measures of the family $(P_B)_{B \in
			\Sym_d(\Z)}$, we first need to determine which spaces $\CH_N(\theta)$ are
invariant under $M_h(P_B)$. This is done in the next lemma.
\begin{lemma}[Quantizability conditions and expression for $M_h(P_B)$]\label{lem_quantizability_condition_expression_Mh_PB}
	Let $B \in \Sym_d(\Z)$. Then $\CH_N((\theta_x, \theta_\xi))$ is invariant under $M_h(P_B)$ if and only if
	\begin{equation}\label{eq_lem_quantizability_condition_expression_Mh_PB}
		B \theta_\xi \equiv \frac{N \modulo{B_{i,i}}_2}{2} \modulo{\Z^d} \qqtq{where} \modulo{B_{i,i}}_2 = \begin{pmatrix}
			\modulo{b_{1,1}}_2 \\
			\vdots \\
			\modulo{b_{d,d}}_2
		\end{pmatrix} \qtq{and} B = (b_{i,j})_{1 \le i,j \le d},
	\end{equation}
	with $\modulo{b}_2$ equal to $0$ if $b$ is even and $1$ otherwise.
	If $\theta_\xi$ satisfies this equation, the restriction $M_N(P_B)$ to $\CH_N(\theta)$ is given on the basis $(f^\theta_j)_j$ by
	\begin{equation*}
		M_N(P_B) f^\theta_j = e^{-\frac{\pi i}{N} \sprod{B(j + \theta_\xi)}{j+ \theta_\xi}} f^\theta_j.
	\end{equation*}
\end{lemma}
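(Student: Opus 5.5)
Since $P_B = J^{-1}\tilde P_{-B} J$ (from $\tilde P_B = J P_{-B} J^{-1}$), the plan is to transport the question to $\tilde P_{-B}$, whose metaplectic operator is explicit by Lemma \ref{lem_metaplectic_op_generator_symp_group}, and then conjugate back by $M_h(J)$.

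\textbf{Step 1: invariance criterion.} By Lemma \ref{lem_conditions_quantizability_metaplectic_operators},
\begin{equation*}
M_h(P_B)\CH_N(\theta) = \CH_N\left(P_B\theta + \tfrac{N\phi_{P_B}}{2}\right),
\end{equation*}
with $\phi_{P_B}$ characterised by $P(P_B^{-1}w) - P(w) \equiv \sigma(\phi_{P_B},w)$ modulo $2\Z$. For $w=(y,\eta)$ we have $P_B^{-1}w = (y - B\eta,\eta)$, so
\begin{equation*}
P(P_B^{-1}w) - P(w) = -\sprod{B\eta}{\eta} \equiv \sum_i b_{ii}\eta_i^2 \equiv \sum_i \modulo{b_{ii}}_2\,\eta_i \quad \modulo{2\Z},
\end{equation*}
because off-diagonal terms appear twice and $\eta_i^2 \equiv \eta_i$ modulo $2$. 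Since $\sigma((a,b),(y,\eta)) = \sprod{a}{\eta} - \sprod{b}{y}$, this gives $\phi_{P_B} = (\modulo{B_{i,i}}_2, 0)$.

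Next, $P_B\theta = (\theta_x + B\theta_\xi, \theta_\xi)$, and $\CH_N(\theta) = \CH_N(\theta')$ exactly when $\theta \equiv \theta'$ in $\T^{2d}$. Indeed, the character $w \mapsto e^{2\pi i\sigma(\theta,w)}$ determines $\theta$ modulo $\Z^{2d}$, and the spaces for distinct characters are distinct since each is nonzero. Therefore invariance holds if and only if
\begin{equation*}
B\theta_\xi + \tfrac{N}{2}\modulo{B_{i,i}}_2 \equiv 0 \quad \modulo{\Z^d}.
\end{equation*}
This is the stated condition up to sign, and the sign is irrelevant because $\tfrac N2\modulo{B_{i,i}}_2 \equiv -\tfrac N2 \modulo{B_{i,i}}_2$ modulo $\Z$.

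\textbf{Step 2: the eigenbasis.} The operator $M_h(\tilde P_{-B})$ is multiplication by $e^{-\frac{i}{2h}\sprod{Bx}{x}} = e^{-\pi i N\sprod{Bx}{x}}$. It acts on $e^{J\theta}_j$, which is a comb of Dirac masses at the points $x = (j + kN - (J\theta)_x)/N$ with $(J\theta)_x = \theta_\xi$. At such a point,
\begin{equation*}
\pi N\sprod{Bx}{x} = \tfrac{\pi}{N}\sprod{B(j+\theta_\xi)}{j+\theta_\xi} + 2\pi\sprod{Bk}{j+\theta_\xi} + \pi N\sprod{Bk}{k},
\end{equation*}
up to the signs coming from writing $x = (j + kN - \theta_\xi)/N$, which are handled the same way. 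The term depending on $k$ is
\begin{equation*}
2\pi\sprod{k}{B\theta_\xi} + \pi N\sum_i b_{ii}k_i^2 \quad \modulo{2\pi}.
\end{equation*}
The quantizability condition says exactly that $B\theta_\xi \equiv \tfrac N2\modulo{B_{i,i}}_2$ modulo $\Z^d$. Combined with $k_i^2 \equiv k_i$ modulo $2$, this makes the $k$-dependent phase a multiple of $2\pi$, so it cancels, possibly up to a global sign. This cancellation is the main point and the most delicate bookkeeping step; I will redo it carefully with the exact definition of $e^\theta_j$, in particular the phases $e^{-2\pi i\sprod{k}{\theta_\xi}}$ and the sign of the shift by $\theta_x$.

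It follows that $e^{J\theta}_j$ is an eigenvector of $M_h(\tilde P_{-B})$ with eigenvalue $e^{-\frac{\pi i}{N}\sprod{B(j\pm\theta_\xi)}{j\pm\theta_\xi}}$. Conjugating by $M_h(J)$ and using $f^\theta_j = M_N(J)^{-1}e^{J\theta}_j$ then gives the formula of the lemma for $M_N(P_B)f^\theta_j$. The metaplectic representation is only projective, so this holds modulo the global $\pm$ ambiguity, which we fix by choice of representative.

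Finally, I will check the sign convention $J\theta = (\theta_\xi, -\theta_x)$ and the relation $e^{\theta+(y,\eta)}_j = e^\theta_{j-y}$ to confirm that the sign in front of $\theta_\xi$ in the eigenvalue is $+$.
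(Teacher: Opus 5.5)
Your route is essentially the paper's. Step 2 (multiplying the Dirac comb $e^{J\theta}_j$ by $e^{-\pi i N\sprod{Bx}{x}}$, cancelling the $k$-dependent phase with the quantizability condition, then conjugating back by $M_h(J)$) is exactly its argument. Step 1 is a harmless shortcut: you compute $\phi_{P_B} = (\modulo{B_{i,i}}_2, 0)$ directly, whereas the paper computes $\phi_{\tilde P_B} = (0,\modulo{B_{i,i}}_2)$ and transports the criterion through $M_h(J)\CH_N(\theta) = \CH_N(J\theta)$. Two cosmetic points. With $\sigma(z,z') = \sprod{Jz}{z'}$ one has $\sigma((a,b),(y,\eta)) = \sprod{b}{y} - \sprod{a}{\eta}$, the opposite of what you wrote, which is irrelevant modulo $2$. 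Also, the $k$-dependent phase cancels exactly, not just up to a sign.

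The real issue is the sign you plan to ``confirm'': the check gives $-$, not $+$. Since $(J\theta)_x = \theta_\xi$, the masses of $e^{J\theta}_j$ sit at $x = (j - \theta_\xi + kN)/N$. So the $k$-independent part of $-\pi N \sprod{Bx}{x}$ is $-\frac{\pi}{N}\sprod{B(j-\theta_\xi)}{j-\theta_\xi}$, and $M_N(P_B) f^\theta_j = e^{-\frac{\pi i}{N}\sprod{B(j-\theta_\xi)}{j-\theta_\xi}} f^\theta_j$. This is precisely what the paper's own proof obtains, and it is the form used afterwards (e.g.\ $e^{-\frac{\pi i}{N} b(j-\theta_\xi)^2}$ when $d=1$). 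The $j+\theta_\xi$ in the displayed statement is a sign slip.

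It cannot be absorbed into the $\pm$ ambiguity of the metaplectic representation. Write $B\theta_\xi = \frac{N}{2}\modulo{B_{i,i}}_2 + a$ with $a \in \Z^d$. The two candidate eigenvalues then differ by the factor $e^{-\frac{4\pi i}{N}\sprod{j}{a}}$, which depends on $j$ unless $2a \in N\Z^d$ (for instance when $\theta_\xi = 0$). Concretely, $d=1$, $b=2$, $\theta_\xi = 1/2$, $N \ge 3$ gives the factor $e^{-4\pi i j/N}$. So do the bookkeeping with $j - \theta_\xi$ and state the conclusion in that form.
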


\begin{proof}
	We first prove an analogous lemma for the auxiliary matrices $\tilde{P}_B$.
	We compute the angle $\phi_{\tilde{P}_B}$ defined in Lemma \ref{lem_conditions_quantizability_metaplectic_operators}. Let $(x, \xi) \in \Z^{2d}$. We have
	\begin{align*}
		P(\tilde{P}^{-1}_B(x,\xi)) - \tilde{P}((x,\xi)) & = \sprod{x}{-Bx + \xi} - \sprod{x}{\xi} = - \sprod{Bx}{x},\\
		& \equiv \sum_{1 \le i,j \le d} b_{i,j}x_i x_j \equiv \sum_{i=1}^d b_{i,i} x_i^2 + 2 \sum_{1 \le i < j \le d} b_{i,j} x_i x_j \quad \modulo{2} ,\\
		& \equiv \sum_{i=1}^d b_{i,i} x_i \equiv
		\sigma((0,\modulo{B_{i,i}}_2),(x,\xi)) \quad \modulo{2}.
	\end{align*}
	Therefore, $\phi_{\tilde{P}_B} = (0, \modulo{B_{i,i}}_2)$.
	According to Lemma \ref{lem_conditions_quantizability_metaplectic_operators}, the space $\CH_N(\theta)$ is invariant under $M_h(\tilde{P}_B)$ if and only if
	\begin{equation*}
		(I - \tilde{P}_B) \theta \equiv \frac{N \phi_{\tilde{P}_B}}{2} \modulo{\Z^{2d}}.
	\end{equation*}
	Therefore
	\begin{equation}\label{equiv_quantizability_condition_expression_Mh_tildeP_B}
		M_h(\tilde{P}_B) \CH_N(\theta) = \CH_N(\theta) \iff B \theta_x \equiv \frac{N \modulo{B_{i,i}}_2}{2} \modulo{\Z^d}.
	\end{equation}
	We suppose that $\theta_x$ is a solution of
	\eqref{eq_lem_quantizability_condition_expression_Mh_PB} and we compute the
	action of $M_h(\tilde{P}_B)$ on $e^\theta_j$. We have thanks to the expression
	for the metaplectic operators computed in Lemma
	\ref{lem_metaplectic_op_generator_symp_group} that
	\begin{align*}
		M_{(2\pi N)^{-1}} (\tilde{P}_{B}) e^\theta_j & = \frac{1}{N^d} \sum_{k \in \Z^d} e^{\pi i N \sprod{Bx}{x}} e^{2 \pi i \sprod{k}{\theta_\xi}} \delta\left(x - \frac{j+kN - \theta_x}{N}\right),\\
		& = \frac{1}{N^d} \sum_{k \in \Z^d} e^{\pi i N \sprod{B(k + \frac{j - \theta_x}{N})}{k + \frac{j - \theta_x}{N}}} e^{2 \pi i \sprod{k}{\theta_\xi}} \delta\left(x - \frac{j+kN - \theta_x}{N}\right),\\
		& = e^{\frac{\pi i}{N} \sprod{B(j- \theta_x)}{j- \theta_x}} \frac{1}{N^d} \sum_{k \in \Z^d} e^{\pi i \sprod{Bk}{N k - 2 \theta_x}} e^{2 \pi i \sprod{k}{\theta_\xi}} \delta\left(x - \frac{j+kN - \theta_x}{N}\right) ,\\
		& =  e^{\frac{\pi i}{N} \sprod{B(j- \theta_x)}{j- \theta_x}} e^\theta_j.
	\end{align*}
	We used that there exists an integer $a \in \Z$ such that $B\theta_x = \frac{N \modulo{B_{i,i}}_2}{2} + a$ as $\theta_x$ is a solution of \eqref{eq_lem_quantizability_condition_expression_Mh_PB}. Hence, for $k \in \Z^d$, we get
	\begin{align*}
		\sprod{Bk}{N k - 2 \theta_x} & = \sprod{k}{N B k - 2 B\theta_x} = N\sprod{k}{Bk - \modulo{B_{i,i}}_2}-2\sprod{Bk}{a} ,\\
		& \equiv N \left(\sum_{1 \le i,j \le d} b_{i,j}k_i k_j + \sum_{i=1}^d k_i \modulo{b_{i,i}}_2 \right) \quad \modulo{2} ,\\
		& \equiv N \left( \sum_{i=1}^d b_{i,i} k_i(k_i+1) + 2 \sum_{1 \le i < j \le d} b_{i,j} x_i x_j \right) \quad \modulo{2} ,\\
		& \equiv 0 \quad \modulo{2},
	\end{align*}
	as $k_i(k_i+1)$ is always an even integer. Therefore, $e^{\pi i \sprod{Bk}{N k - 2 \theta_x}} =1$ for any $k \in \Z^{d}$.

	We finally transfer the results above to $P_B = J^{-1} \tilde{P}_{-B} J$. Since
	the metaplectic operators form a representation of the metaplectic group, we
	get
	\begin{equation}\label{eq_lem_quantizability_condition_expression_Mh_tildeP_B_conjugation_relation}
		M_h(P_B) = \pm M_h(J)^{-1} M_h(\tilde{P}_{-B}) M_h(J).
	\end{equation}
	Thus the relation \eqref{eq_action_J_CH_N(theta)} and the equivalence \eqref{equiv_quantizability_condition_expression_Mh_tildeP_B} imply
	\begin{align*}
		M_h(P_B) \CH_N(\theta) = \CH_N(\theta) & \iff M_h(\tilde{P}_B) \CH_N(J\theta) = \CH_N(J \theta) ,\\
		& \iff -B (J\theta)_x \equiv \frac{N \modulo{-B_{i,i}}_2}{2} \modulo{\Z^d} ,\\
		& \iff B \theta_\xi \equiv \frac{N \modulo{B_{i,i}}_2}{2} \modulo{\Z^d}.
	\end{align*}
	This settles the first part of the lemma. The computation done above for $\tilde{P}_B$ and the conjugation relation \eqref{eq_lem_quantizability_condition_expression_Mh_tildeP_B_conjugation_relation} leads to
	\begin{align*}
		M_N(P_B) f^\theta_j & = M_h(J) M_h(\tilde{P}_{-B}) e^{J \theta}_j = e^{\frac{\pi i}{N} \sprod{-B(j- (J\theta)_x)}{j- (J\theta)_x}} e^{J \theta}_j = e^{-\frac{\pi i}{N} \sprod{B(j - \theta_\xi)}{j - \theta_\xi}} f^\theta_j.
	\end{align*}
\end{proof}

We transfer some results proven in the section
\ref{section_semiclassical_measures} on the basis $(e^\theta_j)_j$ to the basis
$(f^\theta_j)_j$.
\begin{lemma}\label{lem_properties_basis_f_theta_j}
	Let $\psi = \sum_{j \in \eint{0}{N-1}^d} \psi_j f^\theta_j \in \CH_N(\theta)$. Then for any $(m,n) \in \Z^{2d}$, we have
	\begin{equation*}
		Wig(\psi)(e^{2 \pi i \sprod{(m,n)}{\cdot}}) = e^{\frac{2 \pi i}{N} \sigma(J\theta,(m,n)) + \frac{\pi i}{N} P(m,n)} \sum_{j \in \eint{0}{N-1}^d} \psi_j \overline{\psi_{\modulo{j+m}_N}} e^{\frac{2 \pi i }{N}\sprod{n}{j}} e^{-\frac{2 \pi i}{N} \sprod{\theta_x}{j-\modulo{j+m}_N}}.
	\end{equation*}
	Moreover, if $(j_N)_N$ is a sequence of elements in $\eint{0}{N-1}^d$ and $(\theta_N)_N$ is a sequence of elements in $\T^{2d}$, then
	\begin{equation*}
		\frac{j_N}{N} \xrightarrow[N \to \infty]{} x \implies \CS \CM(f^{\theta_N}_{j_N}) = \set{\Leb \otimes \delta_{x}  }.
	\end{equation*}
\end{lemma}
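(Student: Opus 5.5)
The plan is to transport both statements from the basis $(e^{J\theta}_j)_j$ to the basis $(f^\theta_j)_j$ using the metaplectic operator $M_N(J)$. By definition \eqref{eq_new_basis_f_theta_j}, $f^\theta_j = M_N(J)^{-1} e^{J\theta}_j$. Hence $\psi = \sum_j \psi_j f^\theta_j$ satisfies $M_N(J)\psi = \sum_j \psi_j e^{J\theta}_j \in \CH_N(J\theta)$, and $\psi$ and $M_N(J)\psi$ have the same coordinates.

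For the first formula, I would use the exact Egorov theorem \eqref{eq_exact_Egorov_observable_torus}, or directly \eqref{eq_exact_Egorov_R2d}. This gives
\begin{equation*}
Wig(\psi)(a) = \sprod{\Op^w_N(a)\psi}{\psi} = \sprod{\Op^w_N(a\circ J^{-1}) M_N(J)\psi}{M_N(J)\psi} = Wig(M_N(J)\psi)(a \circ J^{-1}).
\end{equation*}
Take $a = e^{2\pi i \sprod{(m,n)}{\cdot}}$. Then $a\circ J^{-1} = e^{2\pi i \sprod{J(m,n)}{\cdot}} = e^{2\pi i\sprod{(n,-m)}{\cdot}}$, since $J^{-T}=J$. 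Lemma \ref{lem_fourier_coefficients_wigner_distribution} is then applied in $\CH_N(J\theta)$ with Fourier index $(m',n') = (n,-m)$ and angle $J\theta = (\theta_\xi, -\theta_x)$. This produces the sum
\begin{equation*}
\sum_j \psi_j \overline{\psi_{\modulo{j+m}_N}}\, e^{\frac{2\pi i}{N}\sprod{n}{j}}\, e^{-\frac{2\pi i}{N}\sprod{\theta_x}{j-\modulo{j+m}_N}}.
\end{equation*}
The prefactor becomes $e^{\frac{2\pi i}{N}\sigma(J\theta,(m,n))}$ once we note $(-n',m') = (m,n)$. Finally $P(m',n') = P(n,-m) = -\sprod{n}{m}$, so $-\frac{\pi i}{N}P(m',n') = \frac{\pi i}{N}P(m,n)$. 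The only real work is this sign and phase bookkeeping. It is the main (mild) obstacle, because the $\pm$ ambiguity of $M_N(J)$ must not matter; it does not, since the Wigner distribution is quadratic in $\psi$.

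For the second statement, I would use the first part applied to $\psi = f^{\theta_N}_{j_N}$, i.e. $\psi_j = \delta_{j,j_N}$. The sum is nonzero only if $\modulo{j_N+m}_N = j_N$, which for $N > |m|$ forces $m=0$. When $m=0$ all the phases collapse except $e^{\frac{2\pi i}{N}\sigma(J\theta_N,(0,n))} e^{\frac{2\pi i}{N}\sprod{n}{j_N}}$. The first factor tends to $1$ because $\theta_N$ lies in a bounded fundamental domain. The second tends to $e^{2\pi i\sprod{n}{x}}$.

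Thus every Fourier coefficient of $Wig(f^{\theta_N}_{j_N})$ converges to $\delta_{m,0}e^{2\pi i \sprod{n}{x}}$, which is the corresponding coefficient of $\Leb\otimes\delta_x$. Density of trigonometric polynomials, the finite-order bound \eqref{ineq_wigner_distribution_finite_order}, and the equivalence \eqref{equivalence_semiclassical_measure_Wigner_Husimi} then give $\CS\CM(f^{\theta_N}_{j_N}) = \{\Leb\otimes\delta_x\}$.

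Alternatively, the second statement follows from Lemma \ref{lem_semiclassical_measure_e_theta_j} combined with Lemma \ref{lem_action_metaplectic_semiclassical_measures} applied with $A=J$ and $s=-1$. This gives the limit $J^{-1}_{\#}(\delta_x\otimes\Leb) = \Leb\otimes\delta_{x}$, up to the orientation of $x$ (one checks $J^{-1}(y,\xi) = (-\xi, y)$, and Lebesgue is invariant under the sign flip). In that route the sign convention is what needs checking, so I would rely on the direct Fourier computation above.
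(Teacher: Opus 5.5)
\textbf{Part 1.} Your argument is the paper's: conjugate by $M_h(J)$ using exact Egorov, rewrite $e^{2\pi i\sprod{(m,n)}{J^{-1}\cdot}}$ as $e^{2\pi i\sprod{(n,-m)}{\cdot}}$, and apply Lemma \ref{lem_fourier_coefficients_wigner_distribution} in $\CH_N(J\theta)$. Your phase and sign bookkeeping is correct.

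\textbf{Part 2.} The paper takes the route you list only as an alternative. It combines three ingredients:
\begin{itemize}
\item Lemma \ref{lem_semiclassical_measure_e_theta_j}, applied with the angles $J\theta_N$;
\item Lemma \ref{lem_action_metaplectic_semiclassical_measures}, with $A=J$ and $s=-1$;
\item the identity $J^{-1}_{\#}(\delta_x\otimes\Leb)=\Leb\otimes\delta_x$.
\end{itemize}
The sign issue you were wary of does not arise. Indeed $J^{-1}(y,\xi)=(-\xi,y)$, and Lebesgue measure is invariant under $\xi\mapsto-\xi$.

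Your preferred direct route specializes the formula from Part 1 to $\psi_j=\delta_{j,j_N}$. This amounts to redoing the proof of Lemma \ref{lem_semiclassical_measure_e_theta_j} in the $f$-basis, and it is correct. It is self-contained and makes explicit the uniform bound \eqref{ineq_wigner_distribution_finite_order} needed to pass from trigonometric polynomials to all of $\CC^\infty$.

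The paper's route is shorter because it reuses an already proved lemma. It also makes the geometric meaning transparent: $f^\theta_j$ is the Fourier transform of a position-localized state, so it is localized in momentum.
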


\begin{proof}
	Let $\tilde{\psi} = M_h(J) \psi$. The exact Egorov theorem \eqref{eq_exact_Egorov_observable_torus} implies
	\begin{align*}
		Wig(\psi)(e^{2 \pi i \sprod{(m,n)}{\cdot}}) & = Wig(M_h(J)^{-1}\tilde{\psi})(e^{2 \pi i \sprod{(m,n)}{\cdot}}) = Wig(\tilde{\psi})(e^{2 \pi i \sprod{(m,n)}{J^{-1}\cdot}}) ,\\
		& = Wig(\tilde{\psi})(e^{2 \pi i \sprod{(n,-m)}{\cdot}}).
	\end{align*}
	Since $\tilde{\psi} = \sum_{j \in \eint{0}{N-1}^d} \psi_j e^{J \theta}_j$, Lemma \ref{lem_fourier_coefficients_wigner_distribution} leads to
	\begin{align*}
		Wig(\tilde{\psi})(e^{2 \pi i \sprod{(n,-m)}{\cdot}}) = e^{\frac{2 \pi i}{N} \sigma(J\theta,(m,n)) + \frac{\pi i}{N} P(m,n)} \sum_{j \in \eint{0}{N-1}^d} \psi_j \overline{\psi_{\modulo{j+m}_N}} e^{\frac{2 \pi i }{N}\sprod{n}{j}} e^{-\frac{2 \pi i}{N} \sprod{\theta_x}{j-\modulo{j+m}_N}}.
	\end{align*}
	This gives the first part of the lemma.

	The second part of the lemma is a direct consequence of the definition
	\eqref{eq_new_basis_f_theta_j} of $f^\theta_j$, Lemma
	\ref{lem_action_metaplectic_semiclassical_measures}, Lemma
	\ref{lem_semiclassical_measure_e_theta_j} and the equality
	\begin{equation*}
		J^{-1}_{\#} (\delta_x \otimes \Leb) = \Leb \otimes \delta_{x} \qqtq{for} x \in \T^{d}.
	\end{equation*}
\end{proof}

\begin{remark}
	Similarly to the vector $e^\theta_j$, the vector $f^\theta_j$ only depends on the residue class of $j$ modulo $N$ if $\theta_x = 0$ as a consequence of \eqref{eq_transformation_rule_e_theta_j}. In this case, the expression for the Fourier coefficients of the Wigner distribution in the lemma above simplifies significantly into
	\begin{equation}\label{eq_simplified_expression_fourier_coefficients_wigner_distribution_f_theta_j}
		Wig(\psi)(e^{2 \pi i \sprod{(m,n)}{\cdot}}) = e^{\frac{\pi i}{N} \sprod{n}{m-2 \theta_\xi}} \sum_{j \in (\Z / N \Z)^d} e^{\frac{2 \pi i }{N}\sprod{n}{j}} \psi_j \overline{\psi_{j+m}}.
	\end{equation}
\end{remark}

We remark that the equation
\eqref{eq_lem_quantizability_condition_expression_Mh_PB} is trivially satisfied
by $\theta_\xi =0$ if $N$ is even. Therefore, the restriction of $M_{2N}(P_B)$
to $\CH_{2N}(0)$ is always well-defined and given by
\begin{equation*}
	M_{2N}(P_B) f^0_j = e^{-\frac{\pi i }{2N} \sprod{B j}{j}} f^0_j \qqtq{for} j \in \eint{0}{2N-1}^d.
\end{equation*}
It is diagonal in the basis $(f^0_j)_{j \in \eint{0}{2N-1}^d}$ and its eigenvalues and eigenspaces depend on the quadratic form $x \mapsto \sprod{Bx}{x}$ on $\eint{0}{2N-1}^d$. We use this observation to build semiclassical measures associated to $P_B$ and prove Proposition \ref{pro_parabolic_nu_times_lebesgue}.

\begin{proof}
	The proof consists in explicitly constructing eigenvectors of $M_{2N}(P_B)$ having semiclassical measure of the form
	\begin{equation*}
		(\sum_{i=1}^m \alpha_i \Leb \otimes \delta_{\xi_i}) \qtq{where} \xi_i \in \T^d, \, \alpha_i \ge 0 \qtq{and} \sum_{i=1}^m \alpha_i = 1.
	\end{equation*}
	To conclude, it then suffices to use the density of convex combinations of Dirac measures in $\CP(\T^{d})$ for the weak-star topology and the fact that $\CS \CM(P_B)$ is closed for this topology.

	We suppose without loss of generality that the points $\xi_1, \dots, \xi_m$ are
	distinct. We take $2 N = 2^{2n-1}$ and consider the points $a_{i,n} =
		\floor{2^{2n-1}\xi_i}$. For each point $a_{i,n}$ there exists a point $k_{i,n}
		\in \eint{0}{2^{n-1}-1}^d$ such that $c_{i,n} = 2^n \cdot k_{i,n}$ is at a
	distance at most $2^n$, for the $l^\infty$ norm, from $a_{i,n}$. This implies
	that
	\begin{equation*}
		\frac{c_{i,n}}{2N} = \frac{a_{i,n}}{2^{2n-1}} + \CO(\frac{2^n}{2^{2n-1}}) = \frac{\floor{2^{2n-1}\xi_i}}{2^{2n-1}} + \CO(2^{-n}) \xrightarrow[n \to + \infty]{} \xi_i,
	\end{equation*}
	and that
	\begin{equation*}
		\sprod{B c_{i,n}}{c_{i,n}} \equiv  \sprod{B 2^n \cdot k_{i,n}}{2^n \cdot k_{i,n}} \equiv 2^{2n} \sprod{B k_{i,n}}{k_{i,n}} \equiv 0 \modulo{2^{2n}}.
	\end{equation*}
	Therefore, each $f^0_{c_{i,n}}$ is an eigenvector of $M_{2N}(P_B)$ associated to the eigenvalue $1$, as $$e^{\frac{\pi i}{2N} \sprod{B c_{i,n}}{c_{i,n}}} = 1.$$ Moreover, as the points $\xi_1, \dots, \xi_m$ are distinct, the points $a_{1,n}, \dots, a_{m,n}$ are distinct for $n$ large enough and we can thus choose the points $c_{1,n}, \dots, c_{m,n}$ to be distinct. Hence,
	\begin{equation*}
		\psi_n = \sum_{i=1}^m \sqrt{\alpha_i} f^0_{c_{i,n}},
	\end{equation*}
	is an eigenvector of norm $1$ for $M_{2N}(P_B)$ for $n$ large enough. Lemma \ref{lem_properties_basis_f_theta_j} gives that
	\begin{equation*}
		Hus(f^0_{c_{i,n}}) \xrightharpoonup[n \to \infty]{}^* \Leb \otimes .\delta_{\xi_i}
	\end{equation*}
	As the points $\xi_1, \dots, \xi_m$ are distinct, the measures $\delta_{\xi_i}$ and $\delta_{\xi_j}$ are mutually singular as long as $i \neq j$. Lemma \ref{lem_convex_combination_orthogonal_semiclassical_measures} thus leads to
	\begin{equation*}
		Hus(\psi_n) \xrightharpoonup[n \to \infty]{}^* \Leb \otimes (\sum_{i=1}^m \alpha_i \delta_{\xi_i}),
	\end{equation*}
	as $\norm{\psi_n} =1$.
\end{proof}

\subsection{The case \texorpdfstring{$d=1$}{d=1}}

This subsection is dedicated to an in-depth study of the case $d=1$ and to the
proof of Theorem \ref{thm_full_characterization_parabolic_dimension_1}. The
proof is decomposed into two propositions:
\begin{itemize}
	\item we prove in Proposition \ref{pro_projection_parabolic_case_dim1} that all the
	      semiclassical measures for $P_b$ must be of the form presented in the theorem,
	\item we prove conversely in Proposition
	      \ref{pro_projection_parabolic_case_dim1_building_the_measure} that all the
	      measures of this form are attained.
\end{itemize}

The first step in our study consists in solving the equation
\eqref{eq_lem_quantizability_condition_expression_Mh_PB} to determine for which
values of $\theta$ the Hilbert space $\CH_N(\theta)$ is stable under $M_{(2\pi
			N)^{-1}}(P_b)$. The equation is equivalent in dimension one to
\begin{equation*}
	b \theta_\xi \equiv \frac{N \modulo{b}_2}{2} \quad \modulo{\Z}.
\end{equation*}
If either $b$ or $N$ is even, this reduces to $b\theta_\xi \equiv 0 \modulo{\Z}$ and if both $b$ and $N$ are odd, this reduces to $b \theta_\xi \equiv \frac{1}{2} \modulo{\Z}$. Solving each case, we obtain that $M_N(P_b)$ is well defined on $\CH_N(\theta_x, \theta_\xi)$ if and only if
\begin{nalign}\label{cond_quantizability_parabolic_one_dim}
	\theta_\xi &\in \set{0, \frac{1}{b}, \dots, \frac{b-1}{b}} \quad &\text{if $b$ or $N$ are even} ,\\
	\theta_\xi &\in \set{\frac{1}{2b}, \frac{3}{2b}, \dots, \frac{2b-1}{2b}} \quad &\text{if $b$ and $N$ are odd}.
\end{nalign}
We suppose in the following that $\theta_\xi$ always satisfies the conditions \eqref{cond_quantizability_parabolic_one_dim}. A useful consequence is that $2 b \theta_\xi$ is always an integer.

The second step is to determine more precisely the spectrum and the eigenspaces
of $M_N(P_b)$. We deduce from the expression of Lemma
\ref{lem_quantizability_condition_expression_Mh_PB} that the spectrum of
$M_N(P_b)$ is
\begin{equation*}
	\spectre(M_{N}(P_b)) = \set{e^{-\frac{\pi i}{N} b(j - \theta_\xi)^2} \, | \, j \in \Z^d},
\end{equation*}
and that the eigenspace $E_\lambda$ corresponding to an eigenvalue $\lambda \in \spectre(M_{N}(P_b))$ is
\begin{equation*}
	E_\lambda = \Vect \set{f^\theta_j \, | \, e^{-\frac{\pi i}{N} b(j - \theta_\xi)^2} = \lambda }.
\end{equation*}
It will be useful to know under which conditions two vectors $f^\theta_j$ and $f^\theta_{k}$ belong to the same eigenspace. We have for any $j, k \in \eint{0}{N-1}$ that
\begin{align*}
	e^{-\frac{\pi i}{N} b(j - \theta_\xi)^2} = e^{-\frac{\pi i}{N} b(k - \theta_\xi)^2} \iff bj^2 - 2b \theta_\xi j \equiv  bk^2 - 2b \theta_\xi k \quad \modulo{2N}.
\end{align*}
Therefore,
\begin{equation}\label{cond_e_0_j_same_eigenvalue_dim_1}
	\exists \lambda \in \spectre(M_{N}(P_b)), \quad  f^\theta_j, f^\theta_{k} \in E_\lambda \iff bj^2 - 2b \theta_\xi j \equiv  bk^2 - 2b \theta_\xi k \quad \modulo{2N}.
\end{equation}
Note that the vector subspace generated by $f^\theta_j$ depends only on the residue class of $j$ modulo $N$ thanks to \eqref{eq_transformation_rule_e_theta_j}.

The structure of the eigenspaces and the conditions
\eqref{cond_e_0_j_same_eigenvalue_dim_1} heavily constrain the possible
semiclassical measures associated to $P_b$.
\begin{proposition}\label{pro_projection_parabolic_case_dim1}
	Let $b \in \N^*$. Any semiclassical measure $\mu \in \CS \CM(P_b)$ has the form
	\begin{equation}\label{eq_pro_projection_parabolic_case_dim1_form_measure}
		\mu = \gamma \Leb_x \otimes \nu + \sum_{l=0}^{bm-1} \alpha_l (1 + \beta_l \cos(2 \pi m x + \phi_l)) \Leb_x \otimes \delta_{\frac{l}{bm}}.
	\end{equation}
	where $\nu \in \CP(\T)$, $m \in \N$, $\gamma \in [0,1]$, $(\alpha_l, \beta_l, \phi_l)_{l \in \eint{0}{bm-1}} \in ([0,1]^2 \times \R)^{bm}$ and $$\gamma + \sum_{l=0}^{bm-1} \alpha_l = 1.$$
\end{proposition}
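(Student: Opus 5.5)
We work with a sequence $\psi_N=\sum_j \psi_j f^\theta_j$ of normed eigenvectors of $M_N(P_b)$ with $Wig(\psi_N)\rightharpoonup^*\mu$. By Lemma \ref{lem_quantizability_condition_expression_Mh_PB}, each $\psi_N$ is supported on indices $j$ lying on one ``sphere'' $bj^2-2b\theta_\xi j\equiv c_N \ \modulo{2N}$. Lemma \ref{lem_properties_basis_f_theta_j} expresses the Fourier coefficient $\widehat{\mu}(m,n)$ (frequency $m$ in position) as a limit of sums $\sum_j e^{\frac{2\pi i}{N}nj}\psi_j\overline{\psi_{j+m}}$, up to phases. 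Such a term can be nonzero only if both $j$ and $j+m$ lie on the sphere, i.e. $b(2j+m)\equiv 2b\theta_\xi\ \modulo{2N}$ after subtracting (recall $2b\theta_\xi\in\Z$). This is a linear congruence in $j$. For fixed $m\neq 0$ it has at most $2b\gcd$-bounded many solutions $j$ modulo $N$, and these solutions sit near $j\approx \theta_\xi - m/2 + kN/b$, i.e. near the momenta $\xi = k/b$ (in the rescaled variable $j/N$, more precisely $j/N\in \frac{1}{2b}\Z$-type points). So the first step is: for every $m\neq 0$, the part of $\mu$ carrying nonzero $x$-frequency $m$ is concentrated on finitely many momentum points, namely the fibers $\xi\in\frac{1}{bm}\Z$ after one accounts for the $m$-dependence of the congruence (I would redo the count carefully to get exactly the lattice $\frac{l}{bm}$).

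The second step rules out two distinct nonzero frequencies $\pm m_1\ne\pm m_2$ interacting. Suppose $\widehat{\mu}(m_1,\cdot)$ and $\widehat{\mu}(m_2,\cdot)$ are both nonzero along the subsequence. Then there exist indices $j$ and $l$ on the same sphere, with $j-m_1$ and $l-m_2$ also on it. This is precisely system \eqref{syst_lem_arithmetic_parabolic_1d} with $a=2b\theta_\xi$ (I would reparametrize so the third equation reads $bj^2-aj\equiv bl^2-al$). By Lemma \ref{lem_no_solutions_system_2_Fourier_coeff_parabolic_1d} it has no solutions for $N$ large, a contradiction. Hence the projection $\pi_\#$ of the non-Lebesgue part involves a single $m\in\N^*$ (and $-m$, by conjugation symmetry since $\mu$ is real).

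Third, we assemble the measure. Decompose $\psi_N=\psi_N^{\mathrm{near}}+\psi_N^{\mathrm{far}}$, where $\psi_N^{\mathrm{near}}$ keeps the coefficients $\psi_j$ with $j$ in the (boundedly many, $O(1)$-thick) solution sets of the congruence for frequency $\pm m$ near the points $l/(bm)$, and $\psi_N^{\mathrm{far}}$ keeps the rest. Fourier coefficients with $m'\neq 0$ vanish for $\psi^{\mathrm{far}}$ and for the cross terms (after taking the thickness $\to\infty$ slowly), so $\psi^{\mathrm{far}}$ contributes $\gamma\,\Leb_x\otimes\nu$. The near part lives on finitely many fibers $\delta_{l/(bm)}$. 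On each fiber, the $x$-marginal has Fourier support in $\{-m,0,m\}$, so it equals $\alpha_l(1+\beta_l\cos(2\pi mx+\phi_l))\Leb_x$, and positivity of $\mu$ forces $|\beta_l|\le 1$. The orthogonality between fibers and the far part is handled as in Lemma \ref{lem_convex_combination_orthogonal_semiclassical_measures}, since the momentum supports are disjoint, except on a $\nu$-null issue that we absorb into $\nu$.

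The main obstacle is this third step, specifically the bookkeeping that separates the near and far pieces uniformly in $N$. The delicate point is showing that cross terms between distinct fibers, or between near and far, give no off-diagonal Fourier mass. I would first try to control this directly from the explicit formula, using that distinct $j$ contribute only when $j'-j=m$ exactly.
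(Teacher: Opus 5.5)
Your route works, and it differs from the paper's in the structural part. Two points need repair.

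\textbf{The congruence in your first step is wrong.} The indices $j$ and $j+m$ lie on the same level set if and only if $bm(2j+m-2\theta_\xi)\equiv 0 \ \modulo{2N}$, and you cannot cancel the factor $m$ modulo $2N$. The solution set $T_N$ consists of the integers of the form $j=\theta_\xi-\frac{m}{2}+\frac{tN}{bm}$ with $t\in\Z$. These form at most $\gcd(bm,N)\le bm$ classes modulo $N$, and $j/N\to t/(bm)$. This is exactly where the lattice $\frac{1}{bm}\Z$ comes from. Your version would give $\frac{1}{b}\Z$, which Proposition \ref{pro_projection_parabolic_case_dim1_building_the_measure} shows is too small.

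\textbf{The third step is not delicate.} Take the near set to be exactly $T_N\cup(T_N+m)$, with no thickening. Then $\psi_N^{\mathrm{near}}$ and $\psi_N^{\mathrm{far}}$ are themselves eigenvectors. Call two indices on the level set carrying $\psi_N$ an on-shell pair. For each fixed frequency $(m',n)$ and $N$ large, the mixed Wigner terms vanish identically:
\begin{itemize}
\item for $m'=0$, because the supports are disjoint;
\item for $m'=\pm m$, because an on-shell pair $\set{j,j+m}$ has $j\in T_N$ and so lies entirely in the near set;
\item for $m'\notin\set{0,\pm m}$, because of Lemma \ref{lem_no_solutions_system_2_Fourier_coeff_parabolic_1d}, with $2b\theta_\xi$ ranging over the finitely many integers allowed by \eqref{cond_quantizability_parabolic_one_dim}: once $\psi_N$ has an on-shell pair at distance $m$, it has none at distance $m'$.
\end{itemize}
The same facts give the two pieces of the measure. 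The far limit has position-Fourier support $\set{0}$, so it equals $\gamma\Leb\otimes\nu$. The near part splits into the pairs $\set{j,j+m}$, $j\in T_N$, which are mutually $\gtrsim N/(bm)$ apart. Each pair contributes $\alpha_l(1+\beta_l\cos(2\pi m x+\phi_l))\Leb\otimes\delta_{l/(bm)}$, where $\alpha_l$ is the limit of $|\psi_j|^2+|\psi_{j+m}|^2$ and $\alpha_l\beta_l/2$ that of $|\psi_j||\psi_{j+m}|$. AM--GM then gives $\beta_l\le 1$.

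Your appeal to Lemma \ref{lem_convex_combination_orthogonal_semiclassical_measures} is misplaced. The momentum supports need not be disjoint: $\psi_N^{\mathrm{far}}$ may contain $f_j$ with $j/N\to l/(bm)$, so $\nu$ may charge these points. But no orthogonality is needed, and the statement allows such atoms of $\nu$.

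\textbf{Comparison with the paper.} Your second step is the paper's first step, which shows that $\widehat{\mu}$ is supported in $\set{-m,0,m}\times\Z$. After that the paper never returns to the eigenvectors. It disintegrates $\mu=\int\mu_\xi\,\diff\nu(\xi)$ and uses the $P_b$-invariance of $\mu$ to get $\tau_{b\xi}\mu_\xi=\mu_\xi$ for $\nu$-almost every $\xi$. Hence $\widehat{\mu_\xi}(m)\neq 0$ forces $bm\xi\in\Z$. So $\mu_\xi=\Leb$ off $\frac{1}{bm}\Z$, while the atoms $l/(bm)$ carry densities with Fourier support in $\set{-m,0,m}$, and positivity gives $\beta_l\le 1$. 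Your linear congruence, divided by $2N$, is the quantum counterpart of the condition $bm\xi\in\Z$.

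The paper's route is shorter and purely measure-theoretic once the Fourier support is known, at the price of the disintegration theorem. Yours is elementary and more informative: it splits each eigenvector into two eigenvectors with asymptotically decoupled Wigner distributions and shows which components produce each part of $\mu$. It is essentially the converse of the construction in Proposition \ref{pro_projection_parabolic_case_dim1_building_the_measure}, but it requires the finite-$N$ bookkeeping above.
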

In other words, the projection on the position of any semiclassical measure is absolutely continuous with respect to the Lebesgue measure and even better, it has at most one positive Fourier coefficient that is nonzero. Namely, the proposition implies the inclusion \eqref{incl_proof_pro_projection_parabolic_case_dim1_fourier_support} below.

\begin{proof}
	The first step of the proof consists in showing that
	\begin{equation}\label{incl_proof_pro_projection_parabolic_case_dim1_fourier_support}
		\CS \CM(P_b) \subset \set{\mu \in \CP_{P_b}(\T^{2d}) \, | \, \exists ! m \in \N, \, \supp(\widehat{\mu}) \subset \set{-m ,0, m} \times \Z},
	\end{equation}
	using the structure of the eigenspaces of $M_N(P_b)$. This will imply the form claimed in the theorem.

	Let $\mu$ be a semiclassical measure associated to $P_b$. There exists a
	sequence $(\psi_{N_k})_k$ of normed eigenvectors of $M_{N_k}(P_b)$, associated
	to the eigenvalues $(\lambda_k)_k$ on $\CH_{N_k}(\theta_k)$, such that
	\begin{equation*}
		Wig(\psi_{N_k}) \xrightharpoonup[k \to + \infty]{}^* \mu.
	\end{equation*}

	Using Lemma \ref{lem_properties_basis_f_theta_j} and observing that the phase
	in front of the sum tends to one as $N_k$ tends to $+ \infty$, we obtain that
	\begin{align*}
		\widehat{\mu}(m,n) & = \lim_{k \to + \infty} Wig(\psi_{N_k})(e^{-2\pi i \sprod{(m,n)}{\cdot}}),\\
		 & = \lim_{k \to + \infty} \sum_{j \in \eint{0}{N_k-1}^d} \psi_{N_k,j} \overline{\psi_{N_k,\modulo{j-m}_{N_k}}} e^{-\frac{2 \pi i }{N_k}\sprod{n}{j}} e^{-\frac{2 \pi i}{N_k} \sprod{\theta_\xi}{j-\modulo{j-m}_{N_k}}}.
	\end{align*}
	This limit can be nonzero if and only if there exists a sequence of indices $(j_k)_k$ such that $\psi_{N_k,j_k} \overline{\psi_{N_k,\modulo{j_k-m}_{N_k}}} \neq 0$. Looking at the form of the eigenspace $E_{\lambda_k}$ of $M_{N_k}(P_b)$, we see that both $f^{\theta_k}_{j_k}$ and $f^{\theta_k}_{j_k-m}$ belong to $E_{\lambda_k}$. We used that $f^{\theta_k}_{\modulo{j_k-n}_{N_k}}$ is equal to $f^{\theta_k}_{j_k-m}$ up to a phase according to \eqref{eq_transformation_rule_e_theta_j}.

	Suppose, by contradiction, that there exists $m_1 \neq 0$, $m_2 \neq \pm m_1$
	and $n_1, n_2 \in \Z$ such that $\widehat{\mu}(m_1,n_1) \neq 0$ and
	$\widehat{\mu}(m_2,n_2) \neq 0$. By the previous discussion, there exist
	sequences $(j_{k,1})_k$ and $(j_{k,2})_k$ such that
	\begin{equation*}
		f^{\theta_k}_{j_{k,1}}, \quad f^{\theta_k}_{j_{k,2}}, \quad f^{\theta_k}_{j_{k,1} - m_1} \quad f^{\theta_k}_{j_{k,2} - m_2} \; \in E_{\lambda_k}.
	\end{equation*}
	According to \eqref{cond_e_0_j_same_eigenvalue_dim_1}, this implies that
	\begin{equation*}
		\begin{syst}
			bj_{k,1}^2 - 2b \theta_{k,\xi} j_{k,1} &\equiv  b(j_{k,1}-m_1)^2 - 2b \theta_{k,\xi} (j_{k,1}-m_1) \quad \modulo{2N_k} ,\\
			bj_{k,2}^2 - 2b \theta_{k,\xi} j_{k,2} &\equiv  b(j_{k,2}-m_2)^2 - 2b \theta_{k,\xi} (j_{k,2}-m_2) \quad \modulo{2N_k} ,\\
			bj_{k,1}^2 - 2b \theta_{k,\xi} j_{k,1} &\equiv bj_{k,2}^2 - 2b \theta_{k,\xi} j_{k,2} \quad \modulo{2N_k}.
		\end{syst}
	\end{equation*}
	However, this system does not admit solutions $(j_{k,1}, j_{k,2})$ for infinitely many values of $N_k$, for each possible value of $2 b \theta_\xi$, according to Lemma \ref{lem_no_solutions_system_2_Fourier_coeff_parabolic_1d}. We are thus done as $2 b \theta_{k,\xi}$ can only take a finite number of integer values since $\theta_\xi$ must satisfy the conditions \eqref{cond_quantizability_parabolic_one_dim}.

	The second step consists in showing that if $\mu \in \CP_{P_b}(\T^{2d})$ is such
	that there exists a unique $m \in \N$ with $\supp (\widehat{\mu}) \subset
		\set{-m ,0, m} \times \Z $, then $\mu$ is of the form
	\eqref{eq_pro_projection_parabolic_case_dim1_form_measure}. Suppose first that
	$m =0$, this means that $\supp(\widehat{\mu}) \subset \set{0} \times \Z $.
	Denote by $\nu$ the projection of $\mu$ on the second variable of $\T^2$. Its
	Fourier coefficients satisfy $\widehat{\nu}(n) = \widehat{\mu}(0,n)$ for any $n
		\in \Z$ and thus the Fourier coefficients of $\mu$ and $\Leb \otimes \nu$ are
	equal. This implies that $\mu = \Leb \otimes \nu$ and thus $\mu$ is of the form
	\eqref{eq_pro_projection_parabolic_case_dim1_form_measure}.

	We suppose now that $m \neq 0$ and we denote by $\nu$ the projection of $\mu$
	on the second coordinate of the torus $\T^2$. The disintegration of measure
	theorem \cite[thm. 5.3.1 for the precise
		statement]{ambrosioGradientFlowsMetric2008}\cite[III-70 for a
		proof]{dellacherieProbabilitiesPotential1978} gives that there exists a
	$\nu$-almost everywhere uniquely determined family of probability measures
	$(\mu_\xi)_{\xi \in \T}$ on $\T$ that satisfies
	\begin{equation*}
		\int_{\T^2} f(x, \xi) \diff \mu = \int_{\T} \int_{\T} f(x, \xi) \diff \mu_\xi(x) \diff \nu(\xi) \qtq{for} f \text{ Borel measurable}.
	\end{equation*}
	The disintegration theorem gives $\mu_\xi$ as a measure on $\T^2$, but we identified $\mu_\xi$ with its restriction to $\T \times \set{\xi}$ as the disintegration theorem also implies that $\mu_\xi$ is supported in $\T \times \set{\xi}$ for $\nu$-almost all $\xi \in \T$. Let $f$ be Borel measurable. We use the invariance of $\mu$ under $P_b$ to obtain
	\begin{align*}
		\int_{\T^2} f(x, \xi) \diff \mu & = \int_{\T^2} f(x + b \xi, \xi ) \diff \mu =  \int_{\T} \int_{\T} f(x + b \xi, \xi) \diff \mu_\xi(x) \diff \nu(\xi) ,\\
		& = \int_{\T} \int_{\T} f(x, \xi) \diff \tau_{b\xi} \mu_\xi(x) \diff \nu(\xi).
	\end{align*}
	Since the measures $(\mu_\xi)_{\xi \in \T}$ are uniquely determined $\nu$-almost everywhere, there exists a set $N$ of $\nu$-measure zero such that
	\begin{equation*}
		\forall \xi \in \T \setminus N, \quad \mu_{\xi} = \tau_{b\xi} \mu_{\xi}.
	\end{equation*}
	Let $l \in \Z$ and $\xi \in \T \setminus N$. We compute
	\begin{align*}
		\widehat{\mu_\xi}(l) = \int_{\T} e^{-2 \pi i l x} \diff \mu_\xi(x) = \int_{\T} e^{-2 \pi i l (x +b \xi)} \diff \mu_\xi(x) = e^{-2 \pi i b\xi l} \widehat{\mu_\xi}(l).
	\end{align*}
	Therefore,
	\begin{equation*}
		\forall l \in \Z^*, \quad \widehat{\mu_\xi}(l) \neq 0 \implies \xi \in \frac{1}{bl} \Z.
	\end{equation*}
	Let $l, n \in \Z$ with $l \notin \set{-m ,0,m}$. We have, as $\supp (\widehat{\mu}) \subset \set{-m ,0,m} \times \Z$, that
	\begin{align*}
		0 & = \widehat{\mu}(l,n) = \int_{\T} e^{-2 \pi i n \xi} \int_{\T} e^{- 2\pi i l x} \diff \mu_\xi(x) \diff \nu(\xi) = \int_{\T} e^{-2 \pi i n \xi} \widehat{\mu_\xi}(l) \diff \nu(\xi).
	\end{align*}
	As this is valid for any $l \in \Z$, we obtain, by density of the trigonometric polynomials in the continuous functions on the torus, that
	\begin{equation*}
		\forall f \in \CC^0(\T), \qquad \int_{\T} f(\xi) \widehat{\mu_\xi}(m) \diff \nu(\xi) = 0.
	\end{equation*}
	Since $\nu$ is a finite Borel measure on a compact space, it is regular \cite[Thm. 2.18]{rudinRealComplexAnalysis1986} and thus the continuous functions are dense in $L^1(\nu)$ \cite[thm. 3.14]{rudinRealComplexAnalysis1986}. Therefore,
	\begin{equation}\label{eq_proof_pro_projection_parabolic_case_dim1_coeff_fourier_disintegration}
		\forall \xi \in \T \setminus \tilde{N},\, l \notin \set{-m ,0,m} \quad \widehat{\mu_\xi}(l) = 0 \qqtqq{where} \nu(\tilde{N}) = 0.
	\end{equation}
	Thus, if $\xi \in \T \setminus \tilde{N}$ and $\xi \notin \frac{1}{bm}\Z$, we must have $\mu_\xi = \Leb$. Indeed, as $\xi \notin \frac{1}{bm} \Z$, we have $\widehat{\mu_\xi}(\pm m) =0$ and all other Fourier coefficients, except $\widehat{\mu_\xi}(0)$, are zero. Combining everything and denoting $X = \set{0, \dots, (bm-1)/bm}$, we obtain
	\begin{equation*}
		\mu = \nu(X^c) \Leb \otimes \tilde{\nu}  + \sum_{l=0}^{bm-1} \nu(\frac{l}{bm}) \mu_{\frac{l}{bm}} \otimes \delta_{\frac{l}{bm}} \qqtqq{where} \tilde{\nu} = \frac{\mathbb{1}_{X^c} \nu }{\nu(X^c)}.
	\end{equation*}
	No problem arises in the definition of $\nu(l/bm) \mu_{l/bm}$ in the expression above, as 
	either~$\nu(l/bm) \neq 0$ and $\mu_{l/bm}$ is well-defined, or $\nu(l/bm) = 0$.  Let $l \in \eint{0}{bm-1}$ such that $\nu(\frac{l}{bm}) \neq 0$. We have thanks to \eqref{eq_proof_pro_projection_parabolic_case_dim1_coeff_fourier_disintegration} that $\widehat{\mu_{l/bm}}$ is supported on $\set{-m, 0,m}$. As $\mu_{l/bm}$ is a probability measure, we have $\widehat{\mu_{l/bm}}(0)=1$ and $\widehat{\mu_{l/bm}}(-m) = \overline{\widehat{\mu_{l/bm}}(m)} = \beta_{l}' e^{-i \phi_l}$ with $\beta_{l}' \ge 0$. Since those are the only nonzero Fourier coefficients of $\mu_{l/bm}$, we get
	\begin{align*}
		\mu_{l/bm} & = (\beta_{l}' e^{-i \phi_l} e^{-2 \pi i m x} + 1 + \beta_{l} e^{i \phi_l} e^{2 \pi i m x}) \Leb ,\\
		& = (1+2 \beta_{l}' \cos(2 \pi m x + \phi_l)) \Leb.
	\end{align*}
	As $\mu_{l/bm}$ is a positive measure, we must have $ \beta_{l} = 2 \beta_{l}' \le 1$. Finally, we obtain the desired expression
	\begin{equation*}
		\mu = \nu(X^c) \Leb \otimes \tilde{\nu}  + \sum_{l=0}^{bm-1} \nu(\frac{l}{bm}) (1+ \beta_l \cos(2 \pi m x + \phi_l)) \Leb \otimes \delta_{\frac{l}{bm}} \qtq{where} \tilde{\nu} = \frac{\mathbb{1}_{X^c} \nu }{\nu(X^c)}.
	\end{equation*}
\end{proof}

We now prove a reverse result, namely that all the measures in Proposition
\ref{pro_projection_parabolic_case_dim1} are indeed semiclassical measures for
$P_b$.

\begin{proposition}\label{pro_projection_parabolic_case_dim1_building_the_measure}
	Let $b \in \N^*$. For any $\nu \in \CP(\T)$, $\gamma \in [0,1]$, $m \in \N^*$, $(\alpha_l, \beta_l, \phi_l)_{l \in \eint{0}{bm-1}} \in ([0,1]^2 \times \R)^{bl}$ with $\gamma + \sum_{l=0}^{bm-1} \alpha_l = 1$, we have that
	\begin{equation*}
		\gamma  \Leb_x \otimes \nu + \sum_{l=0}^{bm-1} \alpha_l (1 + \beta_l \cos(2 \pi m x + \phi_l)) \Leb_x \otimes \delta_{\frac{l}{bm}}  \in \CS \CM(P_b).
	\end{equation*}
\end{proposition}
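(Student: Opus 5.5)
The plan is to construct explicit eigenvectors of $M_N(P_b)$ on $\CH_N(0)$, with $N$ even, in the basis $(f^0_j)_j$, and then use that $\CS\CM(P_b)$ is weak-star closed (Lemma \ref{lem_basic_pro_semicla_meas_symp_matrix}). By that closedness, it suffices to treat a dense subset of the admissible parameters. I would take $\nu=\sum_{i=1}^{r}c_i\delta_{\xi_i}$ a finite convex combination of Dirac masses with $\xi_i\notin\set{\frac{l}{bm}}$, which is enough since such $\nu$ are dense. Recall that $M_N(P_b)f^0_j=e^{-\frac{\pi i}{N}bj^2}f^0_j$, so $f^0_j$ and $f^0_k$ lie in a common eigenspace exactly when $bj^2\equiv bk^2 \modulo{2N}$, as in \eqref{cond_e_0_j_same_eigenvalue_dim_1}. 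The target is a single eigenvalue class $\lambda_N$ together with the following indices in that class:
\begin{itemize}
\item for each $l$, a pair $j_l$, $j_l+m$ with $j_l/N\to \frac{l}{bm}$;
\item for each $i$, one index $k_i$ with $k_i/N\to\xi_i$.
\end{itemize}

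\textbf{Building blocks.} For a pair, I would take $\psi^{(l)}_N=u_lf^0_{j_l}+v_lf^0_{j_l+m}$ with $|u_l|^2+|v_l|^2=1$. Formula \eqref{eq_simplified_expression_fourier_coefficients_wigner_distribution_f_theta_j} shows that the only surviving Fourier modes are $(0,n)$ and $(\pm m,n)$. Their limits are $e^{2\pi i n l/(bm)}$ and $\overline{u_l}v_l e^{2\pi i n l/(bm)}$ (or its conjugate), up to a phase coming from $e^{2\pi i n j_l/N}$ that converges. Hence
\begin{equation*}
Hus(\psi^{(l)}_N)\rightharpoonup^*(1+2|u_lv_l|\cos(2\pi m x+\phi_l))\Leb_x\otimes\delta_{\frac{l}{bm}},
\end{equation*}
and since $2|u_lv_l|$ ranges over $[0,1]$ and the argument of $\overline{u_l}v_l$ is free, every $(\beta_l,\phi_l)$ is attained. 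The singletons $f^0_{k_i}$ give $\Leb\otimes\delta_{\xi_i}$ by Lemma \ref{lem_properties_basis_f_theta_j}. All these limit measures have pairwise disjoint supports in $\xi$, so they are mutually singular. Lemma \ref{lem_convex_combination_orthogonal_semiclassical_measures} then applies to $\sum_l\sqrt{\alpha_l}\psi^{(l)}_N+\sum_i\sqrt{\gamma c_i}f^0_{k_i}$, which is an eigenvector because every component lies in $E_{\lambda_N}$. Its limit is exactly the claimed measure.

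\textbf{Arithmetic step (main obstacle).} The hard part is choosing $N$ and the indices so that everything lies in one eigenspace. The pair condition $b(j+m)^2\equiv bj^2$ reads $bm(2j+m)\equiv0\modulo{2N}$. Writing $2N=bmM$, this becomes $2j+m\equiv0 \modulo{M}$. So I would take $2j_l+m=2lM'$ for a suitable multiple $M'$ of $M$ (or an odd multiple, adjusting parity when $m$ is odd), which gives $j_l/N\to\frac{l}{bm}$. Next I need all the values $bj_l^2 \modulo{2N}$ to coincide across $l$, and to equal $bk_i^2$. As in the proof of Proposition \ref{pro_parabolic_nu_times_lebesgue}, I would choose $N$ of the form $N=bm\cdot 2^{2n}\cdot L$ with $L$ highly divisible, $L$ a multiple of $m$ and of $4$. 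The aim is that $j_l=lM'-\frac{m}{2}$ (or its parity-adjusted version) satisfies $bj_l^2\equiv\frac{bm^2}{4}\modulo{2N}$ up to a fixed constant independent of $l$. This amounts to checking $b\,lM'(lM'-m)\equiv0 \modulo{2N}$, which a highly divisible $M'$ should force.

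For the $k_i$, I would use the trick of Proposition \ref{pro_parabolic_nu_times_lebesgue}: set $k_i=j_0+K t_i$, where $j_0$ is a fixed index in the target class and $K$ is a large power. Here $K$ should satisfy $K^2\mid 2N/b$ and $2bj_0K\equiv0\modulo{2N}$, so that $bk_i^2\equiv bj_0^2$, while $Kt_i/N$ can approximate any $\xi_i$ to within $K/N\to0$. If the parity bookkeeping for odd $m$ or odd $b$ fails on $\CH_N(0)$, the fallback is to work with $\theta_\xi\in\frac{1}{2b}\Z$ and $N$ odd, using the general eigenvalue formula of Lemma \ref{lem_quantizability_condition_expression_Mh_PB}. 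The linear term $2b\theta_\xi j$ only shifts the centre of the congruence $bm(2j+m-2\theta_\xi)\equiv0$, so the same construction applies.

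Finally, closedness of $\CS\CM(P_b)$ extends the result from finitely supported $\nu$ avoiding the points $\frac{l}{bm}$ to arbitrary $\nu\in\CP(\T)$.
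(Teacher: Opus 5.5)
Your architecture matches the paper's: a single eigenspace of $M_N(P_b)$ containing $bm$ pairs with $j_l/N\to l/(bm)$ plus singletons with $k_i/N\to\xi_i$, followed by Lemma \ref{lem_convex_combination_orthogonal_semiclassical_measures} and closedness. However, the arithmetic step you flag as the crux fails in both of its sub-steps.

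\emph{Pairs.} Take $\theta=0$ and $m$ even. Imposing $j_l/N\to l/(bm)$ forces $j_l\equiv-\frac m2+l\frac N{bm}\pmod N$. Since $bj^2 \bmod 2N$ depends only on $j\bmod N$, this gives
\[
b j_l^2-b j_0^2=lN\Big(\frac{lN}{bm^2}-1\Big),
\]
which vanishes modulo $2N$ for all $l$ if and only if $N/(bm^2)$ is an \emph{odd} integer. Making $M'=N/(bm)$ highly divisible is therefore exactly the wrong choice. The term $blM'm=lN$ is $\equiv N\pmod{2N}$ for odd $l$, and $bl^2M'^2$ cancels it only when $M'/m$ is odd. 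Concretely, take $b=1$, $m=2$ and $8\mid N$: the pair positions $j\equiv-1$ and $j\equiv\frac N2-1$ have $j^2\equiv1$ and $j^2\equiv 1+N\pmod{2N}$, so they lie in different eigenspaces. This is why the paper takes $N=bm^2k$ with $k$ odd. For $m$ odd, pairs exist on $\CH_N(0)$ only if $v_2(2N)\le v_2(b)$, where $v_2$ is the $2$-adic valuation. So your fallback $\theta_\xi=\frac12$ is mandatory there, as in the paper, and the same parity constraint reappears.

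\emph{Singletons.} The progression trick of Proposition \ref{pro_parabolic_nu_times_lebesgue} works there only because the common class is $0$ and one can take $j_0=0$. Here the pairs pin the class to $bj^2\equiv bm^2/4\not\equiv 0\pmod{2N}$, and the linear term destroys the trick.
\begin{itemize}
\item With $j_0=-\frac m2$, your requirement $2bj_0K\equiv0\pmod{2N}$ reads $bmK\equiv0\pmod{2N}$. Hence $K/N\ge 2/(bm)$ never tends to $0$, and $j_0+K\Z$ only reaches points of the form $l/(bm)$.
\item No other $j_0$ in the class helps. For $b=1$, $m=2$, $N=4k$ with $k$ odd, every $j_0$ with $j_0^2\equiv1\pmod{8k}$ is odd and prime to $k$. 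So $2j_0K\equiv 0\pmod{8k}$ forces $N\mid K$.
\end{itemize}
What you actually need is that the roots of $b(x-\frac m2)(x+\frac m2)\equiv0\pmod{2N}$ (resp. the shifted quadratic for $m$ odd), divided by $N$, approximate every $\xi_i$ along admissible $N=bm^2k$, $k$ odd. Modulo $p^e$ with $p\nmid 2bm$ there are only the two roots $\pm\frac m2$. So this requires $N$ with several prime factors and a genuine distribution statement for the CRT-combined roots. The paper imports it from the Kowalski--Soundararajan equidistribution theorem along a density-one set of moduli, intersected with $\{2bm^2(2r+1)\}$. A self-contained substitute is possible: use the Chinese remainder theorem with $k=pq_1\cdots q_r$, the $q_i$ large, and $p$ chosen by Dirichlet's theorem so that one root lands near each $\xi_i$. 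Some such argument must replace your progression step.
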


\begin{proof}
	Note that the case $\sum_{l=0}^{bm-1} \alpha_l = 0$ has already been treated in Proposition \ref{pro_parabolic_nu_times_lebesgue}. Thus, we suppose that $\sum_{l=0}^{bm-1} \alpha_l > 0$ in the following. We first explicitly construct a sequence of eigenvectors $(\psi_k)_{k} \in \CH_{N_k}(\theta_k)$ of $M_{N_k}(P_b)$ whose semiclassical measure is
	\begin{equation*}
		\mu = \sum_{l=0}^{bm-1} \alpha_l (1 + \beta_l \cos(2 \pi m x + \phi_l)) \Leb_x \otimes \delta_{\frac{l}{bm}}.
	\end{equation*}
	A simple computation gives
	\begin{equation*}
		\widehat{\mu}(r,s) = \sum_{l=0}^{bm-1} \alpha_l e^{-2 \pi i s \frac{l}{bm}} (\beta_l e^{-i \phi_l} \delta_{r,-m} + \delta_{0,r} + \beta_l e^{i \phi_l} \delta_{r,m}).
	\end{equation*}
	The first step consists in solving the equation
	\begin{equation}\label{eq_proof_pro_projection_parabolic_case_dim1_building_the_measure}
		bj^2_k -2 b \theta_{k,\xi}j \equiv b(j_k-m)^2 -2 b \theta_{k,\xi}(j_k-m) \quad \modulo{2N_k},
	\end{equation}
	with $\theta_k$ that satisfies the conditions \eqref{cond_quantizability_parabolic_one_dim}. There are four cases to consider:
	\begin{enumerate}
		\item if $b$ and $m$ are even, we choose $\theta = 0$,
		\item if $b$ is even and $m$ is odd, we choose $\theta_{k,\xi} = 1/2 = (b/2)/b$,
		\item if $b$ is odd and $m$ is even, we choose $N_k$ even and $\theta_{k,\xi} = 0$,
		\item if $b$ and $m$ are odd, we choose $N_k$ odd and $\theta_{k,\xi} = \frac{b}{2b}
			      = 1/2$.
	\end{enumerate}
	We always choose $\theta_{k,x} = 0$ to ensure that $f^\theta_j = f^\theta_{j+N_k}$ per \eqref{eq_transformation_rule_e_theta_j} and \eqref{eq_new_basis_f_theta_j}. In all cases, we obtain that the equation \eqref{eq_proof_pro_projection_parabolic_case_dim1_building_the_measure} is equivalent to
	\begin{equation*}
		2bm \left(j_k - \frac{m + 2 \theta_{k,\xi}}{2}\right) \equiv 0 \quad \modulo{2N_k}.
	\end{equation*}
	We then choose $N_k = bm^2 k$ in all cases with $k$ odd. The solutions of \eqref{eq_proof_pro_projection_parabolic_case_dim1_building_the_measure} are then
	\begin{equation*}
		j_{k,l} = \frac{m + 2 \theta_{k,\xi}}{2} + l \frac{N_k}{bm} \qqtq{where} l \in \eint{0}{bm-1}.
	\end{equation*}
	Suppose first that $m$ is even and remark that
	\begin{align*}
		b j_{k,l}^2 & \equiv b(\frac{m}{2} + l \frac{b m^2k}{bm})^2 \equiv  b\left(\frac{m}{2}\right)^2 + bm^2k l(lk+1)  \quad \modulo{2 bm^2 k} ,\\
		& \equiv  b\left(\frac{m}{2}\right)^2 \quad \modulo{2b m^2 k}.
	\end{align*}
	We used that $l(kl+1)$ is even as either $l$ is even, or $l$ is odd and thus $(kl+1)$ is even. If $m$ is odd, we have similarly
	\begin{align*}
		bj_{k,l}^2 - bj_{k,l} & \equiv bj_{k,l}(j_{k,l}- 1) \equiv b \frac{m-1}{2}\frac{m+1}{2} + b m^2 k l (kl + 1) \quad \modulo{2bm^2k} ,\\
		 & \equiv b \frac{m-1}{2}\frac{m+1}{2} \quad \modulo{2bm^2k}.
	\end{align*}
	We have thus proved that in all cases and for any $l \in \eint{0}{bm-1}$
	\begin{equation}\label{eq_proof_pro_projection_parabolic_case_dim1_building_the_measure_second}
		bj_{k,0}^2 - 2b \theta_{k,\xi} j_{k,0} \equiv bj_{k,l}^2 - 2b \theta_{k,\xi} j_{k,l} \equiv b(j_k-m)^2 -2 b \theta_{k,\xi}(j_k-m)  \quad \modulo{2N_k}.
	\end{equation}
	We define a sequence of vectors $(\psi_k)_k$ in $\CH_{N_k}(\theta_k)$ by
	\begin{equation*}
		\psi_k = \sum_{l=0}^{bm-1} \sqrt{\alpha_l} \left(\sqrt{\frac{1+ \sqrt{1-\beta_l}}{2}} e^{i \phi_l} f^{\theta_k}_{j_{k, l}} + \sqrt{\frac{1- \sqrt{1-\beta_l}}{2}} f^{\theta_k}_{j_{k,l}-m} \right).
	\end{equation*}
	Each $\psi_k$ is an eigenvector of $M_{N_k}(P_b)$ thanks to \eqref{cond_e_0_j_same_eigenvalue_dim_1} and \eqref{eq_proof_pro_projection_parabolic_case_dim1_building_the_measure_second}. As all $(j_{k,l})_{l \in \eint{0}{bm-1}}$ are different, we get
	\begin{equation}\label{eq_proof_pro_projection_parabolic_case_dim1_building_the_measure_norm_psi_k}
		\norm{\psi_k}^2 = \sum_{l=0}^{bm-1} \alpha_l\left(\frac{1+ \sqrt{1-\beta_l}}{2} + \frac{1 -\sqrt{1-\beta_l}}{2} \right) =\sum_{l=0}^{bm-1} \alpha_l.
	\end{equation}
	Let $(\psi_{k,j})$ be the components of $\psi_k$ in the basis $(f^{\theta_k}_j)_j$. The equation \eqref{eq_simplified_expression_fourier_coefficients_wigner_distribution_f_theta_j} gives
	\begin{align*}
		Wig(\psi_k)(e^{-2 \pi i \sprod{(r,s)}{\cdot}}) & = e^{\frac{\pi i}{N_k} r (s+2 \theta_{k,\xi})} \sum_{j \in \Z/N_k\Z} e^{-\frac{2 \pi i}{N_k} sj} \psi_{k,j} \overline{\psi_{k,j-r}} \qqtq{for} r,s \in \Z.
	\end{align*}
	If $r \notin \set{-m,0,m}$, this sum is zero if $k$ is large enough, as all $(j_{k,l})_l$ and $(j_{k,l}-r)_l$ are different from each other modulo $N_k$.
	If $r = 0$, this sum becomes
	\begin{align*}
		e^{\frac{\pi i}{N_k} r (s+ 2 \theta_{k,\xi})} \sum_{l=0}^{bm-1} \alpha_l \left( e^{-\frac{2 \pi i}{N_k} sj_{k,l}} \left|\sqrt{\frac{1+ \sqrt{1-\beta_l}}{2}} e^{i \phi_l} \right|^2 + e^{-\frac{2 \pi i}{N_k} sj_{k,l}} \left|\sqrt{\frac{1- \sqrt{1-\beta_l}}{2}} \right|^2  \right),
	\end{align*}
	and after some simplifications, we get
	\begin{align*}
		Wig(\psi_k)(e^{-2 \pi i \sprod{(r,0)}{\cdot}}) & = e^{\frac{\pi i}{N_k} r (s+ 2 \theta_{k,\xi})} \sum_{l=0}^{bm-1} \alpha_l e^{-\frac{2 \pi i}{N_k} s(\frac{m + 2 \theta_{k,\xi}}{2} + l \frac{N_k}{bm})} ,\\
		& \xrightarrow[k \to + \infty]{} \sum_{l=0}^{bm-1} \alpha_l e^{-2 \pi i s\frac{l}{bm}} = \widehat{\mu}(0,s).
	\end{align*}
	For the case $r = m$, we remark that for $N_k$ large enough, the only indices $j$ such that both $j$ and $j-m$ belong to the set $\set{j_{k,l}, j_{k,l } -m \, | \, l \in \eint{0}{bm-1}}$ are $(j_{k,l})_{l \in \eint{0}{bm-1}}$, where everything is understood modulo $N_k$. Moreover,
	\begin{align*}
		Wig(\psi_k)(e^{-2 \pi i \sprod{(m,s)}{\cdot}}) & = e^{\frac{\pi i}{N_k} r (s+ 2 \theta_{k,\xi})} \sum_{l=0}^{bm-1} \alpha_l e^{-\frac{2 \pi i}{N_k} s j_{k,l}} \sqrt{\frac{1+ \sqrt{1-\beta_l}}{2}} e^{i \phi_l} \sqrt{\frac{1- \sqrt{1-\beta_l}}{2}} ,\\
		& = e^{\frac{\pi i}{N_k} r (s+ 2 \theta_{k,\xi})} \sum_{l=0}^{bm-1} \alpha_l \beta_l e^{i \phi_l} e^{-\frac{2 \pi i}{N_k} s(\frac{m - 2 \theta_{k,\xi}}{2} + l \frac{N_k}{bm})},\\
		& \xrightarrow[k \to + \infty]{} \sum_{l=0}^{bm-1} \alpha_l e^{-2 \pi i s \frac{l}{bm}} \beta_l e^{i \phi_l} = \widehat{\mu}(m,s).
	\end{align*}
	We obtain a similar result for $r=-m$ and finally that for any $r,s \in \Z$
	\begin{equation*}
		Wig(\psi_k)(e^{-2 \pi i \sprod{(r,s)}{\cdot}}) \xrightarrow[k \to + \infty]{}  \widehat{\mu}(r,s) = \mu(e^{-2 \pi i \sprod{(r,s)}{\cdot}}).
	\end{equation*}
	This implies that $Wig(\psi_k)$ converges as a distribution to $\mu$ by density of the trigonometric polynomials in $\CC^\infty(\T^{2d})$.

	We now fix a finite set of distinct points $(\xi_i)_{i \in \eint{1}{r}}$ in $\T
		\setminus \set{0, \dots, \frac{bm-1}{bm}}$ and construct a sequence of
	eigenvectors $(\tilde{\psi}_k)_{k} \in \CH_{N_k}(\theta_k)$ of $M_{N_k}(P_b)$
	whose semiclassical measure is
	\begin{equation*}
		\sum_{i=1}^r \gamma_i \Leb \otimes \delta_{\xi_i}  + \sum_{l=0}^{bm-1} \alpha_{l} (1 + \beta_l \cos(2 \pi m x + \phi_l))\Leb \otimes \delta_{\frac{l}{bm} } \qtq{with} \sum_{i=1}^r \gamma_i + \sum_{l=0}^{bm-1} \alpha_l = 1.
	\end{equation*}
	We first consider the case $m$ even. Recall that in this case, we take $\theta_{k,\xi} = 0$ and $N_k = bm^2k$. The first step is to find elements $s_{k,i} \in \Z_{N_k}$ such that
	\begin{equation}\label{eq_proof_condition_construction_using_equidistribution_even}
		\frac{s_{k,i}}{N_k} \xrightarrow[k \to + \infty]{} \xi_i \qqtqq{and} bs^2_{k,i} \equiv b j_{k,0}^2 \quad \modulo{2N_k}.
	\end{equation}
	The second equation is there to ensure that $f^{\theta_{k}}_{s_{k,i}}$ is in the same eigenspace of $M_{N_k}(P_b)$ as the $(f^{\theta_{k}}_{j_{k,l}})_{l \in \eint{0}{bm-1}}$. As $j_{k,0}^2 = (m/2)^2$, we get
	\begin{align*}
		bs^2_{k,i} \equiv b j_{k,0}^2 \quad \modulo{2N_k} \iff b(s_{k,i} - \frac{m}{2})(s_{k,i} + \frac{m}{2}) \equiv 0 \quad \modulo{2bm^2 k}.
	\end{align*}

	We apply the following lemma to choose a good sequence of $N_k$.
	\begin{lemma}
		Let $a,b \in \Z^*$ and denote $\CA_N = \set{x \in \Z/N\Z \, | \, b(x-a)(x+a) \equiv 0 \modulo{N}}$. There exists a subset $S$ of density one of $\N^*$ such that
		\begin{equation*}
			\frac{1}{|\CA_N|}\sum_{x \in \CA_N} \delta_{\frac{x}{N}} \xrightharpoonup[N \to + \infty, N \in S]{}^* \Leb_{[0,1]}.
		\end{equation*}
	\end{lemma}
	\begin{proof}
		The lemma is a direct consequence of \cite[example 1]{kowalskiEquidistributionChineseRemainder2021a}, the fact that Cesàro convergence implies convergence along a subset of density one, and that the convergence of the discrepancy to $0$ implies convergence to the Lebesgue measure.
	\end{proof}
	Using this lemma, we obtain a subset $S$ of the integers of density one such that
	\begin{equation*}
		\lambda_N := \frac{1}{|\CA_N|}\sum_{x \in \CA_N} \delta_{\frac{x}{N}} \xrightharpoonup[\substack{N \to + \infty ,\\ N \in S}]{}^* \Leb_{[0,1]},
	\end{equation*}
	where $\CA_N = \set{x \in \Z/N\Z \, | \, b(x-\frac{m}{2})(x+\frac{m}{2}) \equiv 0 \modulo{N}}$.
	As the set of integers of the form $2bm^2k$ with $k$ odd is of density $\frac{1}{4bm^2}$, the set $\tilde{S}$ defined by
	\begin{equation*}
		\tilde{S} := S \cap \set{n \in \N \, | \, \exists r \in \N, \, n =2bm^2(2r+1)},
	\end{equation*}
	must be infinite. Finally, the equidistribution of the measures $(\lambda_N)_{N \in \tilde{S}}$ implies that there exist sequences $(s_{k,i})_{N_k \in \tilde{S}}$ such that
	\begin{equation*}
		\frac{s_{k,i}}{N_k} \xrightarrow[k \to + \infty]{} \xi_i \qqtqq{and} b(s_{k,i} - \frac{m}{2})(s_{k,i} + \frac{m}{2}) \equiv 0 \quad \modulo{2bm^2 k}.
	\end{equation*}

	Now that we have solutions of
	\eqref{eq_proof_condition_construction_using_equidistribution_even}, we define
	an eigenvector $\tilde{\psi}_k$ of $M_{N_k}(P_b)$ by
	\begin{align*}
		\tilde{\psi}_k = \sum_{i=1}^r \sqrt{\gamma_i} f^{\theta_k}_{s_{k,i}} + \psi_k.
	\end{align*}
	By Lemma \ref{lem_properties_basis_f_theta_j} and our conditions \eqref{eq_proof_condition_construction_using_equidistribution_even}, we have that
	\begin{equation*}
		Wig(e^{\theta_k}_{s_{k,i}}) \xrightharpoonup[k \to +\infty]{}^* \Leb \otimes \delta_{\xi_i} \qqtqq{for} i \in \eint{1}{r}.
	\end{equation*}
	Recall that we proved in the first part that $\norm{\psi_k}^2 = \sum_{l=0}^{bm-1} \alpha_l$ \eqref{eq_proof_pro_projection_parabolic_case_dim1_building_the_measure_norm_psi_k}. Moreover,
	\begin{equation*}
		Wig(\frac{\psi_k}{\sqrt{\sum_{l=0}^{bm-1} \alpha_l}}) \xrightharpoonup[k \to +\infty]{}^* \frac{1}{\sum_{l=0}^{bm-1} \alpha_l} \sum_{l=0}^{bm-1} \alpha_l (1 + \beta_l \cos(2 \pi m x + \phi_l)) \Leb_x \otimes \delta_{\frac{l}{bm}}.
	\end{equation*}
	Note that the right-hand side is a probability measure, hence we rewrite $\tilde{\psi}_k$ as
	\begin{equation*}
		\tilde{\psi}_k = \sum_{i=1}^r \sqrt{\gamma_i} e^{\theta_k}_{s_{k,i}} + \sqrt{\sum_{l=0}^{bm-1} \alpha_l} \frac{\psi_k}{\sqrt{\sum_{l=0}^{bm-1} \alpha_l}}.
	\end{equation*}
	Since we supposed that $\xi_i \notin \set{0, \dots, \frac{bm-1}{bm}}$, all the measures above are mutually singular and Lemma \ref{lem_convex_combination_orthogonal_semiclassical_measures} implies that
	\begin{equation*}
		Wig \left(\frac{\tilde{\psi}_k}{\norm{\tilde{\psi}_k}}\right) \xrightharpoonup[k \to +\infty]{}^* \Leb \otimes \left(\sum_{i=1}^r \gamma_i \delta_{\xi_i}\right) + \sum_{l=0}^{bm-1} \alpha_l (1 + \beta_l \cos(2 \pi m x + \phi_l)) \Leb_x \otimes \delta_{\frac{l}{bm}}.
	\end{equation*}
	We have thus proven that
	\begin{equation*}
		\Leb \otimes \left(\sum_{i=1}^r \gamma_i \delta_{\xi_i}\right) + \sum_{l=0}^{bm-1} \alpha_l (1 + \beta_l \cos(2 \pi m x + \phi_l)) \Leb_x \otimes \delta_{\frac{l}{bm}}.
	\end{equation*}
	This is true for any choice of distinct points $(\xi_i)_{i \in \eint{1}{r}}$ in $\T \setminus \set{0, \dots, \frac{bm-1}{bm}}$. As the set of semiclassical measures $\CS \CM(P_b)$ is closed for the weak-star topology of probability measures on $\T$ and the set of convex combinations of Dirac measures supported on a dense subset of $\T$ is dense for this topology in the set of all probability measures on $\T$, we obtain that
	\begin{equation*}
		\gamma \Leb \otimes \nu  + \sum_{l=0}^{bm-1} \alpha_l (1 + \beta_l \cos(2 \pi m x + \phi_l)) \Leb_x \otimes \delta_{\frac{l}{bm}} \in \CS \CM(P_b) \qqtq{for} \nu \in \CP(\T).
	\end{equation*}

	It remains to treat the case $m$ odd. Recall that in the case $m$ odd, we take
	$\theta_{k,\xi} = 1/2$ and $N_k = bm^2k$ with $k$ odd. The proof will be the
	same except that we need to find points $s_{k,i} \in \Z_{N_k}$ such that
	\begin{equation}\label{eq_proof_condition_construction_using_equidistribution_odd}
		\frac{s_{k,i}}{N_k} \xrightarrow[k \to + \infty]{} \xi_i \qqtqq{and} bs^2_{k,i} - b s_{k,i} \equiv b j_{k,0}^2 - b j_{k,0} \quad \modulo{2N_k}.
	\end{equation}
	The equation is equivalent to
	\begin{align*}
		 & \quad b s_{k,i} (s_{k,i} -1) \equiv b \frac{m-1}{2}\frac{m+1}{2} \quad \modulo{2bm^2k} ,\\
		 & \iff b(s_{k,i} + \frac{m-1}{2})(s_{k,i} -\frac{m+1}{2}) \equiv 0 \quad \modulo{2bm^2k}.
	\end{align*}
	The proof then follows the same line as the case $m$ even.
\end{proof}

\subsection{Quantum variance and partial quantum ergodicity}

This section is dedicated to the proof of Theorem
\ref{thm_upper_bound_quantum_variance_position_parabolic_matrices} and Theorem
\ref{thm_lower_bound_quantum_variance_position_parabolic_matrices}.

\begin{proof}
	We suppose without loss of generality that $\overline{a} = 0$.
	Let $T \in \N$. As a consequence of the exact Egorov theorem \eqref{eq_exact_Egorov_observable_torus} and the fact that the $\psi_{j}$ are eigenvectors of $M_N(P_B)$, we obtain that
	\begin{equation*}
		\sprod{\Op^w_N(a) \psi_j }{\psi_j} = \sprod{M_N(P_B)^{-1}\Op^w_N(a) M_N(P_B) \psi_j }{\psi_j} = \sprod{\Op^w_N(a \circ P_B) \psi_j }{\psi_j}.
	\end{equation*}
	Therefore, defining $\jap{a}_T = \frac{1}{T} \sum_{s=0}^{T-1} a\circ P_B^s$, we have
	\begin{equation*}
		\sprod{\Op^w_N(a) \psi_j }{\psi_j} = \frac{1}{T} \sum_{s=0}^{T-1} \sprod{\Op^w_N(a\circ P_B^s) \psi_j }{\psi_j} = \sprod{\Op^w_N(\jap{a}_T) \psi_j }{\psi_j}.
	\end{equation*}
	Moreover, as $\norm{\psi_j} =1$, a direct application of the Cauchy-Schwarz inequality gives
	\begin{equation*}
		\left|\sprod{\Op^w_N(\jap{a}_T) \psi_j }{\psi_j} \right|^2 \le \sprod{\Op^w_N(\jap{a}_T)^* \Op^w_N(\jap{a}_T) \psi_j}{\psi_j}.
	\end{equation*}
	Combining all these observations leads to
	\begin{align*}
		V(a,N, (\psi_j)_j) & \le \frac{1}{N^d} \sum_{j\in \eint{1}{N}^d}\sprod{\Op^w_N(\jap{a}_T)^* \Op^w_N(\jap{a}_T) \psi_j}{\psi_j} ,\\
		& = \frac{1}{N^d}  \tr(\Op^w_N(\jap{a}_T)^* \Op^w_N(\jap{a}_T)).
	\end{align*}
	Note that
	\begin{align*}
		\jap{a}_T(x,\xi) &= \frac{1}{T} \sum_{t=0}^{T-1} \sum_{m \in \Z^d \setminus \set{0}} a_m e^{2 \pi i \sprod{m}{x + tB\xi}} = \sum_{(m,n) \in \Z^{2d}\setminus \set{0}} \frac{a_m}{T} \sum_{t=0}^{T-1} \delta_{n,t B m} e^{2 \pi i \sprod{(m,n)}{(x,\xi)}},
	\end{align*}
	thus the Fourier coefficients of $\jap{a}_T$ are
	\begin{equation*}
		\widehat{\jap{a}_T}(m,n) = a_m \left( \frac{1}{T} \sum_{t=0}^{T-1} \delta_{n,t B m} \right) \qqtq{for} (m,n) \in \Z^{2d} \setminus \set{0}.
	\end{equation*}
	The computation of the Hilbert-Schmidt norm in Lemma \ref{lem_trace_quantum_translation_HS_norm_Weyl_quantization} leads to
	\begin{align*}
		V(a,N, (\psi_j)_j) & \le \sum_{(m,n), (k,l) \in \Z^{2d}} |\widehat{\jap{a}_T}(m,n)| |\widehat{\jap{a}_T}(k,l)| \mathbb{1}_{N \Z^{2d}}(m-k, n-l),\\
		 & \le \sum_{(m,n), (k,l) \in \Z^{2d} \setminus \set{0}} |a_m| |a_k| \left(\frac{1}{T^2}\sum_{s,t= 0}^{T-1} \delta_{n, s Bm} \delta_{l, t Bk} \right) \mathbb{1}_{N \Z^{2d}}(m-k, n-l) ,\\
		 & \le \sum_{m, k \in \Z^{d} \setminus \set{0}} |a_m| |a_k| \frac{1}{T^2}\sum_{s,t= 0}^{T-1}  \mathbb{1}_{N \Z^{2d}}(m-k,s Bm-tBk) ,\\
		 & \le \sum_{m, k \in \Z^{d} \setminus \set{0}} |a_m| |a_k| \mathbb{1}_{N \Z^{2d}}(m-k) \frac{1}{T^2}\sum_{s,t = 0}^{T-1} \mathbb{1}_{N \Z^{2d}}((s-t) Bm).
	\end{align*}
	We let $T$ go to $+ \infty$ and we deduce from Lemma \ref{lem_limit_average_indicator_bound_quantum_variance} that
	\begin{align*}
		V(a,N, (\psi_j)_j) & \le \sum_{m, k \in \Z^{d} \setminus \set{0}} |a_m| |a_k| \mathbb{1}_{N \Z^{2d}}(m-k) \frac{N \wedge gcd(\det(B) m)}{N} ,\\
		 & \le \frac{|\det(B)|}{N} \sum_{m, k \in \Z^{d} \setminus \set{0}} \norm{m}_{\infty} |a_m| |a_k| ,\\
		 & \le C_{s,d} \frac{|\det(B)|}{N} \norm{a}_{H^{s+1}(\T^d)} \norm{a}_{H^s(\T^d)},
	\end{align*}
	for any $s > d/2$. We used that for any $s > d/2$ and any $\alpha \ge 0$,
	\begin{align*}
		\sum_{m \in \Z^d \setminus \set{0}} \norm{m}^\alpha |a_m| \le \norm{a}_{H^{s+\alpha}(\T^d)} \sqrt{\sum_{m \in \Z^d \setminus \set{0}} \frac{1}{\norm{m}^{2s}}} \le C_{s,d} \norm{a}_{H^{s+\alpha}(\T^d)}.
	\end{align*}
\end{proof}

\begin{remark}\label{rem_proof_thm_upper_bound_quantum_variance_position_parabolic_matrices}
	The last step of the proof is probably not optimal and it seems feasible to improve the Sobolev norm used. One could use a better bound on $N \wedge gcd(\det(B)m)$, for example by saying that it is smaller than the smallest nonzero coordinate of $\det(B) m$, or by studying more precisely the arithmetic properties of $N \wedge gcd(\det(B)m)$ and seeing that it is often just one. Finally, we got rid of $\mathbb{1}_{N \Z^{2d}}(m-k)$ and there is probably a better way than just bounding it by $1$.
\end{remark}

The deduction of Corollary \ref{cor_partial_quantum_ergodicity_position}
from Theorem
\ref{thm_upper_bound_quantum_variance_position_parabolic_matrices} is standard
and we let the reader adapt the arguments of \cite[sec.
	5]{colindeverdiereErgodiciteFonctionsPropres1985} to the present context.

It remains to prove Theorem
\ref{thm_lower_bound_quantum_variance_position_parabolic_matrices} that gives
a lower bound for the quantum variance.

\begin{proof}
	The goal is to build a sequence of eigenbases $(\psi_{k,j})_{k \in \N, j \in \eint{0}{N_k-1}^d}$ of $M_{N_k}(P_B)$ and a point $m \in \Z^d \times \set{0}$ such that
	\begin{equation*}
		V(e^{2 \pi i \sprod{m}{\cdot}}, N_k,(\psi_{k,j})_{j}) \ge \frac{1}{2^{d+1}N_k}.
	\end{equation*}
	We choose $N_k = 2 p_k$ where $p_k$ is the $k$-th prime number and
	\begin{equation*}
		m = (\mathbf{2}, \mathbf{0}) \qqtq{where} \mathbf{x} = (x, \dots, x) \in \R^d \text{ for } x \in \R.
	\end{equation*}
	In particular, $\CH_{N_k}(0)$ is invariant under $M_{N_k}(P_B)$ according to Lemma \ref{lem_quantizability_condition_expression_Mh_PB}. Note that for any $j \in \Z_{N_k}^d$, the equation \eqref{eq_U_h(z)=quantif_e(isigma(z,.))} implies
	\begin{align*}
		\sprod{\Op^w_{N_k}(e^{2 \pi i \sprod{(\mathbf{2}, \mathbf{0})}{\cdot}}) (f^0_j + f^0_{j-\mathbf{2}})}{f^0_j + f^0_{j-\mathbf{2}}} & = \sprod{\Op^w_{N_k}(e^{2 \pi i \sigma((\mathbf{0}, \mathbf{2}),\cdot)}) (f^0_j + f^0_{j-\mathbf{2}})}{f^0_j + f^0_{j-\mathbf{2}}}.
	\end{align*}
	Using the exact Egorov theorem \eqref{eq_exact_Egorov_torus}, the definition \eqref{eq_new_basis_f_theta_j} of $f^\theta_j$ and \eqref{eq_action_U_N((m,n)/N)_on_e_theta_j}, this is equal to
	\begin{align*}
		\sprod{U_N( \mathbf{0}, \mathbf{\frac{2}{N}}) M_N(J)^{-1} (e^0_j + e^0_{j-\mathbf{2}})}{ M_N(J)^{-1}(e^0_j + e^0_{j-\mathbf{2}})},
	\end{align*}
	which simplifies into
	\begin{equation*}
		\sprod{U_N( \mathbf{\frac{2}{N}}, \mathbf{0} ) (e^0_j + e^0_{j-\mathbf{2}})}{(e^0_j + e^0_{j-\mathbf{2}})} = \sprod{e^0_{j+\mathbf{2}} + e^0_{j}}{e^0_j + e^0_{j-\mathbf{2}}}.
	\end{equation*}
	Suppose that $N_k >2$, then $j$, $j-\mathbf{2}$ and $j+\mathbf{2}$ are all different in $(\Z/N_k \Z)^d$ and thus
	\begin{equation}\label{eq_proof_lower_bound_quantum_variance}
		\sprod{\Op^w_{N_k}(e^{2 \pi i \sprod{(\mathbf{2}, \mathbf{0})}{\cdot}}) (f^0_j + f^0_{j-\mathbf{2}})}{f^0_j + f^0_{j-\mathbf{2}}} = 1.
	\end{equation}
	Therefore, we search for eigenvectors of $M_{N_k}(P_B)$ of the form
	\begin{equation*}
		\frac{1}{\sqrt{2}}(f^0_j + f^0_{j-\mathbf{2}}).
	\end{equation*}
	The expression of $M_{N_k}(P_B)$ in Lemma \ref{lem_quantizability_condition_expression_Mh_PB} implies that such a vector is an eigenvector if and only if
	\begin{equation*}
		Q(j-\mathbf{2}) \equiv Q(j) \quad \modulo{2N_k} \qqtq{where} Q(x) = \sprod{Bx}{x}.
	\end{equation*}
	This is equivalent to
	\begin{equation*}
		4 \sprod{j}{B \mathbf{1}} \equiv 4 Q(\mathbf{1}) \quad \modulo{4 p_k} \iff \sprod{j}{B \mathbf{1}} \equiv Q(\mathbf{1}) \quad \modulo{p_k}.
	\end{equation*}
	We suppose that $k$ is large enough to ensure that $Q(\mathbf{1})$ and $B \mathbf{1}$ are nonzero modulo $p_k$. The equation above defines an affine hyperplane in $\F_{p_k}^d$ and thus its set of solutions $X_k$ has cardinal $p_k^{d-1}$. Let $j \in X_k$, then
	\begin{equation*}
		\sprod{j-\mathbf{2}}{B\mathbf{1}} \equiv Q(\mathbf{1}) - 2 Q(\mathbf{1}) \equiv - Q(\mathbf{1}) \quad \modulo{p_k}.
	\end{equation*}
	Suppose that $j- \mathbf{2}$ also belongs to $X_k$. Then we would have
	\begin{equation*}
		Q(\mathbf{1}) \equiv \sprod{j-\mathbf{2}}{B^T \mathbf{1}} \equiv - Q(\mathbf{1}) \quad \modulo{p_k}.
	\end{equation*}
	This is impossible because $Q(\mathbf{1})$ is nonzero modulo $p_k$. Therefore, $X_k \cap (X_k -\mathbf{2}) = \emptyset$. We define
	\begin{equation*}
		\psi_{k,j} = \frac{1}{\sqrt{2}}(f^0_j + f^0_{j-\mathbf{2}}) \qqtq{for} j \in X_k.
	\end{equation*}
	They are normed eigenvectors of $M_{N_k}(P_B)$ by construction and are orthogonal to each other since $X_k \cap (X_k -\mathbf{2}) = \emptyset$. We complete the family $(\psi_{k,j})_{j \in X_k}$ in an orthonormal eigenbasis of $M_{N_k}(P_B)$ on $\CH_{N_k}(0)$. Finally, we deduce from the equality \eqref{eq_proof_lower_bound_quantum_variance} that
	\begin{align*}
		V(e^{2 \pi i \sprod{(\mathbf{2}, \mathbf{0})}{\cdot}}, N_k) & \ge \frac{1}{2^d p_k^d} \sum_{j \in X_k} |\sprod{\Op^w_{N_k}(e^{2 \pi i \sprod{(\mathbf{2}, \mathbf{0})}{\cdot}}) \psi_{k,j}}{\psi_{k,j}}|^2 ,\\
		& = \frac{1}{2^d p_k^d} \sum_{j \in X_k} \frac{1}{4} = \frac{p_k^{d-1}}{2^{d+2}p_k^d} = \frac{1}{2^{d+1}N_k}.
	\end{align*}
\end{proof}

\subsection{Examples of semiclassical measures for parabolic matrices in higher dimensions}

We now consider parabolic symplectic matrices of the form
\begin{equation*}
	P_{2B} = \begin{pmatrix}
		I & 2B \\ 0 & I
	\end{pmatrix} \qqtqq{where} B \in M_{2d}(\Z) \qtq{,} B^T = B, \det(B) \neq 0.
\end{equation*}
They are easier to work with as the associated metaplectic operator can always be restricted to $\CH_{N}(0)$ without any parity condition on $N$ as can be seen from Lemma \ref{lem_quantizability_condition_expression_Mh_PB}. Moreover, the metaplectic operator then satisfies
\begin{align*}
	M_{(2\pi N)^{-1}} (P_{2B}) f^0_j = e^{-\frac{2 \pi i}{N} Q(j)} f^0_j,
\end{align*}
where $Q$ is the quadratic form associated to $B$, defined by $Q(x) = \sprod{Bx}{x}$ for any $x \in \Q^d$. We highlight that the coefficients of $Q$ are integers as the coefficients of $B$ are integers. Thus, still denoting by $E_\lambda$ the eigenspaces of $M_N(P_{2B})$, we have, for any $j,k \in \eint{0}{N-1}^d$ that
\begin{equation}\label{cond_e_0_j_same_eigenvalue_higher_dim_2B}
	Q(j) \equiv Q(k) \quad \modulo{N}  \iff  \exists \lambda \in \spectre(M_{N}(P_{2B})), \quad  f^\theta_j, f^\theta_{k} \in E_\lambda.
\end{equation}

We prove Theorem \ref{thm_parabolic_case_infinite_fourier_coef} that states
that starting from the dimension $2$, there always exists a semiclassical measure
whose projection on the position has all its Fourier coefficients positive.
\begin{proof}
	Let $B_0(M)^* = \set{n \in \Z^d \setminus \set{0} \, | \, \norm{n} \le M}$ and let $Q$ be the integral quadratic form associated to $B$. Suppose for the moment that for any $M > 0$, there exist infinitely many primes such that the system $(E_M)$ defined below admits solutions $(j_n)_{n \in B_0(M)^*} \in \F_p^d$.
	\begin{equation*}
		(E_M) \quad
		\forall n \in B_0(M)^*, \quad Q(j_n) = Q(j_n - n) = 0.
	\end{equation*}
	Let $(p_M)_M$ be an increasing sequence of primes such that $(E_M)$ has a solution $(j_{M,n})_{n \in B_0(M)^*}$ in $\F_{p_M}^d$.
	We consider
	\begin{equation*}
		\psi_M = \sum_{n \in B_0(M)^*} \frac{1}{2\norm{n}^{d+1}} (f^0_{j_{M,n}} +  f^0_{j_{M,n}-n}) = \sum_{j \in \F_{p_M}^d} \alpha_j f^0_j,
	\end{equation*}
	where the last equality defines the $(\alpha_j)_{j \in \F^d_{p_M}}$.
	As the $(j_{M,n})_{n \in B_0(M)^*}$ are solutions of $(E_M)$ in $\F_{p_M}^d$, we see that $\psi_M$ is an eigenvector of $M_{p_M}(P_{2B})$ thanks to \eqref{cond_e_0_j_same_eigenvalue_higher_dim_2B} and it is clearly nonzero. Moreover,
	\begin{align*}
		\norm{\psi_M} & \le \sum_{n \in B_0(M)^*}  \frac{1}{\norm{n}^{d+1}} \le \sum_{n \in \Z^d \setminus \set{0}} \frac{1}{\norm{n}^{d+1}} = C_d < + \infty.
	\end{align*}
	Therefore, $\tilde{\psi}_M = \psi_M/\norm{\psi_M}$ is a normed eigenvector of $M_{p_M}(P_{2B})$.

	By the usual computation starting from
	\eqref{eq_simplified_expression_fourier_coefficients_wigner_distribution_f_theta_j},
	we find, as all the coefficients $(\alpha_j)_j$ are real and nonnegative, that
	\begin{align*}
		|Wig(\tilde{\psi}_M)(e^{- 2 \pi i \sprod{(n,0)}{\cdot}})| & = \frac{1}{\norm{\psi_M}^2} \sum_{j \in \F_{p_M}^d} \alpha_j \overline{\alpha_{j-n}} \ge \frac{1}{\norm{\psi_M}^2} \alpha_{j_{M,n}} \alpha_{j_{M,n}-n} \ge \frac{1}{C_d^2} \frac{1}{4 \norm{n}^{2d+2}}.
	\end{align*}
	Up to an extraction, we can suppose that $Wig(\tilde{\psi}_M)$ converges weak-star to a semiclassical measure $\mu$ of $P_{2B}$ and we get that
	\begin{equation*}
		\mu(n,0) = \lim_{M \to \infty} Wig(\tilde{\psi}_M)(e^{- 2 \pi i \sprod{(n,0)}{\cdot}}) \ge \frac{1}{4 C_d^2\norm{n}^{2d+2}} > 0.
	\end{equation*}
	We used that the phase in front of the sum in \eqref{eq_simplified_expression_fourier_coefficients_wigner_distribution_f_theta_j} goes to $1$ as $p_M$ goes to $+\infty$.

	It remains to prove that the system $(E_M)$ admits solutions in $\F_p^d$ for
	infinitely many primes. We start by reducing the system to a standard form.
	Let $\delta = \det(B)$ be the determinant of $B$, which is also the
	discriminant of the quadratic form $Q$. Let $p$ be a prime such that $\delta$
	is a square in $\F_p$. By the reduction theory of quadratic forms over $\F_p$
	\cite[sec. 4.1.7, prop. 5]{serreCourseArithmetic1973a}, we know that there
	exist a matrix $S \in \GL_d(\F_p)$ such that
	\begin{equation*}
		Q(Sx) = \sum_{i=1}^d x_i^2.
	\end{equation*}
	Hence, under the change of variables, the system $(E_M)$ is equivalent to
	\begin{equation*}
		\forall n  \in S^{-1} B_0(M)^*, \quad \sum_{i=1}^{d} j_{n,i}^2 = \sum_{i=1}^{d} (j_{n,i} - n_i)^2 = 0.
	\end{equation*}
	Remark that
	\begin{equation*}
		\sum_{i=1}^{d} j_{n,i}^2 = \sum_{i=1}^{d} (j_{n,i} - n_i)^2 \iff 2 \sum_{i=1}^{d} j_{n,i} n_i = \sum_{i=1}^d n_i^2 = Q(Sn).
	\end{equation*}
	Hence, the system is equivalent to
	\begin{equation*}
		\forall n  \in S^{-1} B_0(M)^*, \quad \sum_{i=1}^{d} j_{n,i}^2 = 0   \qqtqq{and} 2 \sum_{i=1}^{d} j_{n,i} n_i = Q(Sn).
	\end{equation*}
	Consider $n \in B_0(M)^*$ such that its first coordinate is nonzero. The corresponding two equations are then equivalent to
	\begin{equation*}
		\left(Q(Sn) - 2 \sum_{i=2}^d n_i j_{n,i}\right)^2 + 4 n^2_1 \sum_{i=2}^{d} j_{n,i}^2 = 0  \qqtqq{and} j_{n,1} = \frac{1}{2n_1}Q(Sn) - \frac{1}{n_1} \sum_{i=2}^d n_i j_{n,i}.
	\end{equation*}
	If $n_1 = 0$, we do this reduction for another coordinate of $n$ and at least one of them is nonzero since $n \neq 0$. Moreover, we can divide by any nonzero coordinate of $n$ in $\F_p$ if $p$ is large enough to be prime to all of those coordinates. Therefore, the whole system is equivalent to
	\begin{equation*}
		\forall n  \in S^{-1} B_0(M)^*, \quad \begin{syst}
			& \left(Q(Sn) - 2 \sum_{\substack{i =1 ,\\i \neq  f(n)}}^d n_i j_{n,i}\right)^2 + 4 n^2_{f(n)} \sum_{\substack{i =1 ,\\i \neq  f(n)}}^d j_{n,i}^2 = 0,\\
			& j_{n,f(n)} = \frac{1}{2n_{f(n)}}Q(Sn) - \frac{1}{n_{f(n)}} \sum_{\substack{i =1 ,\\i \neq  f(n)}}^d n_i j_{n,i},
		\end{syst}
	\end{equation*}
	where $f$ is chosen to ensure that $n_{f(n)} \neq 0$. If $d = 2$, the first equation associated to each $n$ is a degree two polynomial. If $d>2$, we reduce ourselves to the case of degree two polynomials by setting enough variables $j_{n,i}$ to zero. We need to take $p$ large enough to ensure that the leading coefficients of the polynomials are relatively prime to $p$.

	The final step consists in finding primes such that all the obtained degree-two
	polynomials have a root modulo $p$. A polynomial of degree two admits a root in
	a field $k$ if and only if its discriminant admits a square root in $k$.
	Therefore, we are done if we can find primes $p$ such that $\delta$ and the
	discriminants of all the polynomials are squares modulo $p$. This is always
	possible thanks to Lemma
	\ref{lem_finite_integer_squares_modulo_infinity_primes}. We have thus proven
	that the system $(E_M)$ has a solution modulo $p$ for infinitely many
	primes.
\end{proof}

We now prove Theorem \ref{thm_weird_measure_parabolic_isotropic} that tells us
that as long as the quadratic form $Q$ associated to $B$ is isotropic over
$\Q$, then there exists a semiclassical measure $\mu$ associated to $P_{2B}$ of
the form
\begin{equation*}
	\mu = \delta_{\T_E} \otimes \Leb_{\T_{E^\bot}} \otimes \delta_{\T_F} \otimes \Leb_{\T_{F^\bot}}.
\end{equation*}
\begin{proof}
	Recall that we fixed $B \in \Sym(\Z^d)$ such that the associated quadratic form $Q$ is isotropic over $\Q$ with isotropy index $r$.
	By the reduction theory of quadratic forms over a field of characteristic different from $2$ \cite[sec. 4.1.6]{serreCourseArithmetic1973a}, there exists an invertible matrix $S \in \GL_d(\Q)$ and rational numbers $a_{2r+1}, \dots, a_d \in \Q^*$ such that
	\begin{equation}\label{eq_proof_thm_weird_measure_parabolic_isotropic_reduction_quad_form}
		\forall x \in \Q^d, \quad Q(Sx) = \sum_{k=1}^r x_{2k-1} x_{2k} + \sum_{k=2r+1}^d a_k x_k^2.
	\end{equation}
	The first part corresponds to the isotropic part of $Q$ and the second one to its anisotropic part. As we supposed that $Q$ is isotropic over $\Q$, the dimension $r$ of the maximal rational isotropic subspace of $Q$ is positive.

	We consider in the following primes $p$ that are relatively prime to the
	determinant $\det(S)$, the denominators of any of the coefficients of $S$, and
	the denominators of the numbers $a_{2r+1}, \dots, a_d \in \Q$. In particular,
	the equation
	\eqref{eq_proof_thm_weird_measure_parabolic_isotropic_reduction_quad_form} is
	again valid in $\F_p$ and the matrix $S$ is invertible in $\F_p$. Denote by $X$
	the linear subspace of $\F_p^d$ defined by
	\begin{equation*}
		X = \set{(x_1, 0, x_{3}, 0, \dots, x_{2r-1}, 0, 0, \dots, 0) \, | \, x_{2k-1} \in \F_p  \text{ for } k \le r}.
	\end{equation*}
	As any $x \in X$ satisfies $Q(Sx) = 0$, the vector $\psi_p$ defined below is a normed eigenvector of $M_{p}(P_{2B})$ on $\CH_{p}(0)$.
	\begin{equation*}
		\psi_p = \frac{1}{\sqrt{|X|}} \sum_{x \in X} f^0_{Sx} = \sum_{j \in \F_p^d} \alpha_j f^0_j \qqtq{where} \alpha_{Sj} = \begin{syst}
			&\frac{1}{\sqrt{|X|}} \qtq{if} j \in X ,\\
			&0 \qtq{otherwise}.
		\end{syst}
	\end{equation*}
	The relation \eqref{eq_simplified_expression_fourier_coefficients_wigner_distribution_f_theta_j} gives
	\begin{align*}
		Wig(\psi_p)(e^{-2 \pi i \sprod{(m, n)}{\cdot}}) & = e^{\frac{\pi i}{p} \sprod{m}{n}} \sum_{j \in \F_p^d} e^{-\frac{2 \pi i}{p} \sprod{n}{j}} \alpha_{j} \overline{\alpha_{j - m}},\\
		& = e^{\frac{\pi i}{p} \sprod{m}{n}} \sum_{j \in \F_p^d} e^{-\frac{2 \pi i}{p} \sprod{S^{T}n}{j}} \alpha_{Sj} \overline{\alpha_{S(j - S^{-1}m)}} ,\\
		& = \frac{e^{\frac{\pi i}{p} \sprod{m}{n}}}{|X|} \sum_{j \in X, j-S^{-1}m \in X} e^{-\frac{2 \pi i}{p} \sprod{S^{T}n}{j}}.
	\end{align*}
	Note that $x$ and $x-S^{-1}m$ can both belong to $X$ only if the reduction $\modulo{S^{-1}m}_p$ of $S^{-1}m$ modulo $p$ is also in $X$. Hence, if this is not the case, the sum is $0$. Suppose now that $\modulo{S^{-1}m}_p$ is in $X$ and let $\tilde{n} = S^T n$. Then, using $|X| = p^r$, we find that the sum is equal to
	\begin{align*}
		\frac{e^{\frac{\pi i}{p} \sprod{m}{n}}}{p^r} \sum_{j_1, j_3, \dots, j_{2r-1} \in \F_p} e^{-\frac{2 \pi i}{p} \tilde{n}_1 j_1} \cdots e^{-\frac{2 \pi i}{p} \tilde{n}_{2r-1} j_{2r-1}} = 
		\begin{syst}
			e^{\frac{\pi i}{p} \sprod{m}{n}} \qtq{if} \modulo{\tilde{n}}_p \in X ,\\
			0 \qtq{otherwise}.
		\end{syst}
	\end{align*}
	Finally, we obtain that
	\begin{equation*}
		Wig(\psi_p)(e^{-2 \pi i \sprod{(m, n)}{\cdot}}) = \begin{syst}
			&e^{\frac{\pi i}{p} \sprod{m}{n}} \qtq{if} \modulo{S^{-1}m}_p, \modulo{S^{T}n}_p \in X ,\\
			&0 \qtq{otherwise}.
		\end{syst}
	\end{equation*}
	Denote by $E$ the linear subspace of $\Q^{d}$ defined by
	\begin{equation*}
		E = \set{(x_1, 0, x_{3}, 0, \dots, x_{2r-1}, 0, 0, \dots, 0) \, | \, x_{2k-1} \in \Q  \text{ for } k \le r}.
	\end{equation*}
	Let $m,n \in \Z^d$. Since we can write $S^{-1}m = \frac{\tilde{m}}{a}$ with $\tilde{m} \in \Z^d$ and $a$ relatively prime to $p$, the condition $\modulo{S^{-1}m}_p \in X$ is equivalent, if $p$ is larger than any coordinate of $\tilde{m}$, to $S^{-1}m \in E$. Indeed,
	\begin{equation*}
		\modulo{m}_p \in X \iff \frac{1}{a} \modulo{\tilde{m}}_p \in X \iff \modulo{\tilde{m}}_p \in X \iff \tilde{m} \in E \iff m \in E
	\end{equation*}
	The reasoning is similar for $S^{T}n$. Therefore, we find that for any $(m,n) \in \Z^{2d}$
	\begin{equation*}
		Wig(\psi_p)(e^{-2 \pi i \sprod{(m,n)}{\cdot}}) \xrightarrow[p \to + \infty]{} \begin{syst}
			&1 \qtq{if} S^{-1}m,S^{T}n \in E ,\\
			&0 \qtq{otherwise}.
		\end{syst}
	\end{equation*}
	By density of the trigonometric polynomials in $\CC^\infty(\T^2)$, we get that $Wig(\psi_p)$ converges as a distribution to a semiclassical measure $\mu$ of $P_{2B}$ whose Fourier coefficients are
	\begin{equation*}
		\widehat{\mu}(m,n) = \begin{syst}
			&1 \qtq{if} m \in S E \text{ and } n \in S^{-T} E ,\\
			&0 \qtq{otherwise}.
		\end{syst}
	\end{equation*}
	Denote $\T^{l}_{V}$ the image in the torus of $V \otimes \R$ for $V$ a subspace of $\Q^{2d}$ of dimension $l$. We deduce from the Fourier coefficients of $\mu$ that
	\begin{equation*}
		\mu = \delta_{\T^r_{SE}} \otimes \Leb_{\T^{d-r}_{(SE)^\bot}} \otimes \delta_{\T^r_{S^{-T}E}} \otimes \Leb_{\T^{d-r}_{(S^{-T}E)^\bot}}.
	\end{equation*}
	One can check that the different subspaces verify the properties claimed in the theorem using the fact that $\begin{pmatrix}
			S & 0 \\ 0 & S^{-T}
		\end{pmatrix}$ is symplectic.
\end{proof}

We finally prove Theorem \ref{thm_bound_sum_fourier_coeff_parabolic} that gives
an upper bound on the sum of Fourier coefficients of the projection on
position of semiclassical measures associated to parabolic matrices.
\begin{proof}
	Let $M \ge 0$. Suppose by contradiction that the upper bound does not hold. Then there exist $\epsilon >0$ and a semiclassical measure $\mu \in \CS \CM (P_B)$ such that
	\begin{equation*}
		\sum_{n \in B_0(M)^*} |\widehat{\pi_\# \mu}(n)| \ge H(M)(1+\epsilon).
	\end{equation*}
	Denote by $(\psi_{N})_N$ the sequence of normed eigenvectors of $M_N(P_B)$ on $\CH_N(\theta_N)$ whose Wigner distribution $Wig(\psi_N)$ converges weak-star to $\mu$. Up to reducing $\epsilon$ a bit, we can suppose that for any $N$ larger than some fixed $N_0$, we have
	\begin{equation}\label{ineq_proof_control_sum_coeff_Fourier_lower_bound_sum_wigner}
		\sum_{n \in B_0(M)^*} |Wig(\psi_N)(e^{-2 \pi i \sprod{(n,0)}{\cdot}})| \ge H(M)(1+\epsilon).
	\end{equation}
	Decompose $\psi_N$ in the basis $(f^{\theta_N}_j)_j$ as
	\begin{equation*}
		\psi_N = \sum_{j \in S_N} \alpha_j f^{\theta_N}_j \qqtq{with} \sum_{j \in S_N} |\alpha_j|^2 = 1 \qtq{and} S_N \subset \eint{0}{N-1}^d.
	\end{equation*}
	Lemma \ref{lem_properties_basis_f_theta_j} gives
	\begin{align*}
		\sum_{n \in B_0(M)^*} |Wig(\psi_N)(e^{-2 \pi i \sprod{(n,0)}{\cdot}})| & \le \sum_{n \in B_0(M)^*} \sum_{j, \modulo{j-n}_N \in S_N} |\alpha_j| |\alpha_{\modulo{j-n}_N}| ,\\
		& \le \sum_{n \in B_0(M)^*} \sum_{j, \modulo{j-n}_N \in S_N} \frac{|\alpha_j|^2 + |\alpha_{\modulo{j-n}_N}|^2}{2}.
	\end{align*}
	Denote by $S_{N,j}$ the set
	\begin{equation*}
		S_{N,j} = \set{n \in B_0(M)^* \, | \, j, \modulo{j-n}_N \in S_N}.
	\end{equation*}
	Then the sum of Fourier coefficients of the Wigner distribution is bounded by
	\begin{align*}
		\sum_{j \in \eint{0}{N-1}^d} |S_{N,j}| \frac{|\alpha_j|^2 + |\alpha_{\modulo{j-n}_N}|^2}{2} & \le \max_{j \in \eint{0}{N-1}^d} |S_{N,j}| \sum_{j \in \eint{0}{N-1}^d} \frac{|\alpha_j|^2 + |\alpha_{\modulo{j-n}_N}|^2}{2} ,\\
		& \le \max_{j \in \eint{0}{N-1}^d} |S_{N,j}|.
	\end{align*}
	This inequality combined with \eqref{ineq_proof_control_sum_coeff_Fourier_lower_bound_sum_wigner} leads to the existence of an integer $N_0 \in \N$ such that
	\begin{equation*}
		\forall N \ge N_0, \, \exists j_{N} \in S_{N} \quad |S_{N,j_{N}}| \ge H(M) + 1.
	\end{equation*}
	We used that $H(M)$ and $|S_{N,j}|$ are integers. Recall from Lemma \ref{lem_quantizability_condition_expression_Mh_PB} that
	\begin{equation*}
		M_N(P_B) e^{\theta_N}_j = e^{-\frac{\pi i}{N} Q(j- \theta_{n,\xi})} e^{\theta_N}_j = e^{-\frac{\pi i}{N} Q(\theta_{n,\xi})} e^{-\frac{\pi i}{N} (Q(j) - \sprod{j}{2B\theta_{n,\xi}})} e^{\theta_N}_j.
	\end{equation*}
	Therefore, there exist $a_N \in \Z/2N\Z$ such that
	\begin{equation*}
		\forall j \in S_N, \quad Q(j) - \sprod{j}{2B\theta_{N,\xi}} \equiv a_N \quad \modulo{2N}.
	\end{equation*}
	This equation makes sense in $\Z/2N\Z$ because $2B \theta_{N, \xi}$ is an element of $\Z^d$. Indeed, according to Lemma \ref{lem_quantizability_condition_expression_Mh_PB}, $\theta_{N, \xi}$ must be a solution in $\T^d$ of
	\begin{equation*}
		B \theta_{N, \xi} \equiv \frac{N \modulo{B_{i,i}}_2}{2} \quad \modulo{\Z^d} \iff \theta_{N, \xi} \in \left(B^{-1} \frac{N \modulo{B_{i,i}}_2}{2} + B^{-1} \Z^d \right) \cap \T^d.
	\end{equation*}
	It is clear from the equation on the left that $2B \theta_{N, \xi}$ is in $\Z^d$ and we record from the characterization on the right that $\theta_{N, \xi}$ can only take a finite number of values in $\T^d$. Note also that a computation similar to the one done at the end of the proof of Lemma \ref{lem_quantizability_condition_expression_Mh_PB} implies that the residue class of $Q(j) - \sprod{j}{2B\theta_{N, \xi}}$ modulo $2N$ only depends on the residue class of $j$ modulo $N$.
	Denote by $X_N \subset B_0(M)^*$ the set
	\begin{equation*}
		X_N = \set{n \in B_0(M)^* \, |\, j_{N} \in S_N \text{ and } \modulo{j_{N} - n}_N \in S_N}.
	\end{equation*}
	Since $(X_N)_N$ is a sequence of subsets of the finite set $B_0(M)^*$, we can suppose, up to an extraction, that $X_N = \tilde{X}$ is independent of $N$. We can apply the same reasoning to $(\theta_{N, \xi})_N$ since this sequence only takes a finite number of values in $\T^d$. All in all, we obtain that there exists $\tilde{\theta_{x}} \in \T^d$ and a subset $X \subset B_0(M)^*$ such that $|X| \ge H(M) + 1$ and the following system has solutions for infinitely many $N$
	\begin{equation*}
		\exists j \in (\Z/N \Z)^d ,\,  \forall n \in X, \quad Q(j) - \sprod{j}{2B\theta_{x}} \equiv Q(j-n) - \sprod{j-n}{2B\theta_x} \quad \modulo{2N}.
	\end{equation*}

	We can now use Lemma \ref{lem_empty_over_C_empty_over_ZN}, to obtain the
	existence of a solution over $\C$ of the same system. In other words,
	\begin{equation*}
		\exists j \in \C ,\,  \forall n \in X, \quad Q(j) - \sprod{j}{2B\theta_{x}} = Q(j-n) - \sprod{j-n}{2B\theta_x}.
	\end{equation*}
	Since $Q(j) - \sprod{j}{2B\theta_{x}} = Q(j- \theta_x) - Q(\theta_x)$, this is equivalent to
	\begin{equation*}
		\exists j \in \C ,\,  \forall n \in X, \quad Q(j) = Q(j-n).
	\end{equation*}
	This would imply that $H(M) \ge |X|$ by definition of $H$, which is in direct contradiction with the fact that $|X| \ge H(M) + 1$.

	It remains to find a bound on $H(M)$. Recall that $H(M)$ is the cardinal of the
	largest subset $X$ of $B_0(M)^*$ such that the system
	\begin{equation*}
		\forall n \in X, \quad Q(j) = Q(j-n).
	\end{equation*}
	admits a solution $ j \in \C^{d}$. The system is equivalent to
	\begin{equation*}
		\forall n \in X, \quad 2 \sprod{j}{Bn} = \sprod{Bn}{n}.
	\end{equation*}
	Therefore if $n \in X$, its opposite $-n$ cannot belong to $X$, otherwise
	\begin{equation*}
		-Q(n) = -\sprod{Bn}{n} = 2 \sprod{j}{-Bn} = \sprod{B(-n)}{(-n)} = \sprod{Bn}{n} = Q(n).
	\end{equation*}
	which would imply $n = 0$ since $Q$ is anisotropic over $\Q$. This is in contradiction with $X \subset B_0(M)^*$. As $B_0(M)^*$ is stable under the multiplication by $-1$ and $0$ does not belong to it, we deduce that
	\begin{equation*}
		H(M) \le \frac{|B_0(M)^*|}{2}.
	\end{equation*}
\end{proof}

\subsection{Link with the semiclassical measure of the Laplacian on the torus}

The Laplacian $-\Delta_{\T^d}$ on $\T^{d}$ is a self-adjoint unbounded
diagonalizable operator on $L^{2}(\T^d)$. An orthonormal eigenbasis is given by
$(e^{2 \pi i \sprod{n}{\cdot}})_{n \in \Z^d}$ and the eigenspace $E_\lambda$
associated to the eigenvalue $4 \pi^2 \lambda$ is
\begin{equation*}
	E_\lambda = \Vect \set{e^{2 \pi i \sprod{n}{\cdot}} \, | \, \norm{n}^2 = \lambda}.
\end{equation*}
The dimension of $E_\lambda$ is equal to the number of ways to write $\lambda$ as a sum of $d$ squares.

To each normed vector $\psi$ in $L^2(\T^d)$ is associated a Wigner distribution
$Wig(\psi)$ given by
\begin{equation*}
	Wig_h(\psi)(a) = \sprod{\Op^w_h(a)\psi}{\psi}_{L^2(\T^{d})} \qqtq{for} a \in \CC^\infty(\T^{2d}).
\end{equation*}
This is well defined. Indeed, writing $a = \sum_{n \in \Z^{2d}} a_n e^{2 \pi i \sigma(n,\cdot)}$ and using \eqref{eq_quantization_obs_torus}, we see that
\begin{align*}
	|Wig_h(\psi)(a)| & = |\sum_{n \in \Z^{2d}} a_n \sprod{U_h(2 \pi h n)\psi}{\psi}_{L^2(\T^{d})}| \le \norm{\psi}^2 \sum_{n \in \Z^{2d}} |a_n| \le C_d \max_{|\alpha| \le 2d+2} \norm{\partial^\alpha a}_\infty.
\end{align*}
since the quantum translations $U_h(2 \pi h n)$ are unitary on $L^2(\T^d)$, as can be checked from \eqref{eq_action_quantum_translation_on_functions}.

A semiclassical measure for the Laplacian is defined as an accumulation point
of a sequence $(Wig_{\frac{1}{8 \pi \sqrt{\lambda}}}(\psi_\lambda))_{\lambda \in \N}$ in the space of distributions where $(\psi_\lambda)_\lambda$ is a
sequence of normed eigenvectors of the Laplacian of eigenvalues $4\pi^2\lambda$. We denote the set of semiclassical measures for the Laplacian by
$\CS \CM(-\Delta_{\T^d})$. It is a classical result \cite[thm. 5.6]{zworskiSemiclassicalAnalysis2012} that any semiclassical measure is
supported on the energy shell $\set{(x, \xi) \in \T^d \times \R^d \, | \,
\norm{\xi}^2 = \frac{1}{16}}$ and is invariant under the geodesic flow on it. The
value $1/16$ for the energy shell comes from our choice of scaling $h =
\frac{1}{8 \pi \sqrt{\lambda}}$, instead of the usual $h = \lambda^{-1/2}$.

We now associate an eigenvector of $M_N(P_{2I})$ on $\CH_{N}(0)$ to every
eigenvector of $-\Delta_{\T^d}$. The factor $2$ could probably be removed by
considering the restriction of $M_h(P_I)$ to different $\CH_N(\theta)$, but we
keep it for the sake of simplicity. Let $\psi_\lambda \in E_\lambda$ be a
normed eigenvector of $-\Delta_{\T^d}$ of eigenvalue $4\pi^2 \lambda$. It can be
written as
\begin{equation*}
	\psi_\lambda = \sum_{\substack{j \in \Z^d}} \alpha_j e^{2 \pi i \sprod{j}{\cdot}} \qqtq{with} \sum_{\substack{j \in \Z^d ,\\ \norm{j}^2 = \lambda}} |\alpha_j|^2 = 1 \qtq{and} \alpha_j = 0 \text{ if } \norm{j}^2 \neq \lambda.
\end{equation*}
We associate to it the vector $S_N(\psi_\lambda)$, where $S$ stands for sampling, defined by
\begin{equation*}
	S_N(\psi_\lambda) = \psi_\lambda \sum_{k \in \Z^d} e^{2 \pi i N \sprod{k}{x}} \in \CH_N(0).
\end{equation*}
Its decomposition in the basis $(f^0_j)_{j \in \eint{0}{N-1}^d}$ is
\begin{equation*}
	S_N(\psi_\lambda) = \sum_{j \in \Z^d } \alpha_{j} f^0_j = \sum_{k \in (\Z / N \Z)^d} \beta_k f^0_k \qqtq{where} \beta_k = \sum_{\substack{j \in \Z^d ,\\ j \equiv k \modulo{N}}} \alpha_j,
\end{equation*}
as a direct computation starting from \eqref{eq_new_basis_f_theta_j} gives
\begin{equation*}
	f^0_j (x) = e^{2 \pi i \sprod{j}{x}} \sum_{k \in \Z^d} e^{2 \pi i N \sprod{k}{x}}.
\end{equation*}
We also used that the vector $f^0_j$ only depends on the residue class of $j$ modulo $N$ according to \eqref{eq_transformation_rule_e_theta_j} and \eqref{eq_new_basis_f_theta_j}.
Note that $S_N(\psi_\lambda)$ may not be normed anymore as there could be multiple occurrences of the same $f^0_j$ as there could be multiple terms in the sum defining $\beta_k$. However, such a problem does not occur if we take $N^2 \ge 8 \lambda$ as then any solution of $\norm{j}^2 = \lambda$ satisfies $\norm{j} \le \sqrt{\lambda} \le \frac{N}{2 \sqrt{2}}$ and thus solutions have different reduction modulo $N$ as long as they are different.
Finally, we observe that $S_N(\psi_\lambda)$ is an eigenvector of $M_N(P_{2I})$. Indeed, Lemma \ref{lem_quantizability_condition_expression_Mh_PB} leads to
\begin{equation*}
	M_N(P_{2I}) S_N(\psi_\lambda) = \sum_{\substack{j \in \Z^d ,\\ \norm{j}^2 = \lambda}} \alpha_{j} e^{-\frac{2 \pi i}{N} \norm{j}^2} f^0_j = e^{-\frac{2 \pi i}{N} \lambda} S_N(\psi_\lambda).
\end{equation*}

We now turn to the comparison of the Wigner distribution of $\psi_\lambda$ and
$S_N(\psi_\lambda)$. Let $(m,n) \in \Z^{2d}$. On the one hand, the equations
\eqref{eq_action_quantum_translation_on_functions} and
\eqref{eq_U_h(z)=quantif_e(isigma(z,.))} lead to
\begin{align*}
	Wig_h(\psi_\lambda)(e^{2 \pi i \sigma((m,n),\cdot)}) & = \sprod{U_h(2\pi h(m,n)) \psi_\lambda}{\psi_\lambda}_{L^2(\T^{d})} ,\\
	& = \sum_{j,k \in \Z^d} \alpha_j \overline{\alpha_k} \sprod{U_h(2 \pi h(m,n))e^{2 \pi i \sprod{j}{\cdot}}}{e^{2 \pi i \sprod{k}{\cdot}}}_{L^2(\T^{d})},\\
	& = \sum_{j,k \in \Z^d} \alpha_j \overline{\alpha_{k}} e^{-2 \pi i \sprod{\pi h m}{n+2j}} \sprod{e^{2\pi i \sprod{j+n}{\cdot}}}{e^{2 \pi i \sprod{k}{\cdot}}}_{L^2(\T^{d})} ,\\
	& = \sum_{j \in \Z^d} \alpha_j \overline{\alpha_{j+n}} e^{-2 \pi i \sprod{\pi h m}{n+2j}}.
\end{align*}
On the other hand, the equation \eqref{eq_simplified_expression_fourier_coefficients_wigner_distribution_f_theta_j} gives
\begin{align*}
	Wig_N(S_N(\psi_\lambda))(e^{2 \pi i \sigma((m,n),\cdot)}) & = Wig_N(S_N(\psi_\lambda))(e^{2 \pi i \sprod{(n,-m)}{\cdot}}) ,\\
	& = e^{-\frac{\pi i }{N} \sprod{n}{m}} \sum_{k \in (\Z / N \Z)^d} e^{-\frac{2 \pi i}{N} \sprod{m}{j}} \beta_k \overline{\beta_{k+n}} ,\\
	& = \sum_{k \in (\Z / N \Z)^d} \sum_{\substack{j \in \Z^d ,\\
	j \equiv k \modulo{N}}} \sum_{\substack{l \in \Z^d ,\\
	l \equiv k + n \modulo{N}}} e^{-\frac{\pi i}{N}\sprod{m}{n +2j}} \alpha_j \overline{\alpha_{l}} ,\\
	& = \sum_{\substack{j,l \in \Z^d ,\\ \norm{j}^2 = \norm{l}^2 = \lambda ,\\ j \equiv l+n \modulo{N}}} e^{-\frac{\pi i}{N}\sprod{m}{n +2j}} \alpha_j \overline{\alpha_{l}}.
\end{align*}
We fix $N(\lambda) = \floor{4 \sqrt{\lambda}} + 1 $, $h(\lambda) = (2 \pi N(\lambda))^{-1}$ and suppose that $\norm{n}_\infty < N(\lambda)/2$. Observe that if $\norm{j}^2 = \norm{l}^2 = \lambda$, then
\begin{equation*}
	\norm{j-l-n}_\infty < \norm{j}_2 + \norm{l}_2 + \frac{N(\lambda)}{2} \le  2\sqrt{\lambda} + \frac{N(\lambda)}{2} \le N(\lambda).
\end{equation*}
Therefore, the condition $j \equiv l+n \modulo{N}$ is equivalent to the equality $j = l+n$ and we get
\begin{equation*}
	Wig_{h(\lambda)}(\psi_\lambda)(e^{2 \pi i \sigma((m,n), \cdot)}) = Wig_{N(\lambda)}(S_{N(\lambda)}(\psi_\lambda))(e^{2 \pi i \sigma((m,n), \cdot)}).
\end{equation*}
Hence, since the trigonometric polynomials are dense in $\CC^\infty(\T^{2d})$, we get that for any probability measure $\mu$ on $\set{(x, \xi) \in \T^d \times \R^d \, | \, \norm{\xi}^2 = \frac{1}{16}}$
\begin{equation*}
	Wig_{h(\lambda) }(\psi_\lambda) \xrightharpoonup[\lambda \to + \infty]{}^* \mu \iff Wig_{N(\lambda)}(S_{N(\lambda)}(\psi_\lambda)) \xrightharpoonup[\lambda \to + \infty]{}^* \tilde{\mu},
\end{equation*}
where $\tilde{\mu}$ is the restriction of $\mu$ to $\T^d \times [0,1]^d$ identified with $\T^{2d}$. Our choice for the dependence of $h$ and $N$ on $\lambda$ was made to ensure that $N = \frac{1}{2 \pi h}$ is an integer and that any semiclassical measure for $-\Delta_{\T^d}$ is supported on a set in frequency that is included in $[0,1]^d$ and that does not intersect its translates by $\Z^d$. This allows us to consider the measure on the torus $\T^{2d}$ without needing to periodize in frequency.

The discussion above is summarized in the proposition below.
\begin{proposition}\label{pro_link_semiclassical_laplacian_P_2I}
	Let $(\psi_\lambda)_{\lambda}$ be a sequence of normed eigenvectors of $-\Delta_{\T^d}$ associated to the eigenvalue $4 \pi^2 \lambda$. Then
	\begin{equation*}
		S_N(\psi_\lambda) = \psi_\lambda \sum_{k \in \Z^d} e^{2 \pi i N\sprod{k}{\cdot}} \in \CH_N(0),
	\end{equation*}
	is an eigenvector of $M_N(P_{2I})$. It is normed if $N^2 \ge 8 \lambda$.
	We get that for any probability measure $\mu$ on $\set{(x, \xi) \in \T^d \times \R^d \, | \, \norm{\xi}^2 = \frac{1}{16}}$
	\begin{equation*}
		Wig_{h(\lambda) }(\psi_\lambda) \xrightharpoonup[\lambda \to + \infty]{}^* \mu \iff Wig_{N(\lambda)}(S_{N(\lambda)}(\psi_\lambda)) \xrightharpoonup[\lambda \to + \infty]{}^* \tilde{\mu},
	\end{equation*}
	where $\tilde{\mu}$ is the restriction of $\mu$ to $\T^d \times [0,1]^d$ identified with $\T^{2d}$, $N(\lambda) = \floor{4 \sqrt{\lambda}} + 1$ and $h(\lambda) = \frac{1}{2 \pi N(\lambda)}$.
\end{proposition}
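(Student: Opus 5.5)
The proposition restates the preceding discussion, so the proof reorganizes it into three claims, each checked on the basis $(f^0_j)_j$ of $\CH_N(0)$.

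\textbf{Eigenvector and norm.} First compute $f^0_j$ explicitly from \eqref{eq_new_basis_f_theta_j}: the inverse $h$-Fourier transform of the Dirac comb $e^0_j$ equals $e^{2\pi i\sprod{j}{x}}\sum_{k}e^{2\pi i N\sprod{k}{x}}$. Hence $S_N(\psi_\lambda)=\sum_{\norm{j}^2=\lambda}\alpha_j f^0_j$. Lemma \ref{lem_quantizability_condition_expression_Mh_PB} with $B=2I$ and $\theta=0$ gives $M_N(P_{2I})f^0_j=e^{-\frac{2\pi i}{N}\norm{j}^2}f^0_j$, and this is legitimate because the diagonal entries of $2I$ are even, so $\theta_\xi=0$ is admissible. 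Since every $j$ in the support has $\norm{j}^2=\lambda$, the phase is the common factor $e^{-2\pi i\lambda/N}$. For the norm, if $N^2\ge 8\lambda$ then two distinct lattice points on the sphere of radius $\sqrt\lambda$ differ by at most $2\sqrt\lambda<N$ in each coordinate. They therefore have distinct residues modulo $N$, so no $f^0_k$ is repeated and orthonormality of $(f^0_k)$ gives norm $1$.

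\textbf{Equality of Fourier coefficients.} Fix $(m,n)$ and test both Wigner distributions on $e^{2\pi i\sigma((m,n),\cdot)}$. On $L^2(\T^d)$, use \eqref{eq_U_h(z)=quantif_e(isigma(z,.))} and \eqref{eq_action_quantum_translation_on_functions} to get $\sum_j\alpha_j\overline{\alpha_{j+n}}e^{-2\pi i\sprod{\pi h m}{n+2j}}$. On $\CH_N(0)$, use \eqref{eq_simplified_expression_fourier_coefficients_wigner_distribution_f_theta_j} to get a sum over $j,l$ on the sphere with $j\equiv l+n\pmod N$ and phase $e^{-\frac{\pi i}{N}\sprod{m}{n+2j}}$. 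With $N(\lambda)=\floor{4\sqrt\lambda}+1$ and $h=(2\pi N)^{-1}$ the two phases coincide. Once $\norm{n}_\infty<N/2$, the bound $\norm{j-l-n}_\infty<2\sqrt\lambda+N/2\le N$ turns the congruence into the equality $j=l+n$. So for each fixed $(m,n)$ the two Fourier coefficients agree for all $\lambda$ large enough.

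\textbf{Identification of the limits.} This is the step that needs care. Semiclassical measures of the Laplacian live on $\T^d\times\R^d$ and are supported on $\norm{\xi}=1/4$, by \cite[thm. 5.6]{zworskiSemiclassicalAnalysis2012}. Our observables, however, are $\Z^{2d}$-periodic in $\xi$, so they only probe the periodization of $\mu$. Because the sphere of radius $1/4$ sits inside $(-1/2,1/2)^d$ (equivalently inside a fundamental domain identified with $[0,1]^d$), it is disjoint from its $\Z^d$-translates. The periodized measure is therefore exactly $\tilde\mu$, and $\mu$ can be recovered from $\tilde\mu$.

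To finish, convergence of the coefficients for every trigonometric monomial, together with uniform bounds on both sides (by \eqref{ineq_wigner_distribution_finite_order} and the $\sum|a_n|$ bound for $Wig_h$), extends by density to all of $\CC^\infty(\T^{2d})$. Both implications then follow, since equal coefficients force the two limits to be the same distribution.
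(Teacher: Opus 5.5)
Your proposal is correct and follows essentially the same route as the paper. You expand $S_N(\psi_\lambda)$ in the basis $(f^0_j)_j$, read off the common eigenvalue $e^{-2\pi i\lambda/N}$ from Lemma \ref{lem_quantizability_condition_expression_Mh_PB}, and compare the Fourier coefficients of the two Wigner distributions, using that $j\equiv l+n \pmod N$ forces $j=l+n$ once $\norm{n}_\infty<N/2$, before concluding by density of trigonometric polynomials. Your reading of $\tilde\mu$ as the $\Z^d$-periodization in $\xi$, using that the sphere of radius $1/4$ lies in $(-1/2,1/2)^d$, is slightly more accurate than the paper's phrase ``restriction to $\T^d\times[0,1]^d$'', since that sphere is centered at the origin and is not contained in $[0,1]^d$.
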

\noindent The inclusion part of Corollary \ref{cor_semicla_laplacian_torus_included_semicla_P_2I} follows immediately. This inclusion is strict:
\begin{itemize}
	\item for $d=1$, as the only quantum limit for the Laplacian on the circle $\T^1$ is the Lebesgue measure and this is not the case for the quantum limits of $P_{2}$ according to Theorem \ref{thm_full_characterization_parabolic_dimension_1}.
	\item for $d \ge 2$, by Remark \ref{rem_infinite_fourier_coeff_parabolic} thanks to the example built in Theorem \ref{thm_parabolic_case_infinite_fourier_coef}.
\end{itemize}

\printbibliography

\end{document}